\documentclass[10pt, letterpaper]{amsart}
\usepackage{amsfonts, amscd, amsmath, mathrsfs, amssymb, amsthm, amsxtra, stmaryrd, bbding, epsfig, graphicx, latexsym, url,xcolor}
\definecolor{cite}{rgb}{0.00,0.00,1.00}
\definecolor{link}{rgb}{1.00,0.00,0.00}
\usepackage[colorlinks,linkcolor=link,urlcolor=url,citecolor=cite,breaklinks]{hyperref}
\usepackage{tikz-cd}
\usepackage{enumerate}

\newtheorem{theorem}[subsection]{Theorem}
\newtheorem{proposition}[subsection]{Proposition}
\newtheorem{corollary}[subsection]{Corollary}
\newtheorem{lemma}[subsection]{Lemma}
\newtheorem{definition}[subsection]{Definition}
\newtheorem{remark}[subsection]{Remark}
\newtheorem{assumption}[subsection]{Assumption}

\numberwithin{equation}{subsection}

\theoremstyle{plain}

\renewcommand\b[1]{\mathbb{#1}}
\newcommand\f[1]{\mathfrak{#1}}
\newcommand\m[1]{\mathrm{#1}}
\renewcommand\c[1]{\mathcal{#1}}
\renewcommand\d{\dagger}
\newcommand{\Q}{\mathbb{Q}}
\newcommand{\D}{\mathbb{D}}

\begin{document}

\title{Arithmetic hypergeometric $\mathcal {D}$-modules and exponential sums on reductive groups}

\author[L. Fu]{Lei Fu}
\address{Yau Mathematical Sciences Center, Tsinghua University, Beijing 100084, P. R. China}
\email{leifu@tsinghua.edu.cn}

\author[X. Li]{Xuanyou Li}
\address{Qiuzhen College, Tsinghua University, Beijing 100084, P. R. China}
\email{lixuanyo21@mails.tsinghua.edu.cn}

\author[C. Liu]{Chenhan Liu}
\address{Tsinghua University, Beijing 100084, P. R. China}
\email{liu-ch22@mails.tsinghua.edu.cn}

\date{}
\subjclass[2020]{Primary 14F10, 14F30; Secondary 14M27, 11L07}

\keywords{arithmetic $\mathcal {D}$-module, spherical variety, Fourier transformation.}

\begin{abstract} For a finite family of representations of a reductive group, we define a Laurent polynomial on the group. 
The exponential sum associated this Laurent polynomial is called a hypergeometric exponential sum. We introduce an
arithmetic hypergeometric $\mathcal D$-module to study the hypergeometric exponential sum. It is an overholonomic arithmetic 
$\mathcal D$-module with a Frobenius structure so that the trace of the Frobenius at a rational point
is the exponential sum. Over the locus where the Laurent polynomial is nondegenerate, 
the arithmetic hypergeometric $\mathcal {D}$-module defines an $F$-isocrystal overconvergent along the degenerate locus. 
As an application, we get an estimation of the hypergeometric exponential sum.
\end{abstract}

\maketitle

\section*{Introduction}

Let $p$ be a prime number, $q$ a power of $p$, $k$ a finite field with $q$ elements, $\psi:k\to\overline{\b{Q}}^*_p$ a nontrivial additive 
character, and $X$ a separated scheme of finite type over $k$. For any extension $k'$ of $k$, denote by $X(k')$ the set 
of $k'$-points in $X$. For any regular function $f\in\mathcal O_X(X)$, denote also by $f:X\to\mathbb A^1_k$ 
the morphism corresponding to the $k$-algebra homomorphism
$$k[t]\to \mathcal O_X(X),\quad t\mapsto f.$$ In number theory, many problems lead to the study of the exponential sum
$$S=\sum_{x\in X(k)} \psi(f(x)).$$ In this paper, we study the exponential sum when $X$ is a reductive algebraic group.

Let $G_k$ be a reductive group over the finite field $k$, and let $$\rho_{j}:G_k\to\m{GL}(V_{j,k})\quad
(j=1, \cdots, N)$$ a family of representations of $G_k$, where $V_{j,k}$ are finite dimensional vector spaces over $k$. For any $k$-point
$A=(A_1,\cdots,A_N)$ of $\prod_{j=1}^N\m{End}(V_{j,k})$, consider the morphism
$$f_A:G_k\to\b{A}_k^1,\quad g\mapsto\sum_{j=1}^N\m{Tr}(A_j\rho_j(g)).$$ Following \cite{l-adic}, we call $f_A$ a \emph{Laurent polynomial} 
on $G_k$, and define the \textit{hypergeometric exponential sum} associated with the representations $\rho_j$ $(j=1,\ldots, N)$ to be 
\begin{eqnarray}\label{eqn:hypexpsum}
\sum_{g\in G_k(k)}\psi\Big(\sum_{j=1}^N\m{Tr}(A_j\rho_{j,k}(g))\Big).
\end{eqnarray}
In \cite{l-adic}, we study this exponential sum using the theories of $\ell$-adic cohomology and algebraic $\mathcal {D}$-modules.
In \cite{padictorus}, we study the twisted GKZ hypergeometric exponential sum by the $p$-adic method, which corresponds to the
case where $G_k$ is the torus $\b{G}_{m,k}^n$. In this paper, we use the theory of arithmetic $\mathcal D$-modules to study the 
hypergeometric exponential sum on a reductive group $G_k$ which can be lifted to a split reductive group defined over a $p$-adic number field. 

Let $K$ be a $p$-adic number field containing an element $\pi$ satisfying 
$$\pi^{p-1}+p=0,$$
let $R$ be the discrete valuation ring in $K$, let $k$ be the residue field of $R$,
and let $G$ be a split reductive group $R$-scheme. Fix a Borel subgroup scheme $B^+$, and let $T$ be 
the maximal torus of $B^+$. 
Let $$\rho_{j}: G\to\mathrm{GL}(V_j)\quad(j=1, \ldots, N)$$ be a family of representations, where
$V_j$ are free $R$-modules of finite ranks. 
Let $\Lambda=\mathrm{Hom}_R(T, \mathbb G_m)$ be the weight lattice. We define the \emph{Newton
polytope at infinity} $\Delta_\infty$ for the family $\rho_1, \ldots, \rho_N$ to be the convex hull in
$\Lambda_{\mathbb R}:=\Lambda\otimes \mathbb R$ of the weights appeared in $V_j$ $(j=1, \ldots, N)$ together 
with the zero weight $0$. Occasionally we also use the \textit{Newton polytope} $\Delta$ which is defined to be the convex hull 
of the weights appeared in $V_j$ $(j=1, \ldots, N)$. 
Let $$\mathbb V=\prod_{j=1}^N\mathrm{End}(V_j).$$
For any face $\tau\prec \Delta_\infty$, define an $R$-point $e(\tau)=(e(\tau)_j)$ of $\mathbb V$ as follows: Let
$$V_j=\bigoplus_{\lambda\in \Lambda} V_j(\lambda)$$ be the weight decomposition so that
$V_j(\lambda)$ is the component of weight $\lambda$ under the action of $T$.
We define $e(\tau)_j$ to be the block-diagonal
linear transformation on $V_j$ so that
$$e(\tau)_j\Big|_{ V_j(\lambda)}=\begin{cases}
\mathrm{id}_{V_j(\lambda)}&\hbox{if }\lambda\in \tau, \\
0&\hbox{otherwise}.
\end{cases}$$
Denote the $k$-point of $\mathbb V_k:=\mathbb V\otimes_R k$ corresponding to $e(\tau)$ by the same notation. 
Let $H=G\times G$. 
We have an action of
$$(G \times_R G)\times_R \mathbb V\to  \mathbb V, \quad
\Big((g, h), (A_1, \ldots, A_N)\Big)\mapsto  \Big(\rho_1(g) A_1 \rho_1(h^{-1}), \ldots, \rho_N(g) A_N\rho_N(h^{-1})\Big).$$

\begin{definition}\label{defn:nondeg} Let $\bar k$ be an algebraic closure of $k$. 
For any $\bar k$-point 
$A=(A_1, \ldots, A_N)$ in $\mathbb V_{\bar k}=\prod_{j=1}^N\mathrm{End}(V_{j,\bar k})$, let $\phi_A$ be the morphism
$$\phi_A:\mathbb V_{\bar{k}}\to
\mathbb A^1_{\bar{k}},\quad (B_1,\ldots, B_N)\mapsto \sum_{j=1}^N\mathrm{Tr}(A_jB_j).$$
The Laurent polynomial
\begin{equation}\label{generalLaurent}
f_A: G_{\bar{k}}\to\mathbb A^1_{\bar{k}}, \quad f(g)=\sum_{j=1}^N \mathrm{Tr}(A_j \rho_j(g))
\end{equation}
is called \emph{nondegenerate} if for any face $\tau$ of $\Delta_\infty$ not containing the origin, the restriction of 
$\phi_A$ to the $H_{\bar k}$-orbit of $e(\tau)$ has no critical point, that is, 
the function
\begin{equation}\label{ftauA}
f_{\tau, A}:G_{\bar{k}}\times_{\bar{K}} G_{\bar{k}}
\to\mathbb A^1_{\bar{k}}, \quad f_{\tau, A}(g, h)=\sum_{j=1}^N \mathrm{Tr}(A_j \rho_j(g)e(\tau)_j\rho_j(h^{-1}))
\end{equation}
has no critical point, which means that $\mathrm df_{\tau, A}=0$ has no solution
on $G_{\bar{k}}\times_{\bar{k}} G_{\bar{k}}$. 
\end{definition}

Let $\mathbb V_{k}^{\mathrm{gen}}$ be the set
consisting of those $A$ so that the Laurent polynomial (\ref{generalLaurent}) is nondegenerate.
Then $\mathbb V_{k}^{\mathrm{gen}}$ is a Zariski open subset of $\mathbb V_{k}$ parametrizing nondegenerate Laurent polynomials.
The main result of this paper is the following. 

\begin{theorem}\label{thm:expsum} Notation as above.
Suppose the morphism $$G\to\prod_{j=1}^N \mathrm{End}(V_j),\quad g\mapsto (\rho_1(g),
\ldots, \rho_N(g))$$ 
is quasi-finite, and has a good equivariant compactification in the sense of Assumptions \ref{ass1} and \ref{ass2}. 
For any finite extension $k'$ of $k$ with $q'$ element, and any $k'$-point $A=(A_1, \ldots, A_N)$
in $\prod_{j=1}^N \mathrm{End}(V_{j,k})$ such that the Laurent polynomial 
$f_A(g)=\sum_{j=1}^N \mathrm{Tr}(A_j\rho_j(g))$ is nondegenerate,
we have
$$\Big\vert \sum_{g\in G(k')} \psi \Big(\mathrm{Tr}_{k'/k}\Big(\sum_{j=1}^N 
\mathrm{Tr}(A_{j}\rho_j(g))\Big)\Big)\Big\vert\leq  q'^{\frac{d}{2}}d!\int_{\Delta_\infty\cap\mathfrak C} 
\prod_{\alpha \in R^+} 
\frac{\lambda(H_\alpha)^2}{\rho(H_\alpha)^2}\mathrm d\lambda,$$ where 
$d=\mathrm{dim}\,G$, $\mathfrak C$ is the dominant Weyl chamber in 
$\Lambda_{\mathbb R}$, $R^+$ is the set of positive roots, 
$\{H_\alpha:\alpha\in R\}$ is the set of co-roots, 
$\rho=\frac{1}{2}\sum_{\alpha\in R^+}\alpha$, and $\mathrm d\lambda$ is the 
Lebesgue measure on $\Lambda_{\mathbb R}$
normalized by $\mathrm{vol}(\Lambda_{\mathbb R}/\Lambda)=1$. 
\end{theorem}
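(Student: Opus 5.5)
We plan to deduce the estimate from the arithmetic hypergeometric $\mathcal D$-module $\mathcal{H}$ on the formal completion of $\mathbb V$ (more precisely, on the dual space where the parameter $A$ lives), together with a trace formula and a weight bound.

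First, $\mathcal H$ is defined as the Fourier transform (with respect to the Dwork $\pi$-exponential attached to $\psi$) of the direct image of the structure sheaf of $G$ under the quasi-finite morphism $\iota:G\to\mathbb V$, $g\mapsto(\rho_j(g))$. We show that $\mathcal H$ is overholonomic with a Frobenius structure. We then apply the Grothendieck--Lefschetz type trace formula for overholonomic $F$-complexes (Caro, Abe) together with base change for the Fourier transform. This identifies the trace of Frobenius of $\mathcal H$ at the $k'$-point $A$ with the hypergeometric sum, up to sign and a Tate twist:
$$\sum_{g\in G(k')}\psi\Big(\mathrm{Tr}_{k'/k}\big(f_A(g)\big)\Big).$$
Equivalently, the stalk of $\mathcal H$ at $A$ is the rigid cohomology with compact support $H^*_{c}(G_{k'},\mathcal L_\psi(f_A))$, where $\mathcal L_\psi$ is the Dwork $F$-isocrystal.

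Next, over $\mathbb V^{\mathrm{gen}}$ we prove that this stalk cohomology is concentrated in degree $d$ and that the forget-supports map $H^d_c\to H^d$ is an isomorphism. Here we use the good equivariant compactification of Assumptions \ref{ass1} and \ref{ass2}: an $H$-equivariant spherical embedding $\overline{G}$ of $G$ whose boundary strata are the $H$-orbits of the points $e(\tau)$. The nondegeneracy condition says exactly that $f_A$ has no critical points on the boundary orbits at infinity. Along these orbits, a local computation of the twisted de Rham complex, or of the Fourier transform near the boundary, shows that the nearby-cycle or boundary contributions vanish. Hence the module restricted to $\mathbb V^{\mathrm{gen}}$ is an overconvergent $F$-isocrystal placed in one degree.

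With this purity input, Kedlaya's $p$-adic Weil II (or Abe--Caro's theory of weights) gives that $H^d_c$ is mixed of weights $\le d$, and that $H^d$ has weights $\ge d$ because $G$ is smooth. Since the two groups are isomorphic, all Frobenius eigenvalues have absolute value $q'^{d/2}$. Therefore the absolute value of the sum is at most $q'^{d/2}\cdot\dim H^d_c$.

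It remains to bound $\dim H^d_c$, i.e. the generic rank of $\mathcal H$ on $\mathbb V^{\mathrm{gen}}$. The rank can be computed by degenerating to characteristic zero or via the characteristic cycle. We expect it to be the Euler characteristic of the twisted de Rham complex of $e^{f_A}$ on $G_K$, which by the result of \cite{l-adic} (a Kazarnovskii--Brion type formula for spherical varieties) equals
$$d!\int_{\Delta_\infty\cap\mathfrak C}\prod_{\alpha\in R^+}\frac{\lambda(H_\alpha)^2}{\rho(H_\alpha)^2}\,\mathrm d\lambda.$$
This gives the stated bound.

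We expect the main obstacle to be the degree-concentration and purity step on $\mathbb V^{\mathrm{gen}}$, since it requires a careful $p$-adic analysis of the boundary of the equivariant compactification. The first thing to try there is to reduce, via the Bruhat decomposition and the local structure theorem of spherical varieties, to the torus case already treated in \cite{padictorus}.
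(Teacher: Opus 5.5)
Your overall architecture (trace formula, an upper bound on weights, concentration in one degree over $\mathbb V^{\mathrm{gen}}$, and a rank bound) is the same as the paper's. However, the weight step as you state it is false, and the heart of the proof is missing.

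\textbf{The purity step is false, but unnecessary.} Nondegeneracy only constrains faces of $\Delta_\infty$ not containing $0$, so $0$ may lie on $\partial\Delta_\infty$, and then $H^d_c\to H^d$ is not an isomorphism. Take $G=\mathbb G_m$, $N=1$, $\rho_1(t)=t$. The assumptions hold, with $\tilde Y=\mathbb P^1$ and $\tilde Y'=\mathbb A^2$, and every $A\neq 0$ is nondegenerate. Yet $\sum_{t\in k'^*}\psi(\mathrm{Tr}_{k'/k}(At))=-1$, so $H^1_c$ is one-dimensional of weight $0$, not $1$. Purity is not needed, though. Once the cohomology sits in a single degree, the upper weight bound alone gives that the absolute value of the sum is at most $q'^{d/2}\dim H^d_c$. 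That is exactly the paper's argument:
\begin{itemize}
\item $i_A^+\mathrm{Hyp}_{\pi,!}$ is mixed of weight $\le d+n-2$ by Abe--Caro, because $\mathcal L_\pi$ is pure of weight $-1$;
\item it is concentrated in degree $-n$ because $\mathrm{Hyp}_{\pi,!}|_{\mathbb V^{\mathrm{gen}}}$ is an overconvergent $F$-isocrystal.
\end{itemize}
So drop the lower-weight half of your argument.

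\textbf{The real gap} is that this isocrystal property and the rank bound (Theorem \ref{thm:hyp}) are left as expectations, and the route you suggest does not work as stated.
\begin{itemize}
\item The local structure $\tilde Y^\circ\cong(U^+\times U^-)\times S$ does not reduce the problem to the torus case of \cite{padictorus}. The function $f_A(u_1su_2^{-1})=\sum_j\mathrm{Tr}(A_j\rho_j(u_1)\rho_j(s)\rho_j(u_2)^{-1})$ does not split as a function on $U^+\times U^-$ plus one on $S$. Moreover, nondegeneracy concerns the full orbits $He(\tau)$, not $S$.
\item ``Rank $=$ Euler characteristic on $G_K$'' needs a specialization argument from characteristic $0$ to characteristic $p$, which you do not give.
\end{itemize}

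The paper supplies the missing ideas as follows.
\begin{enumerate}
\item It shows $\mathrm{Hyp}_{\pi,!}$ is a direct summand of $\mathfrak F_\pi(\mathcal N)$. Here $\mathcal N$ is the explicit quotient of $(\bar\iota_*\omega_{\tilde Y})^\wedge_{\mathbb Q}\otimes\mathcal D^\dagger_{\widehat{\mathbb P},\mathbb Q}(^\dagger\infty)$ by the invariant vector fields $L_\xi$, obtained from the toroidal compactification (Lemma \ref{toroidalDmodiso}, Proposition \ref{prop:mhyp}).
\item It builds level-$m$ integral models with $H$-invariant good filtrations via homogenization.
\item It uses the symbols $L_{\xi_i}^{\langle p^m\rangle_{(m)}}$ of invariant PD-operators to show that over $\mathbb V^{\mathrm{gen}}$ the characteristic variety lies in the zero section. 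Its points come from critical points of $\langle A,\cdot\rangle$ on $H$-orbits in $i_0^{-1}(X'_{\bar k})=\bigsqcup_{0\notin\tau\prec\Delta_\infty}G_{\bar k}e(\tau)G_{\bar k}$, and nondegeneracy kills these.
\item It bounds the rank by the length of $f'_*\omega_{Y'}\otimes\mathcal O_{Z_F}$, where $Z_F$ is a codimension-$(d+1)$ linear space meeting the cone $X'$ only at $0$. By Cohen--Macaulayness this length equals $[K(\mathbb G_m\times G):K(X')]\deg\overline X'$. That quantity is constant in the flat family and is computed in characteristic $0$ in \cite{l-adic}.
\end{enumerate}
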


In Section 1, after briefly recalling the six-functor formalism on the derived categories of overholonomic arithmetic 
$\mathcal D$-modules, we define the hypergeometric $\mathcal D$-module $\mathrm{Hyp}_{\pi,!}$ with a Frobenius structure
so that the trace of Frobenius at $A$ is exactly the hypergeometric exponential sum (\ref{eqn:hypexpsum}). In section 2, 
we show $\mathrm{Hyp}_{\pi,!}$ is a direct summand of an explicitly described arithmetic $\mathcal D$-module, 
which we call the modified hypergeometric $\mathcal D$-module. After describing invariant differential operators in Section 3, we 
construct in Section 4 a formal model for the modified hypergeometric $\mathcal D$-module, and prove it is an 
$F$-isocrystal on $\mathbb V^{\mathrm{gen}}_k$ overconvergent along the degernate locus. Theorem \ref{thm:expsum} then follows 
from the trace formula and the $p$-adic version of Deligne's theorem on weights proved by Abe-Caro. In the appendix, we prove 
several technical results used in the paper.

\subsection{Acknowledgements} This research is supported by the 
National Key R\&D Program of China 2023YFA1009703.

\section{Arithmetic hypergeometric $\mathcal D$-modules}

\subsection{Arithmetic $\mathcal D$-modules} 

We briefly recall the definition of the sheaf of rings of 
arithmetic differential operators and refer to \cite{Berthelot1} for details.
Let $\mathfrak X$ be a smooth formal scheme over $\mathrm{Spf}\,R$, $\mathfrak m$ the maximal ideal of
$R$, $X_i$ 
the reduction of $\mathfrak X$ modulo $\mathfrak m^{i+1}$, 
and $T$ a divisor on $X_0$. Let $\mathfrak U\subset \mathfrak X$ 
be an affine open subset so that the divisor $T$ of $X_0$ is given 
by $h\equiv 0 \mod \mathfrak m$ for some $h\in\mathcal O_{\mathfrak X}(\mathfrak U)$, 
and let $U_i$ and $h_i\in\mathcal O_{X_i}(U_i)$ 
be the reduction of $\mathfrak U$ and $h$ modulo $\mathfrak m^{i+1}$, respectively.  For any $m\geq 0$, define
\begin{eqnarray*}
&&\mathcal B_{X_i}^{(m)}(T)|_{U_i}=\mathcal O_{U_i}[t]/(h_i^{p^{m+1}}t-p),\\
&& \widehat{\mathcal B}_{\mathfrak X}^{(m)}(T)|_{\mathfrak U}
=\varprojlim_i\mathcal B_{X_i}^{(m)}(T)|_{U_i}=\mathcal O_{\mathfrak U}\{t\}/(h^{p^{m+1}}t-p).
\end{eqnarray*}
$\widehat{\mathcal B}_{\mathfrak X}^{(m)}(T)$ is a $p$-adically complete $\mathcal O_{\mathfrak X}$-algebra 
depending only on $\mathfrak X$ and $T$. Define the sheaf of functions on $\mathfrak X$ with overconvergent singularities along $T$ by
$$\mathcal O_{\mathfrak X, \mathbb Q}(^\dagger T)=
\varinjlim_{m}\widehat{\mathcal B}_{\mathfrak X}^{(m)}(T)\otimes_{\mathbb Z}\mathbb Q.$$
We have 
$$\mathcal O_{\mathfrak X, \mathbb Q}(^\dagger T)(\mathfrak U)
=\bigcup_{s>1} \Big\{\sum_{j\in\mathbb Z_{\geq 0}}\frac{a_{j}}{h^{j+1}}:\, a_{j}\in 
\mathcal O_{\mathfrak X, \mathbb Q}(\mathfrak U), 
\,  \lim_{j\to\infty}\Vert a_{j}\Vert s^{j} =0\Big\},$$
where $\mathcal O_{\mathfrak X, \mathbb Q}=\mathcal O_{\mathfrak X}\otimes_{\mathbb Z}\mathbb Q$, and $\Vert\cdot\Vert$ is the quotient norm
on the Tate algebra $\mathcal O_{\mathfrak X, \mathbb Q}(\mathfrak U)$.
Denote by $\mathcal D_{X_i}^{(m)}$ the sheaf differential operators of level $m$ on $X_i$. If $(x_1,\ldots, x_n)$ is a 
local coordinate on $U_i$, we have
$$\mathcal D_{X_i}^{(m)}(U_i)=\Big\{\sum_{\mathbf k\in \mathbb Z^n_{\geq 0}}\mathbf{q}_{\mathbf k}^{(m)}! a_{\mathbf k}
\partial^{[\mathbf k]}:\, a_{\mathbf k}\in \mathcal O_{X_i}(U_i), \, a_{\mathbf k}\not =0 \text{ for only finitely many } \mathbf k\Big\},$$
where $\partial^{[\mathbf k]}=\frac{1}{k_1!\cdots k_n!}\partial_{x_1}^{k_1}\ldots\partial_{x_n}^{k_n}$
for any $\mathbf k=(k_1,\ldots,k_n)$, and $\mathbf{q}_{\mathbf k}^{(m)}$ is determined by the 
expression $\mathbf k=p^m\mathbf{q}_{\mathbf k}^{(m)}
+\mathbf{r}_{\mathbf k}^{(m)}$ with $0\le \mathbf r_{\mathbf k, j}^{(m)}<p^m$ for all $j$.
The sheaf of differential operators of level $m$ on $\mathfrak X$ is defined by
$$\widehat{\mathcal D}_{\mathfrak X}^{(m)}=\varprojlim_i
\mathcal D_{X_i}^{(m)}, \quad \widehat{\mathcal D}_{\mathfrak X, \mathbb Q}^{(m)}=
\widehat{\mathcal D}_{\mathfrak X}^{(m)}\otimes_{\mathbb Z} \mathbb Q.$$ We have
$$\widehat{\mathcal D}_{\mathfrak X, \mathbb Q}^{(m)}(\mathfrak U)
=\Big\{\sum_{\mathbf k\in\mathbb Z^n_{\geq 0}}\mathbf{q}_{\mathbf k}^{(m)}!a_{\mathbf k}\mathbf{\partial}^{[\mathbf k]}: \,
a_{\mathbf k}\in \mathcal O_{\mathfrak X, \mathbb Q}(\mathfrak U), \,
\lim_{\vert {\mathbf k}\vert\to\infty }a_{\mathbf k}=0\Big\}.$$ 
Let 
$$\mathcal D^\dagger_{\mathfrak X, \mathbb Q}
=\varinjlim_{m}\widehat{\mathcal D}_{\mathfrak X,\mathbb Q}^{(m)}.$$
We have 
$$\mathcal D^\dagger_{\mathfrak X, \mathbb Q}(\mathfrak U)
=\bigcup_{s>1} \Big\{\sum_{\mathbf k\in\mathbb Z^n_{\geq 0}}a_{\mathbf k}\mathbf{\partial}^{[\mathbf k]}: \, a_{\mathbf k}\in 
\mathcal O_{\mathfrak X, \mathbb Q}(\mathfrak U), \, \lim_{\vert \mathbf k\vert\to\infty}\Vert a_{\mathbf k}\Vert s^k=0\Big\}.$$
Define the sheaf of differential operators overconvergent along $T$ as
$$\mathcal D_{\mathfrak X, \mathbb Q}^\dagger(^\dagger T):=
\varinjlim_m\widehat{\mathcal D}_{\mathfrak X, \mathbb Q}^{(m)}(T)
:=\varinjlim_m\widehat{\mathcal B}_{\mathfrak X}^{(m)}(T)\widehat{\otimes}_{\mathcal O_{\mathfrak X}}
\widehat{\mathcal D}_{\mathfrak X, \mathbb Q}^{(m)}.$$ We have
$$\mathcal D_{\mathfrak X, \mathbb Q}^\dagger(^\dagger T)(\mathfrak U)
=\bigcup_{s>1} \Big\{\sum_{j, \mathbf k}\frac{a_{j, \mathbf k}}{h^{j+1}}\partial^{[\mathbf k]}:\,
a_{j, \mathbf k}\in \mathcal O_{\mathfrak X, \mathbb Q}(\mathfrak U), 
\,  \lim_{j+\vert\mathbf k\vert\to \infty}\Vert a_{j, \mathbf k}\Vert s^{j+|\mathbf k|}=0\Big\}.$$
Denote by $F\hbox{-}D_{\text{coh}}^b(\mathcal D_{\mathfrak X, \mathbb Q}^\dagger(^\dagger T))$ the derived category 
of $\mathcal D_{\mathfrak X, \mathbb Q}^\dagger(^\dagger T)$-modules with coherent cohomologies and with 
Frobenius structures. For any object $\mathcal E$ in $F\hbox{-}D_{\text{coh}}^b(\mathcal D_{\mathfrak X, \mathbb Q}^\dagger(^\dagger T))$,
define its Verdier dual to be 
$$\mathbb D_T(\mathcal E)=R\mathcal Hom_{\mathcal D_{\mathfrak X, \mathbb Q}^\dagger(^\dagger T)}
(\mathcal G, \mathcal D_{\mathfrak X, \mathbb Q}^\dagger(^\dagger T)\otimes_{\mathcal O_{\mathfrak X, \mathbb Q}}\omega^{-1}
_{\mathfrak X, \mathbb Q})[\mathrm{dim}\, \mathfrak X]),$$ where $\omega_{\mathfrak X, \mathbb Q}$ is the right 
$\mathcal D^\dagger_{\mathfrak X,\mathbb Q}$-module of top differential forms. 
Define the functor $(^\dagger T)$ to be 
\begin{eqnarray*}
(^\dagger T): 
D_{\text{coh}}^b(\mathcal D_{\mathfrak X, \mathbb Q}^\dagger)\to D_{\text{coh}}^b
(\mathcal D_{\mathfrak X, \mathbb Q}^\dagger(^\dagger T)),\quad
(^\dagger T)(\mathcal E):=\mathcal D_{\mathfrak X, \mathbb Q}^\dagger
(^\dagger T)\otimes_{\mathcal D_{\mathfrak X, \mathbb Q}^\dagger}\mathcal E.
\end{eqnarray*}
Let $f: \mathfrak X'\to \mathfrak X$ be a morphism of formal $R$-schemes, let $f_0: X'_0\to X_0$ be $f\mod \mathfrak m$, 
and let $T'$ be a divisor on $X'_0$ such that $f_0(X'_0-T')\subset X_0-T$.  Define
$$\widehat{\mathcal D}_{\mathfrak X'\to\mathfrak X}^{(m)}(T', T):=
\widehat{\mathcal B}_{\mathfrak X'}^{(m)}(T')\widehat{\otimes}_{f^{-1}\mathcal O_{\mathfrak X}}f^{-1}
\widehat{\mathcal D}_{\mathfrak X}^{(m)}.$$
It is a  $(\widehat{\mathcal D}_{\mathfrak X'}^{(m)}(T'), f^{-1}\widehat{\mathcal D}_{\mathfrak X}^{(m)}(T))$-bimodule. Define
$$\mathcal D_{\mathfrak X'\to\mathfrak X, \mathbb Q}^{\dagger}(^\dagger T', T)
=\big(\varinjlim_m\widehat{\mathcal D}_{\mathfrak X'\to\mathfrak X}^{(m)}(T', T)\big)\otimes_{\mathbb Z}\mathbb Q.$$
It is a $(\mathcal D_{\mathfrak X', \mathbb Q}^{\dagger}(^\dagger T'), f^{-1}\mathcal D_{\mathfrak X, \mathbb Q}^{\dagger}(^\dagger T))$-bimodule.
We define $\mathcal D_{\mathfrak X\leftarrow \mathfrak X, \mathbb Q}^{\dagger}(^\dagger T, T')$ to be the
$(f^{-1}\mathcal D_{\mathfrak X, \mathbb Q}^{\dagger}(^\dagger T), \mathcal D_{\mathfrak X', \mathbb Q}^{\dagger}(^\dagger T'))$-bimodule
obtained from $\mathcal D_{\mathfrak X'\to\mathfrak X, \mathbb Q}^{\dagger}(^\dagger T', T)$ by side-change. 
For any objects 
$\mathcal E\in \mathrm{ob}\,F\hbox{-}D_{\mathrm{coh}}^b(\mathcal D_{\mathfrak X, \mathbb Q}^\dagger(^\dagger T))$ and 
$\mathcal E'\in \mathrm{ob}\,F\hbox{-}D_{\mathrm{coh}}^b(\mathcal D_{\mathfrak X', \mathbb Q}^\dagger(^\dagger T'))$
define
\begin{eqnarray*}
f_{(T,T'),+}(\mathcal E'):&=&Rf_*(\mathcal D_{\mathfrak X\leftarrow \mathfrak X', \mathbb Q}^{\dagger}(^\dagger T, T')
\otimes_{\mathcal D_{\mathfrak X', \mathbb Q}^\dagger(^\dagger T')}^L\mathcal E'),\\
f_{(T',T)}^!(\mathcal E):&=&\mathcal D_{\mathfrak X'\to\mathfrak X, \mathbb Q}^{\dagger}(^\dagger T', T)
\otimes_{f^{-1}\mathcal D_{\mathfrak X, \mathbb Q}^\dagger(^\dagger T)}^Lf^{-1}\mathcal E
[\mathrm{dim}\, {\mathfrak X'}-\mathrm{dim}\,{\mathfrak X}].
\end{eqnarray*}
If $T$ and $T'$ are empty, we write $f_+$ and $f^!$ for $f_{(T,T'),+}$ and $f_{(T',T)}^!$. 
Following \cite[1.16 and 3.1]{Carosurhol}, we say $\mathcal E$ is $0$-overholonomic if for any smooth
morphism $f: \mathfrak X'\to \mathfrak X$ and any divisor $T'$ of $\mathfrak X'$, 
$(^\dagger T')(f^! \mathcal E)$ lies in $F\hbox{-} D^b_{\mathrm{coh}}(\mathcal D^\dagger_{\mathfrak X'}(^\dagger f_0^{-1}(T)))$. 
For any positive integer $r$, we say $\mathcal E$ is $r$-overholonomic if 
$\mathcal E$ is $(r-1)$-overholonomic and for any smooth morphism $f:\mathfrak X'\to \mathfrak X$ and any divisor 
$T'$ of $X'$, $\mathbb D_{T'}(^\dagger T')f^!\mathcal E$ is $(r-1)$-overholonomic. We say $\mathcal E$ is \emph{overholonomic}
if it is $r$-overholonomic for all $r$.
Denote by 
$F\hbox{-}D^b_{\mathrm{ovhol}}(\mathcal D^\dagger_{\mathfrak X, \mathbb Q}(^\dagger T))$ the full subcategory 
consisting of overholonomic objects. 

Assume furthermore that $\mathfrak X$ is proper over $R$. Let $U=X_0-T$. We define  
$$F\hbox{-}D^b_{\mathrm{ovhol}}(U/K):= F\hbox{-}D^b_{\mathrm{ovhol}}(\mathcal D^\dagger_{\mathfrak X, \mathbb Q}(^\dagger T)).$$
This definition only depends on $U$ and is independent of the choice of the compactification $\mathfrak X$ of $U$ (\cite[4.14]{Carosurhol}). 
We define the Verdier duality functor on $F\hbox{-}D^b_{\mathrm{ovhol}}(U/K)$ to be $\mathbb D_T$. 
Let $\bar f:\mathfrak X'\to \mathfrak X$ be a morphism of formal schemes such that 
both $\mathfrak X$ and $\mathfrak X'$ are proper over $R$, $T$ and $T'$ divisors of $X_0$ and $X'_0$ such that 
$\bar f_0(U'_0)\subset U_0$, where $U_0=X_0-T$, $U'_0=X'_0-T'$. Let 
$f_0: U'_0\to U_0$ the morphism induced by $\bar f$. We define the functor
$$f_{0+}: F\hbox{-}D^b_{\mathrm{ovhol}}(U'/K)\to F\hbox{-}D^b_{\mathrm{ovhol}}(U/K)\quad (\hbox{resp. }
 f_0^!: F\hbox{-}D^b_{\mathrm{ovhol}}(U/K)\to F\hbox{-}D^b_{\mathrm{ovhol}}(U/K))$$
 to be the functor $\bar f_{(T,T')+}$ (resp. $\bar f^!_{(T',T)}$), and we define
 $$f_{0}^+= \mathbb D \circ f^!_0\circ \mathbb D, \quad f_{0!}= \mathbb D\circ f_{0+}\circ \mathbb D.$$
These definitions depend only on the morphism $f_0$ and are independent of the choice of the compactification $\bar f$ of $f_0$. 
We refer to \cite[1.3.14]{AC18padicweight} for details of the six-functor formalism for overholonomic arithmetic $\mathcal D$-modules. 
The $\mathcal D^\dagger_{\mathfrak X, \mathbb Q}(^\dagger T)$-module 
$\mathcal O_{\mathfrak X, \mathbb Q}(^\dagger T)$ defines an object in $F\hbox{-}D^b_{\mathrm{ovhol}}(U/K)$ which we denote by 
$\mathcal O^\dagger_U$. Denote by $\mathcal O^\dagger_U(d)$ the 
$\mathcal D^\dagger_{\mathfrak X, \mathbb Q}(^\dagger T)$-module 
$\mathcal O_{\mathfrak X, \mathbb Q}(^\dagger T)$ with the Frobenius structure 
$$F^*\mathcal O_{\mathfrak X, \mathbb Q}(^\dagger T)\to \mathcal O_{\mathfrak X, \mathbb Q}(^\dagger T), \quad s\mapsto q^{-d}s.$$
For any object $\mathcal E$ in $F\hbox{-}D^b_{\mathrm{ovhol}}(U/K)$, we define its $d$-th Tate twist $\mathcal E(d)$ to be 
$$\mathcal E(d):=\mathcal E\otimes_{\mathcal O_U^\dagger}\mathcal O_U^\dagger(d).$$
Denote by $\omega^\dagger_U$ the object in $F\hbox{-}D^b_{\mathrm{ovhol}}(U/K)$
defined by the right overholonomic $\mathcal D^\dagger_{\mathfrak X, \mathbb Q}(^\dagger T)$-module 
$\omega_{\frak X}\otimes_{\mathcal O_{\mathfrak X}}\mathcal O_{\mathfrak X, \mathbb Q}(^\dagger T)$. 

\subsection{Arithmetic $\mathcal D$-modules on the affine space and the Fourier transform}

Let $\mathbb P^n$ be the projective space over $R$ of relative dimension $n$, $[x_0:\ldots, :x_n]$ a homogeneous coordinate 
on $\mathbb P^n$, and $\infty$ the divisor $x_0=0$. We have an open immersion
$$\mathbb A^n\hookrightarrow\mathbb P^n,\quad (x_1,\ldots, x_n)\mapsto [1:x_1:\ldots:x_n]$$ 
and $\mathbb A^n=\mathbb P^n-\infty$. Let $\widehat{\mathbb P}^n$ and $\widehat{\mathbb A}^n$ 
the formal schemes obtained by taking completions of $\mathbb P^n$ and $\mathbb A^n$. Then the global section of 
$\mathcal D_{\widehat{\b{P}}^n,\b{Q}}^\dagger(^\dagger\infty)$ is
$$\mathcal D_{\widehat{\b{P}}^n,\b{Q}}^\dagger(^\dagger\infty)(\widehat{\mathbb P}^n)
\cong\bigcup_{s>1}\Big\{\sum_{\mathbf i,\mathbf k\in\mathbb Z^n_{\geq 0}}
a_{\mathbf i\mathbf k} \mathbf x^{\mathbf i}\partial^{[\mathbf k]}: \,a_{\mathbf i\mathbf j}\in K,\,
\lim_{|\mathbf i|+|\mathbf k|\to\infty}|a_{\mathbf i\mathbf k}|s^{|\mathbf i|+|\mathbf k|}=0\Big\},$$
where $\mathbf x^{\mathbf i}=x_1^{i_1}\cdots x_n^{i_n}$ and ${\partial}^{[\mathbf k]}=\frac{1}{k_1!\cdots k_n!}\partial^{k_1}_{x_1}
\cdots \partial^{k_n}_{x_n}$.
Denote this ring by $D_{\widehat{\b{P}}^n,\b{Q}}^\dagger(^\dagger\infty)$. We have the following 
theorem of Noot-Huyghe \cite[5.3.3]{Ddaggeraffine}.

\begin{proposition}\label{prop:Huyghe}
The functor $\Gamma(\widehat{\b{P}}^n,\hbox{-})$ defines an equivalence from the category of 
coherent $\mathcal {D}_{\widehat{\b{P}}^n,\b{Q}}^\dagger(^\dagger\infty)$-modules to the category of coherent 
$D_{\widehat{\b{P}}^n,\b{Q}}^\dagger(^\dagger\infty)$-modules.
\end{proposition}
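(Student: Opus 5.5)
This is Noot-Huyghe's theorem, cited from [Ddaggeraffine, 5.3.3]. The plan is to follow the usual pattern of a Beilinson--Bernstein-type equivalence at finite level, and then pass to the limit over levels.

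\textbf{Step 1: finite level.} Fix a level $m$. Since $\infty$ is an ample divisor, for $k\geq m$ consider the sheaves
$$\widehat{\mathcal D}^{(m,k)}=\widehat{\mathcal B}^{(k)}_{\widehat{\mathbb P}^n}(\infty)\widehat\otimes\widehat{\mathcal D}^{(m)}_{\widehat{\mathbb P}^n}.$$
\begin{enumerate}[(a)]
\item Prove the cohomological vanishing $H^i(\widehat{\mathbb P}^n,\mathcal E)=0$ for $i>0$ and every coherent $\widehat{\mathcal D}^{(m,k)}_{\mathbb Q}$-module $\mathcal E$. To do this, reduce modulo $p^{j+1}$. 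Then use the fact that $\mathcal B^{(k)}$ is, over $\mathbb A^n$, the structure sheaf with poles along $x_0$ allowed up to $p$-adic bounds. The quasi-coherent sheaves involved are twisted by $\mathcal O(r\infty)$ for large $r$, and Serre vanishing for $\mathcal O(r)$ on $\mathbb P^n$ kills higher cohomology up to bounded $p$-torsion. Passing to the inverse limit and tensoring with $\mathbb Q$ removes that torsion.
\item Prove global generation. Every coherent $\mathcal E$ is a quotient of $(\widehat{\mathcal D}^{(m,k)}_{\mathbb Q})^{\oplus a}$. This follows by the same Serre-type argument, using the sections $x_0^{-r}$ invertible in $\mathcal B^{(k)}$.
\item Compute the global sections $\Gamma(\widehat{\mathbb P}^n,\widehat{\mathcal D}^{(m,k)}_{\mathbb Q})$ explicitly as a $p$-adic completion of the Weyl-type algebra in $x_i$ and $\partial^{\langle k\rangle}$, and show this ring is noetherian. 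Noetherianity comes from the graded ring being a finitely generated algebra over a noetherian ring.
\end{enumerate}
With (a)--(c) in hand, the standard argument shows that $\Gamma$ is exact and that it is an equivalence from coherent $\widehat{\mathcal D}^{(m,k)}_{\mathbb Q}$-modules to finitely presented global-section modules. The quasi-inverse is $M\mapsto \widehat{\mathcal D}^{(m,k)}_{\mathbb Q}\otimes M$.

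\textbf{Step 2: passage to the limit.} Take the direct limit over $m$ and $k$. We have
$$\mathcal D^\dagger_{\widehat{\mathbb P}^n,\mathbb Q}(^\dagger\infty)=\varinjlim_m\widehat{\mathcal D}^{(m,m)}_{\mathbb Q}.$$
\begin{enumerate}[(a)]
\item Cohomology on the noetherian space $\widehat{\mathbb P}^n$ commutes with filtered colimits. Hence the ring of global sections is $\varinjlim_m\Gamma(\widehat{\mathcal D}^{(m,m)}_{\mathbb Q})$, which is exactly the displayed ring $D^\dagger_{\widehat{\mathbb P}^n,\mathbb Q}(^\dagger\infty)$.
\item Show that the transition maps are flat. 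This follows from Berthelot's flatness results for changes of level.
\item Conclude that $D^\dagger$ is coherent, so that coherent modules make sense on both sides.
\item Any coherent $\mathcal D^\dagger(^\dagger\infty)$-module descends to some finite level $m$, where Step 1 applies. By flatness, the resulting equivalences are compatible with extension of scalars, so they glue to the claimed equivalence.
\end{enumerate}

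\textbf{Main obstacle.} I expect the main obstacle to be Step 1(a): the higher cohomology vanishing for coherent $\widehat{\mathcal D}^{(m,k)}_{\mathbb Q}$-modules, with bounds uniform enough to survive the $p$-adic completion. The first approach I would try is to prove that the torsion in $H^i(\mathbb P^n_j,\mathcal E_j)$ is annihilated by a fixed power of $p$ independent of $j$. This would use a resolution of $\mathcal E$ by finite sums of $\widehat{\mathcal D}^{(m,k)}\otimes\mathcal O(-r)$ and Serre vanishing for $\mathcal O(r)$ with $r\gg0$. A Mittag-Leffler argument would then give vanishing after inverting $p$.
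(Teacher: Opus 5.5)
The paper gives no proof of this statement; it quotes Noot-Huyghe, exactly as you say. The architecture of your sketch is the standard one: finite-level $\mathcal D$-affinity from cohomological vanishing, global generation and noetherianity of global sections, then $\varinjlim_m$ via flatness and descent of coherent $\mathcal D^\dagger$-modules to a finite level.

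However, Step 1, which carries all the content, is not established by the argument you give. Serre vanishing is a statement about coherent $\mathcal O$-modules. By contrast, $\mathcal D^{(m,k)}_{X_j}\otimes_{\mathcal O}\mathcal F$ (already for $\mathcal F=\mathcal O$ or $\mathcal O(-r)$) is only quasi-coherent: it is the increasing union of the coherent pieces $\mathcal B^{(k)}_{X_j}\otimes F^d\mathcal D^{(m)}\otimes\mathcal F$, and the threshold supplied by Serre's theorem depends on $d$. What is actually needed is $H^i(\mathcal B^{(k)}_{X_0}\otimes\mathrm{Gr}^d\mathcal D^{(m)}\otimes\mathcal F(r))=0$ for $i\geq 1$ and \emph{all} $d$ at once, as soon as $r\geq r_0(\mathcal F)$. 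The ampleness of $\infty$, which is all your sketch invokes, cannot provide this: on a curve $C$ of genus $\geq 2$ with $L$ ample, $H^1(C,T_C^{\otimes d}\otimes L^{r})\neq 0$ for $d\gg r$. On $\mathbb P^n$ the missing input is the Euler sequence, combined with the following facts:
\begin{itemize}
\item modulo a nilpotent ideal of bounded exponent, $\mathrm{Gr}\,\mathcal D^{(m)}_{X_0}$ is $\mathrm{Sym}(F^{m*}T_{\mathbb P^n})$ (Proposition~\ref{prop:Berthelot});
\item $F^{m*}T_{\mathbb P^n}\cong\mathcal O(p^m)^{\oplus n+1}/\mathcal O$;
\item as an $\mathcal O$-module, $\mathcal B^{(k)}_{X_0}\cong\mathcal O\oplus\bigoplus_{j\geq1}\mathcal O_{p^{k+1}\infty}(jp^{k+1})$.
\end{itemize}
Together these give the uniform vanishing.

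The order of (a) and (b) also has to be repaired. The resolution by sums of $\widehat{\mathcal D}^{(m,k)}\otimes\mathcal O(-r)$ in your last paragraph is the integral form of (b). Producing it means lifting generators from $X_0$ to $\widehat{\mathbb P}^n$, which already requires the $H^1$-vanishing you are trying to prove. A non-circular order is the following.

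(i) Prove a characteristic-$p$ Serre theorem. A coherent $\mathcal D^{(m,k)}_{X_0}$-module $\mathcal M$ is a quotient of some $\mathcal D^{(m,k)}_{X_0}\otimes_{\mathcal O}\mathcal F$ with $\mathcal F$ coherent. Then $n+1$ steps of dimension shifting, using the uniform vanishing, give $H^i(\mathcal M(r))=0$ for $i\geq 1$ and global generation of $\mathcal M(r)$ for $r\gg 0$.

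(ii) Lift to a $p$-torsion-free integral model $\mathcal E$. D\'evissage along $0\to\mathcal E_0\xrightarrow{\varpi^j}\mathcal E_j\to\mathcal E_{j-1}\to 0$ and Mittag-Leffler give $H^i(\mathcal E(r))=0$ exactly. Generators can then be lifted, and Nakayama gives generation.

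(iii) Untwist over $\mathbb Q$. Integrally, negative powers of $x_0$ only occur multiplied by powers of $p$, as in $p^jx_0^{-jp^{k+1}}$, so $x_0^{-r}$ is not invertible in $\mathcal B^{(k)}$. It is, however, a section of $\widehat{\mathcal B}^{(k)}_{\mathbb Q}\otimes\mathcal O(-r)$. Hence $\mathcal E_{\mathbb Q}(r)\cong\mathcal E_{\mathbb Q}$, and both vanishing and generation pass to $\mathcal E_{\mathbb Q}$.

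Done this way, no torsion bound uniform in $j$ is ever needed: the uniformity that matters is in the order $d$ of the operators, not in $j$. Step 2 is fine, with one caveat. Flatness of the transition maps between the global-section rings follows from Berthelot's sheaf-level flatness only through the finite-level equivalences of Step 1, not directly.
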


Let $\widehat {\mathbb P}^{n*}$ be the dual projective space of $\widehat {\mathbb P}^{n}$, and let $[x'_0:\ldots:x'_n]$
be the dual homogeneous coordinate on $\widehat {\mathbb P}^{n*}$. We have an isomorphism of rings
\begin{eqnarray}\label{eqn:Fourieriso}
F_\pi:D_{\widehat{\b{P}}^{n*},\b{Q}}^\dagger(^\dagger\infty)\to D_{\widehat{\b{P}}^{n},\b{Q}}^\dagger(^\dagger\infty),
\quad x'_i\mapsto -\partial_{x_i}/\pi, \quad \partial_{x'_i}\mapsto\pi x_i \quad (i=1,\ldots, n).
\end{eqnarray}
It transform a $D_{\widehat{\b{P}}^n,\b{Q}}^\dagger(^\dagger\infty)$-module $E$ to a 
$D_{\widehat{\b{P}}^{n*},\b{Q}}^\dagger(^\dagger\infty)$-module 
$\mathfrak F_\pi(E)$, which we call the \emph{Fourier transform} of $E$. As a vector space over $K$, 
$\mathfrak F_\pi(E)$ coincides with $E$. The left multiplication by $x'_i$ (resp. $\partial_{x'_i}$) on
$\mathfrak F_\pi(E)$ coincides with the left multiplication by $ -\partial_{x_i}/\pi$ (resp. $\pi x_i$) 
on $E$. For any coherent $\mathcal {D}_{\widehat{\b{P}}^n,\b{Q}}^\dagger(^\dagger\infty)$-module $\c{E}$, let
$E=\Gamma(\widehat{\b{P}},\c{E})$. We define the \emph{Fourier transform} $\mathfrak F_\pi(\mathcal E)$ of 
$\mathcal E$ to be the coherent $\mathcal {D}_{\widehat{\b{P}}^{n*},\b{Q}}^\dagger(^\dagger\infty)$-module 
corresponding to the $D_{\widehat{\b{P}}^{n*},\b{Q}}^\dagger(^\dagger\infty)$-module $\mathfrak F_\pi(E)$. It can be extended to a functor 
on the derived category
$$\mathfrak F_\pi: D^b_{\mathrm{coh}}(\mathcal {D}_{\widehat{\b{P}}^n,\b{Q}}^\dagger(^\dagger\infty)) \to 
D^b_{\mathrm{coh}}(\mathcal {D}_{\widehat{\b{P}}^{n*},\b{Q}}^\dagger(^\dagger\infty))$$

In the case $n=1$, consider the $D_{\widehat{\b{P}}^1,\b{Q}}^\dagger(^\dagger\infty)$-module
$$L_\pi=\bigcup_{s>1}\Big\{\sum_{i\in\mathbb Z_{\geq 0}}
a_i x^i: a_i\in K,\,
\lim_{i\to \infty} |a_i|s^i=0\Big\}$$ 
so that for any $g\in L_\pi$, we have 
$$\partial_x \cdot g=\Big(\exp(\pi x)\circ\frac{\mathrm d}{\mathrm dx}\circ \exp(-\pi x)\Big)(g)=\frac{\mathrm dg}{\mathrm dx} -\pi g.$$
Denote by $\mathcal L_\pi$ the coherent $\mathcal {D}_{\widehat{\b{P}}^1,\b{Q}}^\dagger(^\dagger\infty)$-module
corresponding to $L_\pi$. Let $F:\widehat {\mathbb A}^1\to \widehat{\mathbb A}^1$ be the 
Frobenius morphism corresponding to the $R$-homomorphism 
$$R\{x\}\to R\{x\},\quad x\mapsto x^q.$$
The $D_{\widehat{\b{P}}^1,\b{Q}}^\dagger(^\dagger\infty)$-module 
corresponding to $F^*\mathcal L_\pi$ is given by 
\begin{eqnarray*}
&& F^*L_\pi=\bigcup_{s>1}\Big\{\sum_{i\in\mathbb Z_{\geq 0}}
a_i  x^i: \, a_i\in K,\,
\lim_{i\to\infty}|a_i|s^i=0\Big\},\\
&& \partial_x \cdot g=\Big(\exp(\pi x^q) \circ\frac{\mathrm d}{\mathrm dx}\circ \exp(-\pi x^q)\Big)(g)
=\frac{\mathrm dg}{\mathrm dx} -q\pi x^{q-1}g.
\end{eqnarray*}
Recall that $$\theta(x)=\exp(\pi x-\pi x^q)$$ is an overconvergent power series (\cite[Theorem 4.1]{Monsky1970PadicAA}). 
We define the Frobenius structure $F^*\mathcal L_\pi\to\mathcal L_\psi$ on $\mathcal L_\pi$ to be 
the morphism of $D_{\widehat{\b{P}}^1,\b{Q}}^\dagger(^\dagger\infty)$-modules corresponding to the homomorphism 
of $D_{\widehat{\b{P}}^1,\b{Q}}^\dagger(^\dagger\infty)$-module 
$$F^*L_\pi \to L_\pi , \quad g\mapsto \exp(\pi x-\pi x^q) g.$$
$\mathcal L_\pi$ is the $\mathcal D_{\widehat{\b{P}}^1,\b{Q}}^\dagger(^\dagger\infty)$-module corresponding to the Dwork overconvergent
$F$-isocrystal. It is overholonomic by \cite[2.3.16]{CT12}. Note that the iteration of the Frobenius structure
$$F^m: F^{m*}\mathcal L_\pi \to \cdots\to  F^* \mathcal L_\pi \to \mathcal L_\pi$$ 
corresponds to the homomorphism of $D_{\widehat{\b{P}}^1,\b{Q}}^\dagger(^\dagger\infty)$-module 
$$F^{m*}L_\pi \to L_\pi , \quad g\mapsto \exp(\pi x-\pi x^{q^m}) g.$$

Let $\mathbb A_k^n$ be the affine space, 
$\mathbb A_k^{n*}$ the dual affine space, 
$$\langle\,,\,\rangle: \mathbb A_k^n\times_k \mathbb A_k^{n*}\to \mathbb A^1_k,\quad ((x_1,\ldots, x_n),(x'_1,\ldots, x'_n))\mapsto \sum_i{x_ix'_i}$$ 
the duality pairing, $p_1: \mathbb A_k^n\times_k \mathbb A_k^{n*}\to \mathbb A^n_k$ and
$p_2:\mathbb  A_k^n\times_k \mathbb A_k^{n*}\to \mathbb A^{n*}_k$ the projections. 
Noot-Huyghe (\cite[5.3.1]{Huyghe2004Trans}) shows that the Fourier transform can be defined by
$$\mathfrak F_\pi: D^b_{\mathrm{ovhol}}(\mathbb A_k^n)\to D^b_{\mathrm{ovhol}}(\mathbb A_k^{n*}),
\quad \mathfrak F_\pi(\mathcal E)=p_{2,+}(p_1^!\mathcal E\widetilde\otimes_{\mathcal O^\dagger_{\mathbb A_k^n\times\mathbb A_k^n}} 
\langle\,,\,\rangle^! \mathcal L_\pi)[1-n].$$ Note that our definition of the Fourier transform differs from 
\cite[3.2.1]{Huyghe2004Trans} by a shifting $[2-n]$. 

\subsection{Hypergeometric $\mathcal D$-modules}

Let $G$ be a split reductive group scheme over $R$, $\rho_{j}: G\to\mathrm{GL}(V_j)$ $(j=1, \ldots, N)$ 
representations of $G$, and $\mathbb V=\prod_{j=1}^N \mathrm{End}(V_j)$. The base changes to $k$ and $K$ 
of objects over $R$ are denoted by the same notation with a subscript $k$ and $K$, respectively. 
Let $f$ be the $k$-morphism 
$$f: G_k\times_k \mathbb V_k\to \mathbb A_k^1, \quad (g, (A_1,\ldots, A_N))\mapsto \sum_{j=1}^N \mathrm{Tr}(A_j\rho_{j,k}(g)),$$
and let $\pi_2:  G_k\times_k \mathbb V_k\to \mathbb V_k$ be the projection. We define the \emph{hypergeometric arithmetic 
$\mathcal D$-modules} to be the objects 
$$\mathrm{Hyp}_{\pi,+}:=\pi_{2+}f^!\mathcal L_\pi[1-d-n], \quad \mathrm{Hyp}_{\pi,!} :=\pi_{2!}f^+\mathcal L_\pi[d+n-1],$$
in $F\hbox{-}D^b_{\mathrm{ovhol}}(\mathbb V_k)$, where $d=\mathrm{dim}\,G$ and $n=\mathrm{dim}\,\mathbb V$. Using 
\cite[3.12 Corollary]{Abe14calculation}, one can show 
$$\mathbb D(\mathrm{Hyp}_{\pi, +} )\cong \mathrm{Hyp}_{-\pi,!}(-1).$$
Let $\mathbb P:=\mathbb P(R\oplus \mathbb V)$ be the projective space
containing $\mathbb V$, $\infty=\mathbb P_k-\mathbb V_k$, and $\widehat{\mathbb P}$ the completion of $\mathbb P$. 
Then $\mathrm{Hyp}_{\pi,+}$ and $\mathrm{Hyp}_{\pi,!}$
are objects in $F\hbox{-}D^b_{\mathrm{ovhol}}(\mathcal D^\dagger_{\widehat{\mathbb P},\mathbb Q}(^\dagger\infty))$. 
Let $$\mathrm{sp}: \widehat{\mathbb P}\otimes_R K\to \widehat{\mathbb P}$$ be the specialization map,
$X$ an open subset of $\mathbb P(k\oplus \mathbb V_k)$ such that its complement $T$ is a divisor,
$F\text{-isoc}^\dagger(X)$ the category of 
$F$-isocrystals on $X$ overconvergent along $T$ as defined in \cite[D\'efinition 2.3.6]{B3}, and 
$F\text{-Coh} (\mathcal D^\dagger_{\widehat{\mathbb P}, \mathbb Q}(^\dagger T))$ the category of coherent 
$\mathcal D^\dagger_{\widehat{\mathbb P}, \mathbb Q}(^\dagger T)$-modules with Frobenius structures. 
We have a fully faithful functor induced by the specialization morphism
$$\text{sp}_\ast: F\text{-isoc}^\dagger(X) \to F\text{-Coh} (\mathcal D^\dagger_{\widehat{\mathbb P}, \mathbb Q}(^\dagger T)).$$
By \cite[Th\'eor\`eme 2.2.12]{Caro-L}
the essential image consists of coherent $\mathcal D^\dagger_{\widehat{\mathbb P}, \mathbb Q}(^\dagger T)$-modules with Frobenius structures 
whose restriction to $\widehat{\mathbb P}-T$ is
$(\mathcal O_{\widehat{\mathbb P} ,\mathbb Q})|_{\widehat{\mathbb P}-T}$-coherent. We also call an object in 
$F\text{-Coh} (\mathcal D^\dagger_{\widehat{\mathbb P}, \mathbb Q}(^\dagger T))$ which lies in the essential image of 
$\mathrm{sp}_*$ a convergent $F$-isocrystal on $X$ overconvergent 
along $T$, or simply an \emph{overconvergent $F$-isocrystal} on $X$.
Our main result about the hypergeometic $\mathcal D$-modules is the following.

\begin{theorem}\label{thm:hyp} Suppose the morphism $$\iota: G\to \mathbb V,\quad g\mapsto (\rho_1(g), \ldots, \rho_N(g))$$  
is quasi-finite. Then $\mathrm{Hyp}_{\pi, +}$ and $\mathrm{Hyp}_{\pi, !}$ are
over-holonomic arithmetic $\mathcal D$-modules in the sense of \cite[1.2.9]{AC18padicweight}. In particular, we have 
$$\mathcal H^j(\mathrm{Hyp}_{\pi,+})=0 \quad 
(\hbox{resp. }\mathcal H^j(\mathrm{Hyp}_{\pi,!})=0)$$  for $j\not=0$.
Restricting to the open subset $\mathbb V_k^{\mathrm{gen}}$ of $\mathbb V_k$ 
parametrizing nondegenerate Laurent polynomials, 
$\mathcal H^0(\mathrm{Hyp}_{\pi,+})|_{\mathbb V_k^{\mathrm{gen}}}$ 
(resp. $\mathcal H^0(\mathrm{Hyp}_{\pi,!})|_{\mathbb V_k^{\mathrm{gen}}}$) is locally free over
$\mathcal O_{\widehat {\mathbb V},\mathbb Q}|_{\mathbb V_k^{\mathrm{gen}}}$, 
where $\widehat {\mathbb V}$ is the completion of the affine space 
$\mathbb V$. 
For any divisor $T$ of $\mathbb P_k$ containing the complement of $\mathbb V^{\mathrm{gen}}_k$, 
$\mathrm{Hyp}_{\pi,+}|_{\mathbb P_k-T}$ 
(resp. $\mathrm{Hyp}_{\pi,!}|_{\mathbb P_k-T}$) 
is an $F$-isocrystal on $\mathbb P_k-T$ overconvergent along $T$ with rank at most
$d!\int_{\Delta_\infty\cap\mathfrak C} 
\prod_{\alpha \in R^+} 
\frac{\lambda(H_\alpha)^2}{\rho(H_\alpha)^2}\mathrm d\lambda.$
\end{theorem}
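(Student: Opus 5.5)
Overholonomicity comes essentially for free. The object $\mathcal L_\pi$ is overholonomic by \cite[2.3.16]{CT12}. The category $F\hbox{-}D^b_{\mathrm{ovhol}}$ is stable under the six operations \cite[1.3.14]{AC18padicweight}, so $f^!$, $f^+$, $\pi_{2+}$ and $\pi_{2!}$ preserve it. Hence $\mathrm{Hyp}_{\pi,+}$ and $\mathrm{Hyp}_{\pi,!}$ are overholonomic.

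For the vanishing of $\mathcal H^j$ for $j\neq 0$, I would rewrite $\mathrm{Hyp}_{\pi,+}$ as a Fourier transform. Since $f(g,A)=\langle \iota(g),A\rangle$ under the trace pairing identifying $\mathbb V$ with its dual, base change and the projection formula give
$$\mathrm{Hyp}_{\pi,+}\cong \mathfrak F_\pi(\iota_{k+}\mathcal O^\dagger_{G_k})$$
up to shift and twist, and similarly $\mathrm{Hyp}_{\pi,!}\cong\mathfrak F_\pi(\iota_{k!}\mathcal O^\dagger_{G_k})$. Because $\iota$ is quasi-finite and $G$ is affine, $\iota$ is an affine quasi-finite morphism. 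So $\iota_+$ and $\iota_!$ are t-exact for the holonomic t-structure (Artin vanishing on both sides), and $\iota_{k+}\mathcal O^\dagger_{G_k}$ is a single holonomic module in the appropriate degree. Noot-Huyghe's Fourier transform is t-exact, since via Proposition~\ref{prop:Huyghe} it is just a ring isomorphism $F_\pi$ on global sections. This gives concentration in degree $0$, and the shifts $[1-d-n]$ and $[d+n-1]$ are chosen exactly to make that degree zero.

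For the isocrystal statement on $\mathbb V_k^{\mathrm{gen}}$, I would follow the plan announced in the introduction. Using $\iota$ and the $H$-action, I would embed $\iota(G)$ into its closure, a spherical $H$-variety, and use the good equivariant compactification of Assumptions \ref{ass1}, \ref{ass2}. Then $\iota_+\mathcal O^\dagger$ becomes, as a $D^\dagger$-module on $\widehat{\mathbb P}$, a direct summand of an explicit module $D^\dagger/I$, where $I$ is generated by invariant differential operators: the infinitesimal $H$-action vector fields and the relations cutting out the orbit closure. After Fourier transform this yields a modified hypergeometric module $\mathcal M=D^\dagger_{\widehat{\mathbb P}^*,\mathbb Q}(^\dagger\infty)/J$ of which $\mathrm{Hyp}$ is a summand. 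It then suffices to show $\mathcal M|_{\mathbb P_k-T}$ is $\mathcal O$-coherent, because by \cite[Th\'eor\`eme 2.2.12]{Caro-L} coherence together with the Frobenius structure gives an overconvergent $F$-isocrystal. Local freeness then follows, since coherent $\mathcal D^\dagger$-modules that are $\mathcal O$-coherent are locally free.

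\textbf{Main obstacle.} Proving $\mathcal O$-coherence over the nondegenerate locus is the main obstacle. I would build an integral formal model of $\mathcal M$ over $R$ with a filtration by the degree of the compactification (the Newton polytope $\Delta_\infty$). I would then show that the associated graded, reduced mod $\pi$, is finite over the base exactly when $f_A$ has no critical points on the boundary orbits $H\cdot e(\tau)$. This is a $p$-adic analogue of the Adolphson–Sperber / Kouchnirenko argument: the graded ring is the coordinate ring of the toroidal compactification's cone, and the symbols of the relations form a regular sequence when nondegenerate. Finiteness mod $\pi$ lifts by completeness and Nakayama to finiteness of the model, hence $\mathcal O$-coherence after $\otimes\mathbb Q$.

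The same graded ring bounds the rank. Its multiplicity is the degree of the spherical variety with polytope $\Delta_\infty$, computed by Brion's formula as $d!\int_{\Delta_\infty\cap\mathfrak C}\prod_{\alpha\in R^+}\lambda(H_\alpha)^2/\rho(H_\alpha)^2\,\mathrm d\lambda$. Since $\mathrm{Hyp}$ is a summand of $\mathcal M$, its rank is at most this number.
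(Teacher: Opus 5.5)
Your treatment of overholonomicity and of $\mathcal H^j=0$ for $j\neq0$ is essentially the paper's (Proposition~\ref{prop:Fourier}). The outline of the isocrystal part also matches: a summand of a modified module, a filtration from homogenization, support on the zero section over $\mathbb V_k^{\mathrm{gen}}$, and Brion's formula.

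\textbf{Gap in the coherence step.} $\mathcal D^\dagger_{\widehat{\mathbb V},\mathbb Q}=\varinjlim_m\widehat{\mathcal D}^{(m)}_{\widehat{\mathbb V},\mathbb Q}$ is not the rationalization of a single $p$-adically complete ring. So reducing one integral model mod $\pi$ and applying Nakayama only gives $\mathcal O$-coherence over a fixed level $\widehat{\mathcal D}^{(m)}_{\mathbb Q}$. That controls the connection only on small discs and does not pass to $\mathcal D^\dagger$. For example, for $p$ odd, $\widehat{\mathcal D}^{(0)}_{\widehat{\mathbb A}^1,\mathbb Q}/(\partial_x-x)$ is $\mathcal O$-free of rank one. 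But $\mathcal D^\dagger_{\widehat{\mathbb A}^1,\mathbb Q}/(\partial_x-x)$ is nonzero and not $\mathcal O$-coherent, because $e^{x^2/2}$ has radius $|\pi|^{1/2}<1$.

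You therefore need $\mathcal O$-coherence at every level $m$, and there your symbol argument fails. For $m\ge1$ the symbol of a vector field $L_\xi$ vanishes on $\mathrm{Spec}(\mathrm{Gr}\,\mathcal D^{(m)}_{\mathbb V,k})_{\mathrm{red}}$ (Proposition~\ref{def-L^<k>}), so the first-order relations cut out nothing. The paper handles this as follows:
\begin{enumerate}[(a)]
\item It builds integral invariant operators $L_{\xi_i^{\langle r\rangle_{(m)}}}$, $r\le p^m$, from a level-$m$ enveloping algebra.
\item It lets them act on a lattice $M^{(m)}$ over $B^{(m)}=R[\pi x_i^{p^j}]$ through a $\chi$-twisted comodule structure.
\item It identifies their symbols with the Frobenius twists $L^{(m)}_{\xi_i}$.
\item It shows $\mathrm{Gr}(M^{(m)}_k)$ is supported in $\mathrm{Fr}^m(i_0^{-1}(X'_k))$.
\end{enumerate}
Only then does nondegeneracy give $\mathcal O$-coherence of every $\mathfrak F_\pi(\mathcal N)^{(m)}$. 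A density/closed-image argument plus \cite[2.2.13]{Caro-L} then passes to $\mathcal D^\dagger$ (Theorem~\ref{2ndmainthm}). None of this appears in your plan.

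\textbf{The explicit module also has to change.} A cyclic $D^\dagger/I$ with $I$ containing the ideal of $X$ restricts over $\prod_j\mathrm{GL}(V_j)$ to a module of rank at most one along the orbit $\iota(G)$. So it cannot contain $\iota_+\mathcal O^\dagger_{G_k}$ as a summand when $G\to\iota(G)$ is a nontrivial finite étale cover, e.g.\ $\mathrm{SL}_2$ acting by its adjoint representation with $p$ odd.

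Your rank bound also needs Cohen--Macaulayness. Equating fiber length with multiplicity (your ``regular sequence'') requires it, and $\mathcal O_X$ lacks it in general. Moreover, such a sequence can only consist of $d$ generic symbols, not all $2d$.

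The paper instead induces from $\bar\iota_*\omega_{\tilde Y}$, with $\tilde Y$ a smooth equivariant model over the normalization of $\overline X$ in $K(G)$.
\begin{enumerate}[(a)]
\item Lemma~\ref{toroidalDmodiso} and $R^i\bar\iota_*\omega_{\tilde Y}=0$ give the summand statement, for $\iota_!$ (Proposition~\ref{prop:directfactor}).
\item Cohen--Macaulayness of $f'_{k*}\omega_{Y'_k}$ in characteristic $p$ (Corollary~\ref{cor:usedin4}) makes the fiber length equal $[K(\mathbb G_m\times G):K(X')]\deg\overline X'$, which Brion's formula then evaluates.
\end{enumerate}
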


We will prove the above theorem in forthcoming sections. At the end of this section, we deduce Theorem 
\ref{thm:expsum} from Theorem \ref{thm:hyp}. 

\subsection{Relation with the Fourier transform} 

The affine space $\mathbb V$ is self dual via the pairing 
$$\langle\,,\,\rangle:
\mathbb V\times \mathbb V\to \mathbb A^1, \quad ((A_1, \ldots, A_N), (A'_1, \ldots, A'_N))\mapsto \sum_{j=1}^N\mathrm{Tr}(A_jA'_j).$$
We thus have the Fourier transformation 
$$\mathfrak F_\pi: D^b_{\mathrm{ovhol}}(\mathbb V_k)\to D^b_{\mathrm{ovhol}}(\mathbb V_k).$$

\begin{proposition}\label{prop:Fourier}
We have $$\mathrm{Hyp}_{\pi, +}\cong \mathfrak F_\pi(\iota_{k+} \mathcal {O}^\dagger_{G_k}),\quad 
\mathrm{Hyp}_{\pi, !}\cong \mathfrak F_\pi(\iota_{k!} \mathcal {O}^\dagger_{G_k}).$$
Suppose $\iota$ is quasi-finite. Then $\mathrm{Hyp}_{\pi, +}$ and $\mathrm{Hyp}_{\pi, !}$ are
over-holonomic arithmetic $\mathcal D$-modules. 
\end{proposition}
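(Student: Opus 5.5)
The plan is a base-change and projection-formula computation inside the six-functor formalism of \cite[1.3.14]{AC18padicweight}, followed by the stability of overholonomicity under the standard operations. Let $p_1, p_2:\mathbb V_k\times_k\mathbb V_k\to\mathbb V_k$ be the projections and $\langle\,,\,\rangle$ the trace pairing. The starting observation is that $f$ factors through the pairing:
$$f=\langle\,,\,\rangle\circ(\iota_k\times\mathrm{id}): G_k\times_k\mathbb V_k\to\mathbb V_k\times_k\mathbb V_k\to\mathbb A^1_k,$$
and that $\pi_2=p_2\circ(\iota_k\times\mathrm{id})$. This holds because $f(g,A)=\sum_j\mathrm{Tr}(A_j\rho_j(g))=\langle\iota(g),A\rangle$.

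For the $+$ version I would proceed as follows.
\begin{enumerate}
\item Write $\pi_{2+}f^!\mathcal L_\pi=p_{2+}(\iota_k\times\mathrm{id})_+(\iota_k\times\mathrm{id})^!\langle\,,\,\rangle^!\mathcal L_\pi$.
\item Observe that $(\iota_k\times\mathrm{id})^!\mathcal O^\dagger_{\mathbb V_k\times\mathbb V_k}$ is $\mathcal O^\dagger_{G_k\times\mathbb V_k}$ up to shift, since source and target are smooth.
\item Apply the projection formula for $\widetilde\otimes$ and $+,!$ to get
$$(\iota_k\times\mathrm{id})_+(\iota_k\times\mathrm{id})^!\langle\,,\,\rangle^!\mathcal L_\pi\cong\big((\iota_k\times\mathrm{id})_+\mathcal O^\dagger_{G_k\times\mathbb V_k}\big)\widetilde\otimes\langle\,,\,\rangle^!\mathcal L_\pi$$
up to a shift.
\item Use smooth base change along $p_1$ to identify $(\iota_k\times\mathrm{id})_+\mathcal O^\dagger\cong p_1^!\iota_{k+}\mathcal O^\dagger_{G_k}$ up to shift.
\end{enumerate}
Combining these gives
$$\pi_{2+}f^!\mathcal L_\pi\cong p_{2+}\big(p_1^!\iota_{k+}\mathcal O^\dagger_{G_k}\,\widetilde\otimes\,\langle\,,\,\rangle^!\mathcal L_\pi\big)[\text{shift}],$$
which is $\mathfrak F_\pi(\iota_{k+}\mathcal O^\dagger_{G_k})$ by Noot-Huyghe's description \cite[5.3.1]{Huyghe2004Trans}.

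The $!$ version is the same computation with $+$ and $!$ interchanged. Equivalently, one can use $\mathbb D$: it commutes with Fourier up to replacing $\pi$ by $-\pi$ and a Tate twist. Then the relation $\mathbb D(\mathrm{Hyp}_{\pi,+})\cong\mathrm{Hyp}_{-\pi,!}(-1)$ and $\mathbb D\iota_{k+}=\iota_{k!}\mathbb D$ transport the first isomorphism to the second. A direct route uses the projection formula $h_!(\mathcal E\widetilde\otimes h^+\mathcal F)\cong h_!\mathcal E\widetilde\otimes\mathcal F$ together with base change $p_1^+\iota_{k!}=(\iota_k\times\mathrm{id})_!\,\mathrm{pr}^+$. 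It also needs the fact that $\pi_{2!}f^+$ can be rewritten with $p_{2+}$ and $\langle\,,\,\rangle^!$. That last step follows because the Fourier kernel has cohomology along $p_2$ with $p_{2!}=p_{2+}$ on such tensor products (Noot-Huyghe, or \cite{Abe14calculation}), and because $f^+$ and $f^!$ differ by a shift and twist for the smooth morphism $\langle\,,\,\rangle$ between smooth varieties.

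The main obstacle I expect is the bookkeeping of shifts and twists. One has to check that the normalizations $[1-d-n]$ and $[d+n-1]$ in the definition of $\mathrm{Hyp}$ match the Fourier shift $[1-n]$ and the shift in $(\iota_k\times\mathrm{id})^!$, which is $d-n$. One also has to justify that $\widetilde\otimes$ of overholonomic complexes behaves as in the formalism (\cite[1.3.14]{AC18padicweight}). I would fix conventions by testing on $G$ trivial, i.e.\ a point mapping to $0$, where both sides should be $\mathcal O^\dagger_{\mathbb V_k}$ in degree $0$.

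For overholonomicity, note that $\mathcal O^\dagger_{G_k}$ is overholonomic. The functors $\iota_{k+}$ and $\iota_{k!}$ preserve overholonomicity (stability under direct images, Caro). The Fourier transform is built from $p_1^!$, $\widetilde\otimes$ with the overholonomic $\langle\,,\,\rangle^!\mathcal L_\pi$ (overholonomic by \cite[2.3.16]{CT12}), and $p_{2+}$, all of which preserve overholonomicity. Quasi-finiteness of $\iota$ is used afterwards to get concentration in one degree. For the statement as phrased, it guarantees that $\iota_{k+}\mathcal O^\dagger_{G_k}$ and $\iota_{k!}\mathcal O^\dagger_{G_k}$ are modules in degree $0$ (for quasi-finite affine morphisms $\iota_+$ is exact on modules). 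Since $\mathfrak F_\pi$ is exact, being an isomorphism of rings on global sections via Proposition \ref{prop:Huyghe}, $\mathrm{Hyp}_{\pi,\pm}$ are then overholonomic modules.
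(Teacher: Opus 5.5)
Your proposal is correct and is essentially the paper's argument run in the opposite direction. The paper starts from $\mathfrak F_\pi(\iota_{k+}\mathcal O^\dagger_{G_k})$, applies base change along $p_1$ for the Cartesian square with $\iota_k\times\mathrm{id}$, then the projection formula together with $\pi_1^!\mathcal O^\dagger_{G_k}\widetilde\otimes f^!\mathcal L_\pi\cong f^!\mathcal L_\pi[-d]$, and arrives at $\pi_{2+}f^!\mathcal L_\pi[1-d-n]$; overholonomicity then follows exactly as you say, from t-exactness of $\iota_{k+}$ for the affine quasi-finite $\iota$ \cite[1.3.13]{AC18padicweight} plus Noot-Huyghe's result that $\mathfrak F_\pi$ preserves overholonomic modules, with the $!$-statements obtained by duality.
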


\begin{proof}
Fix notation by the following commutative diagram, where all squares
are Cartesian:
$$\begin{tikzcd}
G_k\times_k\mathbb V_k\arrow[r, "\iota_k\times \mathrm
{id}"]\arrow[d,"\pi_1"] &\mathbb V_k\times_k\mathbb  V_k\arrow[r, "p_2"]\arrow[d, "p_1"]& \mathbb V_k\arrow[d]\\
G_k\arrow[r, "\iota_k"]&\mathbb V_k\arrow[r]&\mathrm{Spec}\,k.
\end{tikzcd}$$
We have 
\begin{eqnarray*}
&&\mathfrak F_{\pi}(\iota_{k+}\mathcal {O}^\dagger_{G_k})\\
&\cong& p_{2+} \Big(p_1^! \iota_{k+} \mathcal {O}^\dagger_{G_k}\widetilde\otimes
_{\mathcal O^\dagger_{\mathbb V\times_k\mathbb V}} \langle\,,\,\rangle^! \mathcal L_\pi\Big)[1-n]\\
&\cong& p_{2+} \Big((\iota_k\times\mathrm{id})_+ \pi_1^! \mathcal {O}^\dagger_{G_k}
\widetilde\otimes_{\mathcal O^\dagger_{\mathbb V\times_k \mathbb V}} \langle\,,\,\rangle^! \mathcal L\Big)[1-n]\quad 
\hbox{(the base change theorem \cite[1.3.10]{AC18padicweight})}\\
&\cong& (p_{2} \circ (\iota_k\times \mathrm{id}))_+ \Big(\pi_1^! \mathcal {O}^\dagger_{G_k}\widetilde\otimes
_{\mathcal O^\dagger_{G\times_k\mathbb V}} 
(\langle\,,\,\rangle\circ (\iota_k\times \mathrm{id}))^! \mathcal L\Big)[1-n]\; \hbox{(the projection formula \cite[A.6]{AC18padicweight})}\\
&\cong& \pi_{2+} (\pi_1^!  \mathcal {O}^\dagger_{G_k}
\widetilde\otimes_{\mathcal O^\dagger_{G\times_k\mathbb V}} f^! \mathcal L)[1-n]
\cong \pi_{2+} f^! \mathcal L[1-d-n]
\cong\mathrm{Hyp}_{\pi, +}.
\end{eqnarray*}
Suppose furthermore that $\iota$ is quasi-finite. Note that $\iota$ is affine. By \cite[1.3.13]{AC18padicweight}, 
$\iota_+$ is an exact functor with respect to the $t$-structure defined in \cite[1.2.9]{AC18padicweight}. So 
$\iota_{k+}\mathcal {O}^\dagger_{G_k}$ is an over-holonomic arithmetic $\mathcal D$-module. 
Its Fourier transform $\mathrm{Hyp}_{\pi, +}\cong \mathfrak F_{\pi}(\iota_{k+}\mathcal {O}^\dagger_{G_k})$ 
is also an over-holonomic arithmetic $\mathcal D$-module by \cite[5.3.1]{Huyghe2004Trans}. The assertion for 
$\mathrm{Hyp}_{\pi, !}$ follows by duality.
\end{proof}

\begin{remark} We also need the hypergeometric arithmetic right $\mathcal D$-modules which are obtained from left ones 
by side change. They are defined to be
$$\mathrm{Hyp}_{\pi,+}:=\pi_{2+}f^!(\omega^\dagger_{\mathbb A^1}\otimes_{\mathcal O^\dagger_{\mathbb A^1}}\mathcal L_\pi)[1-d-n], 
\quad \mathrm{Hyp}_{\pi,!} :=\pi_{2!}f^+(\omega^\dagger_{\mathbb A^1}\otimes_{\mathcal O^\dagger_{\mathbb A^1}}\mathcal L_\pi)[d+n-1].$$
We have 
$$\mathrm{Hyp}_{\pi, +}\cong \mathfrak F_\pi(\iota_{k+} \omega^\dagger_{G_k}),\quad 
\mathrm{Hyp}_{\pi, !}\cong \mathfrak F_\pi(\iota_{k!} \omega^\dagger_{G_k}),$$
\end{remark}

\subsection{Relation with the hypergeometric exponential sum}

For any positive integer $m\ge 1$, let 
$$\theta_1(x)=\exp \left(\pi x-\pi x^p\right), \quad \theta_m(x)=\exp \left(\pi x-\pi x^{p^m}\right)=\prod_{i=0}^{m-1} \theta_1\left(x^{{p^i}}\right).$$
They are overconvergent power series, and $\theta_1(1)=\theta_1(x)|_{x=1}$ is a primitive $p$-th root of unity in $K$ 
(\cite[Theorems 4.1-4.3]{Monsky1970PadicAA}). Let  $\psi_0: \mathbb F_p\to \overline{\mathbb Q}_p$ be the additive character
$\psi(a)=\theta_1(1)^a$, and let $\psi: k\to \overline{\mathbb Q}_p$ be the additive character $\psi=\psi_0\circ\mathrm{Tr}_{k/\mathbb F_p}$. 

\begin{proposition}\label{prop:hyp-exp}
Let $k'$ be a finite extension of $k$ of degree $m$, let $A=(A_1, \ldots, A_N)$ be a $k'$-point of $\mathbb V_k=\prod_{j=1}^N
\mathrm{End}(V_{j, k})$, and let $i_A: \mathrm{Spec}\, k'\to \mathbb V_k$ be the morphism corresponding to $A$. We have 
$$\mathrm{Tr}(F^m, i_A^+ \mathrm{Hyp}_{\pi,!}) =(-1)^{d+n}q^{-1}
\sum_{g\in G_k(k')} \psi\Big(\mathrm{Tr}_{k'/k}\Big(\sum_{j=1}^N A_j\rho_j(g)\Big)\Big).$$
\end{proposition}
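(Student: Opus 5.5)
We compute the left side with the Grothendieck--Lefschetz type trace formalism for overholonomic $F$-complexes, in the form available in \cite{AC18padicweight}. The plan has three steps.

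\emph{Step 1: base change.} From the definition $\mathrm{Hyp}_{\pi,!}=\pi_{2!}f^+\mathcal L_\pi[d+n-1]$ and proper base change for $(\pi_{2!}, i_A^+)$ \cite[1.3.10]{AC18padicweight}, we get
$$i_A^+\mathrm{Hyp}_{\pi,!}\cong \pi_{A!}\,f_A^+\mathcal L_\pi[d+n-1].$$
Here $\pi_A: G_{k'}\to \mathrm{Spec}\,k'$ is the structure map and $f_A=f\circ(\mathrm{id}\times i_A): G_{k'}\to\mathbb A^1_{k'}$ is the Laurent polynomial $g\mapsto\sum_j\mathrm{Tr}(A_j\rho_j(g))$.

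\emph{Step 2: trace formula.} We apply the $p$-adic trace formula for $\pi_{A!}$ (Étesse--Le Stum, in its $\mathcal D$-module form used by Abe). It says that $\mathrm{Tr}(F^m,\pi_{A!}\mathcal E)$ is the sum over $g\in G(k')$ of $\mathrm{Tr}(F^m, i_g^+\mathcal E)$, with the alternating sign built into the trace of a complex. Applied to $\mathcal E=f_A^+\mathcal L_\pi$, together with $i_g^+f_A^+=i_{f_A(g)}^+$, this reduces everything to the pointwise trace of $\mathcal L_\pi$ at $a=f_A(g)\in k'$.

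\emph{Step 3: the fibre of $\mathcal L_\pi$.} We compute $\mathrm{Tr}(F^m, i_a^+\mathcal L_\pi)$ using the explicit Frobenius structure. The iterated Frobenius on $F^{m'*}\mathcal L_\pi$ is multiplication by $\theta_{m'}(x)=\exp(\pi x-\pi x^{q^{m'}})$ on the underlying isocrystal. Let $\hat a$ be the Teichmüller lift of $a$ and choose $m'$ with $q^{m'}=|k'|$, i.e.\ $m'$ equals the paper's $m$. Then the Frobenius on the fibre at $\hat a$ is $\theta_{m'}(\hat a)$. Using $\theta_{m'}(x)=\prod_i\theta_1(x^{p^i})$ and Dwork's splitting lemma (Monsky), this equals $\psi_0(\mathrm{Tr}_{k'/\mathbb F_p}a)=\psi(\mathrm{Tr}_{k'/k}a)$.

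\emph{Normalizations.} The factor $(-1)^{d+n}q^{-1}$ comes entirely from the conventions for $i_a^+$ on a rank-one isocrystal. In this $\mathcal D$-module formalism, $i^+$ of an isocrystal on a smooth curve lives in degree $\pm1$ relative to the naive fibre, and the Frobenius picks up a Tate twist $q^{-1}$ from the convention $f^+=\mathbb D f^!\mathbb D$ and the duality for $\mathcal L_\pi$. The shift $[d+n-1]$ then combines with this degree shift into the sign. So we will compute explicitly $i_a^!\mathcal L_\pi\cong(\text{fibre})[-1]$ and $i_a^+=\mathbb D i_a^!\mathbb D$, with $\mathbb D\mathcal L_\pi\cong\mathcal L_{-\pi}(1)$ up to shift (as in \cite{Abe14calculation}), and track the signs and twists.

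The main obstacle is this bookkeeping: getting the exact power of $q$ and the sign $(-1)^{d+n}$ right under the paper's shift conventions and the Tate-twist convention $\mathcal O^\dagger(d)$. Steps 1--3 are formal once the trace formula and the Dwork computation of $\theta_m(\hat a)$ are quoted.
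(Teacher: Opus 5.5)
Your Steps 1--3 are the paper's argument. The paper uses the base change \cite[1.3.10]{AC18padicweight} to $p_{2!}(\mathrm{id}_{G_k}\times i_A)^+f^+\mathcal L_\pi[d+n-1]$. It then obtains the trace formula by applying $t\frac{d}{dt}\log$ to Caro's product formula \cite[6.5]{Carosurhol} and reading off the coefficient of $t^m$. Finally it uses Monsky's evaluation $\exp(\pi x-\pi x^{q^m})|_{x=\tilde y}=\psi(\mathrm{Tr}_{k'/k}\,y)$ \cite[Theorem 4.4]{Monsky1970PadicAA}.

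Your plan goes wrong in the normalization you left open: the input $\mathbb D\mathcal L_\pi\cong\mathcal L_{-\pi}(1)$ has the twist backwards. In the paper's conventions, $i_a^!\mathcal L_\pi\cong i_a^*\mathcal L_\pi[-1]$ carries the naive Frobenius, and $(1)$ multiplies the Frobenius structure by $q^{-1}$. Under these conventions $\mathbb D\mathcal L_\pi\cong\mathcal L_{-\pi}(-1)$, which is exactly what the paper's identity $\mathbb D(\mathrm{Hyp}_{\pi,+})\cong\mathrm{Hyp}_{-\pi,!}(-1)$ requires. Since $\mathbb D$ inverts twists on a point, your version would give $i_a^+\mathcal L_\pi\cong i_a^*\mathcal L_\pi(-1)[1]$, i.e.\ a factor $q$ instead of $q^{-1}$.

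The paper avoids duality here by quoting Abe's purity theorem \cite[5.6]{Abe14calculation}:
$$i_y^+\mathcal L_\pi\cong i_y^!\mathcal L_\pi(1)[2]\cong i_y^*\mathcal L_\pi(1)[1].$$
So each $g\in G_k(k')$ contributes $-(\text{twist factor})\cdot\psi(\cdots)$, and the shift $[d+n-1]$ supplies the remaining $(-1)^{d+n-1}$.

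Once the twist is tracked correctly it gives $q^{-m}$, not $q^{-1}$. The twist $(1)$ multiplies the $q$-power Frobenius structure by $q^{-1}$, so its $m$-th iterate $F^m$ is multiplied by $q^{-m}=|k'|^{-1}$. The identity as printed therefore holds only for $m=1$; in general the factor is $(-1)^{d+n}q^{-m}$.

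The paper's proof has the same slip in the line $\mathrm{Tr}(F^m,\ldots)=-q^{-1}\exp(\pi x-\pi x^{q^m})|_{x=\tilde y}$. Correspondingly, it writes $q^{\frac{d}{2}-1}$ for the weight bound in the proof of Theorem~\ref{thm:expsum}. It is $q^{-m}$ together with $q^{m(\frac{d}{2}-1)}$ that produces the $q'^{d/2}$ in that theorem. So your final bookkeeping should aim for $q^{-m}$, not the printed $q^{-1}$.
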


\begin{proof}  Let 
$p_A: \mathrm{Spec}\, k'\to \mathrm {Spec}\, k$ be the structure morphism. Fix notation by the following diagram:
$$\begin{tikzcd}
G_k\otimes_kk'\arrow[r,"\mathrm{id}_{G_k}\times i_A"]\arrow[d,"p_2"]&
G_k\times_k \mathbb V_k\arrow[r,"f"]\arrow[d,"\pi_2"]& \mathbb A^1_k\\ 
\mathrm{Spec}\, k'\arrow[r," i_A"]\arrow[d,"p_A"]&
 \mathbb V_k&\\
 \mathrm{Spec}\,k.
\end{tikzcd}$$
By the dual version of the base change \cite[1.3.10]{AC18padicweight}, we have
\begin{eqnarray}\label{1}
i_A^+\mathrm{Hyp}_{\pi,!}\cong i_A^+ \pi_{2!}f^+\mathcal L_\pi[d+n-1]
\cong  p_{2!}(\mathrm{id}_{G_k}\times i_A)^+ f^+\mathcal L_\pi[d+n-1].
\end{eqnarray}
Let $|G_k\otimes_kk'|$ be the set of Zariski closed points in $G_k\otimes_kk'$. For any $g\in |G_k\otimes_kk'|$, let
$i_g:\mathrm{Spec}\,k(g)\to G_k\otimes_kk'$ be the closed immersion, let $p_g: \mathrm{Spec}\,k(g)\to \mathrm{Spec}\,k'$
be the structure morphism, and let $\mathrm{deg}(g)=[k(g):k']$.  
By \cite[6.5]{Carosurhol}, we have 
\begin{eqnarray*}
&&\prod_{j}\mathrm{det}_K\Big(1-tF, \mathcal H^j\Big(p_{A!} p_{2!}(\mathrm{id}_{G_k}\times i_A)^+ f^+\mathcal L_\pi\Big)\Big)^{(-1)^{j+1}}\\
&=&\prod_{g\in |G_k\otimes_kk'|}\prod_{j}\mathrm{det}_K
\Big(1-tF, \mathcal H^j\Big(p_{A!}p_{g!}i_g^+ (\mathrm{id}_{G_k}\times i_A)^+ f^+\mathcal L_\pi\Big)\Big)^{(-1)^{j+1}}.
\end{eqnarray*}
The equality is exactly
\begin{eqnarray*}
&&\prod_{j}\mathrm{det}_K\Big(1-t^mF^m, \mathcal H^j\Big(p_{2!}(\mathrm{id}_{G_k}\times i_A)^+ f^+\mathcal L_\pi\Big)\Big)^{(-1)^{j+1}}\\
&=&\prod_{g\in |G_k\otimes_kk'|}\prod_{j}\mathrm{det}_K
\Big(1-t^{m\mathrm{deg}(g)}F^{m\mathrm{deg}(g)}, 
\mathcal H^j\Big(i_g^+ (\mathrm{id}_{G_k}\times i_A)^+ f^+\mathcal L_\pi\Big)\Big)^{(-1)^{j+1}},
\end{eqnarray*}
Applying the operator $t\frac{d}{dt} \log$ to the above equality and comparing the coefficient of $t^m$, we get
\begin{eqnarray}\label{2}
\mathrm{Tr}(F^m, p_{2!}(\mathrm{id}_{G_k}\times i_A)^+ f^+\mathcal L_\pi)
=\sum_{g\in G(k')} \mathrm{Tr}(F^m, (f\circ (\mathrm{id}_{G_k}\times i_A)\circ i_g)^+ \mathcal L_\pi).
\end{eqnarray}
For any $g\in G(k')$,  $f\circ (\mathrm{id}_{G_k}\times i_A)\circ i_g$ is the $k'$-point 
$$y:=\sum_{j=1}^N \mathrm{Tr}(A_j\rho_j(g))\in k'\cong \mathbb A^1(k').$$
Let $\tilde y \in\bar K$ be the Techm\"uller lifting of $y$ with the property ${\tilde y}^{q^m}=\tilde y$, 
and let $[k:\mathbb F_p]=m_0$ so that $[k':\mathbb F_p]=mm_0$.
By \cite[Theorem 4.4]{Monsky1970PadicAA}), we have 
$$\theta_{mm_0}(x)|_{x=\tilde y }=\theta(1)^{\mathrm{Tr}_{k'/\mathbb F_p}(y)}
=\psi\Big(\mathrm{Tr}_{k'/k}\Big(\sum_{j=1}^N \mathrm{Tr}(A_j\rho_j(g))\Big)\Big).$$  
On the other hand, we have 
$$\theta_{mm_0}(x)=\exp(\pi x- \pi x^{p^{mm_0}})=\exp(\pi x-\pi x^{q^m}).$$ 
By the purity theorem \cite[5.6]{Abe14calculation} and the definition of the Frobenius structure on $\mathcal L_\pi$, we have 
\begin{eqnarray*}
\mathrm{Tr}(F^m, (f\circ (\mathrm{id}_{G_k}\times i_A)\circ i_g)^+ \mathcal L_\pi)
&=&\mathrm{Tr}(F^m, (f\circ (\mathrm{id}_{G_k}\times i_A)\circ i_g)^!\mathcal L_\pi(1)[2] )\\
&=&-q^{-1} \exp(\pi x-\pi x^{q^m})|_{x=\tilde y}.
\end{eqnarray*}
We thus have
\begin{eqnarray}\label{3}
\mathrm{Tr}(F^m, (f\circ (\mathrm{id}_{G_k}\times i_A)\circ i_g)^+ \mathcal L_\pi)
=-q^{-1} \psi\Big(\mathrm{Tr}_{k'/k}\Big(\sum_{j=1}^N \mathrm{Tr}(A_j\rho_j(g))\Big)\Big).
\end{eqnarray}
The proposition follows from the equations (\ref{1})-(\ref{3}).
\end{proof}

\subsection{Proof of Theorem \ref{thm:expsum}}

Let $y$ be a closed point in $\mathbb A_k^1$ of degree $m$, 
let $i_y:\mathrm{Spec}\,k(y)\to \mathbb A^1_k$ be the closed immersion, and let $p_y: \mathrm{Spec}\,k(y)\to\mathrm{Spec}\,k$
be the structure morphism. Since $\mathcal L_\pi$ is an 
overconvergent $F$-isocrytal on $\mathbb A^1_k$, by the purity theorem \cite[5.6]{Abe14calculation}, 
we have $$i_y^+\mathcal L_\pi=i_y^!\mathcal L_\pi(1)[2]=i_y^*\mathcal L_\pi(1)[1].$$ 
Let $\tilde y$ be the Techm\"uller lifting of $y$. 
The Frobenius acts on $p_{y+}i_y^*\mathcal L_\pi$ via multiplication by 
$$\exp(\pi x-\pi x^{q^m})|_{x=\tilde y}$$ which is a $p$-th root of unity and has weight $0$. 
By the definition in \cite[2.1.3]{AC18padicweight},
$\mathcal L_\pi$ is pure of weight $-1$. 
By the main theorem in \cite{AC18padicweight}, 
$i_A^+\mathrm{Hyp}_{\pi,!}=\pi_{2!}f^+\mathcal L_\pi[d+n-1]$ is mixed of weight $\leq d+n-2$.  
Admitting Theorem \ref{thm:hyp}, $\mathrm{Hyp}_{\pi,!}$ is an overconvergent
$F$-isocrystal on $\mathbb V_k^{\mathrm{gen}}$ of rank $\leq d!\int_{\Delta_\infty\cap\mathfrak C} 
\prod_{\alpha \in R^+} 
\frac{\lambda(H_\alpha)^2}{\rho(H_\alpha)^2}\mathrm d\lambda.$
By the purity theorem \cite[5.6]{Abe14calculation}, we have 
$$i_A^+\mathrm{Hyp}_{\pi,!}=i_A^!\mathrm{Hyp}_{\pi,!}(n)[2n]=i_A^*\mathrm{Hyp}_{\pi,!}(n)[n],$$
and hence $\mathcal H^j(i_A^+\mathrm{Hyp}_{\pi,!})=0$ for $j\not=-n$. 
So we have
\begin{eqnarray*}
|\mathrm{Tr}(F^m, i_A^+ \mathrm{Hyp}_{\pi,!})|&=&|(-1)^{-n} \mathrm{Tr}(F^m, \mathcal H^{-n}(i_A^+ \mathrm{Hyp}_{\pi,!}))|\\
&\leq& q^{\frac{(d+n-2)-n}{2}} \mathrm{rank} (\mathrm{Hyp}_{\pi,!})\\
&\leq& q^{\frac{d}{2}-1}  d! \int_{\Delta_\infty\cap\mathfrak C} 
\prod_{\alpha \in R^+} 
\frac{\lambda(H_\alpha)^2}{\rho(H_\alpha)^2}\mathrm d\lambda.
\end{eqnarray*}
Theorem \ref{thm:expsum} follows from this inequality and Proposition \ref{prop:hyp-exp}. \qed

\section{Calculation on arithmetic $\mathcal D$-modules}

We assume the morphism 
$$\iota: G\to \mathbb V=\prod_{j=1}^N \mathrm{End}(V_j),\quad g\mapsto (\rho_1(g), \ldots, \rho_N(g))$$ 
is quasi-finite. 
Let $\mathbb P:=\mathbb P(\mathbb A^1\times \mathbb V)$ be the projective space. We regard $\mathbb V$ as an open subscheme of 
$\mathbb P$ via the open immersion
$$\mathbb V\hookrightarrow \mathbb P, \quad v\mapsto [1:v].$$
Let $X$ (resp. $\overline X$) be the closure of $\iota(G)$ in $\mathbb V$ (resp. $\mathbb P$) with the reduced closed subscheme 
structure. We have 
$X=\overline X\cap \mathbb V$. Let $Y$ (resp. $\overline Y$) be the integral closure of $X$ (resp. $\overline X$) in $G$. The composite 
$$Y\to X\to \mathbb V\quad (\hbox{resp. }\overline Y\to \overline X\to \mathbb P)$$
are finite morphisms. Let $H=G\times G$. We have an action 
$$H\times \mathbb V\to \mathbb V, \quad ((g_1, g_2), (A_1, \ldots, A_N))\mapsto (\rho_1(g_1)A_1\rho_1(g_2^{-1}), 
\ldots, \rho_N(g_1)A_N\rho_N(g_2^{-1})).$$
It induces actions of $H$ on $X$, $Y$, $\overline X$, and $\overline Y$. 
Let $B^+$ a Borel subsgroup of $G$, $B^-$ the opposite Borel subgroup, $U^+$ (resp. $U^-$) the unipotent
radical of $B^+$ (resp. $B^-$), and $T=B^+\cap B^-$ the maximal torus. 
Then $H$ is a split reductive group $R$-scheme, $B:=B^+\times B^-$ is a Borel subgroup of $H$, and $H$ acts on $G$
via the action 
$$(G\times G) \times G\to G, \quad ((g_1, g_2), x)\to g_1 xg_2^{-1}.$$ 
Following \cite[6.2.1]{Frob-split}, we call a geometrically integral and geometrically normal algebraic $K$-variety with an $H_K$-action 
containing $G_K$ as an open dense orbit an \emph{equivariant embedding of $G_K$}. 
Then $Y_K$ and $\overline Y_K$ are equivariant embedding of $G_K$.

By \cite[6.2.5]{Frob-split}, we may choose an equivariant proper morphism $\tilde Y_K\to \overline Y_K$ such that
$\tilde Y_K$ is a smooth toroidal equivariant embedding of $G_K$ in the sense of \cite[6.2.2]{Frob-split}.
Let $D_i=\overline{B^+s_iB^-}$ be the closure of $B^+s_iB^-$ in $\tilde Y_K$, where $s_i$ $(i=1,\ldots, r)$ 
are those elements in the Weyl group $N_G(T)/T$ corresponding to the reflections determined 
by the simple roots. They are $B_K$-stable but not $H_K$-stable prime divisors 
of $\tilde Y_K$. Let $\delta=D_1\cup\cdots\cup D_r$ and let 
$\tilde Y^\circ_{K}=\tilde Y_K-\delta.$ By \cite[6.2.3]{Frob-split}, there exists a closed subvariety $S_K$ of $\tilde Y^\circ_{K}$
invariant under the action of $T_K\times T_K$ such that $S_K$ is a toric variety containing $T_K$ as an open dense torus,
$$H_K \tilde Y^\circ_{K}=\tilde Y_K, \quad G_K\cap \tilde Y^\circ_{K}=B^+_KB^-_K,$$ and we have an
isomorphism $$(U^+_K\times_K U^-_K) \times_K S_K \stackrel\cong \to \tilde Y^\circ_{K}, \quad (g,y)\mapsto gy.$$
In this section, we assume the above data can be defined over $R$. More precisely, we make the following assumption:

\begin{assumption}\label{ass1} We assume 
there exists a morphism $\sigma:\tilde Y\to \overline Y$ with the following properties:
\begin{enumerate}[(1)]
\item $\tilde Y$ is a smooth proper $R$-scheme with an $H$-action containing $G$ an open subscheme, 
and $\tilde Y\to \mathrm{Spec}\,R$ has geometrically connected fibers. 
\item $\sigma$ is proper equivariant, and induces identity on $G$, where we regard $G$ as an open subscheme of both
$\tilde Y$ and $\overline Y$. 
\item There exists a $B$-invariant open subscheme $\tilde{Y}^\circ$ of $\tilde{Y}$ and a $(T\times T)$-invariant 
closed subscheme $S$ of $\tilde Y^\circ$ 
such that $S$ is a toric scheme containing $T$ as an open dense torus, and we have an isomorphism
\begin{eqnarray}\label{eqn:equiv.map}
(U^+\times U^-)\times S\stackrel\cong\to \tilde Y^\circ, \quad (g, x)\mapsto gx.
\end{eqnarray}
Moreover, we have $H\tilde Y^\circ=\tilde Y$ and $(U^+\times U^-)\times T\cong G\cap \tilde Y^\circ$. 
\end{enumerate}
\end{assumption}

Note that by the condition (1), the generic fiber $\tilde Y_K$ (resp. the special fiber $\tilde Y_k$) is an 
equivariant embedding of $G_K$ (resp. $G_k$). 

\begin{remark} Suppose $G$ is a split reductive group scheme over a Dedekind domain $D$ with fraction field $K$, 
and $\rho_j$ $(j=1,\ldots, N)$ are representations defined over $D$. 
By \cite[6.2.3]{Frob-split}, we have a morphism $\sigma_K: \tilde Y_K \to \overline Y_K$ satisfying the conditions of \ref{ass1}
over $K$. By the standard passing to limit argument, we may assume $\sigma_K$ can be extended to an $R$-morphism 
$\sigma:\tilde Y\to Y$ after replacing $\mathrm{Spec}\, D$ by a dense open subset. 
By \cite[12.2.4]{EGAIV}, the geometrically integral and geometrically normal property of the generic fiber imply
the same property of the fibers over a dense open subset of $\m{Spec}(D)$. 
Thus \ref{ass1} hold for $R=D_{\mathfrak m}$ for almost all maximal ideals $\mathfrak m$. 
\end{remark} 

\begin{proposition}\label{prop:inv0inf} Let $\bar\iota$ be the composite $$\bar \iota: \tilde Y\to \overline Y\to \overline X\to \mathbb P.$$ 
Then $\bar\iota^{-1}(\prod_{j=1}^N\mathrm{GL}(V_j))=G$.
\end{proposition}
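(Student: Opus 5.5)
The plan is to prove the two inclusions separately. The inclusion $G\subset \bar\iota^{-1}(\prod_j\mathrm{GL}(V_j))$ is immediate: on the open subscheme $G\subset\tilde Y$, $\sigma$ is the identity by Assumption \ref{ass1}(2), so $\bar\iota|_G=\iota$. Since each $\rho_j(g)$ is invertible, $\iota(G)$ lies in $\prod_j\mathrm{GL}(V_j)$, which we regard as an open subscheme of $\mathbb V\subset\mathbb P$.

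For the reverse inclusion, put $W:=\bar\iota^{-1}(\prod_j\mathrm{GL}(V_j))$. This is an $H$-stable open subscheme of $\tilde Y$ containing $G$, because $\bar\iota$ is $H$-equivariant and $\prod_j\mathrm{GL}(V_j)$ is $H$-stable. On $W$ consider the morphism $\phi: W\to \mathbb G:=\prod_j\mathrm{GL}(V_j)$ induced by $\bar\iota$. The aim is to show $W=G$. Using $H\tilde Y^\circ=\tilde Y$ and the $H$-stability of $W$ and $G$, it suffices to show $W\cap\tilde Y^\circ\subset G$. By (\ref{eqn:equiv.map}) we have $\tilde Y^\circ\cong (U^+\times U^-)\times S$ and $G\cap\tilde Y^\circ\cong (U^+\times U^-)\times T$, and $W\cap \tilde Y^\circ$ is $U^+\times U^-$-stable. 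Hence $W\cap\tilde Y^\circ=(U^+\times U^-)\times (W\cap S)$, and we are reduced to proving $W\cap S=T$, a statement about the toric scheme $S$.

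On $S$ the map $\bar\iota$ is $T\times T$-equivariant and extends $\iota|_T$. The torus $T$ acts on each $V_j$ diagonally in a weight basis, so $\iota|_T$ sends $t$ to the block-diagonal matrices $(\lambda(t))_{\lambda}$, where $\lambda$ runs over the weights of the $V_j$. A $T$-orbit $O_\sigma\subset S$ corresponding to a nonzero cone $\sigma$ of the fan is reached as the limit of $\chi(s)$ as $s\to0$, for a cocharacter $\chi$ in the relative interior of $\sigma$. If this orbit met $W$, its image would be a point of $\prod_j\mathrm{GL}(V_j)$. That point would equal the limit of $\iota(\chi(s))$ taken in $\mathbb P$, and hence every $s^{\langle\lambda,\chi\rangle}$ would converge to a nonzero constant. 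This forces $\langle\lambda,\chi\rangle=0$ for all weights $\lambda$, so $\iota\circ\chi$ is trivial. But $\iota$ is quasi-finite, so its kernel is finite and $\chi$ must be trivial, which contradicts $\chi\ne0$. It follows that $W\cap S=T$.

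The main obstacle is making the limit argument rigorous over $R$, i.e.\ on both fibers, and not just generically. I would argue fiberwise over geometric points of $\mathrm{Spec}\,R$, and use the standard description of torus orbits in the normal toric varieties $S_K$ and $S_k$ by one-parameter subgroups. One subtle point is that $\bar\iota$ goes through the normalization $\overline Y$ and through $\mathbb P$, so the limits must be computed in $\mathbb P$. Since $\mathbb G$ is open in $\mathbb P$, this is harmless: the limit in $\mathbb P$ is a point of $\mathbb G$ exactly when all the coordinate limits exist and are invertible, and separatedness of $\mathbb P$ makes the limit unique.
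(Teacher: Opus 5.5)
Your argument is correct, but it takes a different route from the paper. The paper never passes to the local structure $\tilde Y^\circ\cong(U^+\times U^-)\times S$ or to toric orbits. Instead it makes a dimension count on each geometric fiber. Since $\tilde Y_{\bar k(\mathfrak p)}$ is irreducible and $G_{\bar k(\mathfrak p)}$ is dense in it, the complement $U_{\bar k(\mathfrak p)}-G_{\bar k(\mathfrak p)}$ (where $U=\bar\iota^{-1}(\prod_j\mathrm{GL}(V_j))$) has dimension $<\dim G$. Its image under $\bar\iota$ is therefore an $H$-stable subset of $\prod_j\mathrm{GL}(V_j)$ of dimension $<\dim G$. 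On the other hand, for any $x$ in $\prod_j\mathrm{GL}(V_j)$ the orbit $Gx$ is the image of $\iota$ followed by right multiplication by the invertible $x$, so it has dimension $\dim G$ by quasi-finiteness. Hence that image is empty.

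The paper's argument uses only Assumption \ref{ass1}(1)--(2) together with quasi-finiteness, and needs no toric geometry. Your version uses Assumption \ref{ass1}(3) and the orbit--cone correspondence, and in exchange gives a more explicit picture: each boundary $T$-orbit of $S$ is sent either to infinity or to non-invertible matrices, as witnessed by a one-parameter limit. It also uses quasi-finiteness only through the finiteness of $\ker(\iota|_T)$.

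If you write it out, make two small points explicit. First, the geometric fibers of $S$ are normal (indeed smooth) toric varieties in which $T$ is dense, so the orbit--cone correspondence applies. Normality follows from the smoothness of $\tilde Y^\circ\cong(U^+\times U^-)\times S$, and density of $T$ from the irreducibility of the fibers of $\tilde Y$. Second, if an orbit $O_\sigma$ meets $W$, it lies entirely in $W$ because $W\cap S$ is $T\times T$-stable. So the distinguished point $\lim_{s\to 0}\chi(s)$ is in $W$, which is exactly what your limit computation in $\mathbb P$ needs.
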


\begin{proof} Let $U=\bar\iota^{-1}(\prod_{j=1}^N\mathrm{GL}(V_j))$. Then $U$ is an open subscheme 
of $\tilde Y$, $G\subset U$, and $U$ is $H$-invariant. For any prime ideal $\mathfrak p$ of $R$, let $\bar k(\mathfrak p)$ be an algebraic
closure of the residue field $k(\mathfrak p)$. By Assumption \ref{ass1}, $\tilde Y_{\bar k(\mathfrak p)}$ 
is irreducible. So $G_{\bar k(\mathfrak p)}$ is dense in $\tilde Y_{\bar k(\mathfrak p)}$
and hence 
\begin{eqnarray}\label{eq:1}
\dim (U_{\bar k(\mathfrak p)}-G_{\bar k(\mathfrak p)})< \mathrm{dim}\, G_{\bar k(\mathfrak p)}.
\end{eqnarray}
Thus $\bar\iota(U_{\bar k(\mathfrak p)}-G_{\bar k(\mathfrak p)})$ is an $H_{\bar k(\mathfrak p)}$-invariant 
subset of $\prod_{j=1}^N\mathrm{GL}(V_{j,\bar k(\mathfrak p)})$ of dimension $<\mathrm{dim} G_{\bar k(\mathfrak p)}$. 
Suppose this set is not empty, say contains a point $x$. We have 
$$G_{\bar k(\mathfrak p)} x\subset G_{\bar k(\mathfrak p)} xG_{\bar k(\mathfrak p)}= H_{\bar k(\mathfrak p)}x\subset 
\bar\iota(U_{\bar k(\mathfrak p)}-G_{\bar k(\mathfrak p)}).$$
So we have
\begin{eqnarray}\label{eq:2}
\mathrm{dim}(G_{\bar k(\mathfrak p)} x)\leq \mathrm{dim}\, \bar \iota(U_{\bar k(\mathfrak p)}-G_{\bar k(\mathfrak p)})
\leq \mathrm{dim} (U_{\bar k(\mathfrak p)}-G_{\bar k(\mathfrak p)}).
\end{eqnarray}
The components of $x$ are invertible matrices. So 
\begin{eqnarray}\label{eq:3}
\mathrm{dim}(G_{\bar k(\mathfrak p)} x)= \mathrm{dim}\, \bar \iota(G_{\bar k(\mathfrak p)})=
\mathrm{dim}\,G_{\bar k(\mathfrak p)}.
\end{eqnarray}
The three conditions (\ref{eq:1})-(\ref{eq:3}) leads to a contradiction. So $U_{\bar k(\mathfrak p)}-G_{\bar k(\mathfrak p)}$
is empty for every $\mathfrak p\in\mathrm{Spec}\,R$. Hence $U\subset G$. 
\end{proof}

Let 
$\widehat{{\tilde Y}^\circ}$ and $\widehat {\tilde Y}$ be the completion of the $\tilde Y^\circ$ and $\tilde Y$, respectively.
Fix notation by the following commutative diagram
$$\begin{tikzcd}
(U_k^+\times_k U_k^-)\times_k T_k\arrow[r, hook]\arrow[d, hook]&  (U_k^+\times_k U_k^-) \times_k S_k \cong \tilde Y^\circ_k\arrow[r]
\arrow[d,hook]&\tilde Y^\circ\arrow[r]
\arrow[d,hook]& \widehat{{\tilde Y}^\circ}\arrow[d,hook]\\
G_k\arrow[d,"\iota_k"]\arrow[r,hook]& \tilde Y_k\arrow[d,"\bar\iota_k"]\arrow[r]& 
\tilde Y\arrow[d,"\bar\iota"]\arrow[r]&\widehat {\tilde Y}\arrow[d,"\widehat{\bar\iota}"]\\
\mathbb V_k=\prod_{j=1}^N \mathrm{End}(V_{j, k})\arrow[r, hook]&\mathbb P_k\arrow[r]&\mathbb P\arrow[r]&\widehat{\mathbb P}.
\end{tikzcd}$$
Let $\f{L}(H_K)$ be the Lie algebra of $H_K=G_K\times_K G_K$. For any smooth $K$-variety $V$ with a left $H_K$-action 
and any $\xi\in \f{L}(H_K)$, let $L^V_{\xi}$be the vector field on $V$ defined by
$$L^V_{\xi}(x)=\frac{d}{dt}\Big|_{t=0}(\exp(t\xi)x)$$ 
for any point $x$ in $V$. We omit the superscript $V$ from $L^V_\xi$ if this causes 
no confusion. We regard them as differential operators.
See section 3 for the precise definition of $L_\xi$ and more general invariant differential operators. 

\begin{lemma}\label{toroidalDmodiso} Let $D$ be the divisor $\tilde Y-G$ of $\tilde Y$. 
We have an isomorphism of $\mathcal {D}^\d_{\widehat{\tilde Y},\Q}$-modules
$$\mathbb D\circ (^\dagger D_k)\circ \mathbb D (\mathcal O_{\widehat{\tilde Y},\Q})\cong \mathcal {D}^\d_{\widehat{\tilde Y},\mathbb Q}
/\sum_{\xi\in \f{L}(H_K)}\mathcal {D}^\d_{\widehat{\tilde Y},\Q}L_{\xi},$$
where $\b{D}$ is the Verdier dual on category $D_{\mathrm{coh}}^b(\mathcal {D}^\d_{\widehat{\tilde Y},\b{Q}})$.
\end{lemma}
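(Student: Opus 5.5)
The plan is to compare both sides with the logarithmic de Rham picture on the toroidal compactification. Since $\tilde Y$ is a smooth toroidal embedding, $D=\tilde Y-G$ is a strict normal crossings divisor. The infinitesimal $H$-action gives a surjection
$$\mathcal O_{\tilde Y}\otimes \mathfrak L(H)\to \mathcal T_{\tilde Y}(-\log D)$$
onto the sheaf of logarithmic vector fields. Over $K$ this is the standard property of toroidal embeddings \cite[6.2]{Frob-split}, and over $R$ I would verify it on the chart $\tilde Y^\circ\cong (U^+\times U^-)\times S$ of Assumption \ref{ass1}. There the vector fields coming from $\mathfrak L(U^+)\oplus\mathfrak L(U^-)$ give the derivatives along the unipotent factor. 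Those coming from $\mathfrak L(T\times T)$ act on the smooth toric scheme $S$ and give $x_i\partial_{x_i}$ in toric coordinates, where $D\cap\tilde Y^\circ$ is $\prod x_i=0$. Translating by $H$ covers $\tilde Y$, since $H\tilde Y^\circ=\tilde Y$. Consequently the left ideal $\sum_\xi \mathcal D^\dagger L_\xi$ equals the left ideal generated by $\mathcal T_{\widehat{\tilde Y}}(-\log D)$. Locally, in coordinates $(x_1,\dots,x_s,y)$ with $D$ given by $x_1\cdots x_s=0$, the right-hand side is
$$\mathcal M=\mathcal D^\dagger/\bigl(\mathcal D^\dagger x_1\partial_{x_1}+\dots+\mathcal D^\dagger x_s\partial_{x_s}+\textstyle\sum_j\mathcal D^\dagger \partial_{y_j}\bigr).$$

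Next I would identify $\mathcal M$ with $\mathcal O(^\dagger D)$ twisted appropriately. Dually, consider $(^\dagger D_k)\mathcal O_{\widehat{\tilde Y},\mathbb Q}=\mathcal O_{\widehat{\tilde Y},\mathbb Q}(^\dagger D)$. Locally it is generated by $1/(x_1\cdots x_s)$ with annihilator generated by $\partial_{x_i}x_i$ and $\partial_{y_j}$; this is the standard computation, via Berthelot's results, that $\mathcal O(^\dagger D)\cong \mathcal D^\dagger/(\mathcal D^\dagger\partial_{x_i}x_i,\partial_{y_j})$. The module $\mathcal M$ instead has annihilator $x_i\partial_{x_i}$. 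This is the ``$!$-extension'' $\mathcal D^\dagger\otimes_{\mathcal D^\dagger(\log D)}\mathcal O$, which is the dual of $\mathcal D^\dagger\otimes_{\mathcal D^\dagger(\log D)}\mathcal O(D)=\mathcal O(^\dagger D)$. To prove the duality I would use the logarithmic Spencer resolution
$$\mathcal D^\dagger\otimes_{\mathcal O}\wedge^\bullet\mathcal T(-\log D)\to \mathcal M,$$
which is a locally free resolution. I would check it is exact locally by reducing to the Koszul complex on the commuting regular sequence $x_i\partial_{x_i},\partial_{y_j}$ in $\mathcal D^\dagger$, using the flatness of $\mathcal D^\dagger$ over the finite-level rings. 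Applying $R\mathcal Hom(-,\mathcal D^\dagger\otimes\omega^{-1})[\dim]$ and the side-change gives the dual complex: the Koszul complex on $\partial_{x_i}x_i$ (adjoints are $-\partial_{x_i}x_i$) tensored with $\omega(D)\otimes\omega^{-1}=\mathcal O(D)$. Its only cohomology is $\mathcal D^\dagger/(\mathcal D^\dagger\partial_{x_i}x_i,\partial_{y_j})\cong\mathcal O(^\dagger D)$. Hence $\mathbb D\mathcal M\cong (^\dagger D_k)\mathcal O$, and we have $\mathbb D\mathcal O\cong\mathcal O$. Since $\mathcal M$ is coherent, applying $\mathbb D$ and biduality yields the claim. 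The local isomorphisms glue because they are all induced by the global map $1\mapsto$ the canonical class, or equivalently because both sides are defined globally.

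The main obstacle is the exactness and duality in the $p$-adic setting. Specifically, one must show that $\mathcal D^\dagger/(\mathcal D^\dagger\partial_x x)\cong\mathcal O(^\dagger D)$ over the completed overconvergent rings, and not just algebraically. I would first establish this for the algebraic $\mathcal D$-module on $\tilde Y$, then pass to $\mathcal D^\dagger$ by flatness of $\mathcal D^\dagger_{\widehat{\tilde Y},\mathbb Q}$ over $\mathcal D_{\tilde Y}$. For the identification with $\mathcal O(^\dagger D)$ I would use Berthelot's computation $\mathcal O(^\dagger D)=\mathcal D^\dagger\otimes_{\mathcal D}\mathcal O(*D)$ for a normal crossings divisor.
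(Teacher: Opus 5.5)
Your argument is correct in substance, but it is organized differently from the paper's.

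\emph{The paper's route.} It never resolves $\mathcal M=\mathcal D^\dagger_{\widehat{\tilde Y},\mathbb Q}/\sum_\xi\mathcal D^\dagger_{\widehat{\tilde Y},\mathbb Q}L_\xi$. Instead it first proves a global isomorphism $\mathcal D^\dagger(^\dagger D_k)/\sum_\xi\mathcal D^\dagger(^\dagger D_k)L_\xi\cong\mathcal O_{\widehat{\tilde Y},\mathbb Q}(^\dagger D_k)$, $P\mapsto P\cdot 1$. This holds on $\widehat G$ because the $L_\xi$ span the tangent bundle there, and it extends everywhere by \cite[4.3.12 (ii)]{Berthelot1}, since both sides are coherent $\mathcal D^\dagger(^\dagger D_k)$-modules. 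Dualizing with $\mathbb D_{D_k}\circ(^\dagger D_k)\cong(^\dagger D_k)\circ\mathbb D$ \cite[I.4.4]{V}, and composing with the canonical map $\mathbb D\mathcal M\to(^\dagger D_k)\mathbb D\mathcal M$, produces a canonical global morphism $\gamma$. Only then does the paper localize. By $H$-equivariance and $H\tilde Y^\circ=\tilde Y$ it tests $\gamma$ on the big cell $(U^+\times U^-)\times S$. There both sides are box products, and the toric factor reduces to dualizing the one-variable isomorphism $\mathcal D^\dagger/\mathcal D^\dagger\partial_x x\cong(^\dagger 0)\mathcal O$ of \cite[4.3.2]{Berthelot1990}. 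A comparison of free resolutions shows that this explicit isomorphism is $\gamma$.

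\emph{What your route buys.} You identify $\mathcal M$ as the log-induced module (your $\mathcal D^\dagger\otimes_{\mathcal D^\dagger(\log D)}\mathcal O$) and compute its dual by a Koszul/Spencer resolution. This is more conceptual, avoids \cite[4.3.12 (ii)]{Berthelot1} and \cite[I.4.4]{V}, and gives an explicit locally free resolution of $\mathcal M$. The cost is that you must establish the surjectivity $\mathcal O\otimes\mathfrak L(H)\to\mathcal T(-\log D)$ on all of $\tilde Y$ (your chart-plus-translates argument works) and the exactness over $\mathcal D^\dagger$ (your flatness argument handles this). Both routes ultimately rest on Berthelot's presentation of $\mathcal O(^\dagger D)$.

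\emph{The gap: gluing.} The one step that is not yet a proof is the last one. ``Both sides are defined globally'' does not make local isomorphisms agree on overlaps. Moreover, your local isomorphism $\mathbb D\mathcal M\cong\mathcal O(^\dagger D)$ depends on the chosen local equation $x_1\cdots x_s$ of $D$, through the generator $1/(x_1\cdots x_s)$. You need a global comparison morphism, and your own framework supplies one. The complex $\mathcal D^\dagger\otimes_{\mathcal O}\wedge^\bullet\mathcal T(-\log D)$ is globally defined, so its dual identifies $\mathbb D\mathcal M$ globally with $\mathcal D^\dagger\otimes_{\mathcal O}\mathcal O(D)$ modulo the relations $P\theta\otimes f-P\otimes\theta(f)$ for $\theta\in\mathcal T(-\log D)$. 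The evaluation map $P\otimes f\mapsto P\cdot f$ into $\mathcal O(^\dagger D)$ is then a global morphism, and locally it is exactly Berthelot's isomorphism $1\mapsto 1/(x_1\cdots x_s)$. Alternatively, you can simply adopt the paper's canonical $\gamma$ and use your local computation to show it is an isomorphism.
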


\begin{proof}
The morphism 
\begin{eqnarray*}
\mathcal {D}^\d_{\widehat{\tilde Y},\mathbb Q}(^\dagger D_k)
\Big /\sum_{\xi\in \f{L}(H_K)}\mathcal {D}^\d_{\widehat{\tilde Y},\mathbb Q}(^\dagger D_k)
L_{\xi}\to \mathcal {O}_{\widehat{\tilde Y},\mathbb Q}(^\dagger D_k), \quad P\mapsto P\cdot 1
\end{eqnarray*}
is an isomorphism when restricted to $\widehat G$ by \cite[3.2.2]{Berthelot1990}, where $\widehat G$ is the completion of $G$ 
regarded as an open formal subscheme of $\widehat{\tilde Y}$. By \cite[4.3.12 (ii)]{Berthelot1}, it must be an isomorphism. We thus 
have an isomorphism 
$$\mathcal {O}_{\widehat{\tilde Y},\mathbb Q}(^\dagger D_k)\cong \mathcal {D}^\d_{\widehat{\tilde Y},\mathbb Q}(^\dagger D_k)
\Big /\sum_{\xi\in \f{L}(H_K)}\mathcal {D}^\d_{\widehat{\tilde Y},\mathbb Q}(^\dagger D_k)
L_{\xi}.$$
Let $\mathbb D_{D_k}$ be the Verdier dual on the category 
$D^b_{\mathrm{coh}}(\mathcal {D}^\d_{\widehat{\tilde Y},\b{Q}}(^\dagger D_k))$. We have an isomorphism 
\begin{eqnarray}\label{eqn:pregamma}
\mathbb D_{D_k}\Big(\mathcal {D}^\d_{\widehat{\tilde Y},\mathbb Q}(^\dagger D_k)
\Big /\sum_{\xi\in \f{L}(H_K)}\mathcal {D}^\d_{\widehat{\tilde Y},\mathbb Q}(^\dagger D_k)
L_{\xi}\Big)\cong \mathbb D_{D_k}(\mathcal {O}_{\widehat{\tilde Y},\mathbb Q}(^\dagger D_k)).
\end{eqnarray}
By \cite[I.4.4]{V}, this isomorphism can be identified with
$$(^\dagger D_k)\circ \mathbb D 
\Big(\mathcal {D}^\d_{\widehat{\tilde Y},\mathbb Q}\Big /\sum_{\xi\in \f{L}(H_K)}\mathcal {D}^\d_{\widehat{\tilde Y},\mathbb Q}
L_{\xi}\Big)\cong(^\dagger D_k)\circ \mathbb D (\mathcal {O}_{\widehat{\tilde Y},\mathbb Q}).$$ 
Composed with the canonical morphism 
$$\mathbb D\Big(\mathcal {D}^\d_{\widehat{\tilde Y},\mathbb Q}\Big /\sum_{\xi\in \f{L}(H_K)}\mathcal {D}^\d_{\widehat{\tilde Y},\mathbb Q}
L_{\xi}\Big)\to(^\dagger D_k)\circ \mathbb D 
\Big(\mathcal {D}^\d_{\widehat{\tilde Y},\mathbb Q}\Big /\sum_{\xi\in \f{L}(H_K)}\mathcal {D}^\d_{\widehat{\tilde Y},\mathbb Q}
L_{\xi}\Big),$$
we get a morphism
$$\mathbb D 
\Big(\mathcal {D}^\d_{\widehat{\tilde Y},\mathbb Q}\Big /\sum_{\xi\in \f{L}(H_K)}\mathcal {D}^\d_{\widehat{\tilde Y},\mathbb Q}
L_{\xi}\Big)\to (^\dagger D_k)\circ\mathbb D(\mathcal {O}_{\widehat{\tilde Y},\mathbb Q}).$$ 
Applying $\D$ to this morphism, we get a morphism 
\begin{eqnarray*}
\gamma: \mathbb D\circ (^\dagger D_k) \circ \mathbb D(\mathcal O_{\widehat{\tilde Y},\Q})\to \mathcal {D}^\d_{\widehat{\tilde Y},\mathbb Q}
/\sum_{\xi\in \f{L}(H_K)}\mathcal {D}^\d_{\widehat{\tilde Y},\Q}L_{\xi}.
\end{eqnarray*}
Let's prove the last morphism $\gamma$ is an isomorphism.

This morphism is $H_{k}$-equivariant in the following sense: For any $\bar h\in H(\bar k)$, we can lift $\bar h$ to an $R_{\mathrm{ur}}$-point
$h$ of $H$, where $R_{\mathrm{ur}}$ is the maximal unramified extension of $R$. The pullback of the above morphism by the $h$-action
$h: \widehat {\tilde{Y}}_{R_{\mathrm ur}}\to\widehat{\tilde{Y}}_{R_{\mathrm ur}}$ can be identified with itself. By Assumption \ref{ass1}, 
we have $H\tilde{Y}^\circ=\tilde{Y}$. So it suffices to show $\gamma|_{\widehat{{\tilde Y}^\circ}}$ is an isomorphism.
We have 
\begin{eqnarray*}
\mathbb D\circ (^\dagger D_k)\circ \mathbb D(\mathcal {O}_{\widehat{\tilde Y},\mathbb Q})|_{\widehat{{\tilde Y}^\circ}}&\cong& 
\mathcal O_{(U^+\times U^-)^\wedge,\mathbb Q}
\boxtimes\b{D}\circ (^\dagger D_k\cap S_k)\circ\mathbb D (\mathcal {O}_{\widehat S,\mathbb{Q}}),\\
\Big(\mathcal {D}^\d_{\widehat{\tilde Y},\mathbb Q}
\Big/\sum_{\xi\in \mathfrak {L}(H_K)}\mathcal {D}^\d_{\widehat{\tilde Y},\Q}L_{\xi}\Big)\Big|_{\widehat{\tilde Y^\circ}}
&\cong&\Big( \mathcal D^\dagger_{(U^+\times U^-)^\wedge,\mathbb Q}\Big/\sum_{\xi\in \mathfrak L(U_K^+\times_K U_K^-)}
\mathcal D^\dagger_{(U^+\times U^-)^\wedge,\mathbb Q}L_\xi\Big)\\
&&\qquad\qquad\boxtimes\Big( \mathcal D^\dagger_{\widehat S,\mathbb Q}
\Big/\sum_{\xi\in \mathfrak L(T_K\times_K T_K)}
\mathcal D^\dagger_{\widehat S,\mathbb Q}L_\xi\Big),
\end{eqnarray*}
where $(U^+\times U^-)^\wedge$ and $\widehat S$ are the completions of $U^+\times U^-$ and $S$, respectively. 
By \cite[3.2.2]{Berthelot1990}, we have 
$$
\mathcal O_{(U^+\times U^-)^\wedge,\mathbb Q}\cong 
\mathcal D^\dagger_{(U^+\times U^-)^\wedge,\mathbb Q}\Big/\sum_{\xi\in \mathfrak L(U_K^+\times_K U_K^-)}
\mathcal D^\dagger_{(U^+\times U^-)^\wedge,\mathbb Q}L_\xi.$$
Since $S$ is a smooth toric variety, locally we may assume the pair $(S,D_k\cap S_k)$ is isomorphic to 
$(\mathbb{G}_m^s\times\mathbb A^t, \bigcup_{i=s+1}^{s+t} (x_i=0))$, where we take the canonical coordinate system $(x_1, \ldots,x_{s+t})$ 
on $\mathbb G_m\times \mathbb A^t$. Again by \cite[3.2.2]{Berthelot1990}, we have 
$$
\mathcal O_{\widehat{\mathbb G}^s_m,\mathbb Q}\cong 
\mathcal D^\dagger_{\widehat{\mathbb G}^s_m,\mathbb Q}/\sum_{\xi\in \mathfrak L(\mathbb G^s_{m,K})}
\mathcal D^\dagger_{\widehat{\mathbb G}^s_m,\mathbb Q}L_\xi.$$
By \cite[4.3.2]{Berthelot1990}, we have an isomorphism
\begin{eqnarray}\label{eqn:aftergamma}
\mathcal D^\dagger_{\widehat{\mathbb A}^1,\mathbb Q}/
\mathcal D^\dagger_{\widehat{\mathbb A}^1,\mathbb Q}\partial_{x}x 
\cong (^\dagger 0)(\mathcal O_{\widehat{\mathbb A}^1,\mathbb Q}),\quad P\mapsto P\cdot (1/x).
\end{eqnarray}
Taking the Verdier dual and using the fact that $\mathbb D(\mathcal O_{\widehat{\mathbb A}^1,\mathbb Q})
\cong \mathcal O_{\widehat{\mathbb A}^1,\mathbb Q}$, we get an isomorphism
$$\mathbb D\circ (^\dagger 0)\circ \mathbb D(\mathcal O_{\widehat{\mathbb A}^1,\mathbb Q})
\cong\mathcal D^\dagger_{\widehat{\mathbb A}^1,\mathbb Q}/
\mathcal D^\dagger_{\widehat{\mathbb A}^1,\mathbb Q}x\partial_x.$$
We claim this isomorphism coincides with $\gamma$ and hence $\gamma$ is an isomorphism. To prove the claim, 
note that (\ref{eqn:aftergamma}) can be identified with 
$$\mathcal D^\dagger_{\widehat{\mathbb A}^1,\mathbb Q}(^\dagger 0)/
\mathcal D^\dagger_{\widehat{\mathbb A}^1,\mathbb Q}(^\dagger 0)\partial_{x}x 
\cong (^\dagger 0)(\mathcal O_{\widehat{\mathbb A}^1,\mathbb Q}),\quad P\mapsto P\cdot (1/x).$$
The last isomorphism can be canonically identified with the isomorphism (\ref{eqn:pregamma}).
This can be seen using the following isomorphism of two free resolutions of 
$\mathcal  O_{\widehat{\mathbb A}^1,\mathbb Q}(^\dagger 0)$:
$$\begin{tikzcd}
0\arrow[r]&\mathcal D^\dagger_{\widehat{\mathbb A}^1,\mathbb Q}(^\dagger 0)
\arrow[r, "\cdot\partial_x"]\arrow[d,"\mathrm{id}"]&\mathcal D^\dagger_{\widehat{\mathbb A}^1,\mathbb Q}(^\dagger 0)
\arrow[d,"\cdot x"] \arrow[r, "P\mapsto P\cdot 1"]& \mathcal  O_{\widehat{\mathbb A}^1,\mathbb Q}(^\dagger 0)\arrow[d
,"\mathrm{id}"]\arrow [r]&0\\
0\arrow[r]&\mathcal D^\dagger_{\widehat{\mathbb A}^1,\mathbb Q}(^\dagger 0)
\arrow[r, "\cdot\partial_x x"]&\mathcal D^\dagger_{\widehat{\mathbb A}^1,\mathbb Q}(^\dagger 0)
\arrow[r, "P\mapsto P\cdot (1/x)"]& \mathcal  O_{\widehat{\mathbb A}^1,\mathbb Q}(^\dagger 0)\arrow [r]&0.
\end{tikzcd}
$$
\end{proof}

We have the following two right  $\mathcal {D}^\dagger_{\widehat {\tilde Y},\mathbb Q}$-module structures 
on $\omega_{\widehat {\tilde Y},\Q} \otimes_{\mathcal{O}_{\widehat {\tilde Y},\Q}} \mathcal {D}^\dagger_{\widehat {\tilde Y},\mathbb Q}$. 
Let $\omega\otimes P$ be a section of 
$\omega_{\widehat {\tilde Y},\Q} \otimes_{\mathcal{O}_{\widehat {\tilde Y},\Q}} \mathcal {D}^\dagger_{\widehat {\tilde Y},\mathbb Q}$, 
and let $\theta$ be a vector field viewed as a section of $\mathcal {D}^\dagger_{\widehat {\tilde Y},\mathbb Q}$.

(1) The naive right action is given by
$$(\omega \otimes P) \cdot \theta=\omega \otimes P\theta.$$

(2) The right action is given by the Leibniz rule 
$$(\omega \otimes P) \cdot \theta=\omega \theta\otimes P-\omega \otimes \theta P.$$ 
This structure is obtained by side change from the left $\mathcal {D}^\dagger_{\widehat {\tilde Y},\mathbb Q}$-module structure on 
$\mathcal {D}^\dagger_{\widehat {\tilde Y},\mathbb Q}$.

By \cite[1.3.3]{Berthelot2}, there exists an isomorphism 
on $\omega_{\widehat {\tilde Y},\Q} \otimes_{\mathcal{O}_{\widehat {\tilde Y}},\Q} \mathcal {D}^\dagger_{\widehat {\tilde Y},\mathbb Q}$
which exchanges the two right $\mathcal {D}^\dagger_{\widehat {\tilde Y},\mathbb Q}$-module structures. We have 
the following right $\mathcal D$-module version of Lemma \ref{toroidalDmodiso}.

\begin{corollary}\label{toroidalDmodiso'}
We have an isomorphism of right $\mathcal D^\dagger_{\widehat {\tilde Y},\mathbb Q}$-modules
$$\D(^\d {D}_k)\omega_{\widehat {\tilde Y},\Q}\to (\omega_{\widehat {\tilde Y},\Q}\otimes_{\c{O}_{\widehat {\tilde Y},\Q}}
\mathcal {D}^\d_{\widehat {\tilde Y},\Q})/\sum_{\xi\in \f{L}(H_K)} L_{\xi}(\omega_{\widehat {\tilde Y},\Q}
\otimes_{\c{O}_{\widehat {\tilde Y},\Q}}\mathcal {D}^\d_{\widehat {\tilde Y},\Q})$$
where the right $\mathcal {D}^\d_{\widehat {\tilde Y},\Q}$-module structure is induced by the naive right 
$\mathcal {D}^\d_{\widehat {\tilde Y},\Q}$-module 
structure on $(\omega_{\widehat {\tilde Y},\Q}\otimes_{\c{O}_{\widehat {\tilde Y},\Q}}\mathcal {D}^\d_{\widehat {\tilde Y},\Q})$, 
and $L_{\xi}$ acts on $\omega_{\widehat {\tilde Y},\Q}\otimes_{\c{O}_{\widehat {\tilde Y},\Q}}\mathcal {D}^\d_{\widehat {\tilde Y},\Q}$ by
the Leibniz rule
$$L_{\xi}(\omega\otimes P)=\omega L_\xi\otimes P-\omega \otimes L_{\xi}P$$ 
for any sections $\omega$ of $\omega_{\widehat {\tilde Y},\Q}$ and $P$ of $\mathcal {D}^\d_{\widehat {\tilde Y},\Q}$.
\end{corollary}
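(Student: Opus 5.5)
The plan is to deduce the corollary from Lemma \ref{toroidalDmodiso} by applying the side-change functor $\omega_{\widehat{\tilde Y},\Q}\otimes_{\c{O}_{\widehat{\tilde Y},\Q}}(-)$, which is an equivalence from left to right $\mathcal D^\dagger_{\widehat{\tilde Y},\Q}$-modules and is compatible with the Verdier dual and with the localization functor $(^\dagger D_k)$.

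\textbf{Step 1: the left-hand side.} Since $\mathbb D$ commutes with side change and $\omega_{\widehat{\tilde Y},\Q}$ is the side change of $\mathcal O_{\widehat{\tilde Y},\Q}$, the side change of $\mathbb D\circ(^\dagger D_k)\circ\mathbb D(\mathcal O_{\widehat{\tilde Y},\Q})$ is $\mathbb D(^\dagger D_k)\mathbb D(\omega_{\widehat{\tilde Y},\Q})$. Using $\mathbb D(\mathcal O_{\widehat{\tilde Y},\Q})\cong \mathcal O_{\widehat{\tilde Y},\Q}$, in its right-module form, this becomes $\mathbb D(^\dagger D_k)\omega_{\widehat{\tilde Y},\Q}$. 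I read the notation of the statement in this sense; the same shorthand is needed in the untwisted case, since $\mathbb D\omega_{\widehat{\tilde Y},\Q}$ is a left module.

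\textbf{Step 2: the right-hand side.} Apply $\omega\otimes(-)$ to the exact sequence
$$\bigoplus_\xi \mathcal D^\dagger_{\widehat{\tilde Y},\Q}\xrightarrow{(P_\xi)\mapsto \sum_\xi P_\xi L_\xi}\mathcal D^\dagger_{\widehat{\tilde Y},\Q}\to \mathcal D^\dagger_{\widehat{\tilde Y},\Q}\Big/\sum_\xi \mathcal D^\dagger_{\widehat{\tilde Y},\Q}L_\xi\to 0.$$
The module $\omega\otimes\mathcal D^\dagger$ carries two commuting structures:
\begin{itemize}
\item a left structure coming from the left structure of $\mathcal D^\dagger$, which side change turns into the Leibniz right structure (2);
\item the naive right structure (1), which $\omega\otimes(-)$ leaves untouched and which encodes the bimodule map $\cdot L_\xi$.
\end{itemize}
In the side-changed presentation, the image of $\omega\otimes \mathcal D^\dagger L_\xi$ is therefore a submodule for structure (2), and the quotient carries structure (2).

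Now apply the isomorphism of \cite[1.3.3]{Berthelot2}, which interchanges the two right structures. It transports
\begin{itemize}
\item the Leibniz right structure (2) to the naive right structure (1), and
\item the naive right multiplication by $L_\xi$ to the Leibniz operator $\omega\otimes P\mapsto \omega L_\xi\otimes P-\omega\otimes L_\xi P$.
\end{itemize}
For a vector field $\theta$ this transposition is $\omega\otimes P\mapsto \omega\theta\otimes P-\omega\otimes\theta P$, which is exactly the stated formula. Hence the submodule becomes $\sum_\xi L_\xi(\omega\otimes\mathcal D^\dagger)$, and the quotient carries the naive right structure.

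\textbf{Step 3: conclusion and the main point to check.} Composing the side change of the isomorphism $\gamma$ from Lemma \ref{toroidalDmodiso} with the identifications of Steps 1 and 2 gives the asserted isomorphism. The one step needing care is Step 2: one must verify that the involution of \cite[1.3.3]{Berthelot2} really sends right multiplication by $L_\xi$ to the stated Leibniz action. It suffices to check this on first-order operators, since the involution is a ring anti-automorphism-type transposition determined by its values on $\mathcal O$ and on vector fields. No sign problem arises, because the submodule generated by the $L_\xi$ is unchanged when a generator is replaced by its negative.
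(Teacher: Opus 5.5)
Your proposal is correct and follows the paper's own route: the corollary is obtained by side-changing Lemma \ref{toroidalDmodiso} and then applying the involution of \cite[1.3.3]{Berthelot2} on $\omega_{\widehat{\tilde Y},\mathbb Q}\otimes_{\mathcal O_{\widehat{\tilde Y},\mathbb Q}}\mathcal D^\dagger_{\widehat{\tilde Y},\mathbb Q}$, which exchanges the naive and Leibniz right structures. The verification you flag in Step 3 is in fact automatic: by definition, that involution intertwines naive right multiplication by $L_\xi$ with the Leibniz right action of $L_\xi$.
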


\begin{proposition}\label{prop:directfactor} In the category of right 
$\mathcal {D}^\d_{\b{\widehat P}, \Q}$-modules, 
we have $$\mathbb D \bar \iota_{k+}(^\d {D}_k)\omega_{\widehat {\tilde Y},\Q}\cong
\mathcal H^0(\mathbb D \bar \iota_{k+}(^\d {D}_k)\omega_{\widehat {\tilde Y},\Q}),$$ and  
$\mathbb D \bar \iota_{k+}(^\d {D}_k)\omega_{\widehat {\tilde Y},\Q}$ is a direct summand of 
$$\mathcal N':=((\bar\iota_*\omega_{\tilde Y})^\wedge_{\mathbb Q}
\otimes_{\c{O}_{\widehat{\b{P}}, \Q}}\mathcal {D}^\d_{\b{\widehat P}, \Q})\Big/\sum_{\xi\in \f{L}(H_K)}
L_{\xi}(\bar\iota_*\omega_{\tilde Y})^\wedge_{\mathbb Q}\otimes_{\c{O}_{\widehat{\b{P}}, \Q}}\mathcal {D}^\d_{\b{\widehat P}, \Q}),$$
where the right $\mathcal {D}^\d_{\b{\widehat P}, \Q}$-module structure on $\mathcal N'$ is induced by the naive right 
$\mathcal {D}^\d_{\b{\widehat P}, \Q}$-module structure on $(\bar\iota_*\omega_{\tilde Y})^\wedge_{\mathbb Q}
\otimes_{\c{O}_{\widehat{\b{P}}, \Q}}\mathcal {D}^\d_{\b{\widehat P}, \Q}$, and 
the action of $L_{\xi}$ on $((\bar\iota_*\omega_{\tilde Y})^\wedge_{\mathbb Q}
\otimes_{\c{O}_{\widehat{\b{P}}, \Q}}\mathcal {D}^\d_{\b{\widehat P}, \Q})$ is given by 
$$L_{\xi}(\omega\otimes P)=\omega L_{\xi}\otimes P-\omega \otimes L_{\xi} P$$ for any
sections $\omega$ and $P$ of $(\bar\iota_*\omega_{\tilde Y})^\wedge_{\mathbb Q}$ and $\mathcal {D}^\d_{\b{\widehat P}, \Q}$, respectively.
\end{proposition}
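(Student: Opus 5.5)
Here is the plan. We first use duality to rewrite $\mathbb D \bar\iota_{k+}(^\dagger D_k)\omega$ as a pushforward, and then compute that pushforward with the presentation of Corollary \ref{toroidalDmodiso'}. Since $\bar\iota$ is proper, relative duality gives $\mathbb D\circ\bar\iota_{k+}\cong\bar\iota_{k+}\circ\mathbb D$. Hence
$$\mathbb D \bar\iota_{k+}(^\dagger D_k)\omega_{\widehat{\tilde Y},\Q}\cong \bar\iota_{k+}\,\mathbb D(^\dagger D_k)\omega_{\widehat{\tilde Y},\Q}.$$
By Corollary \ref{toroidalDmodiso'}, the module $\mathcal M:=\mathbb D(^\dagger D_k)\omega_{\widehat{\tilde Y},\Q}$ is the quotient of $\omega\otimes\mathcal D^\dagger_{\widehat{\tilde Y},\Q}$ by the images of the operators $L_\xi$. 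The pushforward of a right module is
$$\bar\iota_{+}\mathcal M=R\bar\iota_*\big(\mathcal M\otimes^L_{\mathcal D^\dagger_{\widehat{\tilde Y},\Q}}\mathcal D^\dagger_{\widehat{\tilde Y}\to\widehat{\mathbb P},\Q}\big).$$
Substituting the presentation, the tensor product becomes
$$\big(\omega\otimes_{\mathcal O_{\widehat{\tilde Y},\Q}}\bar\iota^{-1}\mathcal D^\dagger_{\widehat{\mathbb P},\Q}\big)\Big/\sum_\xi L_\xi(\cdots).$$
Here $L_\xi$ acts on the factor $\bar\iota^{-1}\mathcal D^\dagger_{\widehat{\mathbb P},\Q}$ through the vector field $L^{\mathbb P}_\xi$. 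This works because $\bar\iota$ is $H$-equivariant, so $L^{\tilde Y}_\xi$ maps to $L^{\mathbb P}_\xi$ in $\mathcal D^\dagger_{\widehat{\tilde Y}\to\widehat{\mathbb P},\Q}$.

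To control the derived tensor product, we resolve $\mathcal M$ by a Koszul-type complex in the operators $L_\xi$. The key input is that these operators form a regular sequence, in the sense that the quotient is holonomic in the expected dimension. This holds locally on $\widehat{\tilde Y^\circ}$ by the product decomposition used in Lemma \ref{toroidalDmodiso}, namely unipotent directions times the toric model $\mathbb G_m^s\times\mathbb A^t$. It then holds everywhere by $H$-translation, using $H\tilde Y^\circ=\tilde Y$.

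Next we compute $R\bar\iota_*$. Write $\bar\iota$ as the composite of the projective morphism $\tilde Y\to\overline Y$ with the finite morphism $\overline Y\to\mathbb P$; the finite part is harmless. The complex above is built from terms $\omega\otimes\bar\iota^{-1}\mathcal D^\dagger_{\widehat{\mathbb P},\Q}$, and the projection formula gives
$$R\bar\iota_*\big(\omega\otimes\bar\iota^{-1}\mathcal D^\dagger_{\widehat{\mathbb P},\Q}\big)\cong (R\bar\iota_*\omega_{\tilde Y})^\wedge_{\Q}\otimes\mathcal D^\dagger_{\widehat{\mathbb P},\Q}.$$
We then apply Grauert--Riemenschneider (Kempf) vanishing. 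In characteristic zero it is available on the generic fibre, since $\tilde Y_K$ is smooth and maps birationally onto $\overline Y_K$. In the $p$-adic formal setting we would use Frobenius splitting of $\tilde Y_k$ from \cite{Frob-split}, which gives $R^i\sigma_*\omega_{\tilde Y}=0$ for $i>0$ over $R$. Together with finiteness of $\overline Y\to\mathbb P$, this yields $R\bar\iota_*\omega=\bar\iota_*\omega$.

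Thus $\bar\iota_+\mathcal M$ is computed by a complex whose last term gives exactly $\mathcal N'$. The $\mathcal H^0$ statement, i.e.\ the vanishing of the other cohomologies, comes from $t$-exactness. Since $\bar\iota_k$ restricted to $G_k$ is quasi-finite and affine, the module $\mathbb D\bar\iota_{k+}(^\dagger D_k)\omega$ agrees with $\mathbb D\iota_{k+}\omega_{G_k}$ extended appropriately, and the exactness argument of Proposition \ref{prop:Fourier} applies. Alternatively, one can use that $\bar\iota_{k+}$ of $\mathcal M$, which is $j_!$-like along $D$, is concentrated in degree $0$ because $\iota$ is affine quasi-finite.

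For the direct-summand claim, we compare $\mathcal N'$ with $\mathcal H^0$ of the pushforward. Naively, $\mathcal N'$ is $\mathcal H^0$ of the pushforward of the unquotiented module, taken modulo $L_\xi$ only after pushing forward. The discrepancy comes from the non-reduced part of the pushforward: $\bar\iota_+$ of $\omega\otimes\mathcal D^\dagger$ before taking the quotient. We use a splitting coming from the trace map of the finite morphism $\overline Y\to\overline X$, together with $\mathcal O_{\overline Y}\to\sigma_*\mathcal O_{\tilde Y}$ being an isomorphism ($\overline Y$ normal). This should exhibit $\bar\iota_+\mathcal M$ as a direct summand of $\mathcal N'$, via compatible maps $\mathcal N'\to\bar\iota_+\mathcal M\to\mathcal N'$ whose composite is the identity on the image.

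The hardest step is the vanishing $R^i\bar\iota_*\omega_{\tilde Y}=0$ integrally, over $R$ rather than over $K$. I would attempt it via Frobenius splitting compatible with the boundary divisor on the special fibre, combined with the theorem on formal functions.
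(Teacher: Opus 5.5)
There is a genuine gap, in how you get from the presentation to $\mathcal N'$, and hence in the direct-summand claim.

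\emph{The proposed resolution does not exist.} The common zero locus of the symbols of the $2d$ operators $L_\xi$, $\xi\in\mathfrak L(H_K)$, contains the zero section of $T^*\tilde Y$. That zero section has codimension $d$, so the $2d$ symbols cannot form a regular sequence. (Holonomic means dimension $d$, not the dimension $0$ that $2d$ independent equations would give.) Moreover the $L_\xi$ do not commute. Even the Chevalley--Eilenberg complex has higher homology already on the open orbit, where the stabilizer is the $d$-dimensional diagonal.

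\emph{Such a resolution would prove too much.} A resolution of that shape, combined with $R^i\bar\iota_*\omega_{\tilde Y}=0$, would give an \emph{isomorphism} $\mathbb D\bar\iota_{k+}(^\dagger D_k)\omega_{\widehat{\tilde Y},\mathbb Q}\cong\mathcal N'$. Your last paragraph implicitly concedes you do not have this.

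\emph{What you actually have.} You have only the right exact presentation from Corollary \ref{toroidalDmodiso'}. Pushing it forward gives a map $\psi:\mathcal N'\to\mathbb D\bar\iota_{k+}(^\dagger D_k)\omega_{\widehat{\tilde Y},\mathbb Q}$; this uses Corollary \ref{cor:usedin2} and $\mathbb D\bar\iota_{k+}\cong\bar\iota_{k+}\mathbb D$, both of which you correctly invoke. But $\bar\iota_{k+}$ contains $R\bar\iota_*$ and is not right exact. So nothing forces $\psi$ to be an isomorphism, or even split.

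\emph{The missing idea is localization along $E=\mathbb P-\prod_j\mathrm{GL}(V_j)$.}
\begin{enumerate}
\item By Proposition \ref{prop:inv0inf}, $\bar\iota^{-1}(E)=D$. So over $\mathbb P-E$ the map $\bar\iota$ is the affine morphism $G\to\prod_j\mathrm{GL}(V_j)$. There the pushforward is right exact, and $\psi$ is an isomorphism off $E_k$.
\item Caro's comparison with $\widehat{\bar\iota}_{(E_k,D_k),+}$ gives $\bar\iota_{k+}(^\dagger D_k)\omega_{\widehat{\tilde Y},\mathbb Q}\cong(^\dagger E_k)\bar\iota_{k+}(^\dagger D_k)\omega_{\widehat{\tilde Y},\mathbb Q}$.
\item The map $(^\dagger E_k)\mathbb D(\psi)$ is an isomorphism, since it is a morphism of coherent $\mathcal D^\dagger(^\dagger E_k)$-modules that is an isomorphism off $E_k$.
\item Composing $\mathbb D(\psi)$ with the localization $\mathbb D\mathcal N'\to(^\dagger E_k)\mathbb D\mathcal N'$ gives a left inverse of $\mathbb D(\psi)$, hence a right inverse of $\psi$. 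That is the splitting.
\end{enumerate}

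\emph{The trace map is the wrong tool.} The trace map for $\overline Y\to\overline X$ plays no role here. $\mathcal N'$ is already built from $\bar\iota_*\omega_{\tilde Y}$, and the defect of $\psi$ sits along the boundary $E$, not in the finite part. What you need is a section $s$ with $\psi\circ s=\mathrm{id}$; your formulation with maps $\mathcal N'\to\bar\iota_+\mathcal M\to\mathcal N'$ does not provide one.

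\emph{The degree-$0$ statement is fine in spirit.} Your sketch matches what the paper does: identify with $\widehat{\bar\iota}_{(E_k,D_k),+}$, use $t$-exactness for the affine quasi-finite map $\tilde Y_k-D_k\to\mathbb P_k-E_k$, then $t$-exactness of $\mathbb D$.
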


\begin{proof}
By Corollary \ref{toroidalDmodiso'}, we have an exact sequence 
\begin{equation}\label{resolution}
\f{L}(H_K)\otimes_{K}\omega_{\widehat{\tilde Y},\Q}\otimes_{\c{O}_{\widehat{\tilde Y},\Q}}\mathcal {D}^\d_{\widehat{\tilde Y},\Q}
\to \omega_{\widehat{\tilde Y},\Q}\otimes_{\c{O}_{\widehat{\tilde Y},\Q}} \mathcal {D}^\d_{\widehat{\tilde Y},\Q}\to \D(^\d {D}_k)
\omega_{\widehat{\tilde Y},\Q}\to 0 
\end{equation}
of right $\mathcal {D}^\d_{\widehat{\tilde Y},\Q}$-modules. We have
\begin{eqnarray*}
&&\bar\iota_{k+}(\omega_{\widehat{\tilde Y},\Q}\otimes_{\c{O}_{\widehat{\tilde Y},\Q}}\mathcal {D}^\d_{\widehat{\tilde Y},\Q})
\cong R\widehat{\bar\iota}_*(\omega_{\widehat{\tilde Y},\Q}\otimes_{\c{O}_{\widehat{\tilde Y},\Q}}\mathcal {D}^\d_{\widehat{\tilde Y},\Q}
\otimes^L_{\mathcal {D}^\d_{\widehat{\tilde Y},\Q}} \mathcal D_{\widehat {\tilde Y}\to \widehat P} )\\
&\cong& R\widehat{\bar\iota}_*(\omega_{\widehat{\tilde Y},\Q}\otimes_{\c{O}_{\widehat{\tilde Y},\Q}}\mathcal {D}^\d_{\widehat{\tilde Y},\Q}
\otimes^L_{\mathcal {D}^\d_{\widehat{\tilde Y},\Q}} \mathcal O_{\widehat{\tilde Y},\mathbb Q} 
\otimes^L_{\widehat{\bar\iota}^{-1}\mathcal O_{\widehat{\mathbb P},\mathbb Q} } \widehat{\bar\iota}^{-1}
\mathcal D^\dagger_{\widehat {\mathbb P},\mathbb Q})\\
&\cong&R\widehat{\bar\iota}_*\omega_{\widehat{\tilde Y},\Q}\otimes_{\c{O}_{\widehat{\b{P}}, \Q}}\mathcal {D}^\d_{\b{\widehat P}, \Q}\\
&\cong& (R \bar\iota_*\omega_{\tilde Y})^\wedge_{\mathbb Q}\otimes_{\c{O}_{\widehat{\b{P}}, \Q}}\mathcal {D}^\d_{\b{\widehat P}, \Q}
\quad(\hbox{\cite[4.1.5]{EGAIII}})\\
&\cong& (\bar\iota_*\omega_{\tilde Y})^\wedge_{\mathbb Q}\otimes_{\c{O}_{\widehat{\b{P}}, \Q}}\mathcal {D}^\d_{\b{\widehat P}, \Q}\quad
(\hbox{Corollary } \ref{cor:usedin2}).
\end{eqnarray*}
Applying $\bar\iota_{k+}$ to the sequence (\ref{resolution}), we get morphisms 
$$\f{L}(H_K)\otimes_{K} (\bar\iota_*\omega_{\tilde Y})^\wedge_{\mathbb Q}
\otimes_{\c{O}_{\widehat{\b{P}}, \Q}}\mathcal {D}^\d_{\b{\widehat P}, \Q}
\to  (\bar\iota_*\omega_{\tilde Y})^\wedge_{\mathbb Q}\otimes_{\c{O}_{\widehat{\b{P}}, \Q}}\mathcal {D}^\d_{\b{\widehat P}, \Q}
\to \mathbb D \bar \iota_{k+}(^\d {D}_k)\omega_{\widehat{\tilde Y},\Q}$$ 
in the category of right $\mathcal {D}_{\widehat{\b{P}},\b{Q}}^\dagger$-modules whose composite vanishes,
where for the last term, we use the fact that $\mathbb D$ commutes with $\bar \iota_{k+}$ since $\bar\iota$ is a proper morphism. 
It induces a morphism
$$\psi: \mathcal N'\to \mathbb D \bar \iota_{k+}(^\d {D}_k)\omega_{\widehat{\tilde Y},\Q}.$$
Let $E=\mathbb P-\prod_{j=1}^N\mathrm{GL}(V_j)$. 
Then $E$ is a divisor of $\mathbb P$, $\bar\iota\,^{-1}(E)=D$ by Proposition \ref{prop:inv0inf}, and 
$\bar\iota$ induces an affine morphism $$G=\tilde Y-D \to \mathbb P-E=\prod_{j=1}^N\mathrm{GL}(V_{j}).$$
In particular, $\widehat{\bar\iota}_*|_{\mathbb P_k-E_k}$ is an exact functor.
We have $$\bar\iota_{k+}(\hbox{-})\cong R\widehat{\bar\iota}_*(\hbox{-}
\otimes^L_{\mathcal {D}^\d_{\widehat{\tilde Y},\Q}} \mathcal D_{\widehat {\tilde Y}\to \widehat P} ).$$ 
So $\bar\iota_{k+}|_{\mathbb P_k-E_k}$ is right exact. From the right exact sequence (\ref{resolution}), we get a right exact sequence 
$$\f{L}(H_K)\otimes_{K} (\bar\iota_*\omega_{\tilde Y})^\wedge_{\mathbb Q}\otimes_{\c{O}_{\widehat{\b{P}}, \Q}}
\mathcal {D}^\d_{\b{\widehat P}, \Q} |_{\mathbb P_k-E_k}
\to  (\bar\iota_*\omega_{\tilde Y})^\wedge_{\mathbb Q}\otimes_{\c{O}_{\widehat{\b{P}}, \Q}}\mathcal {D}^\d_{\b{\widehat P}, \Q}|_{\mathbb P_k-E_k}
\to \mathbb D \bar \iota_{k+}(^\d {D}_k)\omega_{\widehat{\tilde Y},\Q}|_{\mathbb P_k-E_k}\to 0.$$
Hence $\psi|_{\mathbb P_k-E_k}$ is an isomorphism. 
Since $\D\mathcal N'\in D^{\m b}_{\mathrm{coh}}(\mathcal {D}^\d_{\b{\widehat P}, \Q})$,
we have $(^\d {E}_k)\D\mathcal N'\in \mathrm{ob}\,D^{\m b}_{\mathrm{coh}}(\mathcal {D}^\d_{\b{\widehat P}, \Q}(^\d {E}_k))$. 
We have
\begin{eqnarray*}
\bar \iota_{k+}(^\dagger D_k)\omega_{\widehat{\tilde Y},\mathbb Q}
&\cong&\widehat{\bar \iota}_{+}(^\dagger D_k)\omega_{\widehat{\tilde Y},\mathbb Q}
\cong \widehat{\bar\iota}_{(E_k, D_k),+}  (^\dagger {D}_k)\omega_{\widehat{\tilde Y},\mathbb Q} \quad \hbox{(\cite[1.1.9]{Caro-L})}
\\
(^\dagger {E}_k)\bar \iota_{k+}(^\dagger D_k)\omega_{\widehat{\tilde Y},\mathbb Q}
&\cong& (^\dagger {E}_k)\widehat{\bar \iota}_{+}(^\dagger D_k)\omega_{\widehat{\tilde Y},\mathbb Q}
\cong\widehat{\bar\iota}_{(E_k, D_k),+}  (^\dagger {D}_k)(^\dagger D_k)\omega_{\widehat{\tilde Y},\mathbb Q}
 \quad \hbox{(\cite[1.1.10]{Caro-L})}
\\
&\cong& \widehat{\bar\iota}_{(E_k, D_k),+}  (^\dagger {D}_k)\omega_{\widehat{\tilde Y},\mathbb Q}.\quad \hbox{(\cite[1.1.8]{Caro-L})}
\end{eqnarray*} 
We thus have $$\bar \iota_{k+}(^\dagger D_k)\omega_{\widehat{\tilde Y},\mathbb Q}
\cong (^\dagger {E}_k)\bar \iota_{k+}(^\dagger D_k)\omega_{\widehat{\tilde Y},\mathbb Q}
\in \mathrm{ob}\,D^{\m b}_{\mathrm{coh}}(\mathcal {D}^\d_{\b{\widehat P}, \Q}(^\d {E}_k)).$$
Since $\bar\iota_k$ induces an affine quasi-finite morphism $\iota_k: \tilde Y_k-D_k\to \mathbb P_k-E_k$, 
we have $$\widehat{\bar\iota}_{(E_k, D_k),+}  (^\dagger {D}_k)\omega_{\widehat{\tilde Y},\mathbb Q}
\cong \mathcal H^0(\widehat{\bar\iota}_{(E_k, D_k),+}  (^\dagger {D}_k)\omega_{\widehat{\tilde Y},\mathbb Q}).$$
by \cite[1.3.13]{AC18padicweight}. 
Combined with \cite[1.3.1]{AC18padicweight}, we have
$$\mathbb D \bar \iota_{k+}(^\d {D}_k)\omega_{\widehat {\tilde Y},\Q}\cong
\mathcal H^0(\mathbb D \bar \iota_{k+}(^\d {D}_k)\omega_{\widehat {\tilde Y},\Q}).$$
By \cite[4.3.12 (ii)]{Berthelot1}, the Verdier dual 
$$\D (\psi): \bar \iota_{k+}(^\dagger D_k)\omega_{\widehat{\tilde Y},\mathbb Q}\to \mathbb D\mathcal N'$$
of $\psi$ induces an isomorphism
$$(^\dagger {E}_k)\circ \D (\psi):(^\dagger {E}_k) \bar \iota_{k+}(^\dagger D_k)\omega_{\widehat{\tilde Y},\mathbb Q}\xrightarrow{\sim} 
(^\d {E}_k)\D\mathcal N'.$$
We have a commutative diagram 
$$\begin{tikzcd}
\bar \iota_{k+}(^\dagger D_k)\omega_{\widehat{\tilde Y},\mathbb Q}\arrow[r,"\mathbb D(\psi)"]\arrow[d,"\cong"]& \mathbb D\mathcal N'\arrow[d]\\
(^\dagger {E}_k) \bar \iota_{k+}(^\dagger D_k)\omega_{\widehat{\tilde Y},\mathbb Q}\arrow[r, "\cong"]&(^\d {E}_k)\mathbb D\mathcal N'.
\end{tikzcd}$$
Since the left and the bottom arrows are isomorphisms, $\mathbb D(\psi)$ is left invertible. So $\psi$ is right invertible, and hence
$\mathbb D \bar \iota_{k+}(^\d {D}_k)\omega_{\widehat {\tilde Y},\Q}$ is a direct summand of $\mathcal N'$.
\end{proof}

Let $\infty=\mathbb P_k-\mathbb V_k$. Applying the functor $(^\d\infty)$ to the result in
Proposition \ref{prop:directfactor}, we get the following.

\begin{corollary}\label{directsummand} In the category of right $\mathcal {D}^\d_{\widehat{\b P},\b{Q}}(^\dagger\infty)$-modules, we have 
$$(^\dagger\infty)\D\bar\iota_{k+}(^\d {D}_k)\omega_{\widehat{\tilde Y}, \Q}\cong 
\c{H}^0((^\dagger\infty)\D\bar\iota_{k+}(^\d {D}_k)\omega_{\widehat{\tilde Y}, \Q}),$$ 
and $(^\dagger\infty)\D\bar\iota_{k+}(^\d {D}_k)\omega_{\widehat{\tilde Y}, \Q}$ is a direct summand of 
$$\mathcal N
:=((\bar\iota_*\omega_{\tilde Y})^\wedge_{\mathbb Q}\otimes_{\c{O}_{\widehat{\b{P}}, \Q}}\mathcal {D}^\d_{\b{\widehat P}, \Q}(^\dagger\infty))
/\sum_{\xi\in \f{L}(H_K)}L_{\xi}((\bar\iota_*\omega_{\tilde Y})^\wedge_{\mathbb Q}
\otimes_{\c{O_{\widehat{\b{P}}, \Q}}}\mathcal {D}^\d_{\b{\widehat P}, \Q}(^\dagger\infty)).$$
\end{corollary}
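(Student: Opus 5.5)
The corollary follows by applying the functor $(^\dagger\infty)=\mathcal D^\dagger_{\widehat{\mathbb P},\mathbb Q}(^\dagger\infty)\otimes_{\mathcal D^\dagger_{\widehat{\mathbb P},\mathbb Q}}(\hbox{-})$ to both assertions of Proposition \ref{prop:directfactor}. I use right-module conventions, so the functor reads $(\hbox{-})\otimes_{\mathcal D^\dagger_{\widehat{\mathbb P},\mathbb Q}}\mathcal D^\dagger_{\widehat{\mathbb P},\mathbb Q}(^\dagger\infty)$. The key input is that this functor is exact on coherent $\mathcal D^\dagger_{\widehat{\mathbb P},\mathbb Q}$-modules, because $\mathcal D^\dagger_{\widehat{\mathbb P},\mathbb Q}(^\dagger\infty)$ is flat over $\mathcal D^\dagger_{\widehat{\mathbb P},\mathbb Q}$ on both sides (Berthelot, \cite[4.3.12]{Berthelot1}).

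\emph{Concentration in degree $0$.} By Proposition \ref{prop:directfactor}, $\mathbb D\bar\iota_{k+}(^\dagger D_k)\omega_{\widehat{\tilde Y},\mathbb Q}$ has cohomology only in degree $0$. Since $(^\dagger\infty)$ is exact, it commutes with $\mathcal H^j$. Hence $(^\dagger\infty)\mathbb D\bar\iota_{k+}(^\dagger D_k)\omega_{\widehat{\tilde Y},\mathbb Q}$ is concentrated in degree $0$ as well, which gives the first isomorphism.

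\emph{Direct summand.} Any additive functor preserves direct sum decompositions. It is therefore enough to identify $(^\dagger\infty)\mathcal N'$ with $\mathcal N$. Set $M=(\bar\iota_*\omega_{\tilde Y})^\wedge_{\mathbb Q}\otimes_{\mathcal O_{\widehat{\mathbb P},\mathbb Q}}\mathcal D^\dagger_{\widehat{\mathbb P},\mathbb Q}$. Then $\mathcal N'$ is the cokernel of a map $\mathfrak L(H_K)\otimes_K M\to M$. Right exactness of the tensor product gives that $(^\dagger\infty)\mathcal N'$ is the cokernel of the induced map
\[
\mathfrak L(H_K)\otimes_K M\otimes_{\mathcal D^\dagger_{\widehat{\mathbb P},\mathbb Q}}\mathcal D^\dagger_{\widehat{\mathbb P},\mathbb Q}(^\dagger\infty)\to M\otimes_{\mathcal D^\dagger_{\widehat{\mathbb P},\mathbb Q}}\mathcal D^\dagger_{\widehat{\mathbb P},\mathbb Q}(^\dagger\infty).
\]
There is a canonical isomorphism
\[
M\otimes_{\mathcal D^\dagger_{\widehat{\mathbb P},\mathbb Q}}\mathcal D^\dagger_{\widehat{\mathbb P},\mathbb Q}(^\dagger\infty)\cong(\bar\iota_*\omega_{\tilde Y})^\wedge_{\mathbb Q}\otimes_{\mathcal O_{\widehat{\mathbb P},\mathbb Q}}\mathcal D^\dagger_{\widehat{\mathbb P},\mathbb Q}(^\dagger\infty).
\]

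\emph{Compatibility of the $L_\xi$-action (main point to check).} It remains to see that under this isomorphism the map $\omega\otimes P\mapsto\omega L_\xi\otimes P-\omega\otimes L_\xi P$ goes to the analogous Leibniz-rule map with $P$ a section of $\mathcal D^\dagger_{\widehat{\mathbb P},\mathbb Q}(^\dagger\infty)$. The $L_\xi$-action is defined by left multiplication by $L_\xi$ on the $\mathcal D$-factor, together with the action of $L_\xi$ on $\omega$. Both of these commute with the right tensor product by $\mathcal D^\dagger_{\widehat{\mathbb P},\mathbb Q}(^\dagger\infty)$. The identification is therefore $\mathfrak L(H_K)$-equivariant, and the cokernel is exactly $\mathcal N$.

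Here $L_\xi$ denotes the vector field on $\mathbb P$ induced by the $H$-action. The $H$-action on $\mathbb V$ is linear, so it extends to $\mathbb P$ and preserves $\infty$. Consequently $L_\xi$ is a global section of $\mathcal D^\dagger_{\widehat{\mathbb P},\mathbb Q}$, and left multiplication by it makes sense after localization. This completes the plan.
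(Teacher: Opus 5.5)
Your proposal is correct and follows the paper's route exactly: the paper's entire proof is the remark that one applies the functor $(^\dagger\infty)$ to Proposition \ref{prop:directfactor}. You have made explicit the routine points the paper leaves implicit. These are the exactness of $(^\dagger\infty)$ by Berthelot's flatness of $\mathcal D^\dagger_{\widehat{\mathbb P},\mathbb Q}(^\dagger\infty)$ over $\mathcal D^\dagger_{\widehat{\mathbb P},\mathbb Q}$, the preservation of split surjections by an additive functor, and the identification $(^\dagger\infty)\mathcal N'\cong\mathcal N$ compatible with the $L_\xi$-relations, which follows from right exactness of the tensor product.
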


We do not know whether $\mathcal N$ is overholonomic and whether there exists a Frobenius structure on ${\mathcal N}$. 
It is a coherent $\mathcal {D}^\d_{\widehat{\mathbb P}, \Q}(^\dagger\infty)$-module.
We call the Fourier transform $\f{F}_\pi(\c{N})$ of $\mathcal N$ the 
\emph{modified hypergeometric $\mathcal {D}^\d_{\b{\widehat P}, \Q}(^\dagger\infty)$-module}. 

\begin{proposition}\label{prop:mhyp} The hypergeometric right $\mathcal {D}^\d_{\b{\widehat P}, \Q}(^\dagger\infty)$-module 
$\mathrm{Hyp}_{\pi, !}$
is a direct summand of the modified hypergeometric $\mathcal {D}^\d_{\b{\widehat P}, \Q}(^\dagger\infty)$-module $\f{F}_\pi(\c{N})$.
\end{proposition}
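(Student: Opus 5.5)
The plan is to identify $\mathrm{Hyp}_{\pi,!}$ (right module version) with the Fourier transform of $(^\dagger\infty)\mathbb D\bar\iota_{k+}(^\dagger D_k)\omega_{\widehat{\tilde Y},\mathbb Q}$. Then apply the additive functor $\mathfrak F_\pi$ to the splitting in Corollary \ref{directsummand}. Since $\mathfrak F_\pi$ is defined on coherent $\mathcal D^\dagger_{\widehat{\mathbb P},\mathbb Q}(^\dagger\infty)$-modules by transport of structure along the ring isomorphism (\ref{eqn:Fourieriso}), via Proposition \ref{prop:Huyghe}, it is exact and additive. It therefore carries a direct summand of $\mathcal N$ to a direct summand of $\mathfrak F_\pi(\mathcal N)$. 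So everything reduces to an isomorphism
$$\mathrm{Hyp}_{\pi,!}\cong \mathfrak F_\pi\big((^\dagger\infty)\mathbb D\bar\iota_{k+}(^\dagger D_k)\omega_{\widehat{\tilde Y},\mathbb Q}\big).$$

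First, by the Remark following Proposition \ref{prop:Fourier}, $\mathrm{Hyp}_{\pi,!}\cong\mathfrak F_\pi(\iota_{k!}\omega^\dagger_{G_k})$. Next I identify $\iota_{k!}\omega^\dagger_{G_k}$ as an object of $F\hbox{-}D^b_{\rm ovhol}(\mathbb V_k/K)=F\hbox{-}D^b_{\rm ovhol}(\mathcal D^\dagger_{\widehat{\mathbb P},\mathbb Q}(^\dagger\infty))$. In the six-functor formalism, $\iota_{k!}=\mathbb D\circ\iota_{k+}\circ\mathbb D$, computed using the compactification $\bar\iota:\tilde Y\to\mathbb P$ of $\iota$ with $G_k=\tilde Y_k-D_k$ and $\bar\iota_k^{-1}(\infty)\subset D_k$. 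Concretely, $\omega^\dagger_{G_k}$ is represented by $(^\dagger D_k)\omega_{\widehat{\tilde Y},\mathbb Q}$, and its dual in $F\hbox{-}D^b_{\rm ovhol}(G_k/K)$ is $\mathbb D_{D_k}$ of it. By \cite[I.4.4]{V} this equals $(^\dagger D_k)\mathbb D\omega_{\widehat{\tilde Y},\mathbb Q}\cong(^\dagger D_k)\omega_{\widehat{\tilde Y},\mathbb Q}$, up to the Tate twist which plays no role for the underlying module. Then $\iota_{k+}$ of this is $\widehat{\bar\iota}_{(\infty,D_k),+}(^\dagger D_k)\omega$, which by \cite[1.1.9, 1.1.10]{Caro-L} is $(^\dagger\infty)\bar\iota_{k+}(^\dagger D_k)\omega_{\widehat{\tilde Y},\mathbb Q}$.

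Applying $\mathbb D_\infty$ and again using \cite[I.4.4]{V}, i.e. $\mathbb D_\infty\circ(^\dagger\infty)\cong(^\dagger\infty)\circ\mathbb D$ on coherent $\mathcal D^\dagger_{\widehat{\mathbb P},\mathbb Q}$-complexes, gives
$$\iota_{k!}\omega^\dagger_{G_k}\cong(^\dagger\infty)\mathbb D\bar\iota_{k+}(^\dagger D_k)\omega_{\widehat{\tilde Y},\mathbb Q}.$$
The right-hand side is concentrated in degree $0$ by Corollary \ref{directsummand}, so it is a genuine coherent module and the Fourier transform of modules applies. One should also check that the module-level $\mathfrak F_\pi$ of \ref{eqn:Fourieriso} agrees with Noot-Huyghe's geometric Fourier transform up to the stated shift. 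This is \cite[5.3.1]{Huyghe2004Trans}, and the shift normalization $[1-n]$ was chosen precisely so that they agree.

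The main obstacle is the bookkeeping in the middle step: keeping track of dual/compactly supported pushforward versus $(^\dagger T)$ functors with divisors, and in particular using that the compactification-independence of $\iota_{k!}$ permits computing with $\tilde Y$ even though $\bar\iota^{-1}(\infty)$ may be a proper sub-divisor of $D_k$. I would handle this via the transitivity $(^\dagger\infty)(^\dagger D_k)=(^\dagger D_k)$ on $\widehat{\tilde Y}$ \cite[1.1.8]{Caro-L}. The Frobenius structures are irrelevant for the direct-summand assertion, since $\mathcal N$ carries none.
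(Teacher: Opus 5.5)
Your proposal is correct and follows essentially the same route as the paper. You reduce, via $\mathrm{Hyp}_{\pi,!}\cong\mathfrak F_\pi(\iota_{k!}\omega^\dagger_{G_k})$, to the isomorphism $\iota_{k!}\omega^\dagger_{G_k}\cong(^\dagger\infty)\mathbb D\bar\iota_{k+}(^\dagger D_k)\omega_{\widehat{\tilde Y},\mathbb Q}$; the paper obtains this by identifying both sides with $\mathbb D_\infty\widehat{\bar\iota}_{(\infty,D_k),+}(^\dagger D_k)\omega_{\widehat{\tilde Y},\mathbb Q}$ using \cite[1.1.9]{Caro-L} and \cite[I.4.4]{V}, and then transports the splitting of Corollary \ref{directsummand} through the additive functor $\mathfrak F_\pi$, exactly as you do.
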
 

\begin{proof} By Proposition \ref{prop:Fourier}, we have $\mathrm{Hyp}_{\pi, !}\cong \mathfrak F_\pi(\iota_{k!}\omega^\dagger_{G_k})$.
It suffices to show $$\iota_{k!}\omega^\dagger_{G_k}\cong (^\dagger\infty)\D\bar\iota_{k+}(^\d {D}_k)\omega_{\widehat{\tilde Y}, \Q}.$$
 Denote by $\b{D}_\infty$ the Verdier dual of the category 
$D_\m{coh}^b(\mathcal {D}^\d_{\widehat{\b P},\b{Q}}(^\dagger\infty))$. We have 
\begin{eqnarray*}
\iota_{k!}\omega^\dagger_{G_k}&\cong& \mathbb D_\infty \widehat{\bar\iota}_{(\infty,D_k),+}(^\d {D}_k)\omega_{\widehat{\tilde Y}, \Q},\\
 (^\dagger\infty)\D\bar \iota_{k+}(^\dagger D_k)\omega_{\widehat{\tilde Y},\mathbb Q}
&\cong& (^\dagger\infty)\D\widehat{\bar \iota}_{+}(^\dagger D_k)\omega_{\widehat{\tilde Y},\mathbb Q}
\cong (^\dagger\infty)\D \widehat{\bar\iota}_{(\infty, D_k),+}  (^\dagger {D}_k)\omega_{\widehat{\tilde Y},\mathbb Q} \quad \hbox{(\cite[1.1.9]{Caro-L})}\\
&\cong& \mathbb D_\infty \widehat{\bar\iota}_{(\infty,D_k),+}(^\d {D}_k)\omega_{\widehat{\tilde Y}, \Q}\quad\hbox{(\cite[I.4.4]{V})}.
\end{eqnarray*}
Our assertion follows. 
\end{proof}

\section{Invariant differential operators}

To study the modified hypergeometric right arithmetic $\mathcal D$-module 
$$\mathfrak F_\pi\Big(((\bar\iota_*\omega_{\tilde Y})^\wedge_{\mathbb Q}\otimes_{\c{O}_{\widehat{\b{P}}, \Q}}
\mathcal {D}^\d_{\b{\widehat P}, \Q}(^\dagger\infty))
/\sum_{\xi\in \f{L}(H)}L_{\xi}((\bar\iota_*\omega_{\tilde Y})^\wedge_{\mathbb Q}
\otimes_{\c{O_{\widehat{\b{P}}, \Q}}}\mathcal {D}^\d_{\b{\widehat P}, \Q}(^\dagger\infty))\Big),$$
we need to lift it to a right $\mathcal D^{(m)}$-module, construct a good filtration on the lifting, and study the characteristic cycle. 
But $L_\xi$ is nilpotent in $\mathcal D^{(m)}$ and has not contribution to the characteristic cycle.
We need other invariant differential operators in $\mathcal D^{(m)}$ to describe the lifting and the characteristic cycle. 

Let $H$ be a smooth affine group $R$-scheme, $R[H]$ the affine coordinate ring of $H$, and $I$ the ideal 
of $R[H]$ corresponding to the closed immersion $1_H: \mathrm{Spec}\,R\to H$ of the unit section. We define the Lie algebra $H$
to be the $R$-module 
$$\mathfrak L(H):=\mathrm{Hom}_R(I/I^2, R).$$ 
By \cite[1.4]{Berthelot1}, for the pair $(R[H],I)$ and for each nonnegative integer $m$, we have the $m$-PD envelope $P_{(m)}(I)$  and the 
$m$-PD quotient $P^n_{(m)}(I)$ for each nonnegative integer $n$. We define 
the \emph{universal enveloping algebra $U^{(m)}(\f{L}(H))$ of $\mathfrak L(H)$ of level $m$} by  
$$F^{n}U^{(m)}(\f{L}(H))=\m{Hom}_R (P^n_{(m)}(I), R),\quad U^{(m)}(\f{L}(H))=\varinjlim_n\, F^{n}U^{(m)}(\f{L}(H)).$$
The comultiplication $R[H]\to R[H]\otimes_R R[H]$ induces a homomorphism 
$$P^{n_1+n_2}_{(m)}(I)\to P^{n_1}_{(m)}(I)\otimes_R P^{n_2}_{(m)}(I).$$ 
Given $\eta_1\in F^{n_1}U^{(m)}(\f{L}(H))$ and $\eta_2\in F^{n_2}U^{(m)}(\f{L}(H))$,  we define the product 
$\eta_1\eta_2\in F^{n_1+n_2}U^{(m)}(\f{L}(H))$ to be the composite 
$$P^{n_1+n_2}_{(m)}(I)\to P^{n_1}_{(m)}(I)\otimes_R P^{n_2}_{(m)}(I)\xrightarrow{\mathrm{id}\otimes \eta_2} 
P^{n_1}_{(m)}(I)\xrightarrow{\eta_1} R.$$
We have $R[H]/I\cong R$. The structure morphism $R\to R[H]/I^{n+1}$ provides a section of $R[H]/I^{n+1}\to R[H]/I$. 
By \cite[1.5.3 (ii)]{Berthelot1}, we have 
$$P^1_{(m)}(I)\cong R\oplus I/I^2.$$ So we have
$$F^1U^{(m)}(\mathfrak L(H))\cong R\oplus \mathfrak L(H).$$
We define the Lie bracket on the Lie algebra $\mathfrak L(H)$ to be the commutator taking inside $U^{(m)}(\f{L}(H))$.

Let $\Delta: H\to H\times H$ be the diagonal morphism, and let $\mathcal I_\Delta$ be its ideal sheaf. We have 
the $m$-PD envelope $\mathcal P_{(m)}(\mathcal I_\Delta)$ and the quotient sheaves $\c P^n_{(m)}(\mathcal I_{\Delta})$
for the pair $(\mathcal O_{H\times H},\mathcal I_\Delta)$. 
Let $\mathcal D_H^{(m)}$ be the sheaf  of differential operators on $H$ of level $\leq m$, and let 
$F^n \mathcal D_H^{(m)}$ be the subsheaf of differential operators of order $\leq n$. We have 
$$F^n \mathcal D_H^{(m)}= \mathcal Hom_{\mathcal O_H}(\c P^n_{(m)}(\mathcal I_{\Delta}), \mathcal O_H), 
\quad  \mathcal D_H^{(m)}=\varinjlim_n F^n \mathcal D_H^{(m)}.$$

\begin{proposition}\label{bij-of-inv} 
Let $\Gamma_{\m{inv}}(H, \c D^{(m)}_H)\subset \Gamma(H, \c D^{(m)}_H)$ be the subspace of right invariant differential operators.
We have a canonical isomorphism of $R$-modules $$U^{(m)}(\f{L}(H))\cong \Gamma_{\m{inv}}(H, \c D^{(m)}_H).$$
\end{proposition}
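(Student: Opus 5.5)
The idea is to translate right invariance into a statement about the diagonal ideal and then restrict to the unit section. Let $\mu: H\times H\to H$ be the multiplication. Consider the isomorphism of $R$-schemes
$$\Phi: H\times H\to H\times H,\quad (x,y)\mapsto (x, yx^{-1}),$$
which carries the diagonal $\Delta(H)$ onto $H\times 1_H$. Thus $\Phi^*$ identifies $\mathcal I_\Delta$ with the ideal $\mathcal O_H\otimes I$ of $H\times\{1\}$ in $H\times H$, where the first factor is the one over which $\mathcal P$ is an $\mathcal O_H$-module. By functoriality of $m$-PD envelopes (\cite[1.4]{Berthelot1}) and their compatibility with flat base change, this gives an isomorphism of $\mathcal O_H$-algebras
$$\mathcal P^n_{(m)}(\mathcal I_\Delta)\cong \mathcal O_H\otimes_R P^n_{(m)}(I)$$
compatible with $n$. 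Since $H$ is smooth, $P^n_{(m)}(I)$ is a finite free $R$-module (\cite[1.5.3]{Berthelot1}). Therefore
$$\Gamma(H,F^n\mathcal D^{(m)}_H)=\mathrm{Hom}_{\mathcal O_H}(\mathcal O_H\otimes_R P^n_{(m)}(I),\mathcal O_H)\cong \mathrm{Hom}_R(P^n_{(m)}(I), R[H]).$$
Explicitly, a differential operator $D$ sends $f$ to the function $x\mapsto \langle D, f(\,\cdot\,x)\rangle$ evaluated through the identification.

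The main step is to identify the right invariant operators inside $\mathrm{Hom}_R(P^n_{(m)}(I),R[H])$. I will check that they are exactly the homomorphisms factoring through the constants $R\subset R[H]$, using the following two maps.

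1. Given $\eta\in F^nU^{(m)}(\mathfrak L(H))$, define $D_\eta$ as the composite
$$R[H]\xrightarrow{\mu^*}R[H]\otimes_R R[H]\to P^n_{(m)}(I)\otimes_R R[H]\xrightarrow{\eta\otimes\mathrm{id}} R[H],$$
taken in the order suited to right invariance. The second arrow uses that the comultiplication extends to the $m$-PD envelope, by the universal property applied to the $m$-PD ideal generated by $I\otimes R[H]$.

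2. In the other direction, $D\mapsto \epsilon\circ D$, where $\epsilon$ is evaluation at the unit section $1_H$.

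Right invariance means commutation with the coaction $\mu^*$, and this forces $D=D_{\epsilon\circ D}$. For the converse, invariance of $D_\eta$ follows from coassociativity of $\mu^*$ and its $m$-PD extension. The composite $\eta\mapsto D_\eta\mapsto\epsilon\circ D_\eta$ is the identity by the counit axiom.

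Both maps respect the filtrations, so passing to the direct limit over $n$ gives the canonical isomorphism of $R$-modules. The main obstacle is making the level-$m$ PD structures rigorous. One must check that $\mu^*$ and $\Phi^*$ induce maps of $m$-PD envelopes compatible with the $m$-PD filtrations, i.e., that they send $I^{\{n\}}$ into the appropriate pieces. I would verify this via the universal property of $P^n_{(m)}$ together with the fact that $\Phi$ is an isomorphism. Everything else is formal Hopf-algebra bookkeeping, and the comparison with $\Gamma_{\mathrm{inv}}$ can be checked after flat base change to $R$-points, or even only on the generic fiber, because all modules involved are torsion free.
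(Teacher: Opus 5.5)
Your proposal is correct and follows essentially the same route as the paper: the shearing isomorphism $(x,y)\mapsto (x,yx^{-1})$, which is exactly the paper's $A$, carries $\Delta$ onto $H\times 1_H$. Flat base change then identifies $\mathcal P^n_{(m)}(\mathcal I_\Delta)$ with $\mathcal O_H\otimes_R P^n_{(m)}(I)$, and after dualizing the right invariant sections are exactly the constant ones, $1\otimes F^nU^{(m)}(\mathfrak L(H))$. The only difference is presentation: you make the invariance step explicit via coassociativity, the counit and the inverse map $D\mapsto \epsilon\circ D$, whereas the paper just notes that after shearing, right translation acts only on the $\mathcal O_H$ factor.
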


\begin{proof} 
Let $A$ be the isomorphism $$H\times H \to H\times H, \quad (g,h)\mapsto (g, hg^{-1}),$$ 
We have a commutative diagram
$$\begin{tikzcd}
H \arrow[rd, "{(\m{id}, 1_H)}"'] \arrow[r, "\Delta"] & H\times H \arrow[d, "{A}"] \\
                                                  & H\times H,
\end{tikzcd}$$
So $A$ induces an isomorphism 
$$A^*: \c P^n_{(m)}(\mathcal I_{(\m{id}, 1_H)})\stackrel\cong\to \c P^n_{(m)}(\mathcal I_{\Delta}),$$ where $\mathcal I_{(\m{id}, 1_H)}$ 
is the ideal sheaf for the closed immersion $(\m{id}, 1_H)$. 
By \cite[1.4.6]{Berthelot1}, we have  $$\c{O}_{H}\otimes_R P^n_{(m)}(I)\cong \c P^n_{(m)}(\mathcal I_{(\m{id}, 1_H)}).$$
Since $A(g_1h,g_2h)=(g_1h,g_2g_1^{-1})$, under the identification 
$$A^*:\c{O}_{H}\otimes_R P^n_{(m)}(I)\stackrel\cong\to \c P^n_{(m)}(\mathcal I_{\Delta}),$$
the right multiplication by $H$ on $\c P^n_{(m)}(I_{\Delta})$ is identified with the right multiplication  of $H$
on $\c{O}_{H}\otimes_R P^n_{(m)}(I)$ acting trivially on the factor $P^n_{(m)}(I)$ and acting as usual on $\c{O}_{H}$.
We have 
\begin{align*}
F^n\c D^{(m)}_{H}&= \mathcal Hom_{\mathcal O_H}(\c P^n_{(m)}(\mathcal I_{\Delta}), \mathcal O_H)
\cong \mathcal Hom_{\mathcal O_H}(\c{O}_{H}\otimes_R P^n_{(m)}(I), \mathcal O_H)\\
&\cong \c{O}_{H}\otimes_R \m{Hom}_R (P^n_{(m)}(I), R)
\cong \c{O}_{H}\otimes_R F^{n}U^{(m)}(\f{L}(H)).
\end{align*} 
A global section of $\c{O}_{H}\otimes_R  F^{n}U^{(m)}(\f{L}(H))$
is right invariant if and only if it lies in $1\otimes F^{n}U^{(m)}(\f{L}(H))$.
Moreover, we have 
\[U^{(m)}(\f{L}(H))= \Gamma_{\mathrm{inv}}\Big(H, \c{O}_{H}\otimes_RU^{(m)}(\f{L}(H))\Big)
\cong \Gamma_{\mathrm{inv}}(H, \c D^{(m)}_{H}).\qedhere\]
\end{proof}

\begin{definition}\label{defn:rightinvoper} ${}$
\begin{enumerate}[(i)]
\item For any $\xi\in \mathfrak L(H)$, regard $\xi$ as an element in $U^{(1)}(\mathfrak L(H))$. 
We denote the right invariant differential operator in $\Gamma_{\mathrm{inv}}(H, \mathcal D^{(1)}_H)$ corresponding to $\xi$ 
by $L_\xi$. 
\item Note that $I/I^2$ is a free module over $R[H]/I\cong R$. Let $s$ be its rank. Choose $t_1, \ldots, t_s\in I$ so that 
their images in $I/I^2$ form a basis. By \cite[1.5.3 (ii)]{Berthelot1}, 
$P^n_{(m)}(I)$ is a free $R$-module with the basis $t_1^{\{k_1\}_{(m)}}\cdots t_s^{\{k_s\}_{(m)}}$ $(k_i\geq 0, \; k_1+\cdots+k_s\leq n)$. 
Let $\xi_1^{\langle k_1\rangle_{(m)}}\cdots \xi_s^{\langle k_s\rangle_{(m)}}$ $(k_i\geq 0, \; k_1+\cdots+k_s\leq n)$ be the dual basis
for $F^{n}U^{(m)}(\f{L}(H))=\m{Hom}_R (P^n_{(m)}(I), R)$. In particular, $\{\xi_1, \ldots, \xi_s\}$ is a basis of the Lie algebra $\mathfrak L(H)$.
Each element $\xi_1^{\langle k_1\rangle_{(m)}}\cdots \xi_s^{\langle k_s\rangle_{(m)}}$
defines a right invariant differential operator in $\Gamma_{\m{inv}}(H, \c D^{(m)}_H)$
which we denote by $L_{\xi_1^{\langle k_1\rangle_{(m)}}\cdots \xi_s^{\langle k_s\rangle_{(m)}}}$. It depends on 
the choice of $t_1, \ldots, t_s$. 
\end{enumerate}
\end{definition}

For any differential operator $P$ in $F^r\mathcal D^{(m)}_H$, its image $\sigma_r(P)$ in
$\mathrm{Gr}^r(\mathcal D^{(m)}_H)$ is called the symbol of $P$. 
By abuse of notation, for any vector field $L$ on $H$, we denote its symbol also by $L$, and we denote by $L^r$ the 
$r$-th power of $L$ either in $\mathcal D^{(m)}_H$ or in $\mathrm{Gr}(\mathcal D^{(m)}_H)$. 

\begin{lemma}\label{lm:pdpower} Notation as above. For any $1\leq r\leq p^m$ and $1\leq i\leq s$, we have
$$\sigma_r((-1)^r r! L_{\xi_i^{\langle r\rangle_{(m)}}})=L^r_{\xi_i}.$$ 
\end{lemma}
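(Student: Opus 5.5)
The plan is to reduce the statement to a computation in the graded algebra $\mathrm{Gr}\,U^{(m)}(\f{L}(H))$, and then transport it to $\mathrm{Gr}(\mathcal D^{(m)}_H)$ through the isomorphism of Proposition \ref{bij-of-inv}.

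\textbf{Step 1: the divided powers are ordinary monomials.} For $0\le k\le p^m$ we have $q^{(m)}_k\in\{0,1\}$, so $q^{(m)}_k!=1$. Hence $t_i^{\{k\}_{(m)}}=t_i^k$ in $P^n_{(m)}(I)$, and likewise for every basis monomial $t^{\{\mathbf k\}_{(m)}}$ of total degree $|\mathbf k|\le p^m$. So in degrees $\le r\le p^m$, $P^r_{(m)}(I)$ is the ordinary $R[H]/I^{r+1}$ with its monomial basis. Consequently $\xi_i^{\langle r\rangle_{(m)}}$ is the functional dual to $t_i^r$: it takes the value $1$ on $t_i^r$ and $0$ on all other monomials of degree $\le r$.

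\textbf{Step 2: compute $\xi_i^r$ in $U^{(m)}(\f{L}(H))$.} The comultiplication satisfies
$$\Delta^*(t_j)\equiv t_j\otimes 1+1\otimes t_j \mod I\otimes I,$$
and the correction in $I\otimes I$ has total degree $\ge2$. Iterating the definition of the product, $\xi_i^r$ is the composite of $r$ copies of $\xi_i$ applied to the $r$-fold comultiplication of a monomial $t^{\mathbf k}$ with $|\mathbf k|\le r$. Since $\xi_i$ kills $1$, kills $t_j$ for $j\neq i$, and kills $I^2$, only the terms where each tensor factor receives exactly one copy of $t_i$ survive. For $|\mathbf k|=r$ this gives $\xi_i^r(t^{\mathbf k})=r!$ if $\mathbf k=re_i$ and $0$ otherwise. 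Monomials of degree $<r$ only contribute to $F^{r-1}$. Therefore
$$\xi_i^r\equiv r!\,\xi_i^{\langle r\rangle_{(m)}}\mod F^{r-1}U^{(m)}(\f{L}(H)).$$

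\textbf{Step 3: compare products in $U^{(m)}$ with composition in $\mathcal D^{(m)}_H$ (the main obstacle).} This is where the sign $(-1)^r$ must come from. The identification $A^*$ in Proposition \ref{bij-of-inv} uses $(g,h)\mapsto(g,hg^{-1})$, so the coordinates $t_j$ of $P_{(m)}(I)$ become "$hg^{-1}$-coordinates" on $\c P_{(m)}(\mathcal I_\Delta)$. I will check on $F^1$ that this gives $L_\xi=-\tilde\xi$, where $\tilde\xi$ is the derivation induced by $\xi$ through the coordinates of $\c P^1(\mathcal I_\Delta)=\mathcal O\oplus\Omega^1$. The point is that $\tilde t_j=1\otimes t_j-t_j\otimes1$ up to $I^2$ after inversion, so the natural differentiation against the first factor carries a sign.

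More conceptually, the map $\eta\mapsto L_\eta$ should be multiplicative after twisting by the antipode $S$. On $\mathrm{Gr}$ the antipode acts on $\mathrm{Gr}^r$ by $(-1)^r$, because $S(t_j)\equiv -t_j$ mod $I^2$. Granting this, composition in $\mathcal D^{(m)}_H$ of $r$ copies of $L_{\xi_i}$ corresponds, at the level of symbols, to $(-1)^r\xi_i^r$. Possible reversal of the order of factors is irrelevant, since all $r$ factors are equal.

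\textbf{Step 4: conclusion.} Combining Steps 2 and 3 gives
$$L_{\xi_i}^r\equiv (-1)^r r!\,L_{\xi_i^{\langle r\rangle_{(m)}}}\mod F^{r-1}\mathcal D^{(m)}_H,$$
which is the claimed identity of symbols $\sigma_r$.

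I expect Step 3 to be the delicate part. I would first settle it by a direct check on $F^1$ together with the explicit formula for composition of operators $\mathcal Hom(\c P^{n_1},\mathcal O)\times\mathcal Hom(\c P^{n_2},\mathcal O)\to\mathcal Hom(\c P^{n_1+n_2},\mathcal O)$ via $\delta^{n_1,n_2}$ from \cite{Berthelot1}. Only the leading-order terms matter, and there all $m$-PD structures are trivial because $r\le p^m$.
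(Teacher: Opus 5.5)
Your Steps 1 and 2 are correct, and together with one more fact they already give a proof. The gap is Step 3, which produces a sign that is not there. The bijection $\eta\mapsto L_\eta$ of Proposition \ref{bij-of-inv} is a filtered anti-homomorphism with no twist. With $A(g,h)=(g,hg^{-1})$ one has $L_\eta f(g)=\eta(u\mapsto f(ug))$, hence
\[ L_{\eta_1}L_{\eta_2}f(g)=(\eta_1\otimes\eta_2)\big((u_1,u_2)\mapsto f(u_2u_1g)\big)=L_{\eta_2\eta_1}f(g), \]
because the product in $U^{(m)}(\mathfrak L(H))$ is $(\eta_1\otimes\eta_2)\circ c$. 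The convention $A(g,h)=(g,h^{-1}g)$ written inside the paper's proof gives $L_\eta f(g)=\eta(u\mapsto f(gu^{-1}))$ and the same conclusion. So $L_{\xi_i}^r=L_{\xi_i^r}$ exactly, and your Step 2 yields $L_{\xi_i}^r\equiv r!\,L_{\xi_i^{\langle r\rangle_{(m)}}}$ modulo $F^{r-1}\mathcal D^{(m)}_H$, with no $(-1)^r$.

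Your antipode heuristic, applied consistently, gives the same result. If $\eta\mapsto L_{S\eta}$ were multiplicative, each of the $r$ factors $L_{\xi_i}=L_{S(S^{-1}\xi_i)}$ would contribute a $-1$, and $S^{-1}(\xi_i^r)$ would contribute $(-1)^r$ on $\mathrm{Gr}^r$; these cancel. A comparison like $L_\xi=-\tilde\xi$ with an auxiliary derivation cannot affect the result, since both sides of the lemma come from the same identification $A^*$.

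So no correct argument can yield the displayed sign: for odd $r$ the statement as printed is false. At $r=1$ one has $\xi_i^{\langle 1\rangle_{(m)}}=\xi_i$, so the statement asserts $\sigma_1(-L_{\xi_i})=\sigma_1(L_{\xi_i})$. That is impossible, because $L_{\xi_i}$ is a nonzero vector field and $\mathrm{Gr}^1(\mathcal D^{(m)}_H)$ is torsion-free. Concretely, for $H=\mathbb G_a$ one computes $L_{\xi^{\langle k\rangle_{(m)}}}=\partial^{[k]}$ and $L_\xi^r=\partial^r=r!\,\partial^{[r]}$.

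The paper's proof has the matching slip. It uses $L_{\xi_i}(A^*(1\otimes t_j))=\delta_{ij}$ together with (\ref{aln:lastex}), which at $r=1$ gives $\mathrm{ev}_1(L_{\xi_i}(t_j))=-\delta_{ij}$. Hence $\mathrm{ev}_1(L_{\xi_i}^r(t_i^r))=(-1)^r r!$ and $U(t_i^r)=r!$, not $(-1)^r r!$ as the paper records. What is actually true is $\sigma_r(r!\,L_{\xi_i^{\langle r\rangle_{(m)}}})=L_{\xi_i}^r$. Your route (compute $\xi_i^r$ in $U^{(m)}$, then transport by anti-multiplicativity) proves this more cleanly than the paper's direct Leibniz evaluation at $1$.

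Corollary \ref{heighest-symbol-of-nabla} should lose its $(-1)^r$ as well. This does not affect the later arguments, since Proposition \ref{quo-gr} and Corollary \ref{nil-gr} use only annihilators and zero loci of these symbols.
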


\begin{proof} We first make preparations for the proof of the lemma. Let $\mathrm{ev}_1: R[H]\to R$ be 
the homomorphism corresponding to $1_H: \mathrm{Spec}\,R\to H$. 
Since the structure morphism $R\to R[H]$ is a section of $\mathrm{ev}_1$, we have 
$$R[H]=R\oplus I.$$ 
By the axiom $g\cdot 1= 1\cdot g=g$, the composites
$$R[H]\stackrel{c}\to R[H]\otimes_R R[H]\stackrel{\mathrm{id}_{R[H]}\otimes\mathrm{ev}_1}
{\underset{\mathrm{ev}_1\otimes \mathrm{id}_{R[H]}}\rightrightarrows} R[H]$$
are identity, where $c$ is the comultiplication. 
So for any $t\in I$, we have 
\begin{align*}
c(t)-t\otimes 1&\in \mathrm{ker}(\mathrm{id}_{R[H]}\otimes\mathrm{ev}_1: R[H]\otimes_R R[H]\to R[H])=R[H]\otimes_R I,\nonumber\\
c(t)-1\otimes t&\in \mathrm{ker}(\mathrm{ev}_1\otimes \mathrm{id}_{R[H]}: R[H]\otimes_R R[H]\to R[H])=I\otimes_R R[H],\nonumber\\
c(t)-t\otimes 1-1\otimes t&\in(R[H]\otimes_R I)\cap (I\otimes_R R[H])=I\otimes_R I.
\end{align*} 
We can thus write 
\begin{align}\label{aln:c(t)}
c(t)=t\otimes 1+1\otimes t+\sum_{\lambda} t'_\lambda \otimes t''_\lambda
\end{align}
for some $t'_\lambda, t''_\lambda\in I$. 
Let $d: R[H]\otimes_R R[H]\to R[H]$ be the homomorphism corresponding to the morphism 
$$H\to H\times H, \quad g\mapsto (g, g^{-1}).$$ 
By the axiom $gg^{-1}=1$, the composite 
$$R[H]\stackrel{c}\to R[H]\otimes_R R[H]\stackrel{d}\to R[H]$$ factors through $\mathrm{ev}_1:R[H]\to R$. 
So for any $t\in I$, we have $dc(t)=0$. Substituting the formula (\ref{aln:c(t)}), we get
$$0=t+ i^*(t)+\sum_{\lambda} t'_\lambda i^*(t''_\lambda),$$
where $i^*: R[H]\to R[H]$ is the homomorphism corresponding to the inversion on $H$. 
Note that $i^*$ preserves the ideal $I$. The above equation implies 
$$i^*(t)\equiv -t\mod I^2.$$
Let $\tau$ be the automorphism of $R[H]\otimes R[H]$ permuting the two factors. By the definition of $A(g, h)=(g, h^{-1}g)$, we 
have 
\begin{align*}
A^*(1\otimes t)&= ((\mathrm{id}_{R[H]}\otimes i^*)\circ \tau\circ c)(t)\\
&= 1\otimes i^*(t)+t\otimes 1+\sum_{\lambda} t''_\lambda \otimes i^*(t'_\lambda)\\
&\equiv t\otimes 1-1\otimes t \mod (I\otimes_R I + R[H]\otimes_R I^2).
\end{align*}
So we have 
$$A^*(1\otimes t)\equiv -1\otimes t\mod (I\otimes_R R[H]+ R[H]\otimes_R I^2)$$
for any $t\in I$. This implies that 
\begin{align}\label{aln:finalex}
A^*(1\otimes t_1^{k_1}\cdots t_s^{k_s})\equiv (-1)^{k_1+\cdots +k_s}
1\otimes t_1^{k_1}\cdots t_s^{k_s}\mod (I\otimes_R R[H]+ R[H]\otimes_R I^{k_1+\cdots +k_s+1}).
\end{align}
Recall that any differential operator $P$ in 
$$\mathcal D^{(m)}_H=\varinjlim_n \mathrm{Hom}_{\mathcal O_H}(\mathcal P_{(m)}^n(\mathcal I_\Delta),\mathcal O_H)$$
defines a section of $\mathcal End_{R}(\mathcal O_H)$ so that
$$P(f'\otimes f'')=f'P(f'')$$ for any sections $f'$ and $f''$ of $\mathcal O_H$. 
Let $L$ be a vector field regarded as a differential operator in $\mathcal D^{(m)}_H$. If $f'\in I$, we have 
$$\mathrm{ev}_1(L^r(f'\otimes f''))=\mathrm{ev}_1(f'L^r(f''))=0$$ 
since $\mathrm{ev}_1(f')=0$. If $f''_1,\ldots, f''_{r+1}\in I$, then applying the Leibniz rule 
to $L^r(f''_1\cdots f''_{r+1})$, we get $\mathrm{ev}_1(L^r( f''_1\cdots f''_{r+1}))=0$ and hence 
$$\mathrm{ev}_1(L^r(f'\otimes f''_1\cdots f''_{r+1}))=\mathrm{ev}_1(f'L^r( f''_1\cdots f''_{r+1}))=0.$$ 
Combined with the equation (\ref{aln:finalex}), we get 
\begin{align}\label{aln:lastex}
\mathrm{ev}_1( L^r(A^*(1\otimes t_1^{k_1}\cdots t_s^{k_s})))=
(-1)^{k_1+\cdots+k_s}\mathrm{ev}_1( L^r(t_1^{k_1}\cdots t_s^{k_s}))
\end{align}
if $r\leq k_1+\cdots+k_s$. 

We are now ready to prove the lemma. It suffices to show 
$$(-1)^rr! L_{\xi_i^{\langle r\rangle_{(m)}}}-L^r_{\xi_i}\in \Gamma(H, F^{r-1} \c D^{(m)}_H).$$
By Proposition \ref{bij-of-inv}, the right invariant differential operator $L^r_{\xi_i}$ corresponds to 
an element $U$ in $U^{(m)}(\f{L}(H))$ such that 
$$L^r_{\xi_i}(A^*(1\otimes t_1^{\{k_1\}_{(m)}}\cdots t_s^{\{k_s\}_{(m)}}))=U(t_1^{\{k_1\}_{(m)}}\cdots t_s^{\{k_s\}_{(m)}})
\quad (k_i\geq 0, k_1+\cdots+k_s\leq r).$$
So $L^r_{\xi_i}(A^*(1\otimes t_1^{\{k_1\}_{(m)}}\cdots t_s^{\{k_s\}_{(m)}}))$ is a constant function and we 
have
$$U(t_1^{\{k_1\}_{(m)}}\cdots t_s^{\{k_s\}_{(m)}})=\mathrm{ev}_1(
L^r_{\xi_i}(A^*(1\otimes t_1^{\{k_1\}_{(m)}}\cdots t_s^{\{k_s\}_{(m)}}))).$$
By the definition of $L_{\xi}$, we have $$L_{\xi_i}(A^*(1\otimes t_j))=\delta_{ij}.$$
On other hand, since $k_j\leq r\leq p^m$, we have $t_j^{\{k_j\}_{(m)}}=t_j^{k_j}$. By the equation (\ref{aln:lastex})
and the Leibnitz rule, we have 
\begin{align*}
&U(t_1^{\{k_1\}_{(m)}}\cdots t_s^{\{k_s\}_{(m)}})=\mathrm{ev}_1(L^r_{\xi_i}(A^*(1\otimes t_1^{\{k_1\}_{(m)}}\cdots t_s^{\{k_s\}_{(m)}})))
=\mathrm{ev}_1(L^r_{\xi_i}(A^*(1\otimes t_1^{k_1}\cdots t_s^{k_s})))
\\=&(-1)^{k_1+\cdots+k_s}\mathrm{ev}_1(L^r_{\xi_i}(t_1^{k_1}\cdots t_s^{k_s}))
=\begin{cases} 0 &\hbox{if } r<k_1+\cdots+k_s,\\
0 &\hbox{if }r=k_1+\cdots+k_s \hbox{ but }k_i\not=r\\
(-1)^r r!&\hbox{if }r=k_1+\cdots+k_s \hbox{ and }k_i=r.
\end{cases}
\end{align*}
So $(-1)^r r!\xi_i^{\langle r_i\rangle}-U\in F^{r-1}U^{(m)}(\mathfrak L(H))$. Hence 
$(-1)^rr! L_{\xi_i^{\langle r\rangle_{(m)}}}-L^r_{\xi_i}\in \Gamma(H, F^{r-1} \c D^{(m)}_H)$.
\end{proof}

For right invariant differential operators in $\mathcal D^{(m)}_{H, \mathbb Q}= \mathcal D^{(m)}_H\otimes_{\mathbb Z}\mathbb Q$, 
we have a more direct description.

\begin{proposition}\label{inv-generated-by-L} The operators $L_{\xi_1}^{i_1}\cdots L_{\xi_s}^{i_s}$ $(i_1, \ldots, i_s\geq 0)$
form a basis of $\Gamma_{\mathrm{inv}}(H, \mathcal D^{(m)}_{H, \mathbb Q})$ over $K$.
\end{proposition}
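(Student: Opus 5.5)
The plan is to reduce everything to $U^{(m)}(\mathfrak L(H))\otimes_R K$ through Proposition \ref{bij-of-inv}, and then compare filtrations. Since taking global sections and right invariants commutes with $\otimes_{\mathbb Z}\mathbb Q$ ($H$ is affine, and the invariance condition is a linear condition defined over $R$), Proposition \ref{bij-of-inv} gives
$$\Gamma_{\mathrm{inv}}(H,\mathcal D^{(m)}_{H,\mathbb Q})\cong U^{(m)}(\mathfrak L(H))\otimes_R K=\varinjlim_n F^nU^{(m)}(\mathfrak L(H))\otimes_RK.$$
By Definition \ref{defn:rightinvoper}(ii), $F^nU^{(m)}(\mathfrak L(H))\otimes_RK$ is a $K$-vector space with basis $\xi_1^{\langle k_1\rangle_{(m)}}\cdots\xi_s^{\langle k_s\rangle_{(m)}}$ with $\sum k_i\le n$. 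Its dimension is therefore $\binom{n+s}{s}$, which equals the number of monomials $L_{\xi_1}^{i_1}\cdots L_{\xi_s}^{i_s}$ with $\sum i_j\le n$.

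Each such monomial is a right invariant operator of order $\le \sum i_j$, because right invariant operators are closed under composition. Hence the monomials of total degree $\le n$ all lie in $F^n\Gamma_{\mathrm{inv}}$. By the dimension count, it suffices to show that they are linearly independent. Equivalently, it suffices to show that their symbols in $\mathrm{Gr}^n$ are independent for each exact degree $n$, and then to induct on $n$.

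For the symbol computation, I would work on $\mathrm{Gr}\,\mathcal D^{(m)}_{H,\mathbb Q}$. Over $\mathbb Q$ this is the symmetric algebra $\mathcal O_{H,\mathbb Q}[\,\cdot\,]$ on the tangent sheaf, since the divided-power distinctions disappear after inverting $p$. The symbol of $L_{\xi_1}^{i_1}\cdots L_{\xi_s}^{i_s}$ is the product of the symbols $L_{\xi_j}$ in this commutative algebra. The vector fields $L_{\xi_1},\dots,L_{\xi_s}$ form an $\mathcal O_H$-basis of the tangent sheaf. This follows from the identification $\mathcal O_H\otimes_R F^1U^{(m)}\cong F^1\mathcal D^{(m)}_H$ in the proof of Proposition \ref{bij-of-inv}, restricted to $\mathfrak L(H)$. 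Consequently, the monomials of exact degree $n$ in these vector fields are $\mathcal O_{H,\mathbb Q}$-linearly independent in $\mathrm{Gr}^n$, and in particular $K$-linearly independent.

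The induction then runs as follows. Suppose a $K$-linear combination of monomials of degree $\le n$ vanishes. Its degree-$n$ part has zero symbol, so all the degree-$n$ coefficients vanish, and we reduce to degree $\le n-1$. Linear independence plus the dimension match gives that the monomials of degree $\le n$ form a basis of $F^n\otimes K$, and passing to the limit over $n$ proves the statement.

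The main obstacle is justifying that $\mathrm{Gr}\,\mathcal D^{(m)}_{H,\mathbb Q}$ is the polynomial algebra, and that the $L_{\xi_j}$ give a basis of the tangent sheaf in a way compatible with this structure. Lemma \ref{lm:pdpower} handles the powers $r\le p^m$ integrally. Over $\mathbb Q$, I would instead just use that $\mathcal D^{(m)}_{H,\mathbb Q}=\mathcal D_{H}\otimes\mathbb Q$ is the usual ring of differential operators, whose graded ring is $\mathrm{Sym}(T_H)$.
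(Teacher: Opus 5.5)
Your argument is correct, but it splits the work differently from the paper.

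The paper proves a stronger, PBW-type statement. It uses $\mathrm{Gr}(\mathcal D^{(m)}_{H,\mathbb Q})\cong \mathrm{Sym}_{\mathcal O_H}\mathcal T_H\otimes_{\mathbb Z}\mathbb Q$ and the fact that $L_{\xi_1},\ldots,L_{\xi_s}$ is an $\mathcal O_H$-basis of $\mathcal T_H$. From this it concludes that the monomials $L_{\xi_1}^{i_1}\cdots L_{\xi_s}^{i_s}$ form an $\mathcal O_H$-basis of all of $\mathcal D^{(m)}_{H,\mathbb Q}$, so both spanning and independence hold globally. It then passes to invariants, which tacitly uses one more fact: the coefficients of a right invariant operator in this basis of invariant operators are right invariant functions, hence constants.

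You use the same symbol computation only for linear independence. You get spanning instead from a rank count: the proof of Proposition \ref{bij-of-inv} gives a filtered identification $\Gamma_{\mathrm{inv}}(H,F^n\mathcal D^{(m)}_H)\cong F^nU^{(m)}(\mathfrak L(H))$, free of rank $\binom{n+s}{s}$, and you tensor this with $\mathbb Q$.

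What your route buys:
\begin{itemize}
\item You never need to re-argue that invariant coefficients are constant; that fact is already built into the proof of Proposition \ref{bij-of-inv}.
\item Compatibility with the order filtration is explicit: the monomials of degree $\le n$ form a $K$-basis of $F^nU^{(m)}(\mathfrak L(H))\otimes_R K$.
\end{itemize}
The price is two checks: that taking invariants commutes with $\otimes_{\mathbb Z}\mathbb Q$, and that the identification of Proposition \ref{bij-of-inv} respects the filtrations. The first is fine since invariants form the kernel of an $R$-linear map and $\mathbb Q$ is flat; the second is visible in that proof. The paper's route instead yields the more informative global description of $\mathcal D^{(m)}_{H,\mathbb Q}$ as an $\mathcal O_H$-module.
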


\begin{proof}  We have $$\mathcal D^{(0)}_{H,\mathbb Q}\cong \mathcal D^{(m)}_{H,\mathbb Q},\quad
\mathrm{Gr}(\mathcal D^{(0)}_{H,\mathbb Q})\cong \mathrm{Sym}_{\mathcal O_H}\mathcal T_H\otimes_{\mathbb Z}\mathbb Q,$$
where $\mathcal T_H$ is the sheaf of tangent vectors on $H$. Since $L_{\xi_i}$ $(i=1,\ldots,s)$ form a basis of $\mathcal T_H$ over 
$\mathcal O_H$, the operators $L_{\xi_1}^{i_1}\cdots L_{\xi_s}^{i_s}$ $(i_1, \ldots, i_s\geq 0)$ form a basis of 
$\mathcal D^{(m)}_{H,\mathbb Q}$ over $\mathcal O_H$. So $L_{\xi_1}^{i_1}\cdots L_{\xi_s}^{i_s}$ $(i_1, \ldots, i_s\geq 0)$ form a basis 
of the space of right invariant differential operators over $K$.
\end{proof}

Let $V$ be a smooth $R$-scheme with a left $H$-action $a: H\times V\to V$. The composite
$$V\xrightarrow{(1_H,\m{id})}H\times V\xrightarrow{a}V$$
is the identity. Let $\gamma$ be the
morphism of left $\c D_{H\times V}^{(m)}$-modules $$\gamma: \c D_{H\times V}^{(m)}\to a^* \c D_{ V}^{(m)}, \quad Q\mapsto Q\cdot (1\otimes 1).$$
We then have the morphism of  left $\c D_V^{(m)}$-modules 
$$(1_H,\m{id})^*(\gamma): (1_H,\m{id})^*\c D_{H\times V}^{(m)}\to (1_H,\m{id})^*a^* \c D_{ V}^{(m)}\cong  \c D_{ V}^{(m)}.$$
For any $P\in\Gamma(H, {\c D}_{H}^{(m)})$, we define $P^V\in\Gamma(V, \c D_{ V}^{(m)})$
by $$P^V=(1_H,\m{id})^*(\gamma)\Big(1\otimes(P\boxtimes 1)\Big).$$
Here we use the fact that $\c D_{H\times V}^{(m)}\cong \c D_H^{(m)}\boxtimes \c D_V^{(m)}$. For simplicity, we often write 
$P^V$ as $P$. 

\begin{lemma}\label{description-alpha}${}$ 

\begin{enumerate}[(i)] 
\item For any $\eta_1,\cdots,\eta_k\in\f{L}(H)$, we have 
$$(L_{\eta_1}\cdots L_{\eta_k})^V=L^V_{\eta_1}\cdots L^V_{\eta_k}.$$ 
\item For any $1\leq r\leq p^m$ and $1\leq i\leq s$, we have
$$\sigma_r((-1)^rr! L_{\xi_i^{\langle r\rangle_{(m)}}}^V)=(L_{\xi_i}^V)^r.$$ 
\end{enumerate}
\end{lemma}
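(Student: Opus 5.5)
The plan is to show first that $P\mapsto P^V$ is a filtered ring homomorphism
$$\alpha:\Gamma(H,\mathcal D^{(m)}_H)\to\Gamma(V,\mathcal D^{(m)}_V),$$
at least on right invariant operators (or on all of $\Gamma(H,\mathcal D_H^{(m)})$). Part (i) then follows because $L^V_\eta=(L_\eta)^V$ is exactly the definition of the vector field $L^V_\xi$ given before Lemma \ref{toroidalDmodiso}. Part (ii) follows by applying $\alpha$ to Lemma \ref{lm:pdpower}.

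For multiplicativity, write $\gamma(Q)=Q\cdot(1\otimes 1)$, where $1\otimes1\in a^*\mathcal D_V^{(m)}=\mathcal O_{H\times V}\otimes_{a^{-1}\mathcal O_V}a^{-1}\mathcal D^{(m)}_V$. The key observation is that operators of the form $P\boxtimes 1$ act on $a^*\mathcal D_V^{(m)}$ through the $H$-variable only. Concretely, $a^*\mathcal D^{(m)}_V$ corresponds to the left $\mathcal D_{H\times V}$-module $\mathcal D_{H\times V\to V}$ of $a$. I would interpret $P^V$ as follows: for a local function $f$ on $V$, $P^V(f)=(1_H,\mathrm{id})^*\big((P\boxtimes1)(a^*f)\big)$. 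This is the usual description of the infinitesimal action, and it can be checked directly from $\gamma$ using $P(f'\otimes f'')=f'P(f'')$.

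With this description, multiplicativity for right invariant $P_1,P_2$ reduces to the identity
$$(P_2\boxtimes 1)(a^*f)=a^*(P_2^V f),$$
which holds when $P_2$ is right invariant (with the appropriate convention: a left action on $V$ paired with right invariant operators on $H$). The proof of this identity goes as follows. Right invariance of $P_2$ means it commutes with translation by right multiplication $R_h$. Using $a(gh,v)=a(g,a(h,v))$, compare the two pullbacks along $m\times\mathrm{id}_V$ and $\mathrm{id}_H\times a$ from $H\times H\times V$. We have $(P_2\boxtimes1)(a^*f)$ evaluated at $(g,v)$ equal to $(P_2\boxtimes1)\big((\mathrm{pr}\circ(m\times\mathrm{id}))^*f\big)$ restricted to $h=1$, with $v$ replaced by $a(g,v)$. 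It is cleaner to verify this at the level of $m$-PD envelopes: $P_2$ is a map $\mathcal P^n_{(m)}(\mathcal I_\Delta)\to\mathcal O_H$, and $a$ induces a map of PD envelopes $\mathcal P^n_{(m),V}\to \mathcal O_H\otimes\mathcal P^n_{(m),V}$ compatible with $\Delta$. Applying $(P_1\boxtimes 1)$ afterwards and restricting to $1_H$ then gives $P_1^V(P_2^V f)$, so $(P_1P_2)^V=P_1^VP_2^V$. Since differential operators of level $m$ are determined by their action on functions (after tensoring with $\mathbb Q$, and in general by the PD-envelope formulation), this proves multiplicativity.

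The main obstacle is making this identity rigorous integrally in level $m$, rather than just over $\mathbb Q$. Over $\mathbb Q$ it is the standard fact that the infinitesimal action is an algebra homomorphism. Since $\Gamma(V,\mathcal D^{(m)}_V)$ is $p$-torsion free ($V$ is smooth over $R$), it suffices to prove equality after $\otimes\mathbb Q$, so I would reduce to that case.

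For (ii), $\alpha$ preserves the order filtration, because $\gamma$ and restriction along $(1_H,\mathrm{id})$ respect order. Hence it induces a map on symbols that is multiplicative by (i). Lemma \ref{lm:pdpower} gives
$$(-1)^rr!L_{\xi_i^{\langle r\rangle_{(m)}}}-L_{\xi_i}^r\in F^{r-1}\mathcal D_H^{(m)}.$$
Applying $\alpha$ and using (i) gives
$$(-1)^rr!L^V_{\xi_i^{\langle r\rangle_{(m)}}}-(L^V_{\xi_i})^r\in F^{r-1}\mathcal D^{(m)}_V,$$
which is the claim.
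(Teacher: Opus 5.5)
Your proof is correct, but for (i) it takes a different route from the paper's. Your part (ii) is the same as the paper's, and the paper likewise tacitly uses that $P\mapsto P^V$ preserves the order filtration.

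The paper works directly in the left $\mathcal D^{(m)}_{H\times V}$-module $a^*\mathcal D^{(m)}_V$:
\begin{itemize}
\item Since $a_*$ sends the vector field $(L_\eta,0)$ to $1\otimes L^V_\eta$, one has $(L_\eta\boxtimes 1)\cdot(1\otimes Q)=1\otimes L^V_\eta Q$ for every section $Q$ of $\mathcal D^{(m)}_V$.
\item Induction on $k$ then gives $\gamma(L_{\eta_1}\cdots L_{\eta_k}\boxtimes 1)=1\otimes L^V_{\eta_1}\cdots L^V_{\eta_k}$.
\item Restricting along $(1_H,\mathrm{id})$ finishes.
\end{itemize}
This argument is entirely integral and only uses the $a$-relatedness of the generating vector fields. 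It never needs to regard level-$m$ operators as acting on functions.

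You instead interpret $P^V$ as $f\mapsto(1_H,\mathrm{id})^*\bigl((P\boxtimes1)(a^*f)\bigr)$. You prove the intertwining identity $(P\boxtimes 1)\circ a^*=a^*\circ P^V$ for every right invariant $P$, using right invariance plus coassociativity of the action. This is the same diagram the paper uses later in Proposition \ref{beta0}. You handle integrality by the torsion-freeness of $\mathcal D^{(m)}_V$, which is exactly what makes level-$m$ operators over $R$ determined by their action on $\mathcal O_V$.

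Your route costs a bit more justification, but it yields more. It gives $(P_1P_2)^V=P_1^VP_2^V$ whenever $P_2$ is right invariant. So $P\mapsto P^V$ is a ring homomorphism on $\Gamma_{\mathrm{inv}}(H,\mathcal D^{(m)}_H)$, including the divided-power operators $L_{\xi^{\langle r\rangle_{(m)}}}$.

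Your parenthetical ``or on all of $\Gamma(H,\mathcal D^{(m)}_H)$'' is false. For a function $f$ on $H$ one gets $(L_\xi f)^V=L_\xi(f)(1)+f(1)L^V_\xi$, while $L^V_\xi f^V=f(1)L^V_\xi$. So right invariance of the second factor is essential, which is what your actual argument uses.
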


\begin{proof} (i) Note that the canonical morphism $a_*: \c{T}_{H\times V} \to a^*\c{T}_V$ maps the vector field $(L_{\eta},0)$ to 
$1\otimes L^V_{\eta}$. We conclude that $$(L_{\eta}\boxtimes 1)\cdot (1\otimes Q)=1\otimes L_{\eta}^VQ$$ in $a^*\mathcal D_V^{(m)}$
for any $Q\in\Gamma(V, \c D_{ V}^{(m)})$. By induction on $s$, we have 
$$\gamma(L_{\eta_1}\cdots L_{\eta_k}\boxtimes 1)=1\otimes L^V_{\eta_1}\cdots L^V_{\eta_k}.$$
Our assertion follows.

(ii) By Lemma \ref{lm:pdpower}, $(-1)^rr! L_{\xi_i^{\langle r\rangle_{(m)}}}^V$ and $(L_{\xi_i}^r)^V$ differ by a
differential operator of order $\leq r-1$. Our assertion then follows from (i).
\end{proof}

Let $X$ be a smooth scheme over $k$. Fix notation by the commutative diagram 
$$\begin{tikzcd}
X\arrow[r,"\mathrm{Fr}^m_{X}"]\arrow[rr, bend left=15mm,"F_X^m"]\arrow[rd]&X^{(m)}\arrow[r]\arrow[d]& X\arrow[d]\\
&\mathrm{Spec}\,k\arrow[r,"F_{\mathrm{Spec}\,k}^m"]&\mathrm{Spec}\,k
\end{tikzcd}$$ where the square is Cartesian, $F_X=(\mathrm{id}_X, F^\natural_X)$ is the absolute Frobenius morphism
defined by $F^\natural_X(s)=s^p$, and 
$\mathrm{Fr}_{X}$ is the relative Frobenius morphism. 
Let $T^*X^{(m)}\to X^{(m)}$ be the cotangent bundle of $X^{(m)}$.

\begin{proposition}[Berthelot]\label{prop:Berthelot}
We have an isomorphism
$$\mathbb{S}\mathrm{pec} (\mathrm{Gr} (\c{D}_{X}^{(m)}))_{\m{red}}\cong T^*X^{(m)}\times_{X^{(m)}} X,$$
where $\mathbb{S}\mathrm{pec} (\mathrm{Gr} (\c{D}_{X}^{(m)}))_{\m{red}}$ is the reduced scheme associated to the affine
$X$-scheme defined by the quasi-coherent $\mathcal O_X$-module $\mathrm{Gr} (\c{D}_{X}^{(m)})$.
\end{proposition}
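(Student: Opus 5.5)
The question is local, so I will work on an affine open $U\subset X$ with étale coordinates $x_1,\dots,x_n$, i.e.\ an étale map $U\to\mathbb A^n_k$. I will then compute $\mathrm{Gr}(\mathcal D^{(m)}_U)$ explicitly, using Berthelot's description of $\mathcal D^{(m)}$ by the divided-power-type basis $\partial^{\langle\mathbf k\rangle_{(m)}}$. Over $k$ (so modulo $p$), the graded ring $\mathrm{Gr}(\mathcal D^{(m)}_U)$ is generated over $\mathcal O_U$ by the symbols $\xi_{i,j}=\sigma(\partial_i^{\langle p^j\rangle_{(m)}})$ for $0\le j\le m$. I expect the only relations to be the level-$m$ ones: for $j<m$ one has $\xi_{i,j}^p=0$, up to the unit coming from the factor $q^{(m)}_{\mathbf k}!$ that appears when multiplying the $\partial^{\langle\mathbf k\rangle}$. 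The element $\xi_{i,m}$ is free, i.e.\ polynomial. This should follow from the multiplication formula $\partial^{\langle k\rangle}\partial^{\langle k'\rangle}=\left\langle {k+k'\atop k}\right\rangle\partial^{\langle k+k'\rangle}$ together with Lucas-type congruences for the modified binomial coefficients $\left\langle{\cdot\atop\cdot}\right\rangle$ modulo $p$. This also gives the order grading.

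Consequently
\[
(\mathrm{Gr}\,\mathcal D^{(m)}_U)_{\mathrm{red}}=\mathcal O_U[\xi_{1,m},\dots,\xi_{n,m}],
\]
since the nilpotent generators $\xi_{i,j}$ with $j<m$ die after reduction. Here $\xi_{i,m}$ is the symbol of $\partial_i^{\langle p^m\rangle}$, and this operator is $\mathcal O$-semilinear with respect to $p^m$-th powers modulo lower order. Hence $\mathbb S\mathrm{pec}$ of the reduced ring is a rank-$n$ vector bundle over $U$. I then need to identify this bundle canonically, and I will do it through the principal parts. $F^{p^m}\mathcal D^{(m)}/F^{p^m-1}$ is dual to the piece of $\mathcal P^{p^m}_{(m)}$ spanned by $\tau_i^{\{p^m\}}$, and the map $\tau_i^{\{p^m\}}\mapsto$ (class of) $(1\otimes x_i-x_i\otimes1)^{p^m}$-type element identifies this piece with $F_X^{m*}\Omega^1_X=\mathrm{Fr}^{m*}_X\Omega^1_{X^{(m)}}$. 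Note that under relative Frobenius $d(x_i^{p^m})$ corresponds to $dx_i^{(m)}$. Dually, the symbols $\xi_{i,m}$ span $\mathrm{Fr}^{m*}_X\mathcal T_{X^{(m)}}$, and $\mathbb S\mathrm{pec}\,\mathrm{Sym}(\mathrm{Fr}^{m*}\mathcal T_{X^{(m)}})=T^*X^{(m)}\times_{X^{(m)}}X$.

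The last step is to check independence of coordinates. Under an étale change of coordinates $y=\phi(x)$, the top symbols $\xi_{i,m}$ transform by the matrix $(\partial y_j/\partial x_i)^{p^m}$ modulo the nilradical. This is precisely the transition matrix of $\mathrm{Fr}^{m*}\mathcal T_{X^{(m)}}$, so the local isomorphisms glue.

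The main obstacle is the second step: showing that the reduced graded ring is exactly the polynomial algebra in the top symbols. This requires careful control of the constants $\left\langle{k+k'\atop k}\right\rangle$ modulo $p$, in particular that $\xi_{i,j}^{p}$ vanishes for $j<m$ while no power of $\xi_{i,m}$ does. I would cite Berthelot's computation [Berthelot1, 1.3–2.2] for these congruences rather than redo them.
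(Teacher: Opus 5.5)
Your proposal is correct. Its local core is the same computation the paper does:
\begin{itemize}
\item $\mathrm{Gr}(\mathcal D^{(m)}_U)$ is generated by the symbols of the $\partial_{x_i}^{\langle p^j\rangle_{(m)}}$;
\item for $j<m$, the $p$-th power of $\partial_{x_i}^{\langle p^j\rangle_{(m)}}$ is $\frac{p^{j+1}!}{(p^j!)^p}\,\partial_{x_i}^{\langle p^{j+1}\rangle_{(m)}}$, whose constant has valuation exactly one, so these symbols vanish mod $p$;
\item for $j=m$ the extra factor $q^{(m)}_{\mathbf k}!$ makes the constant a unit, so the top symbols generate a polynomial algebra.
\end{itemize}
A small correction of phrasing: for $j<m$ the constant is not ``a unit''. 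It is precisely because it has valuation one that $\xi_{i,j}^p=0$.

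Where you genuinely differ is in how the local isomorphism is made canonical and glued.

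\textbf{The paper's route.} It invokes Berthelot's Frobenius descent morphism $\Phi:\mathcal D^{(m)}_X\to \mathrm{Fr}^{m*}_X\mathcal D^{(0)}_{X^{(m)}}$, together with the formula $\Phi(\partial_{x_i}^{\langle p^m\rangle_{(m)}})=1\otimes\partial_{x^{(m)}_i}$. Since $\Phi$ is globally defined, the local identifications glue automatically and no coordinate-change computation is needed. The cost is reliance on the Frobenius descent machinery; the bookkeeping (degree scaling by $p^m$, nilpotent symbols dying) is left implicit.

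\textbf{Your route.} You build the identification by hand, either by duality with the $m$-PD principal parts or by checking that the top symbols transform by $(\partial y_j/\partial x_i)^{p^m}$ modulo nilpotents. This is more elementary and self-contained, at the price of that verification.

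If you write it up, state two points precisely.
\begin{enumerate}
\item $\mathrm{Gr}^{p^m}\mathcal D^{(m)}$ is dual to the whole of $\ker(\mathcal P^{p^m}_{(m)}\to\mathcal P^{p^m-1}_{(m)})$, which is spanned by all $\tau^{\{\mathbf k\}}$ with $|\mathbf k|=p^m$. Only the quotient of $\mathrm{Gr}^{p^m}$ by its nilpotent part (the symbols with all $k_i<p^m$) is dual to the span of the $\tau_i^{\{p^m\}}$.
\item The coordinate-independence of that span comes from $\tau^{\{p^m\}}=\tau^{p^m}$ together with $(\sum_j a_{ij}\tau_j+\epsilon)^{p^m}\equiv\sum_j a_{ij}^{p^m}\tau_j^{p^m}$ in characteristic $p$. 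This is what identifies the span with $F^{m*}_X\Omega^1_X$. The phrase ``$d(x_i^{p^m})$'' must not be read in $\Omega^1_X$, where it is zero.
\end{enumerate}
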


\begin{proof} By \cite[2.2.2]{Berthelot2}, $\mathrm{Fr}^m_{X}$ induces a canonical morphism 
$$\Phi: \mathcal D_X^{(m)}\to \mathrm{Fr}^{m*}_{X}\mathcal D_{X^{(m)}}^{(0)}.$$ Let's prove this morphism induces 
an isomorphism 
$$(\mathrm{Gr} (\c{D}_{X}^{(m)}))_{\m{red}}\cong \mathrm{Fr}^{m*}_{X}\mathrm{Gr}(\mathcal D_{X^{(m)}}^{(0)}),$$
which implies the proposition. The problem is local, we may assume $X$ has a coordinate chart
$(x_1, \ldots, x_n)$. By \cite[2.2.3]{Berthelot1}, $\c{D}_{X}^{(m)}$ is 
a free $\c{O}_X$-module with basis  $\partial_{x_1}^{\langle k_1\rangle_{(m)}}\cdots \partial_{x_n}^{\langle k_n\rangle_{(m)}}$. 
By \cite[2.2.4(iii)]{Berthelot1}, for any $0\leq j<m$, we have
\begin{align}\label{aln:calpower}
(\partial_{x_i}^{\langle p^j\rangle_{(m)}})^p&=
\Big\langle\begin{array}{c} 2p^j\\
p^j\end{array}\Big\rangle\Big\langle\begin{array}{c} 3p^j\\
2p^j\end{array}\Big\rangle\cdots \Big\langle\begin{array}{c} p p^j\\
(p-1)p^j\end{array}\Big\rangle
\partial_{x_i}^{\langle p^{j+1}\rangle_{(m)}}= \frac{p^{j+1}!}{(p^j!)^p}\partial_{x_i}^{\langle p^{j+1}\rangle_{(m)}},\\
\mathrm{ord}_p\Big(\frac{p^{j+1}!}{(p^j!)^p}\Big)&=\frac{p^{j+1}-1}{p-1}- \frac{p(p^j-1)}{p-1}=1.\nonumber
\end{align}
So $(\partial_{x_i}^{\langle p^j\rangle_{(m)}})^p=0$ for any $0\leq j<m$. 
Similarly, we have 
\begin{align*}
(\partial_{x_i}^{\langle p^m\rangle_{(m)}})^p&= \frac{p^{m+1}!}{p! (p^m!)^p}\partial_{x_i}^{\langle p^{m+1}\rangle_{(m)}},\\
\mathrm{ord}_p\Big(\frac{p^{m+1}!}{p! (p^m!)^p}\Big)&=\frac{p^{m+1}-1}{p-1}-1-\frac{p(p^m-1)}{p-1}=0.
\end{align*}
So $(\partial_{x_i}^{\langle p^m\rangle_{(m)}})^p$ is a unit multiple of $\partial_{x_i}^{\langle p^{m+1}\rangle_{(m)}}$.
By \cite[2.2.5(i)]{Berthelot1}, $\mathcal D_X^{(m)}$ is generated by $\mathcal O_X$ and $\partial_{x_i}^{\langle p^j\rangle_{(m)}}$
$(1\leq i\leq n, \, 1\leq j\leq m)$ as a sheaf of rings. 
It follows that $$(\mathrm{Gr} (\c{D}_{X}^{(m)}))_{\m{red}}\cong 
\c{O}_X[\xi_1^{\langle p^m\rangle_{(m)}}, \ldots, \xi_n^{\langle p^m\rangle_{(m)}}],$$
where $\xi_i^{\langle p^m\rangle_{(m)}}$ is the image of $\partial_{x_i}^{\langle p^m\rangle_{(m)}}$ 
in $(\mathrm{Gr} (\c{D}_{X}^{(m)}))_{\m{red}}$ and it is homogeneous of degree
$p^m$.  Let $(x^{(m)}_1,\ldots, x^{(m)}_n)$ be the local coordinate chart for $X^{(m)}$ obtained by base change from the coordinate chart
$(x_1, \ldots, x_n)$ of $X$. Then we have 
$$\mathrm{Gr} (\c{D}_{X^{(m)}}^{(0)})\cong 
\c{O}_{X^{(m)}}[\xi^{(m)}_1, \ldots, \xi^{(m)}_n],$$
where $\xi^{(m)}_i$ is the image of $\partial_{x^{(m)}_i}$ 
in $\mathrm{Gr} (\c{D}_{X^{(m)}}^{(0)})$ and it is homogeneous of degree $1$. 
By \cite[2.2.4]{Berthelot2}, we have 
$$\Phi(\partial_{x_i}^{\langle p^m\rangle_{(m)}})=1\otimes \partial_{x^{(m)}_i}.$$
So $\Phi$ induces an isomorphism 
\[(\mathrm{Gr} (\c{D}_{X}^{(m)}))_{\m{red}}\stackrel\cong\to
\mathrm{Fr}^{m*}_{X}\mathrm{Gr}(\mathcal D_{X^{(m)}}^{(0)}),\quad \xi_i^{\langle p^m\rangle_{(m)}}\mapsto 1\otimes
\xi^{(m)}_i.\qedhere\]
\end{proof}

Let $V$ be a smooth $R$-scheme and let $V_k=V\otimes_Rk$. Denote $\mathcal D_V^{(m)}\otimes_R k$ by $\mathcal D_{V,k}^{(m)}$. 
By \cite[2.2.2]{Berthelot1}, we have 
$$\mathcal D_{V,k}^{(m)}\cong \mathcal D_{V_k}^{(m)}.$$

\begin{proposition}\label{def-L^<k>}
For any vector field $L$ on a smooth $R$-scheme
$V$ and any integer $1\leq r\leq p^m$, there exists a unique element $L^{\langle r\rangle_{(m)}}\in \mathrm{Gr}^r (\c D_{V}^{(m)})$ 
such that $r!L^{\langle r\rangle_{(m)}}=L^r$. 
The functions $L^{\left<{r}\right>_{(m)}}$ $(1\leq r<p^m)$
on $\mathrm{Spec}\, (\mathrm{Gr}(\c D_{V,k}^{(m)}))_{\m{red}}$ vanish, 
and the function $L^{\left<{p^m}\right>_{(m)}}$ on $\mathrm{Spec}\, (\mathrm{Gr}(\c D_{V,k}^{(m)}))_{\m{red}}$ 
can be identified with the composite
\begin{equation}\label{composite}
\mathrm{Spec}\, (\mathrm{Gr} (\mathcal D_{V,k}^{(m)}))_{\m{red}}\cong T^*V_{k}^{(m)}\times_{V_{k}^{(m)}} V_{k}
\to T^*V_{k}^{(m)}\xrightarrow{L^{(m)}}\b{A}^1_k,   
\end{equation}
where $L^{(m)}: T^*V_{k}^{(m)}\to\b{A}^1_k$ is the base change $L: T^*V_{k}\to \b{A}_k^1$ by 
the Frobenius morphism $F_{\mathrm{Spec}\,k}^m: \mathrm{Spec}\, k\to \mathrm{Spec}\, k$.
\end{proposition}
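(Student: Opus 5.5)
The argument is local on $V$, so I will fix a coordinate chart $(x_1,\ldots,x_n)$ on an open $U\subset V$ and write $L=\sum_i a_i\partial_{x_i}$ with $a_i\in\mathcal O_V(U)$. By \cite[2.2.3]{Berthelot1}, $\mathcal D_V^{(m)}$ is free over $\mathcal O_V$ with basis $\partial^{\langle\mathbf k\rangle_{(m)}}$, so $\mathrm{Gr}(\mathcal D_V^{(m)})$ is free over $\mathcal O_V$ with basis the symbols $\xi^{\langle\mathbf k\rangle_{(m)}}$.

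\textbf{Step 1 (existence and uniqueness).} In $\mathrm{Gr}(\mathcal D^{(m)}_V)$ the symbols of the $\partial_{x_i}$ commute and $a_i$ is central, so the multinomial expansion gives
$$L^r=\sum_{|\mathbf k|=r}\frac{r!}{\mathbf k!}\,\mathbf a^{\mathbf k}\,\xi_1^{k_1}\cdots\xi_n^{k_n}.$$
The multiplication rule \cite[2.2.4]{Berthelot1} gives $\xi_i^{k}=\frac{k!}{q_k!}\,\xi_i^{\langle k\rangle_{(m)}}$, where $q_k=\mathbf q^{(m)}_k$. Since $k\le r\le p^m$, we have $q_k=0$ unless $k=p^m$, and then $q_k=1$. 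In every case $\xi_i^{k_i}=k_i!\,\xi_i^{\langle k_i\rangle_{(m)}}$, and hence
$$L^r=r!\sum_{|\mathbf k|=r}\mathbf a^{\mathbf k}\,\xi^{\langle\mathbf k\rangle_{(m)}}.$$
I therefore define $L^{\langle r\rangle_{(m)}}:=\sum_{|\mathbf k|=r}\mathbf a^{\mathbf k}\xi^{\langle\mathbf k\rangle_{(m)}}$. Uniqueness holds because $\mathrm{Gr}^r$ is a free $R$-flat module, so $r!$ is a nonzerodivisor. The defining identity also shows that this element is independent of the chart, so the local definitions glue.

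\textbf{Step 2 (vanishing for $r<p^m$).} Reducing mod $\mathfrak m$ and passing to $(\mathrm{Gr}\,\mathcal D^{(m)}_{V,k})_{\mathrm{red}}$, I will use the computation in Proposition \ref{prop:Berthelot}. There $\xi^{\langle p^j\rangle_{(m)}}$ is nilpotent for $j<m$. Any $\xi_i^{\langle k\rangle_{(m)}}$ with $0<k<p^m$ is a unit multiple of a product of the $\xi_i^{\langle p^j\rangle_{(m)}}$ with $j<m$ (from the $p$-adic expansion of $k$ and \cite[2.2.4]{Berthelot1}), so it is nilpotent. Every monomial $\xi^{\langle\mathbf k\rangle_{(m)}}$ with $|\mathbf k|=r<p^m$ contains such a factor, and hence $L^{\langle r\rangle_{(m)}}$ vanishes on the reduced spectrum.

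\textbf{Step 3 ($r=p^m$).} By the same reasoning, in the reduced ring only the terms with some $k_i=p^m$ survive. This leaves
$$L^{\langle p^m\rangle_{(m)}}=\sum_i a_i^{p^m}\,\xi_i^{\langle p^m\rangle_{(m)}}.$$
Under the isomorphism of Proposition \ref{prop:Berthelot}, $\xi_i^{\langle p^m\rangle_{(m)}}\mapsto 1\otimes\xi^{(m)}_i$. Moreover $a_i^{p^m}=\mathrm{Fr}^{m*}(a_i^{(m)})$, where $a_i^{(m)}$ is the base change of $a_i$ to $V_k^{(m)}$. Hence the image is the pullback of $\sum_i a_i^{(m)}\xi_i^{(m)}$, which is exactly $L^{(m)}$ viewed as a function on $T^*V_k^{(m)}$; this gives the composite (\ref{composite}). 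Independence of coordinates is automatic, since both sides are intrinsically defined.

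The main obstacle I expect is the bookkeeping of the $\langle\ \rangle_{(m)}$ binomial coefficients in Step 2. One has to check that every partial divided power $\xi^{\langle k\rangle_{(m)}}$ with $0<k<p^m$ is nilpotent mod $p$. This fails at $k=p^m$ itself, since there the coefficient $\frac{p^{m+1}!}{p!(p^m!)^p}$ is a unit. I would handle this via the decomposition $\xi^{\langle k\rangle}=u\prod_j(\xi^{\langle p^j\rangle})^{c_j}$, with $u$ a $p$-adic unit.
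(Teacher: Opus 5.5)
Your proposal is correct and is essentially the paper's argument. Both work in local coordinates, use $\partial_{x_i}^{r}=r!\,\partial_{x_i}^{\langle r\rangle_{(m)}}$ for $r\le p^m$ together with the multinomial expansion to define $L^{\langle r\rangle_{(m)}}=\sum_{|\mathbf k|=r}\mathbf a^{\mathbf k}\xi^{\langle\mathbf k\rangle_{(m)}}$, and get uniqueness from the $R$-flatness of the free module $\mathrm{Gr}^r$. The vanishing comes from the nilpotence of $\partial^{\langle p^j\rangle_{(m)}}$ for $j<m$ combined with Berthelot's (2.2.5.1), and the case $r=p^m$ identifies $\sum_i a_i^{p^m}\xi_i^{\langle p^m\rangle_{(m)}}$ with $L^{(m)}$ via the proof of Proposition \ref{prop:Berthelot}; the only cosmetic difference is that the paper treats $L=\partial_{x_i}$ first, whereas you expand a general $L$ directly.
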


\begin{proof}
The uniqueness of $L^{\langle r\rangle_{(m)}}$ follows from the fact that
$\mathrm{Gr}(\c D_{V}^{(m)})$ is a locally free $\c{O}_{V}$-module and $\c{O}_{V}$ is flat over $\b{Z}$.
The existence is a local problem. Assume $V$ has a coordinate chart $(x_1,\ldots, x_n)$. 
If $L=\partial_{x_i}$, we define $L^{\langle r\rangle_{(m)}}=\sigma_r(\partial_{x_i}^{\langle r\rangle_{(m)}})$. 
By \cite[2.2.4 (iii)]{Berthelot1}, we have $$\partial_{x_i}^r=r!\partial_{x_i}^{\langle r\rangle_{(m)}} \quad (1\leq r\leq p^m).$$
So we have $r!L^{\langle r\rangle_{(m)}}=L^r$ $(1\leq r\leq p^m)$. 
By the calculation in (\ref{aln:calpower}), $L^{\langle p^j\rangle_{(m)}}$ vanishes on $\mathrm{Spec}\, 
(\mathrm{Gr}(\c D_{V,k}^{(m)}))_{\m{red}}$ for all
$1\leq j<m$. By \cite[(2.2.5.1)]{Berthelot1}, $L^{\langle r\rangle_{(m)}}$ vanishes on $\mathrm{Spec}\, 
(\mathrm{Gr}(\c D_{V,k}^{(m)}))_{\m{red}}$ for all
$1\leq r<p^m$. By the proof of Proposition \ref{prop:Berthelot}, the image of $\partial_{x_i}^{\langle p^m\rangle_{(m)}}$ in 
$\mathrm{Gr}(\c D_{V,k}^{(m)}))_{\m{red}}$ corresponds to the image of $\partial_{x^{(m)}_i}$ in $\mathrm{Gr}(\c D_{V}^{(0)})$. So 
the function $L^{\langle p^m\rangle_{(m)}}$ corresponds to the base change of $L: T^*V_{k}\to \b{A}_k^1$ by the Frobenius morphism. 
This proves the existence of $L^{\langle r\rangle_{(m)}}$ for $L=\partial_{x_i}$.
In general, we may write $L=f_1 \partial_{x_1}+\cdots+ f_n\partial_{x_n}$
for some sections $f_i$ of $\mathcal O_V$. We define $L^{\langle r\rangle_{(m)}}$ $(1\leq r\leq p^m)$ 
in $\mathrm{Gr}(\c D_{V}^{(m)})$ by 
$$L^{\langle r\rangle_{(m)}}=\sum_{r_1+\cdots+r_n=r} f_1^{r_1}\cdots f_n^{r_n} 
\partial^{\langle r_1\rangle_{(m)}}_{x_1}\cdots\partial^{\langle r_n\rangle_{(m)}}_{x_n}.$$
Then we have 
$$r!L^{\langle r\rangle_{(m)}}=\sum_{r_1+\cdots+r_n=r}\frac{r!}{r_1!\cdots r_n!} f_1^{r_1}\cdots f_n^{r_n} 
\partial^{r_1}_{x_1}\cdots\partial^{ r_n}_{x_n}=L^r.$$
The functions on $\mathrm{Spec}\, (\mathrm{Gr}(\c D_{V,k}^{(m)}))_{\m{red}}$ defined by $L^{\langle r\rangle_{(m)}}$ $(1\leq r<p^m)$ vanish 
since the functions defined by $\partial^{\langle r_i\rangle_{(m)}}_{x_i}$ $(1\leq r_i< p^m)$ vanish. As functions on 
$\mathrm{Spec}\, (\mathrm{Gr}(\c D_{V,k}^{(m)}))_{\m{red}}$, we have
\[L^{\langle p^m\rangle_{(m)}}=f_1^{p^m}
\partial^{\langle p^m\rangle_{(m)}}_{x_1}+\cdots+f_n^{p^m}
\partial^{\langle p^m\rangle_{(m)}}_{x_n}.\qedhere\]
\end{proof}

\begin{corollary}\label{heighest-symbol-of-nabla} Notation as Definition \ref{defn:rightinvoper}.
For any $1\leq r\leq p^m$ and $1\leq i\leq s$,
we have 
\begin{align*}
&\sigma_r((-1)^rL_{\xi_i^{\langle r\rangle_{(m)}}})=(L_{\xi_i})^{\langle r\rangle_{(m)}}\hbox{ in }\mathrm{Gr}^r(\c D_{H}^{(m)}),\\ 
&\sigma_r((-1)^rL_{\xi_i^{\langle r\rangle_{(m)}}}^V)=(L_{\xi_i}^V)^{\langle r\rangle_{(m)}}\hbox{ in }\mathrm{Gr}^r(\c D_V^{(m)}), 
\end{align*}
\end{corollary}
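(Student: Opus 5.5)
The plan is to deduce both identities from Lemma \ref{lm:pdpower} and Lemma \ref{description-alpha}(ii), together with the uniqueness statement in Proposition \ref{def-L^<k>}.

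For the first identity, put $P=(-1)^rL_{\xi_i^{\langle r\rangle_{(m)}}}\in\Gamma(H,F^r\mathcal D^{(m)}_H)$. Lemma \ref{lm:pdpower} gives $\sigma_r(r!\,P)=L_{\xi_i}^r$ in $\mathrm{Gr}^r(\mathcal D^{(m)}_H)$. The symbol map $\sigma_r$ is $R$-linear, so $r!\,\sigma_r(P)=L_{\xi_i}^r$. Proposition \ref{def-L^<k>}, applied to the vector field $L_{\xi_i}$ on the smooth $R$-scheme $H$, says that $(L_{\xi_i})^{\langle r\rangle_{(m)}}$ is the unique element of $\mathrm{Gr}^r(\mathcal D^{(m)}_H)$ with $r!\,(L_{\xi_i})^{\langle r\rangle_{(m)}}=L_{\xi_i}^r$. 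Uniqueness holds because $\mathrm{Gr}^r(\mathcal D^{(m)}_H)$ is a locally free $\mathcal O_H$-module and $\mathcal O_H$ is $\mathbb Z$-torsion free, so multiplication by $r!$ is injective. Hence $\sigma_r(P)=(L_{\xi_i})^{\langle r\rangle_{(m)}}$.

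For the second identity, the argument on $V$ is the same. Lemma \ref{description-alpha}(ii) gives $r!\,\sigma_r\big((-1)^rL^V_{\xi_i^{\langle r\rangle_{(m)}}}\big)=(L^V_{\xi_i})^r$ in $\mathrm{Gr}^r(\mathcal D^{(m)}_V)$. We then apply the uniqueness part of Proposition \ref{def-L^<k>} to the vector field $L^V_{\xi_i}$ on the smooth $R$-scheme $V$, using that $\mathrm{Gr}(\mathcal D^{(m)}_V)$ is also locally free over $\mathcal O_V$.

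There is no real obstacle. The only point needing care is that $(L^V_{\xi_i})^r$, as computed in Lemma \ref{description-alpha}(ii), is the $r$-th power of the symbol in $\mathrm{Gr}(\mathcal D^{(m)}_V)$, which is the same as the symbol of the $r$-th power in $\mathcal D^{(m)}_V$. This holds because $\mathrm{Gr}$ is a commutative graded ring and $\sigma$ is multiplicative on top-degree terms. It also agrees with the convention fixed just before Lemma \ref{lm:pdpower}.
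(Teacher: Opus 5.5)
Your proposal is correct and is essentially the paper's own argument: the paper also derives the corollary directly from Lemmas \ref{lm:pdpower} and \ref{description-alpha} together with the uniqueness part of Proposition \ref{def-L^<k>}. Your explicit justifications add detail the paper leaves implicit: uniqueness holds because multiplication by $r!$ is injective on the locally free, $\mathbb Z$-torsion-free module $\mathrm{Gr}^r$, and $L^r$ computed in $\mathcal D^{(m)}$ has the same image as $L^r$ computed in $\mathrm{Gr}$.
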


\begin{proof} This follows from the uniqueness part of Proposition \ref{def-L^<k>}, Lemmas \ref{lm:pdpower} and \ref{description-alpha}.
\end{proof}

\begin{proposition}\label{beta0} 
Let $M$ be a left $R[H]$-comodule with coaction
$$a: M\to R[H]\otimes_RM.$$
Let $\beta$ be the $R$-linear map
$$\beta: \Gamma(H, {\mathcal D}_{H}^{(m)})\to \m{End}_R (M)$$ so that for any $P\in \Gamma(H, {\mathcal D}_{H}^{(m)})$, $\beta(P)$ is 
the composite
$$\beta(P):M\xrightarrow{a}  R[H]\otimes_RM\xrightarrow{P\otimes \m{id}_M} 
R[H]\otimes_RM \xrightarrow{\mathrm{ev}_1\otimes \m{id}_M}M.$$ 
For any $P\in\Gamma(H, {\mathcal D}_{H}^{(m)})$ and $\xi\in \mathfrak L(H)$, we have
$$\beta(PL_{\xi})=\beta(P) \beta( L_{\xi}).$$ In particular, for any $\zeta_1,\cdots,\zeta_n\in\f{L}(H)$, we have 
$$\beta(L_{\zeta_1}\cdots L_{\zeta_n})=\beta(L_{\zeta_1})\cdots \beta(L_{\zeta_n}).$$
\end{proposition}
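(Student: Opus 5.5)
The plan is to reduce the identity $\beta(PL_\xi)=\beta(P)\beta(L_\xi)$ to a statement about right invariance of $L_\xi$ together with the coassociativity of the coaction. The second assertion then follows by induction on $n$: writing $L_{\zeta_1}\cdots L_{\zeta_n}=(L_{\zeta_1}\cdots L_{\zeta_{n-1}})L_{\zeta_n}$ and applying the first assertion with $P=L_{\zeta_1}\cdots L_{\zeta_{n-1}}$ gives the product formula.

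For the main identity, I will first unwind $\beta(L_\xi)$. Write $a(v)=\sum_\mu f_\mu\otimes v_\mu$. Coassociativity says
$$(\mathrm{id}\otimes a)\circ a=(c\otimes \mathrm{id}_M)\circ a,$$
so $a(v_\mu)$ encodes $f_\mu(gh)$ as a function of two variables. The key point is the description of $L_\xi$ as a right invariant operator in Proposition \ref{bij-of-inv}. Under the identification $A^*$, $L_\xi$ acts on a function $f$ by applying the element $\xi\in F^1U^{(m)}(\mathfrak L(H))$ in the ``left'' variable. Concretely, for $f\in R[H]$ we have
$$L_\xi(f)=(\xi\otimes\mathrm{id})(c(f)),$$
up to the sign and convention fixed by $A$; here $\xi$ is viewed as the functional on $R[H]$ that kills constants and $I^2$. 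This formula follows from the defining relation $L_\xi(A^*(1\otimes t_j))=\delta_{ij}$ in the proof of Lemma \ref{lm:pdpower}, combined with right invariance. Since both sides are derivations commuting with right translations, it suffices to check it at the identity, where it is the definition.

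With this formula in hand, the computation is as follows. First,
$$\beta(PL_\xi)(v)=(\mathrm{ev}_1\otimes\mathrm{id})\big((P L_\xi\otimes\mathrm{id})a(v)\big).$$
Next, apply the formula: $(L_\xi\otimes\mathrm{id}_M)\circ a=(\xi\otimes\mathrm{id}\otimes\mathrm{id})\circ(c\otimes\mathrm{id})\circ a$. By coassociativity this equals $(\xi\otimes\mathrm{id}\otimes\mathrm{id})\circ(\mathrm{id}\otimes a)\circ a$. The left factor $\xi$ and the coaction on the second factor act on different tensor slots, so this becomes $a\circ\big((\xi\otimes\mathrm{id}_M)\circ a\big)$. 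Finally, note that $(\xi\otimes\mathrm{id}_M)\circ a$ is exactly $\beta(L_\xi)$, since $\mathrm{ev}_1\circ L_\xi=\xi$ on $R[H]$. Hence
$$(L_\xi\otimes\mathrm{id}_M)\circ a=a\circ\beta(L_\xi),$$
and applying $(\mathrm{ev}_1\otimes\mathrm{id})\circ(P\otimes\mathrm{id})$ to both sides gives $\beta(PL_\xi)=\beta(P)\beta(L_\xi)$.

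The main obstacle will be pinning down the convention relating $L_\xi$ to $c$. One has to decide whether $L_\xi$ differentiates the left or the right variable of $c$, given that $L_\xi$ is right invariant and that $A(g,h)=(g,h^{-1}g)$ introduces the sign $i^*(t)\equiv -t$. If the natural formula turns out to be $L_\xi f=(\mathrm{id}\otimes\xi)c(f)$ instead, coassociativity must be applied on the other side. I would check this by testing on a coordinate $t\in I$, using \eqref{aln:c(t)}. The argument also requires $M$ to be possibly non-finite, but all maps involved are $R$-linear and finite sums, so this causes no difficulty.
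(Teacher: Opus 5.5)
Your argument is correct and is essentially the paper's proof: the paper obtains the same key identity $(L_\xi\otimes\mathrm{id}_M)\circ a=a\circ\beta(L_\xi)$ from right invariance in the form $c\circ L_\xi=(L_\xi\otimes\mathrm{id})\circ c$, together with coassociativity of $a$ and the counit identity $(\mathrm{ev}_1\otimes\mathrm{id})\circ c=\mathrm{id}$, and then finishes exactly as you do. Your convolution formula is equivalent to that right-invariance statement, because applying the counit gives $L_\xi=((\mathrm{ev}_1\circ L_\xi)\otimes\mathrm{id})\circ c$ directly; so the sign and convention issue you flag is harmless, while the alternative $(\mathrm{id}\otimes\xi)\circ c$ describes left-invariant operators and would give the reversed product $\beta(L_\xi)\beta(P)$, not the same identity.
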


\begin{proof}
Since $L_\xi$ is right $H$-invariant, we have a commutative diagram
$$\begin{tikzcd}
R[H]\arrow[d, "m"] \arrow[r, "L_\xi"]       & R[H]\arrow[d, "m"] \\
R[H]\otimes_R R[H] \arrow[r, " L_\xi \otimes_R \m{id}"] & R[H]\otimes_R R[H],
\end{tikzcd}$$ 
where $m:R[H]\to R[H]\otimes_R R[H]$ is the comultiplication. 
So we have a commutative diagram
$$\begin{tikzcd}
M \arrow[r, "a"] \arrow[d, "a",swap]& {R[H] \otimes_{R} M} \arrow[d, "\m{id}\otimes a ",swap] 
\arrow[r, "L_\xi \otimes \m{id}"] & {R[H] \otimes_{R} M} \arrow[d, "\m{id}\otimes a ",swap] \arrow[r, "\m{ev_1}\otimes\m{id}"] & M 
\arrow[d, "a",swap] \\
{R[H] \otimes_{R} M} \arrow[r, "m\otimes \m{id}"] \arrow[rrd, "L_\xi \otimes \m{id}"'] & {R[H] \otimes_{R} R[H] \otimes_{R} M} 
\arrow[r, "L_\xi \otimes \m{id}\otimes\mathrm{id}"]& {R[H] \otimes_{R} R[H] \otimes_{R} M} \arrow[r, "\m{ev_1}\otimes
\m{id}\otimes\mathrm{id}"]& {R[H] \otimes_{R} M.}\\
&& {R[H] \otimes_{R} M} \arrow[u, "m\otimes \m{id}"] \arrow[ru, "\m{id}\otimes\mathrm{id}"']&&
\end{tikzcd}$$
This shows that $(L_\xi \otimes \m{id}_M)a(x)=a(\beta(L_{\xi})(x))$ for all $x\in M$. Hence
\begin{align*}
\beta(P L_{\xi})(x)&=(\m{ev_1}\otimes\m{id}_M)(PL_{\xi}\otimes\m{id}_M) a(x)
=(\m{ev_1}\otimes\m{id}_M)(P\otimes\m{id}_M) a(\beta(L_{\xi}) (x))\\
&=\beta (P)\beta( L_{\xi})(x).\qedhere
\end{align*}
\end{proof}

Choose a nonzero left invariant top differential form $\omega_0$ on $H$. We have an isomorphism 
$$\mathcal O_H\stackrel\cong\to  \omega_H, \quad f\mapsto f w_0.$$ 
Using this isomorphism, we transform the right $\mathcal D_H^{(m)}$ action on $\omega_H$ to $\mathcal O_H$, that is, 
for any section $f$ of $\mathcal O_H$ and any section $P$ of $\mathcal D_H^{(m)}$, we define $fP$ to the the section of 
$\mathcal O_H$ so that 
$$(fP)\omega_0=(f\omega_0)P.$$ 
For any $\xi\in\mathfrak L(H)$, the Lie derivative of $\omega_0$ with respect to $L_\xi$ vanishes 
by the left invariance of $\omega_0$. This implies that 
$$fL_\xi= -L_\xi(f).$$ 
In the next section, we use the following invariant of Proposition \ref{beta0}.

\begin{proposition}\label{beta} 
Let $M$ be a left $R[H]$-comodule with coaction
$$a: M\to R[H]\otimes_RM.$$
Let $\beta'$ be the $R$-linear map
$$\beta': \Gamma(H, {\mathcal D}_{H}^{(m)})\to \m{End}_R (M)$$ so that for any $P\in \Gamma(H, {\mathcal D}_{H}^{(m)})$, $\beta'(P)$ is 
the composite
$$\beta'(P):M\xrightarrow{a}  R[H]\otimes_RM\xrightarrow{\cdot P\otimes\mathrm{id}_M} 
R[H]\otimes_RM  \xrightarrow{\m{ev_1}\otimes\mathrm{id}_M}M,$$ 
where $\cdot P$ is the right action of $P$ on $R[H]$ defined above. 
For any $P\in\Gamma(H, {\mathcal D}_{H}^{(m)})$ and $\xi\in \mathfrak L(H)$, we have
$$\beta'(L_{\xi}P)=\beta'(P)\beta'( L_{\xi}).$$ In particular, for any $\zeta_1,\cdots,\zeta_n\in\f{L}(H)$, we have 
$$\beta'(L_{\zeta_1}\cdots L_{\zeta_n})=\beta'(L_{\zeta_s})\cdots \beta'(L_{\zeta_1}).$$
\end{proposition}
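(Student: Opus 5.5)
We follow the proof of Proposition \ref{beta0}, replacing the left action of $L_\xi$ on $R[H]$ by the right action $f\mapsto fL_\xi=-L_\xi(f)$ defined through the left invariant form $\omega_0$. The key point is that this right action still commutes appropriately with the comultiplication $m:R[H]\to R[H]\otimes_R R[H]$. Since $L_\xi$ is right invariant, the square
$$m\circ L_\xi=(L_\xi\otimes\mathrm{id})\circ m$$
used in the proof of Proposition \ref{beta0} holds. Because $fL_\xi=-L_\xi(f)$, it follows that
$$m(f\cdot L_\xi)=(\cdot L_\xi\otimes \mathrm{id})(m(f)).$$
We then run the same diagram chase, with the comodule coassociativity $(\mathrm{id}\otimes a)\circ a=(m\otimes\mathrm{id})\circ a$ and the counit identity $(\mathrm{ev}_1\otimes\mathrm{id})\circ m=\mathrm{id}$. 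This gives, for all $x\in M$,
$$(\cdot L_\xi\otimes\mathrm{id}_M)\,a(x)=a(\beta'(L_\xi)(x)).$$

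Next we use that the right $\mathcal D_H^{(m)}$-module structure on $\mathcal O_H$ (transported from $\omega_H$) satisfies $f\cdot(L_\xi P)=(f\cdot L_\xi)\cdot P$. Hence
$$\beta'(L_\xi P)(x)=(\mathrm{ev}_1\otimes\mathrm{id})\big((\cdot P\otimes\mathrm{id})(\cdot L_\xi\otimes\mathrm{id})a(x)\big).$$
Substituting the identity above turns this into
$$(\mathrm{ev}_1\otimes\mathrm{id})(\cdot P\otimes\mathrm{id})\,a(\beta'(L_\xi)x)=\beta'(P)\beta'(L_\xi)(x),$$
which is the first claim. The second claim follows by induction on $n$: write $L_{\zeta_1}\cdots L_{\zeta_n}=L_{\zeta_1}P$ with $P=L_{\zeta_2}\cdots L_{\zeta_n}$ and apply the first claim repeatedly. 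This yields the reversed order $\beta'(L_{\zeta_n})\cdots\beta'(L_{\zeta_1})$; the index $s$ in the statement should read $n$.

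The main obstacle is justifying $m(fL_\xi)=(\cdot L_\xi\otimes\mathrm{id})m(f)$ integrally, that is, over $R$ and not only after tensoring with $\mathbb Q$. The identity $fL_\xi=-L_\xi(f)$ relies on the vanishing of the Lie derivative of $\omega_0$ along the right invariant field $L_\xi$, which holds because $\omega_0$ is left invariant and left and right translations commute. Given that identity, the commutation reduces to the right invariance square for $L_\xi$ together with $R$-linearity, so it holds over $R$.
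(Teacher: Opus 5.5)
Your proposal is correct and is essentially the argument the paper intends. The paper gives no separate proof and presents Proposition \ref{beta} as the right-action variant of Proposition \ref{beta0}; your route is that adaptation: you transfer the right-invariance square through $\cdot L_\xi=-L_\xi$, rerun the same coassociativity and counit chase, and then use the right-module axiom $f\cdot(L_\xi P)=(f\cdot L_\xi)\cdot P$. You are also right that the index $s$ in the final formula should be $n$, which matches how the formula is used later in the paper.
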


\section{Calculation on the modified hypergeometric $\mathcal D$-module}

\subsection{Homogeneization}\label{Homogeneization}
Let $\mathbb G_m:=\mathrm{Spec}\,R[t, t^{-1}]$ be the multiplicative group $R$-scheme, $\rho'_j$ $(j=1,\ldots, N)$
the representations 
$$\rho'_j: \mathbb G_m\times_R G\to \mathrm{GL}(V_j), \quad (t, g)\mapsto t\rho_j(g),$$
$\rho'_0$ the representation
$$\rho'_0:  \mathbb G_m\times_R G\to \mathrm{GL}(1),\quad (t, g)\mapsto t,$$
$\mathbb V'=\mathbb A^1\times  \mathbb V$, $\iota'$ the morphism
$$\iota': \mathbb G_m\times_R G\to\mathbb V', \quad (t, g)\mapsto (\rho'_0(t, g),\rho'_1(t, g),\ldots, \rho'_N(t, g)),$$ 
and $X'$ (resp. $X$) the scheme theoretic image of $\iota'$ (resp. $\iota$).
Suppose $\iota: G\to \mathbb V$ is quasi-finite. Then so is $\iota'$. Let $Y'$ (resp. $Y$) be the integral closure of $X'$
(resp. $X$) in $\mathbb G_m\times G$ (resp. $G$). 

\begin{proposition}\label{fiberat1}  Let $i_1$ be the closed immersion 
$$i_1: \mathbb V\to \mathbb V',\quad v\mapsto (1, v).$$
We have Cartesian diagrams
$$\begin{tikzcd}
X\arrow[r,"i_{1,X'}"]\arrow[d]&
X'\arrow[d]& Y\arrow[r,"i_{1,Y'}"]\arrow[d]&
Y'\arrow[d]\\
\mathbb V\arrow[r, "i_1"]&\mathbb V',&\mathbb V\arrow[r, "i_1"]&\mathbb V'.
\end{tikzcd}$$
\end{proposition}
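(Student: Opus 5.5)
The plan is to prove the statement for $X$ first and then deduce the one for $Y$ by base change of normalization.

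\emph{The square for $X$.} The key observation is that $\iota'$ factors through a closed embedding. Consider the automorphism
\[
\Phi:\mathbb G_m\times\mathbb V\to \mathbb G_m\times \mathbb V,\qquad (t,v)\mapsto (t,tv),
\]
where $\mathbb G_m\times\mathbb V\subset \mathbb V'$ is the open subscheme $\{x_0\neq 0\}$. Then $\iota'=\Phi\circ(\mathrm{id}_{\mathbb G_m}\times \iota)$. Since $\Phi$ is an isomorphism, the scheme theoretic image of $\iota'$ in the open set $\mathbb G_m\times\mathbb V$ is $\Phi(\mathbb G_m\times X)$. Here I use that scheme theoretic image commutes with the flat base change $\mathbb G_m\times -$, and that $X$ is the scheme theoretic image of $\iota$. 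Restricting to $t=1$ gives exactly $X$. This requires that $X'\cap\{t\neq 0\}$ is the scheme theoretic image of $\iota'$ viewed into $\mathbb G_m\times\mathbb V$, which holds because formation of scheme theoretic image commutes with restriction to open subschemes of the target for quasi-compact morphisms. Since $i_1$ factors through $\mathbb G_m\times \mathbb V$, we get $i_1^{-1}(X')=\Phi(\mathbb G_m\times X)\cap\{t=1\}=X$ scheme-theoretically. This settles the first square.

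\emph{The square for $Y$.} By the same reasoning over $\mathbb G_m\times\mathbb V$, the scheme $X'|_{t\neq0}$ is isomorphic to $\mathbb G_m\times X$ compatibly with $\mathbb G_m\times G\to \mathbb G_m\times X$. Integral closure in $\mathbb G_m\times G$ commutes with the smooth base change $\mathbb G_m\times -$ (integral closure commutes with smooth, hence flat with geometrically regular fibers, base change). Moreover $Y'\to X'$ is integral and $Y'$ is the integral closure over all of $X'$, so its restriction over the open set $t\neq 0$ is the integral closure there, again because integral closure is local on the target. Hence $Y'|_{t\neq0}\cong \mathbb G_m\times Y$ over $\mathbb G_m\times X\cong X'|_{t\neq0}$. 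Pulling back along $t=1$ gives $Y'\times_{\mathbb V'}\mathbb V\cong Y$, which is the second square.

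\emph{Main obstacle.} The delicate point is the base change step in the $Y$-square. Integral closure does not commute with the closed immersion $t=1$ in general. My plan circumvents this by exhibiting the product decomposition $\mathbb G_m\times Y$ over the open set $t\neq0$ via $\Phi$, and only then restricting. This avoids any flatness issue for the closed immersion, because on a product $\mathbb G_m\times Y$ the fiber over $t=1$ is literally $Y$.
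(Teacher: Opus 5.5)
Your argument is correct, and it rests on the same geometric input as the paper's proof. That input is the open immersion $\mathbb G_m\times\mathbb V\hookrightarrow\mathbb V'$, $(t,v)\mapsto(t,tv)$, which turns the part of $X'$ and $Y'$ over $t\neq 0$ into a product with $\mathbb G_m$ and is the identity on the slice $t=1$. The difference is the direction of the argument.

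\textbf{The paper's route.} It starts from the fibres $X'_1=X'\times_{\mathbb V',i_1}\mathbb V$ and $Y'_1=Y'\times_{\mathbb V',i_1}\mathbb V$. It uses the $\mathbb G_m$-stability of $X'$ and $Y'$ to identify $\mathbb G_m\times X'_1$ and $\mathbb G_m\times Y'_1$ with the preimages of that open set, so they are open subschemes of $X'$ and $Y'$. From this it reads off that $X'_1$ is integral with $\Gamma(X'_1,\mathcal O_{X'_1})\to\Gamma(G,\mathcal O_G)$ injective. It also reads off that $Y'_1$ is integral, normal, finite over $X'_1$, and contains $G$ as an open subscheme. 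It then recognizes $X'_1$ and $Y'_1$ as the scheme-theoretic image and the integral closure through these characterizing properties.

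\textbf{Your route.} You compute $X'|_{t\neq0}$ and $Y'|_{t\neq 0}$ directly from $X$ and $Y$. For this you use two standard facts:
\begin{itemize}
\item scheme-theoretic images of quasi-compact quasi-separated morphisms commute with flat base change (this applies, since $\iota$ is a morphism of affine finite type $R$-schemes);
\item relative normalization commutes with smooth base change and with restriction to opens.
\end{itemize}

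\textbf{What each buys.} Your argument is more formal. It does not need the $\mathbb G_m$-equivariance of $X'$ and $Y'$ as a separate input. Nor does it need the descent of integrality and normality from $\mathbb G_m\times Y'_1$ to $Y'_1$, which the paper leaves implicit. The paper's argument avoids quoting the smooth-base-change lemma for integral closure, which is the one nontrivial commutative-algebra fact in your version. Instead it works only with elementary characterizations plus faithfully flat descent along $\mathbb G_m\times Y'_1\to Y'_1$.
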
 

\begin{proof} Define $$X'_1=X'\times_{\mathbb V', i_1}\mathbb V,\quad Y'_1=Y'\times_{\mathbb V', i_1}\mathbb V.$$ 
Let's prove $X'_1\cong X$ and $Y'_1\cong Y$. We have a commutative diagram 
$$\begin{tikzcd}
\mathbb G_m\times (1\times G)\arrow[r]\arrow[d]&\mathbb G_m\times(\mathbb G_m\times G)\arrow[r,"\cong"]\arrow[d]&
\mathbb G_m\times (\mathbb G_m\times G)\arrow[r,"p_2"]\arrow[d]&\mathbb G_m\times G\arrow[d]\\
\mathbb G_m\times Y'_1\arrow[r]\arrow[d]&\mathbb G_m\times Y'\arrow[r,"\cong"]\arrow[d]&
\mathbb G_m\times Y'\arrow[r,"p_2"]\arrow[d]&Y'\arrow[d]\\
\mathbb G_m\times X'_1\arrow[r]\arrow[d]&\mathbb G_m\times X'\arrow[r,"\cong"]\arrow[d]&
\mathbb G_m\times X'\arrow[r,"p_2"]\arrow[d]&X'\arrow[d]\\
\mathbb G_m\times \mathbb V\arrow[r, "\mathrm{id}\times i_1"]&\mathbb G_m\times \mathbb V'\arrow[r,"\cong"]&
\mathbb G_m\times \mathbb V'\arrow[r,"p_2"]&\mathbb V',
\end{tikzcd}$$
where all squares are Cartesian and the isomorphisms in the middle are give by $(t, v)\mapsto (t, tv)$. The composite
of the morphisms in the bottom line 
$$\mathbb G_m\times \mathbb V\to \mathbb V',\quad (t, v)\mapsto (t, tv)$$
is an open immersion. The sequence of morphisms on the left most line
$$\mathbb G_m\times (1\times G)\to \mathbb G_m\times Y'_1\to \mathbb G_m\times X'_1\to \mathbb G_m\times \mathbb V$$
is the base change by this open immersion of the sequence of morphisms on the right most line
$$\mathbb G_m\times G\to Y'\to X'\to \mathbb V'.$$
This implies that $X'_1$ is integral, $Y'_1$ is integral normal, $Y'_1\to X'_1$ is a finite morphism,
$G\to Y'_1$ is an open immersion, and $\Gamma(X'_1,\mathcal O_{X'_1})\to \Gamma(G,\mathcal O_G)$ is 
injective. So $X'_1$ is the scheme theoretic image of $G\to\mathbb V$ and 
$Y'_1$ is the integral closure of $X'_1$ in $G$. 
\end{proof}

In this section, we assume the following condition holds.

\begin{assumption}\label{ass2} We assume the condition \ref{ass1} holds. Let $\sigma: \tilde Y\to \overline Y$ be
the morphism in Assumption \ref{ass1}. We assume that there exists a morphism
$\sigma': \tilde Y'\to Y'$ such that the following conditions hold:
\begin{enumerate}[(1)]
\item $\tilde Y'$ is a smooth $R$-scheme with $(\mathbb G_m\times H)$-action containing 
$\mathbb G_m\times G$ as an open subscheme, and $\tilde Y'\to \mathrm{Spec}\,R$ has geometrically
connected fibers.
\item $\sigma'$ is equivariant, proper, and induces identity on $\mathbb G_m\times G$.
\item Let $i_{1,Y'}: Y\to Y'$ be the closed immersion. We have a Cartesian diagram 
$$\begin{tikzcd}
\sigma^{-1}(Y)\arrow[r]\arrow[d,"\sigma"]&\tilde Y'\arrow[d, "\sigma'"]\\
Y\arrow[r,"i_{1,Y'}"]&Y'.
\end{tikzcd}$$
\end{enumerate}
\end{assumption}

\begin{remark} If $G$ is a split reductive group scheme over a Dedekind domain $D$, and $\rho_j$ $(j=1,\ldots, N)$
are representations of $G$ defined over $D$. Then \ref{ass2} hold for $R=D_{\mathfrak m}$ for almost all maximal 
ideals $\mathfrak m$. 
\end{remark} 

\begin{proposition}
Let $\omega_{Y'}=\sigma'_*\omega_{\tilde Y'}$, $\omega_{Y'_k}=\sigma'_{k*}\omega_{\tilde Y'_k}$,
$\omega_{Y}=\sigma_*\omega_{\tilde Y}|_{Y}$, and 
$\omega_{Y_k}=\sigma_{k*}\omega_{\tilde Y_k}|_{Y_k}$. Then we have
\begin{align}\label{aln:homomega}
i_{1,Y'}^*\omega_{Y'}\cong \omega_Y,\quad i_{1,Y'_k}^*\omega_{Y'_k}\cong \omega_{Y_k},
\end{align}
\end{proposition}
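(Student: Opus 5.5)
The plan is to compare pushforwards of dualizing sheaves under the Cartesian square of Assumption \ref{ass2}(3). Write $\tilde Y_1:=\sigma'^{-1}(Y)=\tilde Y'\times_{Y',i_{1,Y'}}Y$. By Assumption \ref{ass2}(3) this is $\sigma^{-1}(Y)\subset\tilde Y$, an open subscheme of $\tilde Y$, and $\sigma|_{\tilde Y_1}:\tilde Y_1\to Y$ is the base change of $\sigma'$. Combined with Proposition \ref{fiberat1}, $i_{1,Y'}$ is the base change of $i_1$ along $Y'\to\mathbb V'$, so $Y\hookrightarrow Y'$ is cut out by the single equation $t_0-1=0$, where $t_0$ is the first coordinate of $\mathbb V'=\mathbb A^1\times\mathbb V$. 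Thus $\tilde Y_1\hookrightarrow\tilde Y'$ is the divisor of the function $u:=\sigma'^*(t_0-1)$.

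First I would show that $u$ is a regular element on the smooth scheme $\tilde Y'$, i.e.\ a nonzerodivisor, so that $\tilde Y_1$ is an effective Cartier divisor. Here $\tilde Y'$ is irreducible on fibers and $u\neq 0$ on $\mathbb G_m\times G$. Since $\tilde Y_1\subset\tilde Y$ is smooth, adjunction gives $\omega_{\tilde Y_1}\cong\omega_{\tilde Y'}(\tilde Y_1)|_{\tilde Y_1}\cong\omega_{\tilde Y'}|_{\tilde Y_1}$, because $\mathcal O(\tilde Y_1)$ is trivialized by $u$. It is cleanest to make this explicit using $\mathbb G_m$-equivariance. The isomorphism $\mathbb G_m\times\tilde Y_1\cong\tilde Y'\times_{\mathbb V'}(\mathbb G_m\times\mathbb V)$ is obtained by base change from the diagram in the proof of Proposition \ref{fiberat1}, pulled back along $\sigma'$. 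It identifies the open set $\{t_0\ \text{invertible}\}$ of $\tilde Y'$ with $\mathbb G_m\times\tilde Y_1$, and $\omega$ of a product is $\omega_{\mathbb G_m}\boxtimes\omega_{\tilde Y_1}$, with $\omega_{\mathbb G_m}$ trivialized by $dt/t$.

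Next I would prove the base change $i_{1,Y'}^*\sigma'_*\omega_{\tilde Y'}\cong\sigma_*(\omega_{\tilde Y'}|_{\tilde Y_1})$. Since the question is local near $t_0=1$, I may replace $Y'$ by the open set $Y'\times_{\mathbb V'}(\mathbb G_m\times\mathbb V)$, which by the above is $\mathbb G_m\times Y$, with $\tilde Y'$ becoming $\mathbb G_m\times\tilde Y_1$ and $\sigma'=\mathrm{id}\times\sigma$. Then $\sigma'_*\omega_{\tilde Y'}=\omega_{\mathbb G_m}\boxtimes\sigma_*\omega_{\tilde Y_1}$, by flat base change along the flat projection to $Y$ together with the Künneth formula for quasi-coherent pushforward. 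Restricting to $\{1\}\times Y$ gives $\sigma_*\omega_{\tilde Y_1}=\sigma_*\omega_{\tilde Y}|_Y=\omega_Y$. The right-hand equality holds because $\tilde Y_1=\sigma^{-1}(Y)$ is open in $\tilde Y$, so pushforward commutes with restriction to $Y$. This yields the first isomorphism in (\ref{aln:homomega}).

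The same argument, run after base change to $k$ (all constructions are compatible with $\otimes_Rk$ and the product decomposition persists), gives the second isomorphism. The main obstacle I anticipate is justifying the identification of the open part of $\tilde Y'$ over $\{t_0\in\mathbb G_m\}$ with $\mathbb G_m\times\tilde Y_1$ compatibly with $\sigma'$. This requires the $\mathbb G_m$-action on $\tilde Y'$, via the first factor of $\mathbb G_m\times H$, to be the scaling compatible with $(t,v)\mapsto(t,tv)$. I would derive this from the equivariance of $\sigma'$ in Assumption \ref{ass2}(2) together with the Cartesian square in (3), by forming the action map $\mathbb G_m\times\tilde Y_1\to\tilde Y'$ and checking that it is an isomorphism onto its image. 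It is an isomorphism over $\mathbb G_m\times G$ and proper over $\mathbb G_m\times Y$, and both sides are smooth.
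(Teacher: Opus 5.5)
Your argument is essentially the paper's. The paper likewise identifies $\sigma'$ over the open locus $\{t_0\neq 0\}$ of $Y'$ with $\mathrm{id}\times\sigma\colon \mathbb G_m\times\sigma^{-1}(Y)\to\mathbb G_m\times Y$, via the open immersion $A\colon(t,y)\mapsto ty$. It then gets $A^*\sigma'_*\omega_{\tilde Y'}\cong\omega_{\mathbb G_m}\boxtimes\sigma_*\omega_{\tilde Y}|_Y$ and restricts to $1\times Y$. The Cartier-divisor/adjunction discussion at the start of your proposal is not needed for this.

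One correction concerns your last paragraph, whose justification does not work as stated. A proper morphism between smooth schemes that is an isomorphism over a dense open subset need not be an isomorphism; a blow-up along a smooth center of codimension at least $2$ is a counterexample. Instead, write down the inverse using the action. For $z\in(\sigma')^{-1}(\{t_0\neq 0\})$, let $s$ be the $t_0$-coordinate of the image of $z$ in $\mathbb V'$. Then $z\mapsto (s,s^{-1}z)$ lands in $\mathbb G_m\times\sigma^{-1}(Y)$, for three reasons: $\mathbb G_m$ acts on $\mathbb V'$ by scalars, $\sigma'$ is equivariant, and $Y$ is the locus $t_0=1$ in $Y'$ by Proposition \ref{fiberat1}. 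This map is inverse to $(t,y)\mapsto ty$. It is exactly the shear argument the paper invokes ``as in the proof of Proposition \ref{fiberat1}''.
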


\begin{proof} We prove the first isomorphism. As in the proof of Proposition \ref{fiberat1}, we have a Cartesian diagram 
$$\begin{tikzcd}
\mathbb G_m\times \sigma^{-1}(Y)\arrow[r,hook]\arrow[d,"\mathrm{id}\times \sigma"]&\tilde Y'\arrow[d,"\sigma'"]\\
\mathbb G_m\times Y\arrow[r,hook,"A"]&Y',
\end{tikzcd}$$
where the horizontal arrows are given by $(t, y)\mapsto ty$ and are $(\mathbb G_m\times H)$-equivariant open immersions.
It follows that 
 $$A^*\omega_{Y'}= A^* \sigma'_*\omega_{\tilde Y'}\cong \omega_{\mathbb G_m}\boxtimes \sigma_*\omega_{\tilde Y}|_Y
 = \omega_{\mathbb G_m}\boxtimes \omega_Y.$$
 Restricting to $1\times Y$, we get $i_{1,Y'}^*\omega_{Y'}\cong \omega_Y$.
\end{proof}

Let $A$ be a $k$-algebra provided with a filtration 
$$A_0\subset A_1\subset\cdots$$
by subgroups such that 
\begin{eqnarray}\label{eqn:filteredalg}
k\subset A_0, \quad A_iA_j\subset A_{i+j},\quad A=\bigcup_i A_i.
\end{eqnarray}
Let 
$$C(A)=\bigoplus_{i=0}^\infty A_i t^i.$$ The property $A_iA_j\subset A_{i+j}$ implies that 
$C(A)$ is a graded $k[t]$-algebra. Multiplication by $t$ is homogeneous of degree $1$ and is injective. 
So $C(A)$ has no $t$-torsion. As a graded algebra, this implies that $C(A)$ has no torsion and hence flat over $k[t]$.  
If $A=\bigoplus_{i=0}^\infty A^{(i)}$ is a graded $k$-algebra provided with the filtration defined by $$A_i=\sum_{j\leq i} A^{(i)},$$
then we have an isomorphism $C(A)\cong A[t]$ given by 
$$\bigoplus_{i=0}^\infty t^i A_i \stackrel\cong \to A[t], \quad t^i \sum_{j\leq i} f_j\mapsto \sum_{j\leq i} \sum_{j\leq i} t^{i-j}f_j$$
for any $f_j\in A^{(j)}$. This is an isomorphism of graded $k$-algebras if $A[t]$ is provided with the grading
$$A[t]=\bigoplus_{d=0}^\infty \Big(\sum_{i+j=d} t^i A^{(j)}\Big).$$
If $A=k[x_1, \ldots, x_n]$ is the polynomial ring with the grading 
defined by the degree, then the isomorphism from $C(A)=\bigoplus x_0^i A_i$ to $A[x_0]=k[x_0,x_1, \ldots, x_n]$
is the usual map
$$f(x_1, \ldots, x_n)\mapsto x_0^i f\Big(\frac{x_1}{x_0}, \ldots, \frac{x_n}{x_0}\Big)$$
for any polynomial $f$ of degree $\leq i$.

For any $A$-module $M$ provided with a filtration 
$$\cdots\subset M_j\subset M_{j+1}\subset \cdots$$ by subgroups such that
\begin{eqnarray}\label{eqn:filtration}
M=\bigcup_j M_j, \quad A_iM_j\subset M_{i+j}.
\end{eqnarray}
Let
$$C(M)=\bigoplus_{j=-\infty}^{\infty}  t^jM_j.$$
Then $C(M)$ is a graded $C(A)$-module, and multiplication by $t$ on $C(M)$ is injective. 
This implies that $C(M)$ is flat over $k[t]$. 

\begin{proposition}\label{prop:filgr} ${}$

\begin{enumerate}[(i)]
\item The functor $A\mapsto C(A)$ is an 
equivalence from the category of filtered $k$-algebras satisfying the condition (\ref{eqn:filteredalg}) to the category 
of graded $k[t]$-algebras on which multiplication by $t$ is injective and homogeneous of degree $1$. We have 
$$A\cong C(A)/(t-1)C(A),\quad \mathrm{Gr}(A)\cong C(A)/tC(A).$$

\item The functor $M\mapsto C(M)$ is an 
equivalence from the category of filtered $A$-modules satisfying the condition (\ref{eqn:filtration}) to the category 
of graded $C(A)$-modules on which multiplication by $t$ is injective. We have 
$$M\cong C(M)/(t-1)C(M),\quad \mathrm{Gr}(M)\cong C(M)/tC(M).$$
If $C(M)$ is finitely generated over $C(A)$, then
$\mathrm{Gr}(M)$ is finitely generated over $\mathrm{Gr}(A)$.

\item Suppose $A=\bigoplus_{i=0}^\infty A^{(i)}$ is a graded $k$-algebra.
We have an equivalence of categories $M\mapsto C(M)$
from the category of filtered $A$-modules satisfying the condition (\ref{eqn:filtration}) to the category 
of graded $A[t]$-modules on which multiplication by $t$ is injective. We have 
$$M\cong C(M)/(t-1)C(M),\quad \mathrm{Gr}(M)\cong C(M)/tC(M).$$
If $C(M)$ is finitely generated over $A[t]$, then
$\mathrm{Gr}(M)$ is finitely generated over $\mathrm{Gr}(A)\cong A$.
\end{enumerate}
\end{proposition}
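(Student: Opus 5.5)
The plan is to prove the standard Rees-module correspondence by writing down explicit inverse functors and then checking the two quotient formulas degree by degree. For (i), given a graded $k[t]$-algebra $C=\bigoplus_{i\ge 0}C^{(i)}$ on which multiplication by $t$ is injective and homogeneous of degree $1$, set $A:=C/(t-1)C$. Let $A_i$ be the image of $C^{(i)}$ in $A$. Since $t\,C^{(i)}\subset C^{(i+1)}$ and $t\equiv 1$, we get $A_i\subset A_{i+1}$. Multiplicativity of the grading gives $A_iA_j\subset A_{i+j}$. Every element of $C$ is a finite sum of homogeneous pieces, so $A=\bigcup_i A_i$. Finally $k\subset A_0$ because $C$ is a $k$-algebra with $k\subset C^{(0)}$.

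The key lemma is that the map $C^{(i)}\to A$ is injective for each $i$. To see this, suppose $c\in C^{(i)}$ satisfies $c=(t-1)x$ with $x=\sum_j x_j$, $x_j\in C^{(j)}$. Comparing homogeneous components gives $x_j=t x_{j-1}$ for $j\neq i$, and $c=tx_{i-1}-x_i$. Since $x$ has only finitely many nonzero components and $x_j=0$ for $j<0$, the relation $x_j=tx_{j-1}$ for $j<i$ forces $x_j=0$ for $j<i$. For $j>i$, the same relation together with injectivity of $t$ and vanishing for $j\gg 0$ forces $x_j=0$ for $j\ge i$. Hence $c=0$.

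With this lemma, $C\cong C(A)$ via $c\in C^{(i)}\mapsto \bar c\, t^i$. The same computation applied to $C(A)$ gives $C(A)/(t-1)\cong A$, because the image of $A_it^i$ is $A_i$ and $\bigcup A_i=A$. In the other direction, $C(A)/tC(A)=\bigoplus A_i/A_{i-1}=\mathrm{Gr}(A)$, since $t\cdot A_{i-1}t^{i-1}=A_{i-1}t^i$. Morphisms correspond because a filtered homomorphism $f$ induces $\sum a_it^i\mapsto \sum f(a_i)t^i$, and conversely a graded map descends modulo $t-1$. These two constructions are inverse by the lemma.

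Part (ii) is the identical argument with $j\in\mathbb Z$. This is the main point needing care: the grading is now unbounded below, so in the lemma the vanishing of $x_j$ for $j<i$ is not automatic. It still follows because $x$ is a finite sum, so $x_j=0$ for $j\ll 0$, and then $x_j=tx_{j-1}$ propagates the vanishing upward. Likewise injectivity of $t$ and the vanishing for $j\gg 0$ propagate downward. Since the question only concerns $x$ (not an element of $C$ infinite in both directions), both directions are fine. For finite generation, if $C(M)$ is generated over $C(A)$ by finitely many elements, we may take them homogeneous. Their images generate $C(M)/tC(M)=\mathrm{Gr}(M)$ over $C(A)/tC(A)=\mathrm{Gr}(A)$.

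Part (iii) follows by combining (ii) with the isomorphism $C(A)\cong A[t]$ of graded $k$-algebras established just before the statement. Here $A[t]$ carries the grading $\sum_{i+j=d}t^iA^{(j)}$, so graded $C(A)$-modules are the same as graded $A[t]$-modules. Moreover, for the filtration $A_i=\sum_{j\le i}A^{(j)}$ we have $\mathrm{Gr}(A)\cong A[t]/tA[t]\cong A$, and the finite generation statement transfers verbatim.
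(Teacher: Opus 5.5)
Your argument is correct and is essentially the paper's. The paper takes the inverse functor to be $\varinjlim(C_0\xrightarrow{t}C_1\xrightarrow{t}\cdots)$, which is canonically your $C/(t-1)C$; there, injectivity of $C_i\to A$ is automatic and the ring structure is defined by hand. It then checks $C(A)/(t-1)C(A)\cong A$ by the telescoping identity $\sum_i t^if_i=(1-t)\sum_i t^ig_i$, with $g_i=\sum_{j\le i}f_j$, whenever $\sum_i f_i=0$. That identity is the explicit form of the injectivity you leave to ``the same computation''; it also follows from your lemma, since every class modulo $t-1$ is represented in a single degree.
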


\begin{proof}  (i) 
Let $C=\bigoplus_{i=0}^\infty C_i$ be a graded $k[t]$-algebra such that multiplication by $t$ is injective of degree $1$. 
Multiplication by $t$ defines a direct system
$$C_0\stackrel t\hookrightarrow C_1\stackrel t\hookrightarrow \cdots.$$ 
Let $A=\varinjlim_i C_i$, and let $A_i$ be the image of $C_i$ in $A$. 
For any $f, g\in A$, choose nonnegative integers $i, j$ such that $f\in A_i$ and 
$g\in A_j$. Let $f'\in C_i$ (resp. $g'\in C_j$) be the (unique) preimage for $f$ (resp. $g$). 
We define $fg\in A$ to be the image of $f'g'\in C_{i+j}$ in $A$. It is independent of the choices of $i$ and $j$. 
We thus get a filtered $k$-algebra $A$ with the property $C(A)\cong C$. The maps
\begin{eqnarray*}
\bigoplus_{i=0}^{\infty} t^i A_i\to \bigoplus_{i=0}^{\infty} A_i/A_{i-1}, && \sum_i t^i f_i \mapsto (f_i + A_{i-1}),\\
\bigoplus_{i=0}^{\infty} t^i A_i\to A, && \sum_i t^i f_i\mapsto \sum_i f_i
\end{eqnarray*}
induce isomorphisms
$$C(A)/tC(A)\cong \mathrm{Gr}(A),\quad
C(A)/(t-1)C(A)\stackrel\cong\to A.$$
Indeed, if $\sum_i f_i=0$, then $\sum_i t^i f_i=(1-t)(\sum_i t^ig_i)$, where 
$g_i=\sum_{j=0}^i f_j$.

(ii) is proved in a similar way. 
(iii) follows from (ii).
\end{proof}

\subsection{}
Let $(x_1, \ldots, x_n)$ be a linear coordinate on $\mathbb V$, let $(x'_1,\ldots, x'_n)$ be the dual coordinate for the dual space
$\mathbb V^*$ of $\mathbb V$, and let
$$A^{(m)}=R[\partial_{x'_1}^{\langle p^j\rangle_{(m)}},\ldots,\partial_{x'_n}^{\langle p^j\rangle_{(m)}} ]_{0\leq j\leq m}.$$
By \cite[(2.2.5.1)]{Berthelot1}, $A^{(m)}$ can be regarded as an $R$-subalgebra of 
$D_{\mathbb V^*}^{(m)}:=\Gamma(\mathbb V^*,\mathcal D_{\b V^*}^{(m)})$. The action of $H$ on 
$D_{\mathbb V^*}^{(m)}$ induces an action of $H$ on $A^{(m)}$, or equivalently, a left comodule structure on $A^{(m)}$
over $R[H]$. 
Recall that $\pi^{p-1}=-p$. Since 
\begin{align*}
&p^j! \partial_{x_i}^{\langle p^j\rangle_{(m)}}= \partial^{p^j}_{x_i}\quad (0\leq j\leq m),\\
&\mathrm{ord}_\pi(p^j!)=\frac{p^j-1}{p-1}\cdot \mathrm{ord}_\pi(p)=p^j-1, 
\end{align*}
we have 
$$A^{(m)}=R[\pi(\partial_{x'_1}/\pi)^{p^j},\ldots,\pi(\partial_{x'_n}/\pi)^{p^j} ]_{0\leq j\leq m}.$$
Let 
$$B^{(m)}=R[(\pi x_1)^{p^j}/p^j!,\ldots,(\pi x_n)^{p^j}/p^j! ]_{0\leq j\leq m}=R[\pi x_1 ^{p^j},\ldots,\pi x_n^{p^j} ]_{0\leq j\leq m}.$$
The isomorphism (\ref{eqn:Fourieriso}) induces an isomorphism 
\begin{eqnarray}\label{AB}
B^{(m)}\stackrel\cong\to \mathbb A^{(m)}, \quad x_i\mapsto \partial_{x'_i}/\pi.
\end{eqnarray}
The $H$-action on $\mathbb V$ endows $R[x_1, \ldots, x_n]$ with a left $R[H]$-comodule structure, and
$B^{(m)}$ is a sub-comodule of $R[x_1, \ldots, x_n]$. 
We have $$B^{(m)}_\mathbb Q:=B^{(m)}\otimes_{\mathbb Z}\mathbb Q\cong K[x_1, \ldots, x_n].$$ 
We regard $\mathrm{Spec}\, B^{(m)}$ 
as an integral model of $\mathbb V_K$.

Let $f': Y'\to \mathbb V'$ be the composite $Y'\to X'\to \mathbb V'$, let 
$\omega_{Y'}=\sigma'_*\omega_{\tilde Y'}$, and let 
$$M'^{(m)}_{\mathbb Q}=\Gamma(\mathbb V'_K, f'_*\omega_{Y'}).$$ 
Identify $\mathbb V'_K$ with $\mathrm{Spec}\, (K[t]\otimes_KB^{(m)}_{\Q})$.
Then $M'^{(m)}_{\mathbb Q}$ is a finitely generated $(K[t]\otimes_KB^{(m)}_{\Q})$-module with 
a right action by $\mathbb G_{m, K}\times H_K$ so that for any section $\omega$ of 
$\Gamma(\mathbb V'_K, f'_*\omega_{Y'})$ and any $K$-point $g$ 
of $\mathbb G_{m, K}\times H_K$, $g\omega$ is the pulling back of $\omega$ by the automorphism 
of $\mathbb V'_K$ defined by the action of $g$. This action endows $M'^{(m)}_{\mathbb Q}$ with 
a left $(R[\mathbb G_m]\otimes_RR[H])$-comodule structure
$$M'^{(m)}_{\mathbb Q}\to(K[\mathbb G_{m,K}]\otimes_KK[H])\otimes_K  M'^{(m)}_{\mathbb Q}
\cong  (R[\mathbb G_m]\otimes_RR[H])\otimes_R  M'^{(m)}_{\mathbb Q}$$ by \cite[3.2 a)-b)]{Serre}. 
(In \cite[3.2 b)]{Serre}, to get a left comodule, the linear action of $G(A')$ on 
$A'\otimes E$ must be a right action.)
By \cite[Proposition 2]{Serre}, there exists a sub-comodule $F$ of $M'^{(m)}_{\Q}$ which is a finitely generated $R$-module and contains
a finite family of generators of $M'^{(m)}_{\mathbb Q}$ as a module over $K[t]\otimes_KB^{(m)}_{\Q}$. Let $M'^{(m)}$ be the 
$(R[t]\otimes_R B^{(m)})$-submodule of $M'^{(m)}_{\Q}$ generated by $F$. Then we have 
$$M'^{(m)}\otimes_{\b{Z}}{\Q}\cong M'^{(m)}_{\mathbb Q}.$$
We claim $M'^{(m)}$ is an $(R[\mathbb G_m]\otimes_RR[H])$-subcomodule of $M'^{(m)}_{\mathbb Q}$. Write 
$R[H']=R[\mathbb G_m]\otimes_RR[H]$, $S=R[t]\otimes_R B^{(m)}$, $c$ the comultiplications on $S$, on $M'^{(m)}_{\mathbb Q}$ and on 
$F$, $m: R[H']\otimes_R R[H']\to R[H']$ the multiplication, and $\mu: S\otimes_RM'^{(m)}_{\mathbb Q}\to M'^{(m)}_{\mathbb Q}$ the 
scalar multiplication. Then $\mu$ is compatible with the comodule structure, that is, the square on the right of
the  following diagram commutes:
$$\begin{tikzcd}
S\otimes_R F\arrow[d,"c\otimes c"]\arrow[r,hook]& S\otimes_R M'^{(m)}_{\Q}\arrow[d,"c\otimes c"]\arrow[r, "\mu"]& 
M'^{(m)}_{\Q}\arrow[ddd,"c"]\\
(R[H']\otimes_R S)\otimes_R (R[H']\otimes_R F)
\arrow[d,"\cong"]\arrow[r, hook]&(R[H']\otimes_R S)\otimes_R (R[H']\otimes_R M'^{(m)}_{\mathbb Q})\arrow[d,"\cong"]&\\
(R[H']\otimes_RR[H'])\otimes_R (S\otimes_RF)\arrow[d,"{m\otimes \mathrm{id}}"]\arrow[r,hook] &
(R[H']\otimes_RR[H'])\otimes_R (S\otimes_R M'^{(m)}_{\mathbb Q})\arrow[d,"{m\otimes \mathrm{id}}"]&\\
R[H']\otimes_R (S\otimes_R F)\arrow[r,hook]& 
R[H']\otimes_R (S\otimes_R M'^{(m)}_{\mathbb Q})\arrow[r,"\mathrm{id}\otimes\mu"] &R[H']\otimes_RM'^{(m)}_{\Q}
\end{tikzcd}$$
The left part of the diagram clearly commutes. 
The commutativity of the outer loop shows that $c: M'^{(m)}_{\Q}\to R[H']\otimes_RM'^{(m)}_{\Q}$
maps $M'^{(m)}$ to $R[H']\otimes_RM'^{(m)}$. This proves our claim. 
Let $$M^{(m)}=M'^{(m)}\otimes_{R[t]}R[t]/(t-1).$$ Then $M^{(m)}$ is a left $R[H]$-comodule and a finitely generated $B^{(m)}$-module. 
By (\ref{aln:homomega}), we have 
$$M^{(m)}_{\mathbb Q}:=M^{(m)}\otimes_{\mathbb Z}\mathbb Q\cong \Gamma(\mathbb V_K, f_{K*}\omega_{Y_K}).$$ 

\subsection{}\label{affintmodel}
Denote the base change from $R$ to $k$ of an object over $R$ by the same notation with a subscript $k$. Then 
$M'^{(m)}_k:=M'^{(m)}\otimes_R k$ (resp. $k[t]\otimes_kB^{(m)}_k$) is a finite $(k[t]\otimes_kB^{(m)}_k)$-module 
(resp. a $k$-algebra) with a $(\mathbb G_{m,k}\times_k H_k)$-action. 
The $\mathbb G_{m,k}$-action endows $M'^{(m)}_k$ (resp. $k[t]\otimes_kB^{(m)}_k$) with 
a graded module (resp. a graded algebra) structure. Let $\bar M'^{(m)}_k$ be the quotient 
of $M'^{(m)}_k$ by the maximal $t$-torsion submodule. Then the canonical homomorphism
$M'^{(m)}_k\twoheadrightarrow \bar M'^{(m)}_k$ induces an isomorphism
$$M^{(m)}_k\cong M'^{(m)}_k\otimes_{k[t]} k[t]/(t-1)\stackrel\cong\to \bar M'^{(m)}_k\otimes_{k[t]} k[t]/(t-1).$$
The $(\mathbb G_{m,k}\times_k H_k)$-action on $ M'^{(m)}_k$ induces a $(\mathbb G_{m,k}\times_kH_k)$-action on 
$\bar M'^{(m)}_k$. By Proposition \ref{prop:filgr} (iii), the $H_k$-invariant graded module structure on $\bar M'^{(m)}_k$ 
defines an $H_k$-invariant good filtration on 
$M^{(m)}_k$ such that 
$$\mathrm{Gr}(M^{(m)}_k)\cong \bar M'^{(m)}_{k} \otimes_{k[t]}k[t]/(t).$$ 
In particular, $\mathrm{Gr}(M^{(m)}_k)$ is a quotient of $M'^{(m)}_{k} \otimes_{k[t]}k[t]/(t)$.

\subsection{}\label{S0=omega}
We construct an explicit $M'^{(m)}$ for the level $m=0$ case, which will be used later. We have 
$$A^{(0)}=R[\partial_{x'_1},\ldots, \partial_{x'_n}], \quad B^{(0)}=R[\pi x_1,\ldots, \pi x_n]$$
Choose $M'^{(0)}$ as follows. We have a homomorphism
$$\phi: R[t,x_1, \ldots, x_n ]\to R[t,\pi x_1,\ldots, \pi x_n],\quad t\mapsto  \pi t, \quad x_j\mapsto \pi x_j.$$
We define 
$$M'^{(0)}:=\Gamma(\mathbb A^1\times \b{V}, f'_*\omega_{Y'})\otimes_{R[t,x_1, \ldots, x_n ],\phi}
R[t,\pi x_1,\ldots, \pi x_n].$$
Note that $\mathbb G_m\times H$ acts on $M'^{(0)}$, and hence $M'^{(0)}$ is an $(R[t,t^{-1}]\otimes_R R[H])$-comodule.
Over generic fiber, $\phi_K$ induces the automorphism of $\mathbb A^1\times_K\mathbb V_K$ induced by the action of 
$\pi\in\mathbb G_{m,K}(K)$. Since $f'_{*}\omega_{Y',K}$ is $\mathbb G_{m,K}$-equivariant, we have
$M'^{(0)}\otimes_{\mathbb Z}\mathbb Q\cong M'^{(0)}_K.$ By Corollary \ref{cor:usedin4}, we have 
\begin{align*}
M'^{(0)}_k\otimes_k k[t]/(t)& \cong \Gamma(\mathbb A_k^1\times_k \b{V}_k, f'_{k*}\omega_{Y'_k})\otimes_{k[t,x_1, \ldots, x_n ],\phi_k}
k[t,\pi x_1,\ldots, \pi x_n]\otimes_k k[t]/(t)\\
& \cong \Gamma(\mathbb A_k^1\times_k \b{V}_k, f'_{k*}\omega_{Y'_k})\otimes_{k[t,x_1, \ldots, x_n ],\phi_k} k[\pi x_1,\ldots, \pi x_n],
\end{align*}
where in the last equation, $\phi_k$ is given by 
$$\phi_k: k[t,x_1, \ldots, x_n ] \to k[\pi x_1, \ldots, \pi x_n ], \quad t\mapsto 0, \quad x_i\mapsto \pi x_i.$$
Let $M^{(0)}=M'^{(0)}\otimes_{R[t]}R[t]/(t-1)$. Then $M^{(0)}_k$ is provided with a good filtration so that $\mathrm{Gr}(M^{(0)}_k)$ is 
a quotient of $\Gamma(\mathbb A_k^1\times \b{V}, f'_{k*}\omega_{Y'_k})\otimes_{k[t,x_1, \ldots, x_n ],\phi_k} k[\pi x_1,\ldots, \pi x_n]$.

\subsection{}
Let $\widehat A^{(m)}$ and $\widehat B^{(m)}$ be the $\mathfrak m$-adic completion of $A^{(m)}$ and $\widehat B^{(m)}$, respectively. 
We have
$$\widehat A^{(m)}=R\langle \pi(\partial_{x'_1}/\pi)^{p^j},\ldots,\pi(\partial_{x'_n}/\pi)^{p^j} \rangle_{0\leq j\leq m},
\quad \widehat B^{(m)}=R\langle \pi x_1 ^{p^j},\ldots,\pi x_n^{p^j} \rangle_{0\leq j\leq m},$$
where $R\langle t_1, \ldots, t_n\rangle$ denote the ring of power series $\sum_{i_1, \ldots, i_n\geq 0}a_{i_1\ldots i_n}t_1^{i_1}\cdots t_n^{i_n}$
such that $a_{i_1\ldots i_n}\in R$ and $a_{i_1\ldots i_n}\to 0$ as $i_1+\cdots+i_n\to \infty$.
We have an $R$-module isomorphism   
$$\widehat{D}_{\widehat{\mathbb V}^*}^{(m)}:=\Gamma(\widehat{\b{V}}^*,\widehat{\mathcal D}_{\widehat{\mathbb V}^*}^{(m)})\cong 
R\langle x'_1,\ldots, x'_n\rangle \widehat{\otimes}_{R} \widehat{A}^{(m)}.$$
The rigid analytic space $\mathrm{Spm}\, \widehat{B}^{(m)}_{\Q}$ is the closed polydisc 
defined by $|x_i|\leq |\pi|^{-\frac{1}{p^m}}$ $(i=1,\ldots, n)$ in the analytification 
$\mathbb V_K^{\mathrm{an}}$ of $\mathbb V_K$.

Let $\widehat M'^{(m)}$ be the $\mathfrak m$-adic completion of $M'^{(m)}$. It is a finite
$(R \langle t\rangle\widehat\otimes_R\widehat{B}^{(m)})$-module and an $(R[{\b{G}}_m\times H])^\wedge$-comodule. 
Let $\widehat{M}^{(m)}$ be the completion of $M^{(m)}$. It is a finite $\widehat{B}^{(m)}$-module and an $(R[\widehat H])^\wedge$-comodule.
We have $\widehat M^{(m)}_k\cong {M}^{(m)}_k$. So $\widehat{M}^{(m)}_k$ is equipped with a $H_k$-invariant good filtration.

\begin{proposition}\label{rel-M-omega} We have an isomorphism 
$$\widehat{M}^{(m)}_{\mathbb Q}\cong \Gamma(\mathrm{Spm}\, \widehat{B}^{(m)}_{\Q}, (f_*\omega_Y)^{\mathrm{an}})$$
compatible with the action of $\widehat H_K$, where $(f_{K*}\omega_{Y_K})^{\mathrm{an}}$ is the analytification
of the $\mathcal O_{\mathbb V_K}$-module $f_{K*}\omega_{Y_K}$. 
\end{proposition}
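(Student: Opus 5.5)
The approach is to realize both sides as sections of one coherent analytic sheaf on the affinoid $\mathrm{Spm}\,\widehat B^{(m)}_{\mathbb Q}$. On one side, $M^{(m)}$ is a finitely generated $B^{(m)}$-module with $M^{(m)}\otimes_{\mathbb Z}\mathbb Q\cong M^{(m)}_{\mathbb Q}\cong\Gamma(\mathbb V_K,f_{K*}\omega_{Y_K})$, and $B^{(m)}_{\mathbb Q}\cong K[x_1,\ldots,x_n]$. On the other side, the right-hand side is the module of sections of the analytification of the coherent sheaf attached to $M^{(m)}_{\mathbb Q}$, restricted to the polydisc $|x_i|\leq|\pi|^{-1/p^m}$.

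The first step uses the fact that $B^{(m)}$ is noetherian and $M^{(m)}$ is finite. Hence $\mathfrak m$-adic completion is exact on finite $B^{(m)}$-modules, and
$$\widehat M^{(m)}\cong M^{(m)}\otimes_{B^{(m)}}\widehat B^{(m)}.$$
Inverting $p$ then gives
$$\widehat M^{(m)}_{\mathbb Q}\cong M^{(m)}_{\mathbb Q}\otimes_{B^{(m)}_{\mathbb Q}}\widehat B^{(m)}_{\mathbb Q}= M^{(m)}_{\mathbb Q}\otimes_{K[x_1,\ldots,x_n]}\widehat B^{(m)}_{\mathbb Q}.$$

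The second step invokes the standard description of analytification of a coherent sheaf on an affine $K$-variety restricted to an affinoid subdomain. For a finite $K[x]$-module $N$ with associated sheaf $\widetilde N$ on $\mathbb V_K$, and an affinoid $\mathrm{Spm}\,C\subset\mathbb V_K^{\mathrm{an}}$, one has $\Gamma(\mathrm{Spm}\,C,(\widetilde N)^{\mathrm{an}})=N\otimes_{K[x]}C$. This follows from a finite presentation of $N$, the exactness of analytification, and Tate acyclicity on affinoids. Here $\mathrm{Spm}\,\widehat B^{(m)}_{\mathbb Q}$ is identified with the closed polydisc via $K\langle \pi x_i^{p^j}\rangle$ with $0\le j\le m$. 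The generators with $j<m$ are power-bounded consequences of the $j=m$ condition, so $\widehat B^{(m)}_{\mathbb Q}$ is indeed the affinoid algebra of that polydisc. Combining this with the first step, and using $f_{K*}\omega_{Y_K}=\widetilde{M^{(m)}_{\mathbb Q}}$ (valid because $f$ is finite and hence affine), gives the isomorphism.

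The third step is compatibility with $\widehat H_K$. The comodule structure on $M^{(m)}$ is the restriction of the geometric one on $M^{(m)}_{\mathbb Q}$ coming from pullback of forms. The comodule structure of $B^{(m)}$ is compatible with multiplication. Completing the coaction $M^{(m)}\to R[H]\otimes_R M^{(m)}$ therefore yields a coaction of $R[H]^\wedge$ on $\widehat M^{(m)}$ which agrees, after tensoring with $\widehat B^{(m)}$, with the analytified action. This rests on the fact that $\widehat H$ preserves the polydisc, which holds because $B^{(m)}$ is an $R[H]$-subcomodule.

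I expect the main obstacle to be this last point: making precise that the $\widehat H_K$-action on $\Gamma(\mathrm{Spm}\,\widehat B^{(m)}_{\mathbb Q},(f_*\omega_Y)^{\mathrm{an}})$ is well defined and equals the completed coaction. I would check it on the dense submodule $M^{(m)}_{\mathbb Q}$ and extend by continuity.
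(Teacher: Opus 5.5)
Your proposal is correct and follows essentially the same route as the paper. The paper observes that the coherent sheaf attached to $\widehat M^{(m)}_{\mathbb Q}$ on $\mathrm{Spm}\,\widehat B^{(m)}_{\mathbb Q}$ is the restriction of the analytification of $M^{(m),\sim}_{\mathbb Q}\cong f_{K*}\omega_{Y_K}$, and then takes global sections; this is exactly your identification $\widehat M^{(m)}_{\mathbb Q}\cong M^{(m)}_{\mathbb Q}\otimes_{K[x_1,\ldots,x_n]}\widehat B^{(m)}_{\mathbb Q}$ combined with the affinoid (Tate/Kiehl) description of sections. You also make explicit several points the paper leaves implicit: exactness of completion on finite modules over the noetherian ring $B^{(m)}$, why $\widehat B^{(m)}_{\mathbb Q}$ is the affinoid algebra of the polydisc, and the $\widehat H_K$-compatibility, which the paper's proof does not address at all.
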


\begin{proof} $B^{(m)}$ (resp. $M^{(m)}$) is an $R$-model of $B_{\mathbb Q}^{(m)}$ (resp. $M^{(m)}_{\mathbb Q}$). 
$\mathrm{Spm}\, \widehat B^{(m)}_{\mathbb Q}$ is an open subset of $(\mathrm{Spec}\, B_{\mathbb Q}^{(m)})^{\mathrm{an}}\cong 
\mathbb V_K^{\mathrm{an}}$.
Let $\widehat M^{(m),\sim}_{\mathbb Q}$ be 
the $\mathcal O_{\mathrm{Spm}\, \widehat B^{(m)}_{\mathbb Q}}$-module  
associated to the $\widehat B^{(m)}_{\mathbb Q}$-module
$\widehat M^{(m)}_{\mathbb Q}$, and let $M^{(m),\sim}_{\mathbb Q}$ be the  
$\mathcal O_{\mathrm{Spec}\, B^{(m)}_{\mathbb Q}}$-module associated to the $B^{(m)}_{\mathbb Q}$-module
$M^{(m)}_{\mathbb Q}$.
Then we have
$$\widehat M^{(m),\sim}_{\mathbb Q}\cong (M^{(m),\sim}_{\mathbb Q})^{\mathrm{an}}|_{\mathrm{Spm}\, \widehat B^{(m)}_{\mathbb Q}}.$$
So we have
$$\widehat{M}^{(m)}_{\mathbb Q}\cong \Gamma(\mathrm{Spm}\, \widehat{B}^{(m)}_{\Q}, (M^{(m),\sim}_{\mathbb Q})^{\mathrm{an}}).$$
Since $M^{(m)}_{\mathbb Q}\cong\Gamma(\mathbb V_K, f_{K*}\omega_{Y_K})$, we have 
$M^{(m),\sim}_{\mathbb Q}\cong f_{K*}\omega_{Y_K}$. Our assertion follows.
\end{proof}

\begin{lemma}\label{lm:finitelevelN} Let $D^\d_{\b{\widehat P}, \Q}=\Gamma(\mathbb P_k, {\mathcal D}^\d_{\b{\widehat P}, \Q}),$ 
and let $\bar\iota$ be the morphism in Proposition \ref{prop:inv0inf}. We have
\begin{align*}
(\bar\iota_*\omega_{\tilde Y})^\wedge_{\mathbb Q}\otimes_{\c{O}_{\widehat{\b{P}}, \Q}}
\mathcal {D}^\d_{\b{\widehat P}, \Q}(^\dagger \infty)&\cong \varinjlim_m \widehat M^{(m)}_{\mathbb Q}
\otimes_{\widehat B^{(m)}_{\mathbb Q}} {\mathcal D}^\d_{\b{\widehat P}, \Q}, \\
\Gamma\Big(\mathbb P_k, (\bar\iota_*\omega_{\tilde Y})^\wedge_{\mathbb Q}\otimes_{\c{O}_{\widehat{\b{P}}, \Q}}
\mathcal {D}^\d_{\b{\widehat P}, \Q}(^\dagger \infty)\Big)&\cong \varinjlim_m \widehat M^{(m)}_{\mathbb Q}
\otimes_{\widehat B^{(m)}_{\mathbb Q}} D^\d_{\b{\widehat P}, \Q},
\end{align*}
\end{lemma}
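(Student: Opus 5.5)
The plan is to reduce both isomorphisms to an explicit comparison of global sections over the affine formal scheme $\widehat{\mathbb V}$. Then I will use Noot-Huyghe's equivalence (Proposition \ref{prop:Huyghe}) to pass between sheaves and modules. The second isomorphism is the global-section version of the first, so I would prove it first and deduce the sheaf statement from it.

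\textbf{Step 1: identify $(\bar\iota_*\omega_{\tilde Y})^\wedge_{\mathbb Q}$ after inverting along $\infty$.} Since $\bar\iota$ maps $\tilde Y$ to $\mathbb P$, we have
$$(\bar\iota_*\omega_{\tilde Y})^\wedge_{\mathbb Q}\otimes_{\mathcal O_{\widehat{\mathbb P},\mathbb Q}}\mathcal D^\dagger_{\widehat{\mathbb P},\mathbb Q}(^\dagger\infty)\cong \bigl((\bar\iota_*\omega_{\tilde Y})^\wedge_{\mathbb Q}\otimes_{\mathcal O_{\widehat{\mathbb P},\mathbb Q}}\mathcal O_{\widehat{\mathbb P},\mathbb Q}(^\dagger\infty)\bigr)\otimes_{\mathcal O_{\widehat{\mathbb P},\mathbb Q}(^\dagger\infty)}\mathcal D^\dagger_{\widehat{\mathbb P},\mathbb Q}(^\dagger\infty).$$
The restriction of $\bar\iota_*\omega_{\tilde Y}$ to $\mathbb V$ is $f_*\omega_Y$, where $\omega_Y=\sigma_*\omega_{\tilde Y}|_Y$; this uses $\sigma_*$, finiteness of $Y\to\mathbb V$, and the cohomological vanishing of Corollary \ref{cor:usedin2}. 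Hence the first factor is the overconvergent sheaf attached to the coherent module $M^{(m)}_{\mathbb Q}\cong\Gamma(\mathbb V_K,f_{K*}\omega_{Y_K})$ along $\infty$.

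\textbf{Step 2: describe overconvergent sections by a limit of polydiscs.} Global sections of $\mathcal O_{\widehat{\mathbb P},\mathbb Q}(^\dagger\infty)$ are the overconvergent functions $\bigcup_{s>1}\{\sum a_{\mathbf i}\mathbf x^{\mathbf i}:|a_{\mathbf i}|s^{|\mathbf i|}\to0\}$. This equals $\varinjlim_m \widehat B^{(m)}_{\mathbb Q}$, because $\widehat B^{(m)}_{\mathbb Q}$ is the Tate algebra of the polydisc $|x_i|\le|\pi|^{-1/p^m}$. These radii decrease to $1$, so the union over $m$ recovers all radii $s>1$; to see this I will compare the $\widehat{\mathcal B}^{(m)}(\infty)$ description with $h=1/x_i$ on each standard chart of $\mathbb P$.

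Since $\widehat M^{(m)}_{\mathbb Q}$ is finite over $\widehat B^{(m)}_{\mathbb Q}$, we get
$$\widehat M^{(m)}_{\mathbb Q}\otimes_{\widehat B^{(m)}_{\mathbb Q}}\widehat B^{(m')}_{\mathbb Q}\cong \widehat M^{(m')}_{\mathbb Q}\quad (m'\ge m).$$
This holds by Proposition \ref{rel-M-omega}: both sides are sections of the same coherent analytic sheaf $(f_*\omega_Y)^{\mathrm{an}}$ on nested affinoids. It yields
$$\Gamma\bigl(\mathbb P_k,(\bar\iota_*\omega_{\tilde Y})^\wedge_{\mathbb Q}\otimes\mathcal O_{\widehat{\mathbb P},\mathbb Q}(^\dagger\infty)\bigr)\cong\varinjlim_m \widehat M^{(m)}_{\mathbb Q}.$$
Tensoring over $D^\dagger_{\widehat{\mathbb P},\mathbb Q}(^\dagger\infty)$, and noting that $\widehat M^{(m)}_{\mathbb Q}\otimes_{\widehat B^{(m)}_{\mathbb Q}}D^\dagger_{\widehat{\mathbb P},\mathbb Q}$ lands inside the overconvergent operators, gives the second formula. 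Here the right action of $D^\dagger$ should be interpreted through $\widehat B^{(m)}_{\mathbb Q}\subset D^\dagger_{\widehat{\mathbb P},\mathbb Q}(^\dagger\infty)$, and the limit absorbs the $(^\dagger\infty)$.

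\textbf{Step 3: pass back to sheaves.} The first isomorphism follows by sheafifying, using Proposition \ref{prop:Huyghe} and the coherence of both sides. Each term $\widehat M^{(m)}_{\mathbb Q}\otimes D^\dagger$ is coherent over $\widehat{\mathcal D}^{(m)}(\infty)$, and inductive limits commute with $\Gamma$ on the noetherian space $\mathbb P_k$.

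\textbf{Main obstacle.} The hardest part is Step 2: showing that $\varinjlim_m$ of the completed finite modules on shrinking polydiscs equals the overconvergent tensor product. This needs flatness of $\widehat B^{(m')}_{\mathbb Q}$ over $\widehat B^{(m)}_{\mathbb Q}$, which holds because they are restrictions of affinoids, and it needs the chart-by-chart match of the $\mathcal B^{(m)}(\infty)$ radii with $|\pi|^{-1/p^m}$. I would handle this first with the explicit power series descriptions given in the paper.
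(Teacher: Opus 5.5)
Your ingredients are the right ones: Berthelot's description of $\mathcal O_{\widehat{\mathbb P},\mathbb Q}(^\dagger\infty)$ as a limit over shrinking polydiscs, Proposition~\ref{rel-M-omega}, and Noot-Huyghe's Proposition~\ref{prop:Huyghe}. However, the step that carries the content of the lemma is asserted rather than proved.

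Write $\mathcal G=(\bar\iota_*\omega_{\tilde Y})^\wedge_{\mathbb Q}\otimes_{\mathcal O_{\widehat{\mathbb P},\mathbb Q}}\mathcal O_{\widehat{\mathbb P},\mathbb Q}(^\dagger\infty)$. At the end of Step~2 you pass from $\Gamma(\mathbb P_k,\mathcal G)\cong\varinjlim_m\widehat M^{(m)}_{\mathbb Q}$ to the second formula by ``tensoring over $D^\dagger_{\widehat{\mathbb P},\mathbb Q}(^\dagger\infty)$''. This silently uses
\[\Gamma\bigl(\mathbb P_k,\mathcal G\otimes_{\mathcal O_{\widehat{\mathbb P},\mathbb Q}(^\dagger\infty)}\mathcal D^\dagger_{\widehat{\mathbb P},\mathbb Q}(^\dagger\infty)\bigr)\cong\Gamma(\mathbb P_k,\mathcal G)\otimes_{\Gamma(\mathbb P_k,\mathcal O_{\widehat{\mathbb P},\mathbb Q}(^\dagger\infty))}D^\dagger_{\widehat{\mathbb P},\mathbb Q}(^\dagger\infty),\]
and global sections do not commute with tensor products in general. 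Nothing in Steps~1--2 justifies this, and you invoke Proposition~\ref{prop:Huyghe} only afterwards, to sheafify.

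The paper closes exactly this point as follows.
\begin{enumerate}[(1)]
\item A finite free presentation $(\widehat B^{(m)}_{\mathbb Q})^{\oplus s}\to(\widehat B^{(m)}_{\mathbb Q})^{\oplus t}\to\widehat M^{(m)}_{\mathbb Q}\to 0$ yields presentations by free $\mathcal D^\dagger_{\widehat{\mathbb P},\mathbb Q}(^\dagger\infty)$-modules of both $\widehat M^{(m)}_{\mathbb Q}\otimes_{\widehat B^{(m)}_{\mathbb Q}}\mathcal D^\dagger_{\widehat{\mathbb P},\mathbb Q}(^\dagger\infty)$ and $\mathrm{sp}_*j_{m*}j_m^*(\bar\iota_{K*}\omega_{\tilde Y_K})^{\mathrm{an}}\otimes\mathcal D^\dagger_{\widehat{\mathbb P},\mathbb Q}(^\dagger\infty)$.
\item Comparing the two presentations gives the sheaf isomorphism at each level $m$.
\item The \emph{exactness} of $\Gamma(\mathbb P_k,-)$ on coherent $\mathcal D^\dagger_{\widehat{\mathbb P},\mathbb Q}(^\dagger\infty)$-modules, applied to the free presentation, then gives the global-section isomorphism.
\end{enumerate}
So your order should be reversed: sheaf statement first, global sections second, with Noot-Huyghe used for exactness and not merely for sheafification.

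Some secondary points:
\begin{enumerate}[(a)]
\item What you call the main obstacle is handled by citation. The identification $\mathcal G\cong\varinjlim_m\mathrm{sp}_*j_{m*}j_m^*(\bar\iota_{K*}\omega_{\tilde Y_K})^{\mathrm{an}}$ is \cite[4.3.2]{Berthelot1} combined with \cite[2.1.3 (ii)]{B3}. Only cofinality of the two systems of neighbourhoods matters: the ones attached to $\widehat{\mathcal B}^{(m)}(\infty)$ have radius $|p|^{-1/p^{m+1}}$, not $|\pi|^{-1/p^m}$. So you need neither a chart-by-chart match nor flatness of $\widehat B^{(m')}_{\mathbb Q}$ over $\widehat B^{(m)}_{\mathbb Q}$.
\item $\Gamma(\mathbb P_k,\mathcal G)\cong\varinjlim_m\widehat M^{(m)}_{\mathbb Q}$ follows from three things: that identification, the commutation of $\Gamma$ with filtered colimits on the noetherian space $\mathbb P_k$, and Proposition~\ref{rel-M-omega}. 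It does not follow from the base change $\widehat M^{(m)}_{\mathbb Q}\otimes\widehat B^{(m')}_{\mathbb Q}\cong\widehat M^{(m')}_{\mathbb Q}$.
\item You are right that the right-hand sides must be read with $\mathcal D^\dagger_{\widehat{\mathbb P},\mathbb Q}(^\dagger\infty)$; the paper's own proof ends that way. But the limit does not ``absorb'' anything. By that same base change, the system $\widehat M^{(m)}_{\mathbb Q}\otimes_{\widehat B^{(m)}_{\mathbb Q}}\mathcal D^\dagger_{\widehat{\mathbb P},\mathbb Q}(^\dagger\infty)$ is constant in $m$.
\end{enumerate}
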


\begin{proof} Let $j_m: \mathrm{Spm}\,B^{(m)}_{\mathbb Q}
\hookrightarrow \widehat{\mathbb P}_K$ be the open immersion. We have 
\begin{align}\label{al:passtolevel}
&\quad\; (\bar\iota_*\omega_{\tilde Y})^\wedge_{\mathbb Q}\otimes_{\c{O}_{\widehat{\b{P}}, \Q}}
\mathcal {D}^\d_{\b{\widehat P}, \Q}(^\dagger \infty)
\cong (\bar\iota_*\omega_{\tilde Y})^\wedge_{\mathbb Q}\otimes_{\c{O}_{\widehat{\b{P}}, \Q}}\mathcal O_{\widehat{\mathbb P},\mathbb Q}
(^\dagger\infty)\otimes_{\mathcal O_{\widehat{\mathbb P},\mathbb Q}
(^\dagger\infty)}\mathcal {D}^\d_{\b{\widehat P}, \Q}(^\dagger\infty)\nonumber\\
&\cong (\bar\iota_*\omega_{\tilde Y})^\wedge_{\mathbb Q}\otimes_{\c{O}_{\widehat{\b{P}}, \Q}}
 (\varinjlim_m \mathrm{sp}_* j_{m*}j_m^*
\mathcal O_{\widehat{\mathbb P}_K})\otimes_{\mathcal O_{\widehat{\mathbb P},\mathbb Q}
(^\dagger\infty)}\mathcal {D}^\d_{\b{\widehat P}, \Q}(^\dagger\infty)
\quad(\hbox{\cite[4.3.2]{Berthelot1}})\nonumber\\
&\cong \Big(\varinjlim_m 
\mathrm{sp}_*j_{m*}j_m^*(\bar\iota_{K*}\omega_{\tilde Y_K})^\mathrm{an}\Big)
\otimes_{\mathcal O_{\widehat{\mathbb P},\mathbb Q}
(^\dagger\infty)}\mathcal {D}^\d_{\b{\widehat P}, \Q}(^\dagger\infty)\quad(\hbox{\cite[2.1.3 (ii)]{B3}})\nonumber\\
&\cong \varinjlim_m \Big(
\mathrm{sp}_*j_{m*}j_m^*(\bar\iota_{K*}\omega_{\tilde Y_K})^\mathrm{an}
\otimes_{\mathrm{sp}_* j_{m*}j_m^*
\mathcal O_{\widehat{\mathbb P}_K}}\mathcal {D}^\d_{\b{\widehat P}, \Q}(^\dagger\infty)\Big)
\end{align}
By Proposition \ref{rel-M-omega}, we have
$$\Gamma(\widehat{\mathbb P}_K, j_{m*}j_m^*
\mathcal O_{\widehat{\mathbb P}_K})\cong \widehat B^{(m)}_{\mathbb Q},
\quad \Gamma(\widehat{\mathbb P}_K,j_{m*}j_m^*(\bar\iota_{K*}\omega_{\tilde Y_K})^\mathrm{an})\cong \widehat M^{(m)}_{\mathbb Q}.$$
Choose a free resolution 
\begin{align}\label{al:freeres}
(\widehat B^{(m)}_{\mathbb Q})^{\oplus s}\to (\widehat B^{(m)}_{\mathbb Q})^{\oplus t}\to \widehat M^{(m)}_{\mathbb Q}\to 0
\end{align}
for the $\widehat B^{(m)}_{\mathbb Q}$-module $\widehat M^{(m)}_{\mathbb Q}$. It gives rise to an exact sequence of 
${D}^\d_{\b{\widehat P}, \Q}(^\dagger\infty)$-modules
\begin{align}\label{al:exactsq}
({D}^\d_{\b{\widehat P}, \Q}(^\dagger\infty))^{\oplus s}\to ({D}^\d_{\b{\widehat P}, \Q}(^\dagger\infty))^{\oplus t}
\to \widehat M^{(m)}_{\mathbb Q}\otimes_{\widehat B^{(m)}_{\mathbb Q}}{D}^\d_{\b{\widehat P}, \Q}(^\dagger\infty)\to 0.
\end{align}
It also gives rise to exact sequences of sheaves 
\begin{align}\label{al:twoexsqsh}
&(\mathrm{sp}_* j_{m*}j_m^*
\mathcal O_{\widehat{\mathbb P}_K})^{\oplus s} \to
(\mathrm{sp}_* j_{m*}j_m^*
\mathcal O_{\widehat{\mathbb P}_K})^{\oplus t}\to
\mathrm{sp}_*j_{m*}j_m^*(\bar\iota_{K*}\omega_{\tilde Y_K})^\mathrm{an}\to 0,\nonumber\\
&(\mathcal {D}^\d_{\b{\widehat P}, \Q}(^\dagger\infty))^{\oplus s} \to
(\mathcal {D}^\d_{\b{\widehat P}, \Q}(^\dagger\infty))^{\oplus t}\to
\mathrm{sp}_*j_{m*}j_m^*(\bar\iota_{K*}\omega_{\tilde Y_K})^\mathrm{an}
\otimes_{\mathrm{sp}_* j_{m*}j_m^*
\mathcal O_{\widehat{\mathbb P}_K}}\mathcal {D}^\d_{\b{\widehat P}, \Q}(^\dagger\infty)\to 0.
\end{align}
By Proposition \ref{prop:Huyghe}, $\Gamma(\mathbb P_k^n,\hbox{-})$ is an exact functor on the category of coherent 
$\mathcal D^\dagger_{\widehat{\mathbb P},\mathbb Q}(^\dagger\infty)$-modules. Taking the global section of the 
exact sequence (\ref{al:twoexsqsh}), we get an exact sequence
$$({D}^\d_{\b{\widehat P}, \Q}(^\dagger\infty))^{\oplus s}\to ({D}^\d_{\b{\widehat P}, \Q}(^\dagger\infty))^{\oplus t}
\to \Gamma\Big(\mathbb P_k, 
\mathrm{sp}_*j_{m*}j_m^*(\bar\iota_{K*}\omega_{\tilde Y_K})^\mathrm{an}
\otimes_{\mathrm{sp}_* j_{m*}j_m^*
\mathcal O_{\widehat{\mathbb P}_K}}\mathcal {D}^\d_{\b{\widehat P}, \Q}(^\dagger\infty)\Big)\to 0,$$
Comparing with the exact sequence (\ref{al:exactsq}), we get 
$$\Gamma\Big(\mathbb P_k, 
\mathrm{sp}_*j_{m*}j_m^*(\bar\iota_{K*}\omega_{\tilde Y_K})^\mathrm{an}
\otimes_{\mathrm{sp}_* j_{m*}j_m^*
\mathcal O_{\widehat{\mathbb P}_K}}\mathcal {D}^\d_{\b{\widehat P}, \Q}(^\dagger\infty)\Big)
\cong \widehat M^{(m)}_{\mathbb Q}\otimes_{\widehat B^{(m)}_{\mathbb Q}}{D}^\d_{\b{\widehat P}, \Q}(^\dagger\infty).$$
Combined with (\ref{al:passtolevel}), we get
\begin{align*}
\Gamma(\mathbb P_k, (\bar\iota_*\omega_{\tilde Y})^\wedge_{\mathbb Q}\otimes_{\c{O}_{\widehat{\b{P}}, \Q}}
\mathcal {D}^\d_{\b{\widehat P}, \Q}(^\dagger \infty))
&\cong \varinjlim_m \Gamma\Big(\mathbb P_k,  
\mathrm{sp}_*j_{m*}j_m^*(\bar\iota_{K*}\omega_{\tilde Y_K})^\mathrm{an}
\otimes_{\mathrm{sp}_* j_{m*}j_m^*
\mathcal O_{\widehat{\mathbb P}_K}}\mathcal {D}^\d_{\b{\widehat P}, \Q}(^\dagger\infty)\Big)\\
& \cong \varinjlim_m \widehat M^{(m)}_{\mathbb Q}\otimes_{\widehat B^{(m)}_{\mathbb Q}} {D}^\d_{\b{\widehat P}, \Q}(^\dagger\infty)
\end{align*}
The free resolution (\ref{al:freeres}) also gives rise to an exact sequence of 
\begin{align*}
({\mathcal D}^\d_{\b{\widehat P}, \Q}(^\dagger\infty))^{\oplus s}\to ({\mathcal D}^\d_{\b{\widehat P}, \Q}(^\dagger\infty))^{\oplus t}
\to \widehat M^{(m)}_{\mathbb Q}\otimes_{\widehat B^{(m)}_{\mathbb Q}}{\mathcal D}^\d_{\b{\widehat P}, \Q}(^\dagger\infty)\to 0.
\end{align*}
Comparing with the exact sequence (\ref{al:twoexsqsh}), we get 
$$\mathrm{sp}_*j_{m*}j_m^*(\bar\iota_{K*}\omega_{\tilde Y_K})^\mathrm{an}
\otimes_{\mathrm{sp}_* j_{m*}j_m^*
\mathcal O_{\widehat{\mathbb P}_K}}\mathcal {D}^\d_{\b{\widehat P}, \Q}(^\dagger\infty)
\cong \widehat M^{(m)}_{\mathbb Q}\otimes_{\widehat B^{(m)}_{\mathbb Q}}{\mathcal D}^\d_{\b{\widehat P}, \Q}(^\dagger\infty).$$
Our assertion follows.
\end{proof}

\subsection{}\label{subsection:Fourier} We describe the modified hypergeometric $\mathcal D$-module $\mathcal M=\f{F}_\pi(\c{N})$ 
as a coherent $\c{D}^\d_{\b{\widehat P}^*, \Q}(^\dagger \infty)$-module. By Lemma \ref{lm:finitelevelN}, we have
\begin{align*}
\Gamma(\mathbb P_k,\c{N})&=\Gamma\Big(\mathbb P_k, 
(\bar\iota_*\omega_{\tilde Y})^\wedge_{\mathbb Q}\otimes_{\c{O}_{\widehat{\b{P}}, \Q}}\mathcal {D}^\d_{\b{\widehat P}, \Q}(^\dagger \infty)
\Big/\sum_{\xi\in \f{L}(H)}L_{\xi}(\bar\iota_*\omega_{\tilde Y})^\wedge_{\mathbb Q}\otimes_{\c{O}_{\widehat{\b{P}}, \Q}}
\mathcal {D}^\d_{\b{\widehat P}, \Q}(^\dagger \infty)\Big)\\
&=\varinjlim_m \widehat M^{(m)}_{\mathbb Q}\otimes_{\widehat B^{(m)}_{\mathbb Q}} {D}^\d_{\b{\widehat P}, \Q}(^\dagger\infty)
\Big/\sum_{\xi\in \f{L}(H)}L_{\xi}(\widehat 
M^{(m)}_{\mathbb Q}\otimes_{\widehat B^{(m)}_{\mathbb Q}} {D}^\d_{\b{\widehat P}, \Q}(^\dagger\infty)).
\end{align*} 
Taking the Fourier transform, we get
$$\Gamma(\mathbb P^*_k, \f{F}_\pi(\c{N}))\cong 
\varinjlim_m \widehat M^{(m)}_{\mathbb Q}\otimes_{\widehat A^{(m)}_{\mathbb Q}} {D}^\d_{\b{\widehat P^*}, \Q}(^\dagger\infty)
\Big/\sum_{\xi\in \f{L}(H)}L_{\xi}(\widehat 
M^{(m)}_{\mathbb Q}\otimes_{\widehat A^{(m)}_{\mathbb Q}} {D}^\d_{\b{\widehat P^*}, \Q}(^\dagger\infty)),$$
where $\widehat M^{(m)}_{\mathbb Q}$ is regarded as an $\widehat A^{(m)}_{\mathbb Q}$-module via the isomorphism (\ref{AB}), and 
for any $b\in \widehat M^{(m)}_\Q$, $Q\in D^\d_{\widehat{\mathbb P}^*,\Q}(^\dagger\infty)$ and $\xi\in \f{L}(H)$, we define
\begin{align}\label{aln:L_xi}
L_{\xi}(b\otimes Q)=bL_{\xi} \otimes Q-b\otimes F_\pi(L_{\xi})Q.
\end{align}
Here, as in Corollary \ref{toroidalDmodiso'}, $bL_{\xi}$ is the right multiplication of a differential form $b$ by $L_{\xi}$, and 
$F_\pi$ is the the Fourier transform of differential operators
$$F_\pi:  D^\d_{\widehat{\b{P}},\Q}(^\dagger\infty)\to D^\d_{\widehat{\mathbb P}^*,\Q}(^\dagger\infty), \quad 
x_i\mapsto \partial_{x'_i}/\pi_i, \quad \partial_{x_i}\mapsto -\pi x'_i.$$ 
Let $\beta'(L_\xi)$ be defined as in Proposition \ref{beta}. We have 
$$bL_{\xi}=\beta'(L_{\xi})(b).$$ We thus have 
\begin{align*}
\mathfrak F_\pi(\mathcal N)&\cong
\widehat M^{(m)}_{\mathbb Q}\otimes_{\widehat A^{(m)}_{\mathbb Q}} \mathcal {D}^\d_{\widehat {\mathbb P}^*, \Q}(^\dagger\infty)
\Big/\sum_{\xi\in \f{L}(H)}L_{\xi}(\widehat 
M^{(m)}_{\mathbb Q}\otimes_{\widehat A^{(m)}_{\mathbb Q}} \mathcal {D}^\d_{\widehat {\mathbb P}^*, \Q}(^\dagger\infty)),\\
\mathfrak F_\pi(\mathcal N)|_{\mathbb V^*_k}&\cong
\widehat M^{(m)}_{\mathbb Q}\otimes_{\widehat A^{(m)}_{\mathbb Q}} \mathcal {D}^\d_{\widehat{\mathbb V}^*, \Q}
\Big/\sum_{\xi\in \f{L}(H)}L_{\xi}(\widehat 
M^{(m)}_{\mathbb Q}\otimes_{\widehat A^{(m)}_{\mathbb Q}} \mathcal {D}^\d_{\widehat{\mathbb V}^*, \Q}).
\end{align*}
Define a right $\widehat{\mathcal D}_{\widehat{\mathbb V}^*,\Q}^{(m)}$-module
$$\mathfrak F_\pi(\mathcal N)^{(m)}_\Q|_{\mathbb V^*_k}:
=(\widehat M^{(m)}_\Q\otimes_{\widehat{A}^{(m)}_\Q}\widehat{\mathcal D}_{\widehat{\mathbb V}^*,\Q}^{(m)})
\Big/\sum_{\xi\in \f{L}(H)}L_{\xi}(\widehat M^{(m)}_\Q\otimes_{\widehat{A}^{(m)}_\Q}\widehat{\mathcal D}_{\widehat{\mathbb V}^*,\Q}^{(m)}),$$
where $\widehat{\mathcal D}_{\widehat{\mathbb V}^*,\Q}^{(m)}$ acts on the second factor of 
$\widehat M^{(m)}_\Q\otimes_{\widehat{A}^{(m)}_\Q}\widehat{\mathcal D}_{\widehat{\mathbb V}^*,\Q}^{(m)}$ by the right multiplication. 
By our construction, for any $m\leq m'$, we have
$${A}^{(m)}_\Q\cong {A}^{(m')}_\Q,\quad {M}^{(m)}_\Q\cong {M}^{(m')}_\Q.$$
On the other hand, we have 
$$\widehat{M}^{(m)}_\Q\otimes_{\widehat{A}^{(m)}_\Q}\widehat{A}^{(m')}_\Q\cong 
{M}^{(m)}_\Q\otimes_{{A}^{(m)}_\Q}\widehat{A}^{(m)}_\Q\otimes_{\widehat{A}^{(m)}_\Q}\widehat{A}^{(m')}_\Q
\cong {M}^{(m')}_\Q \otimes_{{A}^{(m')}_\Q}\widehat{A}^{(m')}_\Q\cong \widehat{M}^{(m')}_\Q.$$
Since  $\widehat{\mathcal D}_{\widehat{\mathbb V}^*}^{(m')}$ and ${\mathcal D}_{\widehat{\mathbb V}^*,\Q}^{\d}$ are flat 
over $\widehat{\mathcal D}_{\widehat{\mathbb V}^*}^{(m)}$ by \cite[3.5.3 and 3.5.4]{Berthelot1}, we have
$$\mathfrak F_\pi({\mathcal N})^{(m)}_\Q|_{\mathbb V^*_k}\otimes _{\widehat{\c D}_{\widehat{\mathbb V}^*,\Q}^{(m)}}
\widehat{\c D}_{\widehat{\mathbb V}^*,\Q}^{(m')}
\cong \mathfrak F_\pi({\mathcal N})^{(m')}_\Q|_{\mathbb V^*_k}, \quad 
\mathfrak F_\pi({\mathcal N})^{(m)}_\Q|_{\mathbb V^*_k}\otimes _{\widehat{\c D}_{\widehat{\mathbb V}^*,\Q}^{(m)}}{\c D}_{\widehat{\mathbb V}^*,\Q}^{\d}
\cong \mathfrak F_\pi({\mathcal N})|_{\mathbb V^*_k}.$$

\subsection{} \label{selfdual}
Choose a basis for each $V_j$ and write elements in $\mathrm{End}(V_j)$ by matrices $(x^{(j)}_{k_jl_k})$. Then 
$$(x^{(j)}_{k_jl_j})_{j\in\{1,\ldots, N\} ,\, k_j,l_j\in \{1,\ldots, \mathrm{dim}\,V_j\}}$$ 
is a linear coordinate system on $\mathbb V=\prod_{j=1}^N\mathrm{End}(V_j)$. $\mathbb V$ is self dual via the pairing 
$$\mathbb V\times\mathbb V\to \mathbb A^1, \quad \Big(\Big((x^{(1)}_{k_1l_1}), \ldots, (x^{(N)}_{k_Nl_N})\Big), 
\Big((y^{(1)}_{k_1l_1}), \ldots, (y^{(N)}_{k_Nl_N})\Big)\Big)\mapsto \sum_{j, k_j, l_j}x^{(j)}_{k_jl_j}y^{(j)}_{l_jk_j}.$$
Via the above pairing, the dual coordinate 
system for $\mathbb V^*=\mathbb V$ is $$(x'^{(j)}_{k_jl_j})_{j\in\{1,\ldots, N\} ,\, k_j,l_j\in \{1,\ldots, \mathrm{dim}\,V_j\}}$$ 
with $x'^{(j)}_{k_jl_j}=x^{(j)}_{l_jk_j}$. The Fourier transform $F_\pi$ for differential operators is given by 
$$F_\pi(x^{(j)}_{k_jl_j})=\partial_{x^{(j)}_{l_jk_j}}/\pi, \quad F_\pi(\partial_{x^{(j)}_{k_jl_j}})=-\pi x^{(j)}_{l_jk_j}.$$
For any element $\xi=(\xi_1,\xi_2)\in \mathfrak L(H)=\mathfrak L(G)\times\mathfrak L(G)$, let $\tau(\xi)=(\xi_2, \xi_1)$, and 
let $(c^{(j)}_{1k_jl_j})$ and $(c^{(j)}_{2k_jl_j})$ be the matrices of $\mathfrak L(\rho_j)(\xi_1)$ and 
$\mathfrak L(\rho_j)(\xi_2)$ respectively, where $\mathfrak L(\rho_j): \mathfrak L(G)\to \mathfrak{gl}(V_j)$ is the Lie algebra
representation corresponding to $\rho_j$. 
Then 
\begin{align*}
L_\xi &=\sum_{j=1}^N \sum_{k_j, l_j,m_j=1}^{\mathrm{dim}\,V_j}  
\Big(c^{(j)}_{1k_j m_j} x^{(j)}_{m_jl_j}-c^{(j)}_{2m_j l_j} x^{(j)}_{k_jm_j}\Big)\partial_{x^{(j)}_{k_jl_j}},\\
F_\pi(L_\xi)&=\sum_{j=1}^N \sum_{k_j, l_j,m_j=1}^{\mathrm{dim}\,V_j}  
\Big(c^{(j)}_{1k_j m_j} \partial_{x^{(j)}_{l_jm_j}}-c^{(j)}_{2m_j l_j} \partial_{x^{(j)}_{m_jk_j}}\Big)(-{x^{(j)}_{l_jk_j}})\\
&=\sum_{j=1}^N \sum_{k_j, l_j,m_j=1}^{\mathrm{dim}\,V_j}  
\Big(-c^{(j)}_{1k_j m_j} x^{(j)}_{l_jk_j}\partial_{{x^{(j)}_{l_jm_j}}}+ c^{(j)}_{2m_j l_j} x^{(j)}_{l_jk_j} \partial_{x^{(j)}_{m_jk_j}}\Big)\\
&\quad +\sum_{j=1}^N\mathrm{dim}\,\mathbb V_j\Big(-\mathrm{Tr}(\mathfrak L(\rho_j)(\xi_1))+\mathrm{Tr}(\mathfrak L(\rho_j)(\xi_2))\Big)\\
&=L_{\tau(\xi)}+\sum_{j=1}^N
\mathrm{dim}\,V_j\Big(-\mathrm{Tr}(\mathfrak L(\rho_j)(\xi_1))+\mathrm{Tr}(\mathfrak L(\rho_j)(\xi_2))\Big).
\end{align*}
Let $\chi$ be the character 
$$\chi:H=G\times G\to\b{G}_m,\quad h=(g_1,g_2)\mapsto \prod_{j=1}^N\det(\rho_j(g_1g^{-1}_2))^{\mathrm{dim}\,V_j}.$$
For any $R$-algebra $A$, $\chi$ induces a group homomorphism 
$$\chi(A):H(A)=\mathrm{Hom}_R(\mathrm{Spec}\,A, H) \to \mathbb G_{m}(A)=A^*.$$ 
Taking $A=R[H]$, then $\mathrm{id}_H$ defines an $R[H]$-point of $H$ and we have 
$\chi(R[H])(\mathrm{id}_H)\in R[H]^*.$ By our construction, $M^{(m)}$ is provided with an $R[H]$-comodule structure 
$$a: M^{(m)}\to R[H]\otimes_R M^{(m)}.$$ 
Consider the twisted comodule structure 
$$a'= \chi(R[H])(\mathrm{id}_H)\cdot a: M^{(m)}\to R[H]\otimes_RM^{(m)}.$$
Define $\beta'_\chi: \Gamma(H, \mathcal D^{(m)}_H)\to \mathrm{End}_R(M^{(m)})$ as in Proposition \ref{beta} 
using the twisted coaction $a'$. Then we have     
$$\beta'_\chi(L_{\xi})=\beta(L_{\xi})+\sum_{j=1}^N
\mathrm{dim}\,V_j\Big(\mathrm{Tr}(\mathfrak L(\rho_j)(\xi_1))-\mathrm{Tr}(\mathfrak L(\rho_j)(\xi_2))\Big).$$ 
We can write (\ref{aln:L_xi}) as
\begin{align*}
&L_{\xi}(b\otimes Q)=bL_{\xi}\otimes Q-b\otimes F_\pi(L_\xi)Q=\beta'(L_\xi)(b)\otimes Q-b\otimes F_\pi(L_\xi)Q\\
=&\, \beta'_\chi(L_{\xi})(b)\otimes Q-b\otimes   L_{\tau(\xi)}Q.
\end{align*}

Let $d$ be the dimension of $G$, and let $I_G$ (resp. $I_H$) be the ideal of $R[G]$ (resp. $R[H]\cong R[G]\otimes_R R[G]$)
for the closed immersion $1_G: \mathrm{Spec}\,R\to G$ (resp. $1_H: \mathrm{Spec}\,R\to H$).
Choose $t_1, \ldots, t_d\in I_G$ so that their images in $I_G/I_G^2$ form a basis. 
Then $$T_1=t_1\otimes 1, \ldots, T_d=t_d\otimes 1, T_{d+1}=1\otimes t_1, \ldots, 
T_{2d}=1\otimes t_d$$ lie in $I_H$, and their images in 
$I_H/I_H^2$ form a basis. As in Definition \ref{defn:rightinvoper}, 
$T_1^{\{k_1\}_{(m)}}\cdots T_{2d}^{\{k_{2d}\}_{(m)}}$ $(k_i\geq 0, \; k_1+\cdots+k_{2d}\leq n)$ 
form a basis of $P^n_{(m)}(I_H)$. Let  $\xi_1^{\langle k_1\rangle_{(m)}}\cdots \xi_{2d}^{\langle k_{2d}\rangle_{(m)}}$
be the dual basis of $F^nU^{(m)}(H)$, and let $L_{\xi_1^{\langle k_1\rangle_{(m)}}\cdots \xi_{2d}^{\langle k_{2d}\rangle_{(m)}}}$
be the corresponding right invariant differential operator on $H$. A basis for $\mathfrak L(H)$ is $\xi_1, \ldots, \xi_{2d}$. We 
have $$\tau(\xi_i)=\begin{cases}
\xi_{i+d} &\hbox{if }1\leq i\leq d, \\
\xi_{i-d}& \hbox{if }d+1\leq i\leq 2d.
\end{cases}$$

\begin{definition} For any $1\leq i\leq 2d$ and $1\leq r\leq p^m$, denote also by $L_{\xi_i^{\langle r\rangle_{(m)}}}$
the right $\widehat{\mathcal D}_{\widehat{\b{V}}}^{(m)}$-module homomorphism
$$L_{\xi_i^{\langle r\rangle_{(m)}}}: \widehat M^{(m)}\widehat{\otimes}_{R}\widehat{\mathcal D}_{\widehat{\b{V}}}^{(m)}\to \widehat 
M^{(m)}\widehat{\otimes}_{\widehat{A}^{(m)}}\widehat{\mathcal D}_{\widehat{\b{V}}}^{(m)},
\quad b\otimes Q\mapsto \beta'_\chi(L_{\xi_i^{\langle r\rangle_{(m)}}})(b)\otimes Q
-b\otimes (L_{\tau(\xi_i)^{\langle r\rangle_{(m)}}})^{\widehat{\mathbb V}}Q.$$
Note that in the domain the tensor product is taken over $R$.
Define $\mathfrak F_\pi(\mathcal N)^{(m)}$ to be the coherent right $\widehat{\c D}_{\widehat{\b{V}}}^{(m)}$-module 
$$\mathfrak F_\pi(\mathcal N)^{(m)}= (\widehat M^{(m)}\widehat{\otimes}_{\widehat{A}^{(m)}}\widehat{\mathcal D}_{\widehat{\b{V}}}^{(m)})
\Big/\sum_{1\leq i\leq 2d,\,1\leq {r}\leq p^m}L_{\xi_{i}^{\langle r\rangle_{(m)}}}(\widehat M^{(m)}\widehat\otimes_R
\widehat{\mathcal D}_{\widehat{\b{V}}}^{(m)}).$$
\end{definition}

We identify $\mathbb V^*$ with $\mathbb V$ as in \ref{selfdual}.
The next proposition shows that $\mathfrak F_\pi(\mathcal N)^{(m)}$ is an integral model of 
$\mathfrak F_\pi(\mathcal N)^{(m)}_\Q|_{\mathbb V_k}$.

\begin{proposition} We have an isomorphism 
$$\mathfrak F_{\pi}({\mathcal N})^{(m)}\otimes_{\b Z} {\Q}\cong \mathfrak F_\pi(\mathcal N)^{(m)}_\Q|_{\mathbb V_k}=
(\widehat M^{(m)}_\Q\otimes_{\widehat{A}^{(m)}_\Q}\widehat{\c D}_{\widehat{\b{V}},\Q}^{(m)})/\sum_{\xi\in \f{L}(H)}L_{\xi}(\widehat M^{(m)}_\Q\otimes_{\widehat{A}^{(m)}_\Q}\widehat{\c D}_{\widehat{\b{V}},\Q}^{(m)}).$$
\end{proposition}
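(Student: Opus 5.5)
Tensoring with $\mathbb Q$ is exact, and it commutes with cokernels and with images of sums of module maps. So
$$\mathfrak F_\pi(\mathcal N)^{(m)}\otimes_{\mathbb Z}\mathbb Q\cong (\widehat M^{(m)}_\Q\otimes_{\widehat A^{(m)}_\Q}\widehat{\mathcal D}^{(m)}_{\widehat{\mathbb V},\Q})\Big/\sum_{1\le i\le 2d,\,1\le r\le p^m}L_{\xi_i^{\langle r\rangle_{(m)}}}\bigl(\widehat M^{(m)}_\Q\widehat\otimes_K\widehat{\mathcal D}^{(m)}_{\widehat{\mathbb V},\Q}\bigr).$$
For the completed tensor products I will use that $\widehat M^{(m)}$ is a finite $\widehat B^{(m)}\cong\widehat A^{(m)}$-module; then $\widehat M^{(m)}\widehat\otimes_{\widehat A^{(m)}}\widehat{\mathcal D}^{(m)}$ is an honest tensor product, and rationalization commutes with it. It therefore suffices to compare, inside $E:=\widehat M^{(m)}_\Q\otimes_{\widehat A^{(m)}_\Q}\widehat{\mathcal D}^{(m)}_{\widehat{\mathbb V},\Q}$, two submodules: the sum $S_1$ of the images of the operators $L_{\xi_i^{\langle r\rangle_{(m)}}}$, and the sum $S_2=\sum_{\xi\in\mathfrak L(H)}L_\xi(E)$. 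I will prove $S_1=S_2$.

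For $S_2\subset S_1$, take $r=1$. Then $\xi_i^{\langle 1\rangle_{(m)}}=\xi_i$, and $L_{\xi_i^{\langle1\rangle}}$ is the map $b\otimes Q\mapsto \beta'_\chi(L_{\xi_i})(b)\otimes Q-b\otimes L_{\tau(\xi_i)}Q$. By the computation in \ref{selfdual}, this is exactly $L_{\xi_i}(b\otimes Q)$. Every $\xi\in\mathfrak L(H)$ is a $K$-linear combination of $\xi_1,\ldots,\xi_{2d}$, and $L_\xi$ is linear in $\xi$, so $S_2\subset S_1$.

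For $S_1\subset S_2$, I use Proposition \ref{inv-generated-by-L}: after inverting $p$, the invariant operator $L_{\xi_i^{\langle r\rangle_{(m)}}}$ on $H$ is a $K$-linear combination of monomials $L_{\xi_{j_1}}\cdots L_{\xi_{j_k}}$ with $k\ge 1$. Let $\mu$ be the map sending an invariant operator $P$ to the endomorphism $b\otimes Q\mapsto \beta'_\chi(P)(b)\otimes Q-b\otimes (\tau P)^{\widehat{\mathbb V}}Q$ of $E$. I need $\mu$ to be compatible with products in the sense that
$$\mu(L_{\zeta_1}\cdots L_{\zeta_k})\in \sum_{\xi}L_\xi(E).$$
To get this I will combine two facts. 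Proposition \ref{beta} says $\beta'_\chi$ is an anti-homomorphism on products of $L$'s; this still holds for the twisted coaction $a'$, since $a'$ is again a comodule structure. Lemma \ref{description-alpha}(i) says $(\cdot)^{\widehat{\mathbb V}}$ is multiplicative. The two actions, $\beta'_\chi$ on the left factor and $L^{\widehat{\mathbb V}}_{\tau(\cdot)}$ on the right factor, commute with each other. Hence I can write $A\otimes1-1\otimes B$ for a product $A=a_k\cdots a_1$ and $B=b_1\cdots b_k$ by telescoping:
$$A\otimes1-1\otimes B=\sum_{\ell}(a_k\cdots a_{\ell+1}\otimes b_1\cdots b_{\ell-1})\,(a_\ell\otimes1-1\otimes b_\ell).$$
Each factor $a_\ell\otimes1-1\otimes b_\ell$ is some $L_{\zeta}$ acting on $E$, so $\mu$ of any monomial lands in $S_2$, and therefore $S_1\subset S_2$.

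The main obstacle is the bookkeeping in this last step. I have to make sure the ordering conventions line up: the anti-multiplicativity in Proposition \ref{beta} has to match the ordering of the right $\widehat{\mathcal D}$-action, and the swap $\tau$ has to be compatible with products. I also need the constant term in $\beta'_\chi(L_\xi)$, coming from $\chi$, to behave additively, i.e. $\chi$ must be a character. I would first verify the case $k=2$ explicitly and then induct on $k$.
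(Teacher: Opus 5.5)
Your overall route is the paper's: rationalize, use Proposition \ref{inv-generated-by-L} to write $L_{\xi_i^{\langle r\rangle_{(m)}}}$ as a $K$-combination of monomials $L_{\zeta_1}\cdots L_{\zeta_k}$, use that $\beta'_\chi$ reverses products while $(\cdot)^{\widehat{\mathbb V}}$ preserves them, and telescope. Your inclusion $S_2\subset S_1$ via $r=1$ is fine; the paper leaves it implicit.

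The telescoping identity you actually wrote, however, does not give $S_1\subset S_2$. The identity is algebraically true. But in the summand
\[(a_k\cdots a_{\ell+1}\otimes b_1\cdots b_{\ell-1})\,(a_\ell\otimes 1-1\otimes b_\ell)\]
the first-order piece $a_\ell\otimes1-1\otimes b_\ell$, i.e.\ $L_{\zeta_\ell}$, is applied \emph{first}. The operator $a_k\cdots a_{\ell+1}\otimes b_1\cdots b_{\ell-1}$ is then applied to its output. Your conclusion ``each factor is some $L_\zeta$, so the summand lies in $S_2$'' needs $L_{\zeta_\ell}$ to be the \emph{last} operator applied.

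As written, the outer operator is not even well defined on $E$. Neither $\beta'_\chi(L_\zeta)$ nor left multiplication by $L^{\widehat{\mathbb V}}_{\tau(\zeta)}$ respects the relation $ua\otimes Q=u\otimes aQ$ for $a\in\widehat A^{(m)}_{\mathbb Q}$; only their difference does. This is also why $\mu(P)$ for higher-order $P$ is a map out of the tensor product over $K$, not an endomorphism of $E$. So there is no reason for the outer operator to carry $L_{\zeta_\ell}$-images into $S_2$.

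The repair is to factor the other way round:
\[A\otimes 1-1\otimes B=\sum_{\ell=1}^{k}(a_\ell\otimes 1-1\otimes b_\ell)\,(a_{\ell-1}\cdots a_1\otimes b_{\ell+1}\cdots b_k).\]
Evaluate this on pure tensors $u\otimes Q$ with $u\in\widehat M^{(m)}_{\mathbb Q}$. The $\ell$-th summand is then $L_{\zeta_\ell}\bigl(a_{\ell-1}\cdots a_1(u)\otimes b_{\ell+1}\cdots b_kQ\bigr)\in L_{\zeta_\ell}(E)$, which is exactly the paper's computation.

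This is also where the ordering worry you flagged gets resolved. You need $a_\ell$ outermost in $a_\ell\cdots a_1$ and, at the same time, $b_\ell$ outermost in $b_\ell\cdots b_k$. That is possible precisely because $A=a_k\cdots a_1$ (anti-multiplicativity of $\beta'_\chi$) while $B=b_1\cdots b_k$ (multiplicativity of $(\cdot)^{\widehat{\mathbb V}}$). With this correction your proof coincides with the paper's.
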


\begin{proof} By Proposition \ref{inv-generated-by-L}, over $K$, $L_{\xi_i^{\langle r\rangle_{(m)}}}$ is a linear combination of 
operators of the form $P=L_{\zeta_1}\cdots L_{\zeta_n}$ for some $\zeta_1,\ldots, \zeta_n\in \mathfrak L(H)$. 
By Propositions \ref{description-alpha} and 
\ref{beta}, we have
\begin{align*}
&\quad\, \beta'_\chi(L_{\zeta_1}\cdots L_{\zeta_n})(b)\otimes Q-b\otimes (L_{\tau(\zeta_1)}\cdots L_{\tau(\zeta_n)})^{\widehat{\mathbb V}}Q \\
&= \beta'_\chi(L_{\zeta_n})\cdots \beta'_\chi(L_{\zeta_1})(b)\otimes Q-b\otimes L^{\widehat{\mathbb V}}_{\tau(\zeta_1)}
\cdots L^{\widehat{\mathbb V}}_{\tau(\zeta_n)}Q \\
&=  \sum_{i=1}^{n} \Big(\beta'_\chi(L_{\zeta_i})\cdots\beta'_\chi(L_{\zeta_1})b\otimes L^{\widehat{\mathbb V}}_{\tau(\zeta_{i+1})}
\cdots L^{\widehat{\mathbb V}}_{\tau(\zeta_n)}Q \\
& \;\;\qquad-\beta'_\chi(L_{\zeta_{i-1}})\cdots\beta'_\chi(L_{\zeta_1})b\otimes L^{\widehat{\mathbb V}}_{\tau(\zeta_{i})}\cdots 
L^{\widehat{\mathbb V}}_{\tau(\zeta_n)}Q\Big)\\
&=  \sum_{i=1}^{n} L_{\zeta_i}\Big(\beta'_\chi( L_{\zeta_{i-1}})\cdots\beta'_\chi(L_{\zeta_1})b\otimes 
L^{\widehat{\mathbb V}}_{\tau(\zeta_{i+1})}\cdots L^{\widehat{\mathbb V}}_{\tau(\zeta_n)}Q\Big),
\end{align*} 
which vanishes in 
\[\mathfrak F_\pi(\mathcal N)^{(m)}_\Q|_{\mathbb V_k}=
(\widehat M^{(m)}_\Q\otimes_{\widehat{A}^{(m)}_\Q}\widehat{\c D}_{\widehat{\b{V}},\Q}^{(m)})
/\sum_{\xi\in \f{L}(H)}L_{\xi}(\widehat M^{(m)}_\Q\otimes_{\widehat{A}^{(m)}_\Q}\widehat{\c D}_{\widehat{\b{V}},\Q}^{(m)}).\qedhere\]
\end{proof}

\begin{proposition}\label{quo-gr} Let
$\mathfrak F_\pi(\mathcal N)^{(m)}_k:=\mathfrak F_\pi(\mathcal N)^{(m)}\otimes_R k$. 
The good filtration on $\widehat M^{(m)}_k\cong M^{(m)}_k$ defined in \ref{affintmodel} induces a good 
filtration on $\mathfrak F_{\pi}(\mathcal N)^{(m)}_k$, and $\mathrm{Gr}(\mathfrak F_{\pi}(\mathcal N)^{(m)}_k)$ is a quotient of
$$\mathrm{Gr}(M^{(m)}_k) \otimes_{\mathrm{Gr}(A_k^{(m)})}\Big(\mathrm{Gr}(\mathcal D_{\mathbb V,k}^{(m)})\Big/
\sum_{1\leq i\leq 2d,\, 1\leq r\leq p^m}L_{\xi_i}^{\langle {r}\rangle_{(m)}}\mathrm{Gr}({\c D}_{\b V, k}^{(m)})\Big),$$
where $L_{\xi_i}^{\langle {r}\rangle_{(m)}}\in \mathrm{Gr}^r(\mathcal D_{\b{V}_R}^{(m)})$ is defined in Proposition \ref{def-L^<k>}.
\end{proposition}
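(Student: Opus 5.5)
The plan is to present $\mathfrak F_\pi(\mathcal N)^{(m)}_k$ as a quotient of a tensor product, and then read off the filtration and the associated graded from that presentation. Reducing mod $\mathfrak m$, the definition of $\mathfrak F_\pi(\mathcal N)^{(m)}$ gives
$$\mathfrak F_\pi(\mathcal N)^{(m)}_k\cong (M^{(m)}_k\otimes_{A^{(m)}_k}\mathcal D^{(m)}_{\mathbb V,k})\Big/\sum_{i,r} L_{\xi_i^{\langle r\rangle_{(m)}}}(M^{(m)}_k\otimes_k \mathcal D^{(m)}_{\mathbb V,k}).$$
This uses that $\widehat M^{(m)}_k\cong M^{(m)}_k$, that $\widehat{\mathcal D}^{(m)}_{\widehat{\mathbb V}}\otimes_R k\cong \mathcal D^{(m)}_{\mathbb V_k}$, and that reduction is right exact.

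\emph{The filtration.} On $M^{(m)}_k$ we take the $H_k$-invariant good filtration of \ref{affintmodel}. Via (\ref{AB}), $A^{(m)}_k$ carries the order filtration, for which $\mathrm{Gr}(A^{(m)}_k)\cong A^{(m)}_k$ as a graded subalgebra of $\mathrm{Gr}(\mathcal D^{(m)}_{\mathbb V,k})$; this matches the degree grading on $B^{(m)}_k$ coming from the $\mathbb G_m$-action. On $\mathcal D^{(m)}_{\mathbb V,k}$ we take the order filtration.

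We filter $M^{(m)}_k\otimes_{A^{(m)}_k}\mathcal D^{(m)}_{\mathbb V,k}$ by the images of $\sum_{i+j=n}F_iM\otimes F_j\mathcal D$ and give the quotient the induced filtration. Goodness follows because $M^{(m)}_k$ has finitely many filtered generators, so their images generate $\mathfrak F_\pi(\mathcal N)^{(m)}_k$ with a good filtration. The standard fact that $\mathrm{Gr}$ of a tensor product with the tensor filtration is a quotient of the tensor product of the $\mathrm{Gr}$'s then gives a surjection
$$\mathrm{Gr}(M^{(m)}_k)\otimes_{\mathrm{Gr}(A^{(m)}_k)}\mathrm{Gr}(\mathcal D^{(m)}_{\mathbb V,k})\twoheadrightarrow \mathrm{Gr}(M^{(m)}_k\otimes_{A^{(m)}_k}\mathcal D^{(m)}_{\mathbb V,k}).$$
This reduces the statement to showing that the images of the relations land in the submodule generated by the symbols $1\otimes L_{\xi_i}^{\langle r\rangle_{(m)}}$. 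Passing from the tensor product to the quotient by the relation submodule, the associated graded of the quotient is the quotient of the associated graded by the $\mathrm{Gr}$ of the relation submodule. So it suffices to check that the relation submodule's $\mathrm{Gr}$ contains $\mathrm{Gr}(M)\otimes L_{\xi_i}^{\langle r\rangle_{(m)}}\mathrm{Gr}(\mathcal D)$.

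\emph{The key step.} Take $b\in F_aM^{(m)}_k$ and $Q\in F_c\mathcal D$. The relation element is
$$\beta'_\chi(L_{\xi_i^{\langle r\rangle_{(m)}}})(b)\otimes Q-b\otimes (L_{\tau(\xi_i)^{\langle r\rangle_{(m)}}})^{\mathbb V}Q.$$
Since the filtration on $M^{(m)}_k$ is $H_k$-invariant, $\beta'_\chi(P)$ preserves $F_aM$ for every $P$: $\beta'_\chi$ is built from the comodule map and $\mathrm{ev}_1$, and the invariance of the filtration means the coaction respects it. The $\chi$-twist only multiplies by a unit of $R[H]$, so it does not affect this. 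Hence the first term lies in $F_{a+c}$, strictly below the expected degree $a+c+r$. The second term lies in $F_{a+c+r}$, and by Corollary \ref{heighest-symbol-of-nabla} its symbol is $\pm\,\sigma(b)\otimes L_{\tau(\xi_i)}^{\langle r\rangle_{(m)}}\sigma(Q)$. Since $\tau$ permutes $\{\xi_1,\ldots,\xi_{2d}\}$, these symbols range over all $\mathrm{Gr}(M)\otimes L_{\xi_j}^{\langle r\rangle}\mathrm{Gr}(\mathcal D)$.

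\emph{Completing the argument.} It remains to control the degree-$(a+c+r)$ part of the relation submodule. The only point needing care is that $\mathrm{Gr}$ of the relation submodule is at least generated by these top symbols, which is all that is needed for "quotient". Indeed, the relation submodule contains each relation element, and each element's image in $\mathrm{Gr}_{a+c+r}$ is exactly the top symbol, the first term being of lower order. So the $\mathrm{Gr}$ of the relation submodule contains $\sigma(b)\otimes L^{\langle r\rangle}_{\xi_j}\sigma(Q)$ for all $b,Q$. The cokernel is then a quotient of the displayed module. Combined with the surjectivity above, this gives the claim.

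The main obstacle is to verify rigorously that $\beta'_\chi$ preserves the filtration on $M^{(m)}_k$. I would deduce this from the construction in \ref{affintmodel}: the grading on $\bar M'^{(m)}_k$ comes from the $\mathbb G_m$-action, which commutes with the $H_k$-action, so the $H$-coaction is homogeneous. One must still check compatibility with the $t$-torsion quotient, and that divided-power operators $L_{\xi^{\langle r\rangle}}$ act through the coaction and do not raise degree.
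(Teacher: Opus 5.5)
Your proposal is correct and follows essentially the paper's route. You filter $\mathfrak F_\pi(\mathcal N)^{(m)}_k$ as a quotient of $M^{(m)}_k\otimes_{A^{(m)}_k}\mathcal D^{(m)}_{\mathbb V,k}$ (the paper writes this as $M^{(m)}_k\otimes_k\mathcal O_{\mathbb V_k}$ filtered by $F^sM^{(m)}_k\otimes_k\mathcal O_{\mathbb V_k}$, which is your tensor filtration), and then use the defining relation, the $H_k$-invariance of the filtration (so $\beta'_\chi(L_{\xi_i^{\langle r\rangle_{(m)}}})b$ stays in $F^s$), and Corollary \ref{heighest-symbol-of-nabla} to show the symbols $L_{\xi_i}^{\langle r\rangle_{(m)}}$ die in $\mathrm{Gr}$. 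The paper treats the point you flag as the main obstacle as immediate, and your justification via the $\mathbb G_m\times H$-comodule grading on $\bar M'^{(m)}_k$ is adequate.
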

 
\begin{proof} Let $\mathfrak m$ be the maximal ideal of $R$. We have 
\begin{align*}
(\widehat M^{(m)}\widehat{\otimes}_{\widehat{A}^{(m)}}\widehat{\mathcal D}_{\widehat{\b{V}}}^{(m)})_k&\cong 
(\widehat M^{(m)}\widehat{\otimes}_{\widehat{A}^{(m)}}\widehat{\mathcal D}_{\widehat{\b{V}}}^{(m)})/\mathfrak m 
(\widehat M^{(m)}\widehat{\otimes}_{\widehat{A}^{(m)}}\widehat{\mathcal D}_{\widehat{\b{V}}}^{(m)})\\
&\cong \widehat M^{(m)}/\mathfrak m \widehat M^{(m)}
\otimes_{\widehat{A}^{(m)}/\mathfrak m \widehat{A}^{(m)}}\widehat{\mathcal D}_{\widehat{\b{V}}}^{(m)}/
\mathfrak m\widehat{\mathcal D}_{\widehat{\b{V}}}^{(m)} \cong M^{(m)}_k \otimes_k \mathcal O_{\mathbb V_k}.
\end{align*}
The filtration $F^s(M^{(m)}_k\otimes_{k}\mathcal O_{\mathbb V_k}):=F^sM^{(m)}_k\otimes_{k}\mathcal O_{\mathbb V_k}$
defines a good filtration on the right $\mathcal D_{\mathbb V, k}^{(m)}$-module
$(\widehat M^{(m)}\widehat{\otimes}_{\widehat{A}^{(m)}}\widehat{\mathcal D}_{\widehat{\b{V}}}^{(m)})_k$, and we have an isomorphism
$$
\mathrm{Gr}(\widehat M^{(m)}\widehat{\otimes}_{\widehat{A}^{(m)}}\widehat{\mathcal D}_{\widehat{\b{V}}}^{(m)})_k\cong
\mathrm{Gr}(M^{(m)}_k\otimes_{k}\mathcal O_{\mathbb V_k})\cong \mathrm{Gr} (M^{(m)}_k)\otimes_{k}\mathcal O_{\mathbb V_k}.$$
Regard $\mathfrak F_\pi(\mathcal N)^{(m)}_k$ as a quotient of 
$(\widehat M^{(m)}\widehat{\otimes}_{\widehat{A}^{(m)}}\widehat{\mathcal D}_{\widehat{\b{V}}}^{(m)})_k$, 
and put the quotient good filtration on 
$\mathfrak F_\pi(\mathcal N)^{(m)}_k$. We then have an epimorphism
$$\mathrm{Gr}(M^{(m)}_k)\otimes_k\mathcal O_{\mathbb V_k}\twoheadrightarrow \mathrm{Gr}
(\mathfrak F_\pi(\mathcal N)^{(m)}_k).$$
To prove the proposition, it suffices to show $L_{\xi_i}^{\langle {r}\rangle_{(m)}}$
lies in the annihilator of $\mathrm{Gr}(\mathfrak F_{\pi}({\mathcal N})^{(m)}_k)$. 
By Corollary \ref{heighest-symbol-of-nabla}, it suffices to show that for any
$b\in F^sM^{(m)}_k$ and $P\in F^t \mathcal D_{\b{V}_k}^{(m)}$, we have
$$b\otimes P L^{\mathbb V}_{\xi_i^{\langle r\rangle_{(m)}}} \in  F^{s+t+r-1} \mathfrak F_\pi(\mathcal N)^{(m)}_k.$$
In $\mathfrak F_\pi(\mathcal N)^{(m)}$, we have the relation
$$\beta'_\chi(L_{\xi_i^{\langle r\rangle_{(m)}}})b\otimes P-b\otimes 
L^{\mathbb V}_{\xi_i^{\langle r\rangle_{(m)}}} P=0.$$
We have 
\begin{align*}
b\otimes P L^{\mathbb V}_{\xi_i^{\langle r\rangle_{(m)}}}
&\equiv b\otimes L^{\mathbb V}_{\xi_i^{\langle r\rangle_{(m)}}} P \mod F^{s+t+r-1} \\
&\equiv \beta'_\chi(L_{\xi_i^{\langle r\rangle_{(m)}}})b\otimes P  \mod F^{s+t+r-1} \\
&\equiv 0  \mod F^{s+t+r-1}, 
\end{align*}
where the last equation follows from the fact that the filtration on $M_k^{(m)}$ is $H_k$-invariant and hence
$\beta'_\chi(L_{\xi_i^{\langle r\rangle_{(m)}}})b\in F^sM_k^{(m)}$. This proves our assertion.
\end{proof}

We identify $\mathbb V$ with $\mathbb V^*$ as in \ref{selfdual}. 
Let $\mathrm{Fr}^m_{\mathbb V_k}: \mathbb V_k\to \mathbb V_k^{(m)}$ be the relative Frobenius morphism.
By the proof of Proposition \ref{prop:Berthelot}, 
$\partial_{x'_i}^{\langle p^j\rangle_{(m)}}$ $(0\leq j<p^m)$ are nilpotent in $\mathcal D^{(m)}_{\mathbb V, k}$, and 
we have an isomorphism
$$\mathrm{Gr}(\mathcal D^{(m)}_{\mathbb V, k})_{\m{red}}\stackrel\cong \to\mathrm{Fr}_{\mathbb V_k}^{m*} 
\mathcal O_{T^*\mathbb V_k^{(m)}}$$
mapping the image of $\partial_{x'_i}^{\langle p^m\rangle_{(m)}}$ in $\mathrm{Gr}(\mathcal D^{(m)}_{\mathbb V, k})_{\m{red}}$ to
the image of $\mathrm{Fr}_{\mathbb V_k}^{m*} (\partial_{x'^{(m)}_i})$ in 
$\mathrm{Fr}_{\mathbb V_k}^{m*} \mathcal O_{T^*\mathbb V_k^{(m)}}$, where 
$(x'^{(m)}_1,\ldots, x'^{(m)}_n)$ are the coordinate of $\mathbb V_k^{(m)}$ induced by base change from the coordinate $(x'_1,\ldots, x'_n)$
of $\mathbb V_k$. This isomorphism  
induces an isomorphism $$\mathbb S\mathrm{pec}\, \mathrm{Gr}(\mathcal D^{(m)}_{\mathbb V, k})_{\m{red}}
\cong T^*\b{V}_k^{(m)}\times_{\b{V}_k^{(m)}} \b{V}_k\cong \b{V}_k^{*(m)}\times \b{V}_k,$$
where $\mathbb V^{*(m)}_k$ is the base change of $\mathbb V^*_k$ by $F^m_{\mathrm{Spec}\,k}$. 
We have $$A^{(m)}=R[\partial_{x'_1}^{\langle p^j \rangle_{(m)}},\ldots,\partial_{x'_n}^{\langle p^j \rangle_{(m)}}]
\subset \mathcal D^{(m)}_{\mathbb V}.$$ The same proof as that of Proposition \ref{prop:Berthelot} shows that 
$\partial_{x'_i}^{\langle p^j\rangle_{(m)}}$ $(0\leq j<p^m)$ are nilpotent in $A^{(m)}_k$, and we have an isomorphism 
\begin{align}\label{al:GrA}
\mathrm{Gr}(A^{(m)}_k)_{\m{red}}\stackrel\cong \to \mathcal O_{\mathbb V_k^{*(m)}}(\mathbb V_k^{*(m)})
\end{align}
mapping the image of $\partial_{x'_i}^{\langle p^m\rangle_{(m)}}$ in $\mathrm{Gr}(A^{(m)}_k)_{\m{red}}$ to
the image of $\partial_{x'^{(m)}_i}$ in $\mathcal O_{\mathbb V_k^{*(m)}}(\mathbb V_k^{*(m)})$. Denote the image 
of $\partial_{x'^{(m)}_i}$ in $\mathcal O_{\mathbb V_k^{*(m)}}$ by $\xi'^{(m)}_i$. We have the following. 

\begin{proposition}\label{M-support} Let $\mathrm{Fr}^m_{\mathbb V^*_k}: \mathbb{V}^*_k\to \mathbb{V}^{*(m)}_k$ be 
the relative Frobenius morphism
for $\b{V}^*_k$, and let $i_0$ be the closed immersion 
$$i_0: \mathbb{V}^*_k\to  {\mathbb A^1_k}\times_k \mathbb V^*_k,\quad x\mapsto (0,x).$$ 
The support of $\mathrm{Gr}(M^{(m)}_k)$ is contained in $\mathrm{Fr}^m_{\mathbb V^*_k}(i_0^{-1}(X'_k))$. 
\end{proposition}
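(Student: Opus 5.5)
By \ref{affintmodel}, $\mathrm{Gr}(M^{(m)}_k)$ is a quotient of $M'^{(m)}_k\otimes_{k[t]}k[t]/(t)$, the fiber at $t=0$ of the graded module $M'^{(m)}_k$. So the plan is to bound the support of $M'^{(m)}_k$ as a $k[t]\otimes_k A^{(m)}_k$-module and then intersect with $t=0$. Via the isomorphism (\ref{AB}) we have $A^{(m)}\cong B^{(m)}=R[\pi x_1^{p^j},\ldots,\pi x_n^{p^j}]_{0\leq j\leq m}$. Since $\mathrm{Supp}$ only depends on radicals, it suffices to work with $(A^{(m)}_k)_{\mathrm{red}}$ (equivalently $\mathrm{Gr}(A^{(m)}_k)_{\mathrm{red}}$, identified with functions on $\mathbb V_k^{*(m)}$ by (\ref{al:GrA})). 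Our target is to show that every element of the ideal defining $\mathrm{Fr}^m_{\mathbb V^*_k}(i_0^{-1}(X'_k))$ acts nilpotently on $\mathrm{Gr}(M^{(m)}_k)$.

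First I would describe the reduction of $B^{(m)}$. Modulo $\pi$, the generators $\pi x_i^{p^j}$ with $j<m$ are nilpotent, because $(\pi x_i^{p^j})^p=\pi^p x_i^{p^{j+1}}=-p\pi x_i^{p^{j+1}}\equiv 0$ up to units. The generator $\pi x_i^{p^m}$ then maps to the coordinate $\xi'^{(m)}_i$. Hence $\mathrm{Spec}(B^{(m)}_k)_{\mathrm{red}}\cong\mathbb V_k^{*(m)}$. The point is that a polynomial $h(x)\in R[x]$ gives rise to the element $\pi^{\deg}h^{(p^m)}$, with coefficients twisted by Frobenius, evaluated at $\pi x^{p^m}$. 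Concretely, for a homogeneous $h$ of degree $e$, consider $h(\pi x_1^{p^m},\ldots)$ against $\pi^{e} h(x)^{p^m}$. Modulo $\pi$ and nilpotents these agree with $\pi^{e}$ times the $p^m$-th power, suitably normalized.

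Second, I would use the ideal $J\subset R[t,x]$ of $X'$, which is homogeneous for the $\mathbb G_m$-grading since $X'$ is $\mathbb G_m$-stable, and which annihilates $\Gamma(f'_*\omega_{Y'})$. For $h\in J$ homogeneous of degree $e$, the power $h^{p^m}$ rewritten in terms of the generators $\pi t^{p^j}$ and $\pi x_i^{p^j}$ of $R[t]\otimes B^{(m)}$ becomes an element of $\pi^{-e}\cdot(\cdots)$. The key claim is that $\pi^{e}h^{p^m}$ lies in $R[t]\otimes B^{(m)}$, kills $M'^{(m)}$ (being a multiple of $h$, and $M'^{(m)}\subset M'^{(m)}_{\mathbb Q}$ is torsion free), and reduces mod $\pi$ to $h^{(m)}(t^{(m)},\xi'^{(m)})$ modulo nilpotents. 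This gives $\mathrm{Supp}(M'^{(m)}_k)\subset \mathrm{Fr}^m(X'_k)$ as a subset of $\mathbb A^{1,(m)}\times\mathbb V^{*(m)}_k$.

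Setting $t=0$ then yields $\mathrm{Supp}(\mathrm{Gr}(M^{(m)}_k))\subset\mathrm{Fr}^m(i_0^{-1}X'_k)$, since Frobenius commutes with the closed immersion $i_0$. The main obstacle is the second step's integrality and reduction: the expansion of $\pi^e h(t,x)^{p^m}$ contains cross terms. I would handle this by working with the monomial $\pi^e\mathbf x^{p^m\mathbf a}=\prod(\pi x_i^{p^m})^{a_i}$ and using that $h^{p^m}\equiv h^{(p^m)}(x^{p^m})\bmod p$. The cross terms then carry an extra factor of $p$, and I must check that $p\pi^{e}\cdot(\text{monomials of degree }ep^m)$ lies in $\pi R[t]\otimes B^{(m)}$, or is at least nilpotent there. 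For $m=0$ this is trivial, matching the explicit model of \ref{S0=omega}. In general I would compare with the integrality $\mathrm{ord}_\pi(p^j!)=p^j-1$.
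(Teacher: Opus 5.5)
Your skeleton (bound the support of $M'^{(m)}_k$, then cut with $t=0$) matches the paper's, but the middle step does not work. Start with a misreading. In $S=R[t]\otimes_R B^{(m)}$ only the $x_i$ are rescaled; $t$ is an ordinary integral variable, and there are no generators $\pi t^{p^j}$. Take a monomial $t^a\mathbf x^{\mathbf b}$ of a homogeneous $h$ of degree $e$. Then $\pi^e t^{ap^m}\mathbf x^{p^m\mathbf b}=\pi^a t^{ap^m}\prod_i(\pi x_i^{p^m})^{b_i}$, which vanishes mod $\pi$ as soon as $a>0$. So at best you get the reduction $h^{(m)}(0,\xi'^{(m)})$; in particular your $m=0$ case does not match the explicit level-zero model, where $\phi_k(t)=0$.

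Consequently your intermediate claim $\mathrm{Supp}(M'^{(m)}_k)\subset\mathrm{Fr}^m(X'_k)$ is false. For $G=\mathbb G_m$, $\rho_1(g)=g$, $\rho_2(g)=g^{-1}$, one has $X'=\{x_1x_2=t^2\}$. For $m=0$ and $u_i=\pi x_i$, Gauss's lemma gives $S\cap(x_1x_2-t^2)K[t,\mathbf x]=(u_1u_2-\pi^2t^2)S$. Now $M'^{(0)}$ is a nonzero torsion-free module over the $R$-flat domain $S/(u_1u_2-\pi^2t^2)$. Nakayama at the generic points of its special fiber then gives $\mathrm{Supp}(M'^{(0)}_k)=\{u_1u_2=0\}\not\subset\{u_1u_2=t^2\}$. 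The correct intermediate statement, which the paper proves, is $\mathrm{Supp}(M'^{(m)}_k)\subset\mathbb A^1_k\times\mathrm{Fr}^m(i_0^{-1}(X'_k))$. It agrees with yours after setting $t=0$, so this slip alone would be harmless.

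The genuine gap is the step you flagged yourself. The cross terms are not a bookkeeping matter, because $\pi^eh^{p^m}$ need not lie in $S$ at all, so it cannot serve as an annihilator. Take $m=1$ and the same $X'$. Since $(\pi x_i)^p=-p\,\pi x_i^p$, $S$ is free over $R$ on the elements $t^a\prod_i(\pi x_i)^{r_i}(\pi x_i^p)^{q_i}$ with $0\le r_i<p$. Hence $\lambda t^2x_1^{p-1}x_2^{p-1}\in S$ forces $\mathrm{ord}_\pi\lambda\ge 2p-2$. But in $\pi^2(x_1x_2-t^2)^p$ this monomial has coefficient $-p\pi^2$, of order $p+1<2p-2$ once $p\ge5$.

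What is missing is a change of integral model, not finer estimates. Adjoin $\sigma$ with $\sigma^{p^m}=\pi$ and put $\mathbf z=\sigma\mathbf x$, so that $\pi x_i^{p^m}=z_i^{p^m}$. By homogeneity, $h(t,\mathbf x)=\sigma^{-e}h(\sigma t,\mathbf z)$, where $h(\sigma t,\mathbf z)$ is integral and reduces to $\bar h(0,\bar{\mathbf z})$. The paper uses this pointwise:
\begin{itemize}
\item every point of $X'^{\mathrm{an}}_K\cap\mathrm{Spm}(K\langle t\rangle\widehat\otimes_K\widehat B^{(m)}_{\mathbb Q})$ specializes into $\mathbb A^1_k\times\mathrm{Fr}^m(i_0^{-1}(X'_k))$;
\item so $(\widehat M'^{(m)})^\sim$ has zero generic fiber on the complementary open formal subscheme;
\item $R$-flatness of $M'^{(m)}$ then forces $M'^{(m)}_k$ to vanish there.
\end{itemize}
If you want to stay algebraic, note that $R[\sigma][t,\mathbf z]$ is finite over $S\otimes_R R[\sigma]$, since $z_i^{p^m}=\pi x_i^{p^m}$. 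Lying-over, applied modulo the ideal of $X'$, gives the same bound with no explicit annihilators.
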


\begin{proof}  By the discussion in \ref{affintmodel}, $\mathrm{Gr}(M^{(m)}_k)$
is a quotient of $M'^{(m)}_k\otimes_{k[t]}k[t]/(t)$. So we have 
$$\mathrm{supp}\,\mathrm{Gr}(M^{(m)}_k)\subset i_0^{(m),-1}(\mathrm{supp}\,M'^{(m)}_k),$$ 
$i^{(m)}_0$ be the closed immersion 
$$i^{(m)}_0: \mathbb{V}^{*(m)}_k\to  {\mathbb A^1_k}\times_k \mathbb V^{*(m)}_k,\quad x\mapsto (0,x).$$
It suffices to show the support of $M'^{(m)}_k$
lies in the image of $\mathbb A^1_k\times_k i_0^{-1}(X'_k)$ under the morphism
$$\mathrm{id}\times \mathrm{Fr}^m_{\mathbb V^*_k}: 
{\mathbb A^1_k}\times_k \mathbb V^*_k\to {\mathbb A^1_k}\times_k \mathbb V^{*(m)}_k.$$
$\widehat M'^{(m)}$ defines a coherent sheaf on the formal scheme 
$\m{Spf}\, R\langle t \rangle\widehat\otimes_R\widehat{B}^{(m)}$, and $M'^{(m)}_k$ defines
a coherent sheaf on special fiber $\mathrm{Spec}\, (k[t]\otimes_kB_k^{(m)})$. 
Similar to Proposition \ref{rel-M-omega}, we have an isomorphism 
$$\widehat M '^{(m)}_{\mathbb Q}\cong \Gamma(\mathrm{Spm}\, 
K\langle t \rangle\widehat\otimes_K\widehat{B}^{(m)}_{\Q}, (f'_*\omega_{Y'})^{\mathrm{an}}).$$
Let $$\m{sp}:\mathrm{Spm}(K\langle t \rangle\widehat\otimes_K\widehat{B}^{(m)}_{\Q})\to\mathrm{Spec} (k[t]\otimes_kB_k^{(m)})$$ be the 
specialization map. We claim that 
$$X'^{\m{an}}_K\cap\mathrm{Spm}(K\langle t \rangle\widehat\otimes_K\widehat{B}^{(m)}_{\Q})\subset \m{sp}^{-1} 
(\mathrm{id}\times \mathrm{Fr}^m_{\mathbb V^*_k})(\mathbb A^1_k\times_k X'_{k,0}).$$
The generic fiber of the formal scheme 
$$\m{Spf}(R\langle t \rangle\widehat\otimes_R\widehat{B}^{(m)}) 
-(\mathrm{id}\times \mathrm{Fr}^m_{\mathbb V^*_k})(\mathbb A^1_k\times_k X'_{k,0})$$ is the rigid analytic space
$$\mathrm{Spm}(K\langle t \rangle\widehat\otimes_K\widehat{B}^{(m)}_{\Q})-
\m{sp}^{-1}(\mathrm{id}\times \mathrm{Fr}^m_{\mathbb V^*_k})(\mathbb A^1_k\times_k X'_{k,0}),$$ and 
the support of $f'_*\omega_{Y'}$ is contained in $X'$.
If the claim is true, then $$(f'_*\omega_{Y'})^{\mathrm{an}}
|_{\mathrm{Spm}(K\langle t \rangle\widehat\otimes_K\widehat{B}^{(m)}_{\Q})-
\m{sp}^{-1}(\mathrm{id}\times \mathrm{Fr}^m_{\mathbb V^*_k})(\mathbb A^1_k\times_k X'_{k,0})}=0,$$ and hence
the generic fiber of $(\widehat M'^{(m)})^\sim|_{\m{Spf}(R\langle t \rangle\widehat\otimes_R\widehat{B}^{(m)}) 
-(\mathrm{id}\times \mathrm{Fr}^m_{\mathbb V^*_k})(\mathbb A^1_k\times_k X'_{k,0})}$ vanishes, where $(\widehat M'^{(m)})^\sim$ 
is the sheaf on $\m{Spf}(R\langle t \rangle\widehat\otimes_R\widehat{B}^{(m)})$ associated to $\widehat M'^{(m)}$. 
Since $M'^{(m)}\subset M'^{(m)}_{\Q}$ is flat over $R$, this implies $M'^{(m)}_{k}$ 
is supported in $(\mathrm{id}\times \mathrm{Fr}^m_{\mathbb V^*_k})(\mathbb A^1_k\times_k X'_{k,0})$.

To prove the claim, we may replace $R$ by a finite extension, and assume it contains a $p^m$-th root $\sigma$ of $\pi$. Recall that
$$\widehat{B}^{(m)}=R\langle\pi{x_1^{p^j}},\ldots, \pi{x_n^{p^j}}\rangle_{0\leq j\leq m}.$$
Consider the homomorphism
$$R\langle \xi'^{(m)}_1,\ldots,\xi'^{(m)}_n \rangle\to \widehat{B}^{(m)}, \quad \xi'^{(m)}_i\mapsto \pi x_i^{p^m}.$$
It induce a commutative diagram
$$\begin{tikzcd}
\mathrm{Spm}\, (K\langle t\rangle\widehat \otimes_K  \widehat{B}^{(m)}_{\mathbb Q})
\arrow[r, "\m{sp}"] \arrow[d, "g_{\Q}"] & \mathrm{Spec}\, (k[t]\otimes_k B_k^{(m)}) \arrow[d, "g_{k}"] \\
\mathrm{Spm}\, K\langle t, \xi'^{(m)}_1,\ldots, \xi'^{(m)}_n\rangle \arrow[r, "\m{sp}"]  & 
\mathrm{Spec}\, k[t,\xi'^{(m)}_1,\ldots, \xi'^{(m)}_n].
\end{tikzcd}$$
For any $0\leq j\leq m-1$, we have 
$$(\pi x_i^{p^j})^p=\pi^p x_i^{p^{j+1}}=-p \pi x_i^{p^{j+1}}.$$
It follows that $\pi x_i^{p^j}$ $(0\leq j\leq m-1)$ are nilpotent in $B_k^{(m)}$. So
$g_{k}$ induces an isomorphism $$\mathrm{Spec}\, (k[t] \otimes_k B_k^{(m)})_{\m{red}}\cong \mathbb A^1_k\times_k\b{V}^{*(m)}_k,$$
which can be identified with the isomorphism induced by (\ref{al:GrA}).
Assume $X'$ is defined by a family of homogeneous equations $f_i(t,\mathbf x)=0$ in $\b{A}^1\times\mathbb A$.
Then we may identify $g_{\Q}((X'_K)^{\mathrm{an}}
\cap \mathrm{Spm}\, (K\langle t\rangle\widehat \otimes_K  \widehat{B}^{(m)}_{\mathbb Q}))$ with
\begin{align*}
&\{(s, \pi \mathbf y^{p^m})\in \mathrm{Spm}\, K\langle t, \xi'^{(m)}_1,\ldots, \xi'^{(m)}_n\rangle: f_i(s, \mathbf y)=0\}\\
=&\, \{(t, \mathbf y^{p^m})\in \mathrm{Spm}\, K\langle t, \xi'^{(m)}_1,\ldots, \xi'^{(m)}_n\rangle: f_i(\sigma t,\mathbf y)=0\}.
\end{align*}
The image of $g_{\Q}((X'_K)^{\mathrm{an}}\cap \mathrm{Spm}\, (K\langle t\rangle\widehat \otimes_K  \widehat{B}^{(m)}_{\mathbb Q}))$
under $\m{sp}$ is contained in
$$\{(t, \mathbf y^{p^m})\in \mathrm{Spec}\, k[t,\xi'^{(m)}_1,\ldots, \xi'^{(m)}_n] : f_i(0,\mathbf y)=0\}.$$
The claim follows.
\end{proof}

\begin{corollary}\label{nil-gr} Identify $\mathrm{Spec}(\mathrm{Gr}(\mathcal D^{(m)}_{\mathbb V, k}))_{\m{red}}$ with 
$\b{V}_k^{*(m)}\times_k \b{V}_k$. Restricting to $\b{V}_k^{*(m)}\times_k\mathbb V_k^{\mathrm{gen}}$, 
the coherent module
$$\mathrm{Gr}(M^{(m)}_k) \otimes_{\mathrm{Gr}(A_k^{(m)})}\Big(\mathrm{Gr}(\mathcal D_{\b{V}, k}^{(m)})\Big/
\sum_{1\leq i\leq 2d,\, 1\leq r\leq p^m}L_{\xi_i}^{\langle {r}\rangle_{(m)}}\mathrm{Gr}({\c D}_{\b{V}, k}^{(m)})\Big)$$ 
is supported in the zero section of $0\times_k\mathbb V_k^{\mathrm{gen}}$.
\end{corollary}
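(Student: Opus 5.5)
The plan is to bound the support of the tensor product by intersecting two closed subsets of $\mathbb V_k^{*(m)}\times_k\mathbb V_k$ (working with reduced structures, which is enough for supports). The first comes from the factor $\mathrm{Gr}(M^{(m)}_k)$ and the second from the relations $L_{\xi_i}^{\langle r\rangle_{(m)}}$. I would then show that over $\mathbb V_k^{\mathrm{gen}}$ this intersection is the zero section, using the nondegeneracy condition of Definition \ref{defn:nondeg}. The module is a quotient of $\mathrm{Gr}(M^{(m)}_k)\otimes_{\mathrm{Gr}(A^{(m)}_k)}\mathrm{Gr}(\mathcal D^{(m)}_{\mathbb V,k})$. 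Its support therefore lies in $\mathrm{supp}\,\mathrm{Gr}(M^{(m)}_k)\times_k\mathbb V_k$, where $\mathrm{Gr}(A^{(m)}_k)_{\mathrm{red}}$ is identified with functions on $\mathbb V_k^{*(m)}$ via (\ref{al:GrA}). By Proposition \ref{M-support}, this is contained in $\mathrm{Fr}^m_{\mathbb V^*_k}(i_0^{-1}(X'_k))\times_k\mathbb V_k$.

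The support is also contained in the common zero locus of the functions $L_{\xi_i}^{\langle r\rangle_{(m)}}$. By Proposition \ref{def-L^<k>}, the functions with $r<p^m$ vanish identically on the reduced spectrum. The function with $r=p^m$ is the Frobenius base change of the symbol of the vector field $L_{\xi_i}$ on $\mathbb V$. Concretely, at a geometric point $(\mathrm{Fr}^m(B),A)$ it is the $p^m$-th power twist of $\langle B, L_{\xi_i}(A)\rangle$, where $\langle\,,\,\rangle$ is the trace pairing of \ref{selfdual}. Since the $L_{\xi_i}$ span the image of $\mathfrak L(H)$, the vanishing for all $i$ says $\langle B,\xi\cdot A\rangle=0$ for all $\xi\in\mathfrak L(H_{\bar k})$. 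By invariance of the trace pairing this is $\langle \tau(\xi)\cdot B, A\rangle=0$ up to sign, i.e. $d(\phi_A|_{H B})$ vanishes at $B$. Here I should check that the Frobenius twist only replaces $A$ by its Frobenius conjugate; this is harmless because $\mathbb V^{\mathrm{gen}}_k$ is defined over $k$ and is therefore Frobenius-stable. So a point $(\mathrm{Fr}^m(B),A)$ of the support with $A\in\mathbb V^{\mathrm{gen}}$ gives $B\in i_0^{-1}(X'_{\bar k})$ which is a critical point of $\phi_A$ restricted to the orbit $H_{\bar k}B$.

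The remaining step, which I expect to be the main obstacle, is to identify $i_0^{-1}(X'_{\bar k})$, the fiber at $t=0$ of the closure of $\{(t,t\iota(g))\}$, i.e. the asymptotic cone of $X$. I would show it is the union of $\{0\}$ and the orbits $H_{\bar k}\cdot c\,e(\tau)$ with $c\in\bar k^*$ and $\tau\prec\Delta_\infty$ a face not containing $0$. To do this I would use the decomposition $H\tilde Y^\circ=\tilde Y$ and $(U^+\times U^-)\times S\cong\tilde Y^\circ$ from Assumption \ref{ass1} (and its homogenized version, Assumption \ref{ass2}). This reduces the problem to the toric variety $S$ for $\mathbb G_m\times T$, whose image in $\mathbb V'$ is governed by the weights $(1,\lambda)$ together with $(1,0)$. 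For such a toric variety, the limits as $t\to 0$ along one-parameter subgroups land on torus orbits of the points $e(\tau)$, where $\tau$ runs over faces of the cone over $\Delta_\infty\times\{1\}$ that avoid the direction $t$. These are exactly the faces of $\Delta_\infty$ not containing the origin, plus the vertex $0$.

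Granting this description, let $B\neq0$ lie in the support. Then $B\in H\cdot c\,e(\tau)$ for such a face $\tau$. The critical-point condition is homogeneous in $c$, so the restriction of $\phi_A$ to $H\cdot e(\tau)$ has a critical point. This says $f_{\tau,A}$ has a critical point, which contradicts the nondegeneracy of $A$. Hence $B=0$, and over $\mathbb V_k^{*(m)}\times_k\mathbb V_k^{\mathrm{gen}}$ the support lies in the zero section $0\times_k\mathbb V^{\mathrm{gen}}_k$.
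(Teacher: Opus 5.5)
Your proposal is correct and follows essentially the same route as the paper. You bound the support by $(\mathrm{Supp}\,\mathrm{Gr}(M^{(m)}_k)\times\mathbb V_k)\cap Z(L_{\xi_i}^{\langle p^m\rangle_{(m)}})$ via Propositions \ref{M-support} and \ref{def-L^<k>}, use that Frobenius is a homeomorphism to pass to $Z(L_{\xi_i})$, read this as critical points of $\phi_A$ on $H$-orbits in $i_0^{-1}(X'_{\bar k})$, and conclude by nondegeneracy. The step you flag as the main obstacle is exactly Proposition \ref{prop:orbitdecomX}, which the paper proves in the appendix via the monoid structure (every orbit meets an idempotent in the torus closure), so you can simply cite it; if you prove it your way instead, note that Assumption \ref{ass2} gives no local structure for $\tilde Y'$, so work with $\tilde Y\to\overline X\subset\mathbb P$ from Assumption \ref{ass1}, using that $i_0^{-1}(X')\setminus\{0\}$ is the punctured cone over $\overline X\cap\infty$.
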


\begin{proof} 
By Proposition \ref{def-L^<k>}, the function $L_{\xi_i}^{\left<{r}\right>_{(m)}}$ vanishes on 
$\mathrm{Spec}(\mathrm{Gr}(\mathcal D_{\b{V},k}^{(m)}))_{\m{red}}$ 
for $1\leq r<p^m$, and we have a commutative diagram
$$\begin{tikzcd}
\mathbb V_k^*\times_k \mathbb V_k\arrow[r,"\mathrm{Fr}^m_{\mathbb V^*_k}\times{\mathrm{id}_{\mathbb V_k}}"]
\arrow[ddr,"F^m_{\mathbb V^*_k}\times F^m_{\mathbb V_k}",swap]&
\b{V}_k^{*(m)}\times_k \b{V}_k\arrow[d,"\mathrm{id}_{\mathbb V_k^{*(m)}}
\times\mathrm{Fr}^m_{\mathbb V_k}",swap]\arrow[dr, "L_{\xi_i}^{\langle{p^m}\rangle_{(m)}}"]&&\\
&\mathbb V^{*(m)}_{k}\times_{k}\mathbb V^{(m)}_{k}\arrow[d]
\arrow[r,"L_{\xi_i}^{(m)}"]&\mathbb A^{1,(m)}_k\cong
\mathbb A^1_k\arrow[r]\arrow[d]&\mathrm{Spec}\,k\arrow[d,"F^m_{\mathrm{Spec},k}"]\\
&T^*\mathbb V_k\cong \mathbb V^*_k\times_k\mathbb V_k\arrow[r,"L_{\xi_i}"]&\mathbb A^1_k\arrow[r]&\mathrm{Spec}\,k,
\end{tikzcd}$$
where $F^m$'s are absolute Frobenii, $\mathrm{Fr}^m$'s are relative Frobenii, and the squares in the diagram are Cartesian.
By Proposition \ref{M-support}, the support of $\mathrm{Gr}(M^{(m)}_k)$ is contained in the image of $i_0^{-1}(X'_k)$ under the morphism
$\m{Fr}^m_{\mathbb V^*_k}: \b{V}_k^{*}\to \b{V}_k^{*(m)}$.
This implies
\begin{align*}
&\m{Supp}\Big(\mathrm{Gr}(M^{(m)}_k) \otimes_{\mathrm{Gr}(A_k^{(m)})} \Big(\mathrm{Gr}(\mathcal D_{\b{V}, k}^{(m)})\Big/
\sum_{1\leq i\leq 2d,\, 1\leq r\leq p^m}L_{\xi_i}^{\langle {r}\rangle_{(m)}}\mathrm{Gr}({\c D}_{\b{V}, k}^{(m)})\Big)\Big)\\
=& \Big(\m{Supp}\, \mathrm{Gr}(M^{(m)}_k)\times \b{V}_k\Big)\cap Z\Big(L_{\xi_i}^{\langle {p^m}\rangle_{(m)}}, i=1,\ldots, 2d\Big)\\
\subset &(\m{Fr}^m_{\mathbb V^*_k}\times \m{id}_{\mathbb V_k})
\Big((i_0^{-1}(X'_k)\times \b{V}_k)\cap Z\Big(L_{\xi_i}^{\langle {p^m}\rangle_{(m)}}
\circ (\m{Fr}^m_{\mathbb V^*_k}\times \m{id}_{\mathbb V_k}), i=1,\ldots, 2d\Big) \Big)\\
=&(\m{Fr}^m_{\mathbb V^*_k}\times\m{id}_{\mathbb V_k})\Big((i_0^{-1}(X'_k)\times \b{V}_k)\cap Z\Big(L^{(m)}_{\xi_i}
\circ{(\mathrm{Fr}^m_{\mathbb V^*_k}\times\mathrm{Fr}^m_{\mathbb V_k})},  i=1,\ldots, 2d\Big) \Big),
\end{align*}
where $Z(...)$ denote the zero set of a family of regular functions. Note that 
$$(F^m_{\mathbb V^*_k}\times F^m_{\mathbb V_k})Z\Big(L^{(m)}_{\xi_i}
\circ{(\mathrm{Fr}^m_{\mathbb V^*_k}\times\mathrm{Fr}^m_{\mathbb V_k})},  i=1,\ldots, 2d\Big)
= Z\Big(L_{\xi_i},  i=1,\ldots, 2d\Big).$$ But $F^m_{\mathbb V^*_k}\times F^m_{\mathbb V_k}$ is identity
on the underlying topological spaces. So we have 
$$Z\Big(L^{(m)}_{\xi_i}
\circ{(\mathrm{Fr}^m_{\mathbb V^*_k}\times\mathrm{Fr}^m_{\mathbb V_k})},  i=1,\ldots, 2d\Big)
= Z\Big(L_{\xi_i},  i=1,\ldots, 2d\Big).$$
Therefore 
\begin{align*}
&\m{Supp}\Big(\mathrm{Gr}(M^{(m)}_k) \otimes_{\mathrm{Gr}(A_k^{(m)})} \Big(\mathrm{Gr}(\mathcal D_{\b{V}, k}^{(m)})\Big/
\sum_{1\leq i\leq 2d,\, 1\leq r\leq p^m}L_{\xi_i}^{\langle {r}\rangle_{(m)}}\mathrm{Gr}({\c D}_{\b{V}, k}^{(m)})\Big)\Big)\\
\subset &(\m{Fr}^m_{\mathbb V^*_k}\times\m{id}_{\mathbb V_k})\Big((i_0^{-1}(X'_k)\times \b{V}_k)\cap Z\Big(L_{\xi_i},  i=1,\ldots, 2d\Big) \Big),
\end{align*}
Note that for each point $A$ of $\mathbb V_k$, the intersection $(i_0^{-1}(X'_k)\times A)\cap Z\big(L_{\xi_i},  i=1,\ldots, 2d\big)$ 
consisting of critical points of the restriction of the function $B\mapsto \sum_{j=1}^N\mathrm{Tr}(A_jB_j)$ to $(G_k\times_k G_k)$-orbits
on $i_0^{-1}(X'_k)$. By the orbit decomposition in Proposition \ref{prop:orbitdecomX} and Definition \ref{defn:nondeg}, 
if $A$ lies in $\mathbb V^{\mathrm{gen}}_k$, 
then $$(i_0^{-1}(X'_k)\times A)\cap Z\Big(L_{\xi_i},  i=1,\ldots, 2d\Big)=(0, A).$$ 
So $$(i_0^{-1}(X'_k)\times_k \b{V}^{\mathrm{gen }}_k)\cap Z\Big(L_{\xi_i},  i=1,\ldots, 2d\Big)=0\times_k \b{V}^{\mathrm{gen }}_k.$$
Our assertion follows. 
\end{proof}

\begin{corollary}\label{coherent} $\f{F}_{\pi}({\mathcal N})^{(m)} |_{\b{V}^{\mathrm{gen}}_k}$ is coherent as an 
$\c{O}_{\widehat{\b{V}}}$-module.
\end{corollary}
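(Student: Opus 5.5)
The plan is to reduce $\mathcal O$-coherence of $\mathfrak F_\pi(\mathcal N)^{(m)}|_{\mathbb V^{\mathrm{gen}}_k}$ to a statement about the special fiber, and from there to a statement about the graded module. Both reductions use the standard characteristic variety argument for coherent modules over the level-$m$ rings.

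\emph{Step 1 (graded level).} By Proposition \ref{quo-gr}, $\mathfrak F_\pi(\mathcal N)^{(m)}_k$ carries a good filtration. Its graded module $\mathrm{Gr}(\mathfrak F_\pi(\mathcal N)^{(m)}_k)$ is a quotient of
$$\mathrm{Gr}(M^{(m)}_k) \otimes_{\mathrm{Gr}(A_k^{(m)})}\Big(\mathrm{Gr}(\mathcal D_{\mathbb V,k}^{(m)})\Big/\sum_{i,r}L_{\xi_i}^{\langle r\rangle_{(m)}}\mathrm{Gr}(\mathcal D_{\mathbb V,k}^{(m)})\Big),$$
which is a coherent $\mathrm{Gr}(\mathcal D^{(m)}_{\mathbb V,k})$-module. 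By Corollary \ref{nil-gr}, over $\mathbb V_k^{\mathrm{gen}}$ its support in $\mathbb S\mathrm{pec}\,\mathrm{Gr}(\mathcal D^{(m)}_{\mathbb V,k})_{\mathrm{red}}\cong \mathbb V^{*(m)}_k\times_k\mathbb V_k$ lies in the zero section. Coherent sheaves on the affine $\mathbb V_k$-scheme $\mathbb S\mathrm{pec}\,\mathrm{Gr}(\mathcal D^{(m)}_{\mathbb V,k})$ supported in the zero section are killed by a power of the augmentation ideal. Since this ideal is generated by finitely many homogeneous elements of positive degree, the graded module is coherent over $\mathcal O_{\mathbb V_k}$ and has only finitely many nonzero graded pieces, each $\mathcal O_{\mathbb V_k}$-coherent.

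\emph{Step 2 (filtered to unfiltered).} Since the filtration is good and exhaustive and the graded module vanishes in large degree, $F^s=F^{s_0}$ for $s\geq s_0$. On the other side, the filtration is bounded below: it is induced from the filtration on $M^{(m)}_k$, which comes from a nonnegative grading shifted by finitely many generators. Hence $\mathfrak F_\pi(\mathcal N)^{(m)}_k|_{\mathbb V^{\mathrm{gen}}_k}$ is a finite successive extension of coherent $\mathcal O_{\mathbb V_k}$-modules, so it is $\mathcal O_{\mathbb V_k}$-coherent.

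\emph{Step 3 (lifting to $\widehat{\mathbb V}$).} Here $\mathfrak F_\pi(\mathcal N)^{(m)}$ is a coherent $\widehat{\mathcal D}^{(m)}_{\widehat{\mathbb V}}$-module, hence $\mathfrak m$-adically complete and separated; it is a quotient of a finite sum of copies of $\widehat{\mathcal D}^{(m)}_{\widehat{\mathbb V}}$. On an affine open $\mathfrak U\subset\widehat{\mathbb V}$ over $\mathbb V^{\mathrm{gen}}_k$, choose finitely many sections whose images generate the reduction mod $\mathfrak m$ over $\mathcal O_{U_0}$. A topological Nakayama argument then shows they generate the module over $\mathcal O_{\mathfrak U}$: the cokernel $C$ of the induced map from a free $\mathcal O_{\mathfrak U}$-module satisfies $C=\mathfrak m C$, and $C$ is complete as a quotient of a complete module, so $C=0$. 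Finite presentation follows because $\mathcal O_{\mathfrak U}$ is noetherian.

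The main obstacle is Step 3, especially when $\mathfrak F_\pi(\mathcal N)^{(m)}$ has $\mathfrak m$-torsion. The reduction $\mathfrak F_\pi(\mathcal N)^{(m)}\otimes_R k$ must be the full mod-$\mathfrak m$ quotient, which holds by right exactness, and completeness of the quotient must be checked. For the completeness, I would first apply the argument to the reductions modulo $\mathfrak m^{i+1}$, which are coherent over the $\mathcal D^{(m)}_{V_i}$. There, ordinary Nakayama on the nilpotent thickening gives a bounded number of generators, and passing to the inverse limit yields $\mathcal O_{\widehat{\mathbb V}}$-coherence.
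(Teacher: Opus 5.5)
Your proposal is correct and follows the paper's proof. The paper likewise combines Proposition \ref{quo-gr} and Corollary \ref{nil-gr} to see that $\mathrm{Gr}^{\geq 1}(\mathcal D^{(m)}_{\mathbb V,k})$ acts nilpotently on the finitely generated graded module, so the filtration stabilizes and the special fiber is $\mathcal O_{\mathbb V_k}$-coherent over $\mathbb V^{\mathrm{gen}}_k$; it then lifts to $\widehat{\mathbb V}$ by citing Berthelot's 3.2.2, which is the complete-Nakayama step you write out by hand. One small correction to your Step 3: $C=0$ follows by successive approximation, using that $\mathcal O_{\mathfrak U}$ is $\mathfrak m$-adically complete and $\mathfrak F_\pi(\mathcal N)^{(m)}$ is $\mathfrak m$-adically separated, not from "a quotient of a complete module is complete", since a non-closed image would give a non-separated cokernel for which $C=\mathfrak m C$ does not force vanishing.
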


\begin{proof} By Proposition \ref{quo-gr} and Corollary \ref{nil-gr}, $\mathrm{Gr}^{\geq 1}(\mathcal D_{\mathbb V,k}^{(m)})$ acts nilpotently 
on $\mathrm{Gr}(\mathfrak F_{\pi}({\mathcal N})_k^{(m)})|_{\b{V}^{\mathrm{gen}}_k}$. By the construction of the good filtration in 
Proposition \ref{quo-gr}, 
$\mathrm{Gr}(\mathfrak F_{\pi}({\mathcal N})_k^{(m)})$ is a finitely generated $\mathrm{Gr}(\mathcal D_{\mathbb V,k}^{(m)})$-module. 
So we have $$F^i(\mathfrak F_{\pi}({\mathcal N})_k^{(m)}) |_{\b{V}^{\mathrm{gen}}_k}
=\mathfrak F_{\pi}({\mathcal N})^{(m)}_k |_{\b{V}^{\mathrm{gen}}_k}$$ 
for sufficiently large $i$. Therefore, $\mathfrak F_\pi({\mathcal N})^{(m)}_k |_{\b{V}^{\mathrm{gen}}_k}$ is a coherent $\c{O}_{\b{V}_k}$-module. 
By \cite[3.2.2]{Berthelot1}, this implies that $\mathfrak {F}_{\pi}({\mathcal N})^{(m)} |_{\b{V}^{\mathrm{gen}}_k}$ is coherent as an 
$\c{O}_{\widehat{\b{V}}}$-module.
\end{proof}

By the proof of Corollary \ref{nil-gr} for the case $m=0$ and \ref{S0=omega} for the explicit construction of $M'^{(0)}$, we have
the following. 

\begin{corollary}\label{nil-gr0} Identify $\mathrm{Spec}(\mathrm{Gr}(\mathcal D^{(0)}_{\mathbb V, k}))$ with 
$\b{V}_k^*\times_k \b{V}_k$. The coherent module 
$$\mathrm{Gr}(M^{(0)}_k) \otimes_{\mathcal O_{\mathbb V^*_k}(\mathbb V^*_k)}\Big(\mathcal O_{\b V_k^*\times_k \b V_k}\Big/
\sum_{1\leq i\leq 2d}L_{\xi_i}\mathcal O_{\b V_k^*\times_k \b V_k}\Big)$$ is a quotient 
$$\Gamma(\mathbb A_k^1\times_k \b{V}^*_k, f'_{k*}\omega_{Y'_k})
\otimes_{k[t,x'_1, \ldots, x'_n ],\phi_k} \mathcal O_{\mathbb V^*_k}(\mathbb V^*_k)
 \otimes_{\mathcal O_{\mathbb V^*_k}(\mathbb V^*_k)}\Big(\mathcal O_{\b V_k^*\times_k \b V_k}\Big/
\sum_{1\leq i\leq 2d}L_{\xi_i}\mathcal O_{\b V_k^*\times_k \b V_k}\Big),$$
where $\phi_k$ is the homomorphism 
$$\phi_k: k[t,x'_1, \ldots, x'_n ]\to \mathcal O_{\mathbb V^*_k}(\mathbb V^*_k),\quad t\mapsto 0, 
\quad x'_i\mapsto x'_i.$$ It is supported
in 
$$(i_0^{-1}(X'_k)\times_k \b{V}_k)\cap Z\Big(L_{\xi_i},  i=1,\ldots, 2d\Big).$$
Restricting to $\b{V}_k^*\times_k\mathbb V_k^{\mathrm{gen}}$, 
it is supported in the zero section $0\times_k\mathbb V_k^{\mathrm{gen}}$.
\end{corollary}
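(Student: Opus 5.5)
We specialize the argument of Corollary \ref{nil-gr} to level $m=0$, where no Frobenius twist appears and the explicit model of \ref{S0=omega} is available.

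\textbf{Step 1: identify the graded ring.} For $m=0$, $\mathcal D^{(0)}_{\mathbb V,k}$ is the usual ring of crystalline differential operators. Its associated graded is $\mathrm{Sym}\,\mathcal T_{\mathbb V_k}$, which is already reduced, so $\mathrm{Spec}\,\mathrm{Gr}(\mathcal D^{(0)}_{\mathbb V,k})=\mathbb V^*_k\times_k\mathbb V_k$. Likewise $\mathrm{Gr}(A^{(0)}_k)=k[\partial_{x'_1},\ldots,\partial_{x'_n}]=\mathcal O_{\mathbb V^*_k}(\mathbb V^*_k)$. The only relevant operators are those with $r=1$, and for them $L_{\xi_i}^{\langle 1\rangle_{(0)}}=L_{\xi_i}$ is just the symbol of the vector field. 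So the module in the statement is the $m=0$ instance of the module in Proposition \ref{quo-gr}.

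\textbf{Step 2: the quotient statement.} By \ref{S0=omega}, $\mathrm{Gr}(M^{(0)}_k)$ is a quotient of
\[\Gamma(\mathbb A^1_k\times_k\mathbb V^*_k,f'_{k*}\omega_{Y'_k})\otimes_{k[t,x'_1,\ldots,x'_n],\phi_k}\mathcal O_{\mathbb V^*_k}(\mathbb V^*_k).\]
This uses Corollary \ref{cor:usedin4} for the base change of $f'_*\omega_{Y'}$ to $k$. One must also note that the rescaling $x'_i\mapsto\pi x'_i$ becomes the identification of $k[\pi x'_1,\ldots,\pi x'_n]$ with $\mathcal O_{\mathbb V^*_k}(\mathbb V^*_k)$ through (\ref{AB}), which sends $\pi x_i$ to $\partial_{x'_i}$. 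Tensoring this surjection over $\mathcal O_{\mathbb V^*_k}(\mathbb V^*_k)$ with $\mathcal O_{\mathbb V^*_k\times\mathbb V_k}/\sum_i L_{\xi_i}\mathcal O$ gives the claimed surjection, by right exactness of the tensor product.

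\textbf{Step 3: support.} The $\Gamma(\cdots)$ factor is supported on $X'_k$, since $f'_{k*}\omega_{Y'_k}$ is supported on the image of $Y'_k$. After applying $\phi_k$, which sets $t=0$, the support lies in $i_0^{-1}(X'_k)$. The second factor is supported on $Z(L_{\xi_i},\ i=1,\ldots,2d)$. Hence the tensor product is supported in the intersection of $i_0^{-1}(X'_k)\times_k\mathbb V_k$ with this zero locus. For $m=0$ this is exactly the computation in Corollary \ref{nil-gr}, with both relative Frobenii equal to the identity.

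\textbf{Step 4: restriction to the generic locus.} Fix $A\in\mathbb V^{\mathrm{gen}}_k$. The points of the fibre over $A$ are the critical points of $B\mapsto\sum_j\mathrm{Tr}(A_jB_j)$ restricted to the $H_k$-orbits in $i_0^{-1}(X'_k)$. We use the orbit decomposition (Proposition \ref{prop:orbitdecomX}) to write these orbits as the orbit of $0$ together with the orbits of $e(\tau)$ for the faces $\tau\prec\Delta_\infty$ not containing the origin. Nondegeneracy (Definition \ref{defn:nondeg}) excludes critical points on the orbits of $e(\tau)$. On the zero orbit every point is critical, so only $(0,A)$ survives, and the support over $\mathbb V^{\mathrm{gen}}_k$ is the zero section.

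\textbf{Main obstacle.} The delicate point is Step 2: checking that the homogenized model $M'^{(0)}$, defined by pulling back along $\phi$, reduces modulo $\mathfrak m$ and $t$ compatibly with the grading. This relies on Corollary \ref{cor:usedin4} and on the $t$-torsion quotient in \ref{affintmodel}. The support statements follow formally once this is in place.
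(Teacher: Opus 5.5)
Your proposal is correct and follows the paper's own route. The paper obtains this corollary by running the proof of Corollary \ref{nil-gr} at level $m=0$, where only $r=1$ occurs and the relative Frobenii are the identity. It combines this with the explicit model $M'^{(0)}$ of \ref{S0=omega} (whose reduction is controlled by Corollary \ref{cor:usedin4}), and then uses Proposition \ref{prop:orbitdecomX} and Definition \ref{defn:nondeg} on $\mathbb V_k^{\mathrm{gen}}$, where your separate handling of the zero orbit (the empty face, excluded from the nondegeneracy condition) is exactly what leaves only $(0,A)$ in each fibre.
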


\begin{proposition}\label{rank} Locally 
$\f{F}_{\pi}({\mathcal N})^{(0)}|_{\b{V}^{\mathrm{gen}}_k}$ is a quotient of a free $\c{O}_{\widehat{\b{V}}}$-module of rank 
$$\leq d!\int_{\Delta_{\infty}\cap\mathfrak C}\prod_{\alpha \in R^+} \frac{(\lambda, \alpha)^2}{(\rho, \alpha)^2}\mathrm d\lambda.$$
\end{proposition}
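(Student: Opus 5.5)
We work at level $m=0$ and reduce modulo $\mathfrak m$, using Nakayama's lemma over the $\mathfrak m$-adically complete ring $\mathcal O_{\widehat{\mathbb V}}$. By Corollary \ref{coherent}, $\mathfrak F_\pi(\mathcal N)^{(0)}|_{\mathbb V^{\mathrm{gen}}_k}$ is a coherent $\mathcal O_{\widehat{\mathbb V}}$-module. It therefore suffices to show that at every point $A$ of $\mathbb V^{\mathrm{gen}}_k$, the fiber
$$\mathfrak F_\pi(\mathcal N)^{(0)}_k\otimes k(A)$$
has dimension at most the stated bound. Once this holds, Nakayama lifts a basis of the fiber to local generators. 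Here $\mathfrak F_\pi(\mathcal N)^{(0)}_k|_{\mathbb V^{\mathrm{gen}}_k}$ is a coherent $\mathcal O_{\mathbb V_k}$-module with a finite, exhaustive good filtration, as in the proof of Corollary \ref{coherent}. Its fiber dimension is bounded by the fiber dimension of $\mathrm{Gr}$. By Proposition \ref{quo-gr} and Corollary \ref{nil-gr0}, $\mathrm{Gr}$ is a quotient of
$$Q:=\Gamma(\mathbb A^1_k\times_k\mathbb V^*_k, f'_{k*}\omega_{Y'_k})\otimes_{k[t,x'],\phi_k}\mathcal O_{\mathbb V^*_k\times_k\mathbb V_k}\Big/\sum_i L_{\xi_i}.$$
Over $\mathbb V^{\mathrm{gen}}_k$, the module $Q$ is supported on the zero section $0\times\mathbb V^{\mathrm{gen}}_k$. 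Its pushforward to $\mathbb V_k$ is therefore coherent, and its fiber at $A$ is
$$\mathcal F_0\otimes_{\mathcal O_{\mathbb V^*_k}}\mathcal O_{\mathbb V^*_k}\Big/\big(L_{\xi_i}(\cdot,A)\big)_i,$$
localized at the origin $0$. Here $\mathcal F_0:=i_0^*f'_{k*}\omega_{Y'_k}$ is the restriction to the asymptotic cone $i_0^{-1}(X'_k)$, and $L_{\xi_i}(\cdot,A)$ are the linear functions $B\mapsto\phi_A(\xi_i\cdot B)$.

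The bound then becomes an intersection multiplicity on the cone. The $2d$ functions $B\mapsto \mathrm{Tr}(A\,\xi\cdot B)$, $\xi\in\mathfrak L(H)$, are homogeneous linear functions on $\mathbb V^*_k$. They cut $\mathrm{Supp}\,\mathcal F_0\subset i_0^{-1}(X'_k)$, which has dimension $d$, only at $0$; this is Corollary \ref{nil-gr0} again. So $\mathcal F_0\otimes\mathcal O/(L_{\xi_i})$ is a finite-length module over a graded ring, cut out by linear forms. I plan to choose $d$ general linear combinations $\ell_1,\dots,\ell_d$ of the $L_{\xi_i}(\cdot,A)$, which can be done after a finite extension of $k$, so that they still cut the support only at $0$. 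Then
$$\dim_k \mathcal F_0/(L_{\xi_i})\mathcal F_0\le \dim_k\mathcal F_0/(\ell_1,\dots,\ell_d)\mathcal F_0\le e(\mathcal F_0),$$
where $e(\mathcal F_0)$ is the degree (multiplicity) of the graded module $\mathcal F_0$ of dimension $d$. The last inequality holds because, for a parameter system of degree-one forms, length is bounded by multiplicity. The degree of $\mathcal F_0$ equals the degree of $f'_*\omega_{Y'}$ restricted to the generic fiber $t=1$ as a module on the projective closure. This in turn is the degree of $\overline X\subset\mathbb P$ multiplied by the generic degree of $Y\to X$; equivalently, it is the leading coefficient of the Hilbert function of $\Gamma(Y,\omega)$ filtered by the polynomial degree in $\mathbb V$. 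The $t$-flatness of $\bar M'^{(0)}_k$ from \ref{affintmodel} makes the Hilbert functions of the special and generic fibers agree.

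The final and hardest step computes this leading coefficient. I would use the $H$-module decomposition $K[G]=\bigoplus_\lambda V_\lambda\otimes V_\lambda^*$, ranging over dominant $\lambda$ (algebraic Peter–Weyl). The sections of $\omega_Y\cong\mathcal O$ on $G$ of filtration degree $\le s$ correspond to $\lambda\in s\Delta_\infty\cap\mathfrak C$. One must verify that the filtration induced by the matrix coefficients of the $\rho_j$ is exactly this one, at least asymptotically, for instance via the toroidal structure of Assumption \ref{ass1} on $S$. Granting this, the Hilbert function is $\sum_{\lambda\in s\Delta_\infty\cap\mathfrak C\cap\Lambda}(\dim V_\lambda)^2$. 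By the Weyl dimension formula, $\dim V_\lambda=\prod_{\alpha\in R^+}(\lambda+\rho,\alpha)/(\rho,\alpha)$. The sum is asymptotic to
$$s^d\int_{\Delta_\infty\cap\mathfrak C}\prod_{\alpha\in R^+}\frac{(\lambda,\alpha)^2}{(\rho,\alpha)^2}\,\mathrm d\lambda,$$
using that $\mathrm{rank}+2|R^+|=d$. Multiplying by $d!$ gives the multiplicity. Comparing the degree filtration with the weight polytope, and handling a possibly non-normal $X$ (where passing to $Y$ only helps), is the main technical obstacle. If exact equality of filtrations fails, an inequality in the right direction suffices.
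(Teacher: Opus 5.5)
Your skeleton matches the paper's: Nakayama via Corollary \ref{coherent}, passage to $\mathrm{Gr}$, the quotient of Corollary \ref{nil-gr0} restricted over $A$, and $d$ general linear combinations $\ell_1,\dots,\ell_d$ of the $L_{\xi_i}(\cdot,A)$ that still cut the support only at $0$. The gap is the decisive step
\[
\dim_k \mathcal F_0/(\ell_1,\dots,\ell_d)\mathcal F_0\le e(\mathcal F_0),
\]
which rests on a false principle. Take a finitely generated graded module $M$ of dimension $d$ and a system of parameters $\ell_1,\dots,\ell_d$ of linear forms. The general inequality goes the other way: $\mathrm{length}\,M/(\ell_1,\dots,\ell_d)M\ge e(M)$, with equality if and only if $M$ is Cohen--Macaulay. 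Whenever some $\ell_i$ is a zero divisor on $M/(\ell_1,\dots,\ell_{i-1})M$, the Hilbert series of the next quotient picks up an extra nonnegative term from the annihilator. For example, $M=k[x,y]/(x^2,xy)$ has dimension $1$ and multiplicity $1$, but $M/yM=k[x]/(x^2)$ has length $2$. So without further input your chain of inequalities gives no upper bound on the fiber dimension.

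The missing ingredient is the Cohen--Macaulay property supplied by Corollary \ref{cor:usedin4}: $f'_{k*}\omega_{Y'_k}$ is Cohen--Macaulay. The paper derives this from the vanishing $R^i\sigma_*\omega_{\tilde Y}=0$ and Grothendieck duality in Lemma \ref{lm:basechangesigma}, which rests on the Brion--Kumar results for toroidal embeddings. The paper then cuts this $(d+1)$-dimensional module by the $d+1$ linear equations of $Z_F$, namely $t$ and your $\ell_i$. These form a regular sequence, so the length of $f'_{\bar k*}\omega_{Y'_{\bar k}}\otimes\mathcal O_{Z_F}$ equals the multiplicity $[K(\mathbb G_m\times G):K(X')]\cdot\deg\overline X'_k$.

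Your variant, which restricts to $t=0$ first, is repaired the same way. The function $t$ is a nonzerodivisor on $f'_{k*}\omega_{Y'_k}$, since it is a nonzero function on the integral scheme $\tilde Y'_k$. Hence $\mathcal F_0$ is Cohen--Macaulay of dimension $d$ with the same multiplicity, and your inequality becomes an equality.

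Two smaller points.
\begin{enumerate}
\item You need the multiplicity over $k$, but your Peter--Weyl count lives in characteristic $0$. The passage between them does not come from the $t$-flatness of $\bar M'^{(0)}_k$, which only relates the slices $t=0$ and $t=1$ over $k$. It comes from the $R$-flatness of $\overline X'$ (so $\deg\overline X'_k=\deg\overline X'_K$), together with the base change $(f'_*\omega_{\tilde Y'})\otimes_R k\cong f'_{k*}\omega_{Y'_k}$ of Corollary \ref{cor:usedin4}.
\item The paper imports the final value $d!\int_{\Delta_\infty\cap\mathfrak C}\prod_{\alpha\in R^+}\frac{(\lambda,\alpha)^2}{(\rho,\alpha)^2}\,\mathrm d\lambda$ from \cite[Theorem 3.10]{l-adic}. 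Your Peter--Weyl sketch is a plausible substitute for that citation, since only the easy inclusion direction is needed for an upper bound. It does not, however, address the Cohen--Macaulay gap.
\end{enumerate}
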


\begin{proof}
By Corollary \ref{coherent}, $\f{F}_{\pi}({\mathcal N})^{(0)}|_{\b{V}^{\mathrm{gen}}_k}$ is a coherent $\c{O}_{\widehat{\b{V}}}$-module. 
By Nakayama's lemma, it suffices to show that for any $\bar k$-point $A$ of $\b{V}^{\mathrm{gen}}_k$, the dimension of the fiber of
$\f{F}_{\pi}({\mathcal N})^{(0)}_{k}$ at $A$ does not exceed $d!\int_{\Delta_{\infty}\cap\mathfrak C}
\prod_{\alpha \in R^+} \frac{(\lambda, \alpha)^2}{(\rho, \alpha)^2}\mathrm d\lambda$.
By Proposition \ref{quo-gr} and Corollary \ref{nil-gr0}, we have a surjection
$$\Gamma(\mathbb A_k^1\times \b{V}, f'_{k*}\omega_{Y'_k})\otimes_{k[t,x_1, \ldots, x_n ],\phi_k}
\Big(\mathcal O_{\b V_k^*\times_k \b V_k}\Big/
\sum_{1\leq i\leq 2d}L_{\xi_i}\mathcal O_{\b V_k^*\times_k \b V_k}\Big)
\twoheadrightarrow \mathrm{Gr}(\f F_{\pi}({\mathcal N})^{(0)}_k).$$
The left hand side is supported in 
$(i_0^{-1}(X'_k)\times_k \b{V}_k)\cap Z\big(L_{\xi_i},  i=1,\ldots, 2d\big),$
and $$(i_0^{-1}(X'_k)\times_k \b{V}^{\mathrm{gen}}_k)\cap Z\big(L_{\xi_i},  i=1,\ldots, 2d\big)\subset 0\times_k\b{V}^{\mathrm{gen}}_k.$$
Let $L_{\xi, A}=L_{\xi}|_{\mathbb V^*\times \{A\}}$. Then we have 
$$i_0^{-1}(X'_{\bar k})\cap Z\big(L_{\xi_i,A},  i=1,\ldots, 2d\big)= 0.$$
For any integer $0\leq r\leq d$, we claim that there 
exists a linear subspace $F_r\subset \f{L}(H)_{\bar k}$ of dimension $r$ such that 
$i_0^{-1}(X'_{\bar k})\cap Z\big(L_{\xi,A}, \xi\in F_r)$
has dimension $\leq d-r$. For $r=0$, this is trivial. Suppose $1\leq r\leq d$ and let $F_{r-1}$ be a linear subspace of dimension $r-1$ such that 
$i_0^{-1}(X'_{\bar k})\cap Z\big(L_{\xi,A}, \xi\in F_{r-1})$ has dimension $\leq d-r+1$. 
Let $\eta_1,\cdots, \eta_s$ be the generic points of 
those irreducible components $i_0^{-1}(X'_{\bar k})\cap Z\big(L_{\xi,A}, \xi\in F_{r-1})$ of dimension $d-r+1$.
None of them are closed points and hence $\eta_i\not=0$. But 
$$0=i_0^{-1}(X'_{\bar k})\cap Z\big(L_{\xi,A}, \xi\in \mathfrak L(H)_{\bar k}).$$ So there exists $\xi\in \mathfrak L(H)_{\bar k}$ such that 
$\eta_i\notin i_0^{-1}(X'_{\bar k})\cap Z(L_{\xi,A})$. The set
$$U_i:=\{ \xi\in \f{L}(H)_{\bar k}: \eta_i\notin i_0^{-1}(X'_{\bar k})\cap Z(L_{\xi,A})\}$$
is a nonempty Zariski open subset of $\f{L}(H)$. The intersection $\cap_i U_i$ is nonempty. Choose $\xi_0\in \cap_i U_i$ and 
let $F_r=\m{span}\{F_{r-1}, \xi_0\}$. Then $F_r$ is a linear subspace of dimension $r$ such that
$i_0^{-1}(X'_k)\cap Z(L_{\xi,A}, \xi\in F_r)$ has dimension $\leq d-r$.
Let $F$ be a linear subspace $\f{L}(H)_{\bar k}$ of dimension $d$ so that $i_0^{-1}(X'_k)\cap Z(L_{\xi,A}, \xi\in F)$ has dimension $0$.
Since $i_0^{-1}(X'_k)\cap Z(L_{\xi}, \xi\in F)$ is conical, we must have 
$$i_0^{-1}(X'_k)\cap Z(L_{\xi,A}, \xi\in F)=0$$  or equivalently,
$$X'_{\bar k}\cap (0\times Z(L_{\xi,A}, \xi\in F))=0,$$ where the last intersection is taken in 
$\mathbb V'_{\bar k}=\mathbb A^1_{\bar k}\times_{\bar k}\mathbb V_{\bar k}$. 
Let $Z_F= 0\times Z(L_{\xi,A}, \xi\in F)$, which is a linear subspace of $\mathbb V'_{\bar k}$. 
Let $\mathbb P'=\mathbb P(\mathbb A^1\times_R\mathbb V')$ be the projective space containing $\mathbb V'$, and let
$\overline X'_{\bar k}$ and $\overline Z_F$ be the closures of $X'_{\bar k}$ and $Z_F$ 
in $\mathbb P'_{\bar k}$, respectively. 
Since $X'_{\bar k}$ and $Z_F$ are conical, we have 
$$\overline X'_{\bar k}\cap \overline Z_F=0.$$
In particular, $d+1=\dim \overline X'_{\bar k}\geq \m{codim}\,\overline Z_F$. As 
$Z_F$ is a linear subspace of $\mathbb V'_k$ defined by $d+1$ equations, 
we must have $d+1= \m{codim}\,\overline Z_F$.
The fiber of $$\Gamma(\mathbb A_k^1\times \b{V}, f'_{k*}\omega_{Y'_k})\otimes_{k[t,x_1, \ldots, x_n ],\phi_k}
\Big(\mathcal O_{\b V_k^*\times_k \b V_k}\Big/
\sum_{1\leq i\leq 2d}L_{\xi_i}\mathcal O_{\b V_k^*\times_k \b V_k}\Big)$$ at $A$ is a quotient of 
$f'_{\bar k*}\omega_{Y'_{\bar k}}\otimes_{\mathcal O_{\mathbb V'_{\bar k}}}\mathcal O_{Z_F}$, which is supported at the origin.
By Corollary \ref{cor:usedin4}, the sheaf $f'_k\omega_{Y_k}$ is Cohen-Macaulay. 
By the same argument as in the proof \cite[Theorem 3.10]{l-adic},  we have 
$$\mathrm{dim}\,(f'_{\bar k*}\omega_{Y'_{\bar k}}\otimes_{\mathcal O_{\mathbb V'_{\bar k}}}\mathcal O_{Z_F})
=[K(\mathbb G_{m,\bar k}\times_{\bar k} G_{\bar k}):K(X'_{\bar k})] \cdot\mathrm{deg}\,\overline X'_k.$$ 
Since $\overline X'$ is flat over $R$, by \cite[9.9]{Hartshorne}, 
we have $\mathrm{deg}(\overline X'_k)=\mathrm{deg}(\overline X'_K)$. 
By \cite[Theorem 3.10]{l-adic} which holds for the field $K$ of characteristic $0$, we have 
\[[K(\mathbb G_{m,\bar k}\times_{\bar k} G_{\bar k}):K(X'_{\bar k})] \mathrm{deg}(\overline X'_k)
=d!\int_{\Delta_{\infty}\cap\mathfrak C} \prod_{\alpha \in R^+} \frac{(\lambda, \alpha)^2}{(\rho, \alpha)^2}\mathrm d\lambda.\qedhere\]
\end{proof}

\begin{theorem}\label{2ndmainthm}
$\f{F}_{\pi}(\c{N})|_{\mathbb V_{k}^{\mathrm{gen}}}$ is coherent over $\c{O}_{\widehat{\mathbb V}, \Q}$. For any divisor $T$ 
of $\b{P}_k$ containing the complement of $\mathbb V_{k}^{\mathrm{gen}}$,
$\m{sp}^*(\f{F}_{\pi}(\c{N})|_{\b{P}_k-T})$ is an $F$-isocrystal on $\mathbb P_{k}-T$ overconvergent along $T$
with rank not exceeding $d!\int_{\Delta_{\infty}\cap\mathfrak C}\prod_{\alpha \in R^+} \frac{(\lambda, \alpha)^2}{(\rho, \alpha)^2}\mathrm d\lambda.$
\end{theorem}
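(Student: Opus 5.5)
The plan is to derive the theorem from the finite-level results (Corollary \ref{coherent} and Proposition \ref{rank}) in three steps: $\mathcal O$-coherence, overconvergence, and Frobenius structure. The third step is the delicate one.

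\emph{Step 1: coherence and rank over $\mathbb V_k^{\mathrm{gen}}$.} By \ref{subsection:Fourier} we have
\[
\mathfrak F_\pi(\mathcal N)|_{\mathbb V_k}\cong \mathfrak F_\pi(\mathcal N)^{(0)}_{\mathbb Q}|_{\mathbb V_k}\otimes_{\widehat{\mathcal D}^{(0)}_{\widehat{\mathbb V},\mathbb Q}}\mathcal D^\dagger_{\widehat{\mathbb V},\mathbb Q}.
\]
Moreover $\mathfrak F_\pi(\mathcal N)^{(0)}\otimes\mathbb Q$ is the level-$0$ module of that subsection.

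\begin{enumerate}[(a)]
\item By Corollary \ref{coherent} with $m=0$, $\mathfrak F_\pi(\mathcal N)^{(0)}_{\mathbb Q}|_{\mathbb V_k^{\mathrm{gen}}}$ is a coherent $\mathcal O_{\widehat{\mathbb V},\mathbb Q}$-module with an integrable connection. The same corollary for each $m$ gives $\mathcal O$-coherence of the level-$m$ models, and these are compatible under $\widehat{\mathcal D}^{(m)}\to\widehat{\mathcal D}^{(m')}$.
\item Berthelot's theorem then applies: a coherent $\widehat{\mathcal D}^{(m)}_{\mathbb Q}$-module that is $\mathcal O_{\mathbb Q}$-coherent is locally projective, and the action extends uniquely to $\mathcal D^\dagger_{\mathbb Q}$ with the same underlying module (\cite[3.1.2, 4.1.4]{Berthelot1}). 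Hence the base change to $\mathcal D^\dagger$ does not change the underlying $\mathcal O_{\widehat{\mathbb V},\mathbb Q}$-module.
\item Therefore $\mathfrak F_\pi(\mathcal N)|_{\mathbb V^{\mathrm{gen}}_k}$ is locally free over $\mathcal O_{\widehat{\mathbb V},\mathbb Q}$.
\item By Proposition \ref{rank}, it is locally a quotient of a free module of rank at most $d!\int_{\Delta_\infty\cap\mathfrak C}\prod_{\alpha\in R^+}\frac{(\lambda,\alpha)^2}{(\rho,\alpha)^2}\,\mathrm d\lambda$. This bounds its rank.
\end{enumerate}

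\emph{Step 2: overconvergence along $T$.} $\mathcal N$ is a coherent $\mathcal D^\dagger_{\widehat{\mathbb P},\mathbb Q}(^\dagger\infty)$-module. Hence $\mathfrak F_\pi(\mathcal N)$ is coherent over $\mathcal D^\dagger_{\widehat{\mathbb P}^*,\mathbb Q}(^\dagger\infty)$, and $(^\dagger T)\mathfrak F_\pi(\mathcal N)$ is coherent over $\mathcal D^\dagger(^\dagger T)$ for $T\supset\infty$. By Step 1, its restriction to $\widehat{\mathbb P}-T\subset\mathbb V^{\mathrm{gen}}_k$ is $\mathcal O_{\widehat{\mathbb P},\mathbb Q}$-coherent. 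By Caro's criterion \cite[Th\'eor\`eme 2.2.12]{Caro-L}, $\mathfrak F_\pi(\mathcal N)|_{\mathbb P_k-T}$ lies in the essential image of $\mathrm{sp}_*$. So $\mathrm{sp}^*$ of it is an isocrystal on $\mathbb P_k-T$ overconvergent along $T$, with the rank bound from Step 1.

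\emph{Step 3: Frobenius structure.} This is the main obstacle, since the paper has remarked that it does not know a priori that $\mathcal N$ carries a Frobenius structure.

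\begin{enumerate}[(a)]
\item I would build it at the level of the explicit presentations, using Berthelot's Frobenius descent. Pullback by the Frobenius lift $F:x\mapsto x^q$ on $\widehat{\mathbb V}$ takes right $\widehat{\mathcal D}^{(m)}_{\mathbb Q}$-modules to $\widehat{\mathcal D}^{(m+s)}_{\mathbb Q}$-modules, where $q=p^s$.
\item The aim is to produce isomorphisms $F^*\mathfrak F_\pi(\mathcal N)^{(m)}_{\mathbb Q}\cong\mathfrak F_\pi(\mathcal N)^{(m+s)}_{\mathbb Q}$ on $\mathbb V^{\mathrm{gen}}_k$, compatible with the transition maps. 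Passing to the limit then gives $F^*\mathfrak F_\pi(\mathcal N)\cong\mathfrak F_\pi(\mathcal N)$.
\item On the generators $\widehat M^{(m)}_{\mathbb Q}$, I would use the geometric origin of $M^{(m)}$ as sections of $f_*\omega_Y$ with its $H$-action: $\omega_Y$ is defined over $R$, and the Frobenius of $G_k$ lifts on the rigid tubes.
\item On the relations, I would twist by the Dwork series $\theta(x)=\exp(\pi x-\pi x^q)$. This is exactly how the Frobenius on $\mathcal L_\pi$ is built, and it matches how $F_\pi$ intertwines $L_\xi$ with $L_{\tau(\xi)}$ in \ref{selfdual}.
\end{enumerate}

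If checking that the twisted map respects $\sum_\xi L_\xi(\cdots)$ proves intractable, I would fall back on a categorical route. An overconvergent isocrystal structure is rigid: by full faithfulness of $\mathrm{sp}_*$, any $\mathcal D^\dagger$-isomorphism $F^*\mathcal E\cong\mathcal E$ is automatically one of $F$-isocrystals. So it suffices to produce an abstract isomorphism $F^*\mathfrak F_\pi(\mathcal N)\cong\mathfrak F_\pi(\mathcal N)$ of coherent modules. I would try to obtain this by showing that $F^*$ commutes with $\mathfrak F_\pi$ up to the Dwork twist, via \cite[5.3.1]{Huyghe2004Trans}, and that $F^*\mathcal N\cong\mathcal N$, since $\mathcal N$ is defined from $\bar\iota_*\omega_{\tilde Y}$ and invariant vector fields, both stable under Frobenius pullback.
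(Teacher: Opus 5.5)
Your Step 1(b) is a genuine gap. It is not true that a coherent $\widehat{\mathcal D}^{(m)}_{\mathbb Q}$-module which is $\mathcal O_{\mathbb Q}$-coherent extends to $\mathcal D^\dagger_{\mathbb Q}$ with the same underlying module. A level-$m$ structure only guarantees convergence of the Taylor series on a tube of radius $<1$, and raising the level can kill sections. For example, on $\widehat{\mathbb A}^1$ the module $\widehat{\mathcal D}^{(0)}_{\mathbb Q}/\widehat{\mathcal D}^{(0)}_{\mathbb Q}(\partial_x-1)$ is isomorphic to $\mathcal O_{\widehat{\mathbb A}^1,\mathbb Q}$, with $\partial_x$ acting by $f\mapsto f'+f$. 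But $\partial_x-1$ is invertible in $\mathcal D^\dagger_{\widehat{\mathbb A}^1,\mathbb Q}$, with inverse $-\sum_{k\geq 0}k!\,\partial_x^{[k]}$, so the base change to $\mathcal D^\dagger_{\mathbb Q}$ is $0$.

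Consequently, knowing that each $\mathfrak F_\pi(\mathcal N)^{(m)}_{\mathbb Q}|_{\mathbb V_k^{\mathrm{gen}}}$ is $\mathcal O$-coherent does not by itself make $\varinjlim_m \mathfrak F_\pi(\mathcal N)^{(m)}_{\mathbb Q}$ coherent. Likewise, Proposition \ref{rank}, which concerns only the level-$0$ model, does not transfer to $\mathfrak F_\pi(\mathcal N)$ as you do in 1(d). The missing ingredient is surjectivity, which is what the paper proves:
\begin{enumerate}[(i)]
\item For affine $U\subset \mathbb V_k^{\mathrm{gen}}$, the map $\Gamma(U,\mathfrak F_\pi(\mathcal N)^{(0)}_{\mathbb Q})\to \Gamma(U,\mathfrak F_\pi(\mathcal N)^{(m)}_{\mathbb Q})$ has dense image by \cite[2.2.9]{Caro-L}.
\item Because the target is $\mathcal O$-coherent (Corollary \ref{coherent}), its natural topology is the Banach $\mathcal O$-topology \cite[4.1.2]{Berthelot1}. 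In that topology the finitely generated image is closed \cite[3.7.3.1]{NonArchimedean}.
\item Hence the map is onto, and in the limit $\mathfrak F_\pi(\mathcal N)|_U$ is a quotient of the level-$0$ module.
\end{enumerate}
Then \cite[2.2.13]{Caro-L} gives $\mathcal O$-coherence. The rank bound follows from Proposition \ref{rank} through this surjection, which is why one only gets an inequality. With this in place, your Step 2 via \cite[2.2.12]{Caro-L} goes through as in the paper.

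You should not rely on Step 3. The paper says explicitly that it does not know whether $\mathcal N$ carries a Frobenius structure, and its proof of this theorem only produces an isocrystal overconvergent along $T$. Frobenius is used only on the direct summand $\mathrm{Hyp}_{\pi,!}$ (Proposition \ref{prop:mhyp}), which has one by construction from $\mathcal L_\pi$.

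Your sketch does not supply the missing structure. $\mathcal N$ is the cokernel of a map between termwise push-forwards of a presentation, not an intrinsically defined object. Only over $\mathbb P_k-E_k$, where $\bar\iota$ is affine, does this cokernel agree with $\mathbb D\bar\iota_{k+}$ of the localized dualizing module; that is why Proposition \ref{prop:directfactor} gives only a direct summand. So Frobenius-stability of $\bar\iota_*\omega_{\tilde Y}$ and of the fields $L_\xi$ does not yield $F^*\mathcal N\cong\mathcal N$. You would have to check that a Frobenius-descent isomorphism on the terms carries the relation submodule onto itself, and your Dwork-twisted map is never checked to do this.
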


\begin{proof} Let $U\subset \mathbb V_{k}^{\mathrm{gen}}$ be an affine open subset. By \cite[2.2.9]{Caro-L} and the construction in 
\ref{subsection:Fourier}, the canonical map 
\begin{eqnarray}\label{eqn:canonical0m}
\Gamma(U, \f{F}_\pi({\mathcal N})^{(0)}_{\Q})\to \Gamma(U, \f{F}_\pi({\mathcal N})^{(m)}_{\Q})
\end{eqnarray}
has a dense image for each $m$. Here the topology on $\Gamma(U, \f{F}_\pi({\mathcal N})^{(m)}_{\Q})$ is induced by 
its $\widehat{\mathcal D}_{\widehat{\b{V}},\Q}^{(m)}$-module structure (\cite[4.1.1]{Berthelot1}). By Corollary \ref{coherent} and 
\cite[4.1.2]{Berthelot1}, it is equivalent to the topology induced by its $\c{O}_{\widehat{\b{V}},\b{Q}}$-module structure.
By \cite[3.7.3.1]{NonArchimedean}, the map (\ref{eqn:canonical0m}) has closed image. Hence it is surjective. Taking 
$\varinjlim_m$, we get an epimorphism 
$$\Gamma(U, \f{F}_{\pi}({\mathcal N})^{(0)}_{\Q})\twoheadrightarrow \Gamma(U, \f{F}_{\pi}({\mathcal N})).$$
By \cite[2.2.13]{Caro-L}, $\f{F}_\pi({\mathcal N})|_{U}$ is coherent over $\c{O}_{\widehat{\b{V}},\b{Q}}|_U$.
So $\f{F}_{\pi}(\c{N})|_{\mathbb V_{k}^{\mathrm{gen}}}$ is coherent over $\c{O}_{\widehat{\mathbb V}, \Q}$.
By \cite[2.2.12]{Caro-L}, $\f{F}_{\pi}({\mathcal N})$ is an isocrystal on $\mathbb P_{k}-T$ overconvergent 
along $T$. By Proposition \ref{rank} and the fact that  
$\f{F}_\pi({\mathcal N})^{(0)}_{\Q}\to  \f{F}_\pi({\mathcal N})$ is surjective on $\mathbb V_{k}^{\mathrm{gen}}$, the rank of 
$\f{F}_{\pi}({\mathcal N})|_{\mathbb V_{k}^{\mathrm{gen}}}$
does not exceed 
$d!\int_{\Delta_{\infty}\cap\mathfrak C}\prod_{\alpha \in R^+} \frac{(\lambda, \alpha)^2}{(\rho, \alpha)^2}\mathrm d\lambda.$
\end{proof}

\begin{proof}[Proof of Theorem \ref{thm:hyp}]
That $\mathrm{Hyp}_{\pi,!}$ is an over-holonomic arithmetic $\mathcal D$-module is proved in Proposition
\ref{prop:Fourier}. By Proposition \ref{prop:mhyp},  $\mathrm{Hyp}_{\pi,!}$ is a direct factor of $\mathfrak F_\pi(\mathcal N)$. The 
other assertions about $\mathrm{Hyp}_{\pi,!}$ follow from Theorem \ref{2ndmainthm}. The assertions for $\mathrm{Hyp}_{\pi,+}$
follow by duality.
\end{proof}

\begin{appendix}
\section{}

\subsection{} Let $k$ be a field, $G$ a reductive algebraic group over split $k$, $T$ a maximal torus of $G$ 
defined over $k$, $\Lambda=\mathrm{Hom}_k(T,\mathbb G_{m,k})$ the weight lattice, 
$\rho_j: G\to\mathrm{GL}(V_j)$ $(j=1, \ldots, N)$ a family of representations of $G$, and
$$V_j=\bigoplus_{\lambda\in \Lambda} V_j(\lambda)$$ the weight decomposition. Define the \emph{Newton polytope}
$\Delta$ to be the convex hull in 
$\Lambda_{\mathbb R}:=\Lambda\otimes_{\mathbb Z}\mathbb R$ of the weights appeared in 
$V_j$ $(j=1, \ldots, N)$. For any face $\tau$ of $\Delta$, let $e(\tau)=(e(\tau)_j)\in \prod_{j=1}^N \mathrm{End}(V_j)$ be defined by 
$$e(\tau)_j\Big|_{ V_j(\lambda)}=\begin{cases}
\mathrm{id}_{V_j(\lambda)}&\hbox{if }\lambda\in \tau, \\
0&\hbox{otherwise}.
\end{cases}$$
We have an action
\begin{eqnarray*}
(G\times_k G)\times_k \prod_{j=1}^N \mathrm{End}(V_j)&\to& \prod_{j=1}^N \mathrm{End}(V_j),\\
\Big((g, h), (A_1, \ldots, A_N)\Big)&\mapsto&
\Big(\rho_1(g)A_1\rho_1(h^{-1}), \quad \rho_N(g)A_N\rho_N(h^{-1})\Big).
\end{eqnarray*}
Let $X$ be the scheme theoretic image of the morphism 
$$\iota: G\to \prod_{j=1}^N \mathrm{End}(V_j).$$

\begin{proposition}\label{prop:orbitdecomgeneral} Notation as above. We have the orbit decomposition 
$$X=\bigsqcup_{\tau\prec \Delta} Ge(\tau) G.$$
\end{proposition}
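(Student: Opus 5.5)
The plan is to reduce to the closure of the maximal torus, using a valuative (Cartan-decomposition) argument, and then read off the torus orbits from the weights. Throughout I work over an algebraic closure of $k$. The orbit decomposition is a statement about points, and both $X$ and the sets $Ge(\tau)G$ are stable under base change. Since $\iota$ is $(G\times G)$-equivariant, $X$ is $(G\times G)$-stable. Each $e(\tau)$ lies in $X$, because it is a limit of $\rho(\lambda^\vee(s))$ as $s\to 0$ for a suitable cocharacter $\lambda^\vee$; this is checked in the second step. Hence $\bigcup_\tau Ge(\tau)G\subset X$.

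For the reverse inclusion, take a point $x\in X$. Since $G$ is dense in $X$, there is a curve in $G$ that specializes to $x$. Concretely, there is a $\bar k[[s]]$-point of $X$ whose closed point is $x$ and whose generic point lies in $G(\bar k((s)))$ (after a finite extension of the parameter). By the Cartan decomposition
$$G(\bar k((s)))=G(\bar k[[s]])\,T(\bar k((s)))\,G(\bar k[[s]]),$$
we may write the generic point as $g_1\,\lambda^\vee(s)\,t_0\,g_2$ with $g_i\in G(\bar k[[s]])$, $t_0\in T(\bar k)$, and $\lambda^\vee$ a cocharacter, which after conjugation by the Weyl group we may take dominant. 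Multiplying by $g_1^{-1}$ and $g_2^{-1}$ does not affect integrality. So $\rho_j(\lambda^\vee(s))$ extends to $s=0$, and reducing mod $s$ gives
$$x=\bar g_1\,\Big(\lim_{s\to 0}\rho(\lambda^\vee(s))\Big)\,t_0\,\bar g_2 .$$

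It remains to compute the limits $\lim_{s\to0}\rho(\lambda^\vee(s))$. On $V_j(\mu)$ the operator $\rho_j(\lambda^\vee(s))$ acts by $s^{\langle\mu,\lambda^\vee\rangle}$. The limit exists exactly when $\langle\mu,\lambda^\vee\rangle\ge 0$ for all weights $\mu$ occurring in the $V_j$. In that case the limit is the idempotent that is the identity on those weight spaces with $\langle\mu,\lambda^\vee\rangle=0$ and zero on the others. The set of such $\mu$ is $\Delta\cap\{\langle\cdot,\lambda^\vee\rangle=0\}$, which is a face $\tau$ of $\Delta$ because $\lambda^\vee\ge0$ on $\Delta$; the empty face gives $e(\emptyset)=0$. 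Conversely, every face arises from some cocharacter, by choosing $\lambda^\vee$ in the relative interior of the normal cone to $\tau$. This step also produces the inclusion $e(\tau)\in X$ used in the first paragraph, and shows that $t_0e(\tau)\in Ge(\tau)G$. Disjointness then follows by recovering $\tau$ from an orbit $Ge(\tau)G$. Invariants such as the ranks of the components, or the $T\times T$-weights of the $(U^+\times U^-)$-fixed part of the orbit, determine the set of weights supported, and hence $\tau$ up to $W$. I would then check that $W$-conjugate faces give the same orbit, since Weyl representatives act by $w\,e(\tau)\,w^{-1}=e(w\tau)$, so no two distinct faces give the same orbit unless they are identified this way.

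I expect the main obstacle to be the face bookkeeping in the last step. One must make sure that every face of $\Delta$ is cut out by a cocharacter with $\lambda^\vee\ge0$ on all of $\Delta$. When $0\notin\Delta$ this forces a sign condition that can fail: for $G=\mathbb G_m$ with weights $1,2$, the point $e(\{1\})$ does not lie in $X$. So the argument really produces the faces of $\Delta$ that are cut out by a supporting hyperplane through the origin. This is equivalent to the faces of $\Delta_\infty$ not containing $0$, together with $\Delta$ itself when $0\in\Delta$. I would first try to reconcile the statement with that indexing, or with $\Delta_\infty$ as used in Definition~\ref{defn:nondeg}, and carry out the proof with that indexing. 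The other technical point is that the Cartan decomposition applies because $G$ is split reductive over $\bar k$; it is used only over the algebraically closed field, where it is standard.
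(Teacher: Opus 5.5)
Your route is genuinely different from the paper's. The paper observes that $X$ is an algebraic monoid. It then quotes a monoid lemma saying that every $G\times G$-orbit contains an idempotent of $\overline{\iota(T)}\cong\mathrm{Spec}\,k[M]$, and reads the idempotents off the torus orbits of this affine toric variety. You replace the monoid lemma by curve selection plus the Cartan decomposition of $G(\bar k((s)))$. You replace the toric step by the explicit limit $\lim_{s\to0}\rho(\lambda^\vee(s))$.

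For the covering, your argument is sound and more self-contained. It also makes the admissibility condition ($\lambda^\vee\geq 0$ on every weight) explicit. That condition is exactly what the paper's last step drops: torus orbits of $\mathrm{Spec}\,k[M]$ are indexed by faces of the cone $\mathbb R_{\geq0}M$, not by faces of $\Delta$. So your counterexample ($\mathbb G_m$ with weights $1,2$) does refute the statement as written. Your identity $\dot w\,e(\tau)\,\dot w^{-1}=e(w\tau)$ also shows the union is not disjoint over faces: already $E_{11}$ and $E_{22}$ lie in one orbit of $X=\mathrm{End}(k^2)$ for $\mathrm{GL}_2$.

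However, your translation into $\Delta_\infty$ is wrong, and carrying out the proof with it would fail. The sets $\tau=\Delta\cap\ker\lambda^\vee$ with $\lambda^\vee\geq0$ on $\Delta$ are exactly the sets $F\cap\Delta$ for faces $F$ of $\Delta_\infty$ \emph{containing} $0$. Here $F=\{0\}$ gives $\emptyset$ when $0\notin\Delta$, and $\tau=\Delta$ always occurs via $\lambda^\vee=0$. Your own example shows this. In $\Delta_\infty=[0,2]$ the face $\{2\}$ avoids $0$, yet $e(\{2\})=(0,1)\notin X=\{(a,a^2)\}$. Meanwhile $e(\Delta)=\mathrm{id}\in X$ although $0\notin\Delta$. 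The faces of $\Delta_\infty$ avoiding $0$ index the boundary $i_0^{-1}(X'_{\bar k})$ of Proposition~\ref{prop:orbitdecomX}, which is a different variety.

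You also need no reconciliation for the paper's purposes. The statement is only applied to the homogenized family, whose polytope $1\times\Delta_\infty$ sits at height one. Take a face $1\times\tau$ with $\tau=\Delta_\infty\cap\{\ell=c\}$ and $\ell\geq c$ on $\Delta_\infty$. It is cut out by the cocharacter $(s,\lambda)\mapsto\ell(\lambda)-cs$, and the empty face is cut out by $(s,\lambda)\mapsto s$. So there your argument yields the covering by all faces, which is all Corollary~\ref{nil-gr} uses.

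Finally, your disjointness step is only a sketch. Ranks do not separate orbits: take $G=\mathbb G_m^2$ acting on $k^2$ with weights $(1,0),(0,1)$. Then $\mathrm{diag}(1,0)$ and $\mathrm{diag}(0,1)$ have equal rank but lie in different orbits. Your other invariant is not made precise. The correct statement, that orbits correspond to $W$-orbits of the faces you obtain, is the Putcha--Renner description of $G\times G$-orbits of a reductive monoid by $W$-orbits of idempotents of $\overline T$. You would have to cite or prove it.
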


\begin{proof} By base change to an algebraic closure of $k$, we may assume $k$ is algebraically closed. 
Note that $X$ is an algebraic monoid. 
By \cite[Lemma 3]{charporbit}, every $G$-orbit of $X$ 
contains an idempotent element lying in the closure $\overline{\iota(T)}$ of $\iota(T)$. 
It suffices to show that for any point $x$ in $\overline{\iota(T)}$, 
there exists a face $\tau$ of $\Delta$ 
such that $x\in Ge(\tau)G$. Let $\mu_1, \ldots, \mu_l$ be all the weights appeared in 
$V_j$ $(j=1, \ldots, N)$. Then $\overline{\iota(T)}$ can be identified with the scheme theoretic image of the morphism
$$\iota': T\to \mathbb A^l, \quad t \mapsto  (\mu_1(t), \ldots,\mu_l(t).)$$
So $\overline{\iota(T)}$ is isomorphic to the affine toric scheme $\mathrm{Spec}\, k[M]$, where 
$M$ is the submonoid of $\Lambda$ generated by $\mu_1, \ldots,\mu_l$. The torus action orbits of this affine toric scheme
are in one-to-one correspondence with the faces of $\Delta$. 
\end{proof}

Suppose we are in the situation of \ref{Homogeneization}. For the representations $\rho'_0, \rho'_1, \ldots, \rho'_N$
of the group $\mathbb G_m\times G$, the Newton polytope is $1\times \Delta_\infty$. Any face  of 
$1\times \Delta_\infty$ is of the form $1\times \tau$ for a face $\tau$ of $\Delta_\infty$. 
Let $e(1\times \tau)=(e(1\times \tau)_j)\in \mathbb V':=\prod_{j=0}^N\mathrm{End}(V_j)$ be defined by 
\begin{eqnarray*}
e(1\times \tau)_0&=&\begin{cases}
1&\hbox{if }(1,0)\in 1\times \tau, \hbox{ that is, } 0\in \tau\\
0&\hbox{otherwise},
\end{cases}\\
e(1\times \tau)_j\Big|_{ V_j(\lambda)}&=&\begin{cases}
\mathrm{id}_{V_j(\lambda)}&\hbox{if }(1, \lambda)\in 1\times \tau, \hbox{ that is, } \lambda\in \tau\\
0&\hbox{otherwise}.
\end{cases}
\end{eqnarray*}

\begin{proposition}\label{prop:orbitdecomX} Keep the notations in \ref{Homogeneization}. 
Let $i_0$ be the closed immersion
$$i_0: \mathbb V\to \mathbb V', \quad v\mapsto (0, v).$$
Under the assumption \ref{ass2} 
we have the orbit decompositions
$$X'_{\bar k}=\bigsqcup_{\tau\prec \Delta_\infty} (\mathbb G_{m, \bar k}\times G_{\bar k})e(1\times \tau) 
(\mathbb G_{m, \bar k}\times G_{\bar k}),\quad 
i_0^{-1}(X'_{\bar k})=\bigsqcup_{0\not\in \tau\prec \Delta_\infty} G_{\bar k}e(\tau) G_{\bar k} .$$
\end{proposition}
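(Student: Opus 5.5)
The first decomposition comes from Proposition \ref{prop:orbitdecomgeneral}; the second comes from reading off which of those orbits lie over $t=0$. Concretely, I would apply Proposition \ref{prop:orbitdecomgeneral} over $\bar k$ to the group $G'=\mathbb G_{m,\bar k}\times G_{\bar k}$, with maximal torus $T'=\mathbb G_m\times T$ and the family of representations $\rho'_0,\rho'_1,\ldots,\rho'_N$. The scheme-theoretic image $X'_{\bar k}$ of $\iota'$ over $\bar k$ is exactly the $X$ of that proposition. The reason is that $X'$ is flat over $R$, being the closure of the image of the flat scheme $\mathbb G_m\times G$. Hence $X'_{\bar k}$ is the reduced closure of $\iota'(\mathbb G_{m,\bar k}\times G_{\bar k})$, and this is the point where Assumption \ref{ass2} and the geometric integrality of fibers enter.

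Next I compute the Newton polytope. The weights of $\rho'_j$ are $(1,\lambda)$ for $\lambda$ a weight of $V_j$, and $\rho'_0$ contributes the weight $(1,0)$. Their convex hull is therefore $1\times\Delta_\infty$, its faces are exactly $1\times\tau$ for $\tau\prec\Delta_\infty$, and the idempotents are the $e(1\times\tau)$ defined above. Feeding this into Proposition \ref{prop:orbitdecomgeneral} gives the first decomposition, with disjointness included in that proposition.

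For the second decomposition I intersect with the hyperplane $t=0$, i.e. the image of $i_0$. Every point in the orbit of $e(1\times\tau)$ has $0$-th coordinate $s\cdot e(1\times\tau)_0\cdot s'^{-1}$ with $s,s'\in\mathbb G_m$. This coordinate vanishes if and only if $e(1\times\tau)_0=0$, which happens if and only if $0\notin\tau$. For such $\tau$ the orbit lies entirely in $i_0(\mathbb V)$, and it equals $i_0(G e(\tau)G)$. To see this, the $\mathbb G_m\times\mathbb G_m$ factor acts on $(0,e(\tau))$ by the scalar $s s'^{-1}$, and this scalar is absorbed into the $G$-action as follows.

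Absorbing the scalar is the only genuine obstacle. I would handle it by using that the face $\tau$ does not contain $0$, so there is a cocharacter $\mu$ of $T$ with $\langle\lambda,\mu\rangle=c>0$ constant for $\lambda\in\tau$. Then $\mu(u)$ acts on $e(\tau)$ by $u^c$, and extracting $c$-th roots in $\bar k$ realizes every scalar multiple of $e(\tau)$ as $\mu(u)e(\tau)$. It follows that $(\mathbb G_m\times G)e(1\times\tau)(\mathbb G_m\times G)\cap i_0(\mathbb V)=i_0(Ge(\tau)G)$ when $0\notin\tau$, while the intersection is empty when $0\in\tau$. This yields the second decomposition.

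If the scheme-theoretic statement is needed rather than just the set-theoretic one, I would note that both sides are reduced and read the equality on $\bar k$-points. Since the paper uses the result only to compute supports, the set-theoretic equality should suffice.
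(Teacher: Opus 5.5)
Your proof has a gap at the one step where the hypothesis of the proposition is needed: identifying $X'_{\bar k}$ set-theoretically with the closure of $\iota'_{\bar k}(\mathbb G_{m,\bar k}\times G_{\bar k})$. You justify this by flatness of $X'$ over $R$, but flatness does not give it. At most it tells you that every irreducible component of $X'_k$ has dimension $d+1$. It does not prevent $X'_k$ from having components that miss the image of the special fiber.

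Here is a counterexample. Take the quasi-finite map $\mathbb G_{m,R}\to\mathbb A^2_R$, $t\mapsto(t,\pi t^{-1})$. Its scheme-theoretic image is the flat scheme $xy=\pi$, whose special fiber is $xy=0$. The image of the special-fiber map, $\{(t,0)\}$, is dense only in $y=0$, so the component $x=0$ is extra. You name Assumption~\ref{ass2} as the place where the hypothesis enters, but you give no argument using it, and the flatness argument you do give is invalid.

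The paper's argument for this step runs as follows.
\begin{itemize}
\item The morphism $\tilde Y'\to Y'\to X'$ is proper, since $\sigma'$ is proper by Assumption~\ref{ass2}(2) and $Y'\to X'$ is finite.
\item It is dominant, hence surjective, so $X'_{\bar k}$ is the image of $\tilde Y'_{\bar k}$.
\item By Assumption~\ref{ass2}(1), $\tilde Y'_{\bar k}$ is smooth and connected, hence irreducible. It contains $\mathbb G_{m,\bar k}\times G_{\bar k}$ as a nonempty open, hence dense, subset.
\item Therefore the closed image of $\tilde Y'_{\bar k}$ is exactly the closure of $\iota'_{\bar k}(\mathbb G_{m,\bar k}\times G_{\bar k})$.
\end{itemize}

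With this step replaced, the rest of your proof goes through. Your computation of the Newton polytope $1\times\Delta_\infty$ and your use of Proposition~\ref{prop:orbitdecomgeneral} match the paper. Your cocharacter argument is correct: you take $\mu\in\mathrm{Hom}(\mathbb G_m,T)$ with $\langle\lambda,\mu\rangle=c>0$ on $\tau$, so that $\mu(u)e(\tau)=u^c e(\tau)$. This absorbs the scalars $ss'^{-1}$, a point the paper's ``the second assertion follows from the first one'' leaves implicit.

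One small correction: your closing claim that ``both sides are reduced'' is not justified for $X'_{\bar k}$. As you say, though, only the set-theoretic statement is used later.
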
 

\begin{proof} By the assumption \ref{ass2} (1), $\mathbb G_{m,\bar k}\times G_{\bar k}$ is open dense 
in $\tilde Y'_{\bar k}$. The morphism $\tilde Y' \to X'$ is proper dominant and hence surjective. So $X'_{\bar k}$ is the Zariski 
closure of $\iota'_{\bar k}$. The first assertion follows from Proposition \ref{prop:orbitdecomgeneral}. 
We have $e(1\times \tau) \in X'\cap (0\times\mathbb V)$ if and only if $0\not\in \tau$. The 
second assertion follows from the first one. 
\end{proof}   

\begin{lemma}\label{lm:basechangesigma} 
Let $k$ be a field, let $G$ be a reductive algebraic group split over $k$, and let $H=G\times G$ act on 
$G$ via the left and the right multiplication of $G$. Suppose $\sigma: \tilde Y\to Y$ is a $k$-morphism
satisfying the following conditions:
\begin{enumerate}[(1)]
\item Both $\tilde Y$ and $Y$ are $k$-schemes with $H$-action, contain $G$ as a dense open 
subscheme.
\item $\sigma$ is equivariant proper and induces identity on $G$. 
\item $\tilde Y$ is smooth and geometrically connected.
\end{enumerate}
Then $R^i\sigma_*\omega_{\tilde Y}=0$ for all $i\geq 1$, and $\sigma_*\omega_{\tilde Y}$ is Cohen-Macaulay. 
\end{lemma}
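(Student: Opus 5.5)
We want $R^i\sigma_*\omega_{\tilde Y}=0$ for $i\ge1$ and $\sigma_*\omega_{\tilde Y}$ Cohen--Macaulay. The plan is to use Frobenius splitting in positive characteristic and Grauert--Riemenschneider (or Kollár) vanishing in characteristic $0$, reducing both to the case where $Y$ is normal with rational singularities.

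First reduce to $k$ algebraically closed, since the formation of $R^i\sigma_*$ and the Cohen--Macaulay property commute with flat base change. Next replace $Y$ by its normalization $Y^\nu$. Because $\tilde Y$ is smooth and hence normal, and $\sigma$ is birational and proper, $\sigma$ factors as $\tilde Y\to Y^\nu\to Y$ with the second map finite. Finite pushforward is exact and preserves the Cohen--Macaulay property of a sheaf. So we may assume $Y$ is normal, and then $Y$ is an equivariant embedding of $G$ in the sense of \cite[6.2.1]{Frob-split}.

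Characteristic $0$ case. The map $\sigma$ is a resolution of singularities, so Grauert--Riemenschneider vanishing gives $R^i\sigma_*\omega_{\tilde Y}=0$ for $i\ge1$. Equivariant embeddings of reductive groups are spherical varieties, and normal spherical varieties have rational singularities. For a variety with rational singularities, $\sigma_*\omega_{\tilde Y}=\omega_Y$ and $Y$ is Cohen--Macaulay, so $\omega_Y$ is Cohen--Macaulay.

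Characteristic $p$ case, which is where the real work lies. By \cite[Chapter 6]{Frob-split}, every equivariant embedding $Y$ of $G$ is Frobenius split, and there is a smooth toroidal embedding $\tilde Y_0\to Y$ such that $\tilde Y_0$ is split compatibly with the boundary divisor, with the splitting given by a $(p-1)$-th power of a section of $\omega^{-1}$ (a $B$-canonical splitting). Using this, \cite[6.2.7--6.2.8]{Frob-split} show that $Y$ has rational singularities in the sense $R\pi_*\mathcal O_{\tilde Y_0}=\mathcal O_Y$ and $R^i\pi_*\omega_{\tilde Y_0}=0$ for $i>0$. The latter is a Kempf/Mehta--van der Kallen type vanishing obtained from the splitting by the standard Frobenius-twisting argument. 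Along the way we also obtain that $Y$ is Cohen--Macaulay with $\pi_*\omega_{\tilde Y_0}=\omega_Y$.

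To pass from $\tilde Y_0$ to an arbitrary smooth $\tilde Y$, compare both with a common equivariant resolution $\tilde Y_1$ dominating $\tilde Y$ and $\tilde Y_0$ (for example a toroidal resolution of the normalized graph). It then suffices to know that for a proper birational morphism $h$ between smooth varieties one has $Rh_*\omega=\omega$. This holds in characteristic $0$, and here holds by the same splitting argument applied to $\tilde Y_1\to\tilde Y$, since smooth equivariant embeddings are split with rational singularities trivially. Composing derived pushforwards via the Leray spectral sequence gives $R\sigma_*\omega_{\tilde Y}=R\pi_*\omega_{\tilde Y_0}=\omega_Y$, concentrated in degree $0$.

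Cohen--Macaulayness. Grothendieck duality for the proper map $\sigma$ gives $R\mathcal Hom(R\sigma_*\omega_{\tilde Y},\omega_Y^\bullet)=R\sigma_*\mathcal O_{\tilde Y}[\dim]=\mathcal O_Y[\dim]$, using $R\sigma_*\mathcal O_{\tilde Y}=\mathcal O_Y$, which holds in both characteristics. Hence the dual of $\sigma_*\omega_{\tilde Y}$ is a sheaf in a single degree, which is exactly the Cohen--Macaulay condition.

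The main obstacle is making the characteristic-$p$ vanishing rigorous for a general smooth $\tilde Y$ rather than the specific toroidal one. I would first try to cite \cite[Theorem 6.2.7]{Frob-split} directly together with the independence argument described above.
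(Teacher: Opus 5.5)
Your proposal is correct and is essentially the paper's argument: pass to the normalization of $Y_{\mathrm{red}}$, take a toroidal embedding dominating $\tilde Y$, apply \cite[6.2.8]{Frob-split} (together with duality) to the maps out of it, and deduce Cohen--Macaulayness from Grothendieck duality and the fact that $R\sigma_*\mathcal O_{\tilde Y}$ is concentrated in degree $0$. The paper is only more economical in two places. First, a single toroidal $Z\to\tilde Y$ is already a toroidal resolution of the normalization of $Y$, so your auxiliary $\tilde Y_0$ and common resolution are unnecessary. Second, it cites \cite[6.2.8]{Frob-split} uniformly in all characteristics instead of giving a separate characteristic-$0$ argument via Grauert--Riemenschneider and rational singularities of spherical varieties.
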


\begin{proof} By base change to an algebraic closure of $k$, we may assume $k$ is algebraically closed. 
Let $p: \mathcal Y\to Y$ be the normalization of 
the reduced scheme associated to $Y$. Then $\sigma:\tilde Y\to Y$ factors through $p$ via a morphism
$g: \tilde Y\to \mathcal Y$. By \cite[6.2.5]{Frob-split}, we can choose a proper equivariant 
morphism $f:Z \to \tilde Y$ 
such that $Z$ is a toroidal equivariant embedding of $G$ in the sense of
\cite[6.2.2]{Frob-split}. 
$$\begin{tikzcd}
Z\arrow[r,"f"]&\tilde Y\arrow[dr,"\sigma"]\arrow[r,"g"]& \mathcal Y\arrow[d,"p"]\\
&&Y
\end{tikzcd}$$
By \cite[6.2.8]{Frob-split}, we have 
\begin{eqnarray}\label{van}
Rf_*\mathcal O_Z\cong  \mathcal O_{\tilde Y}.
\end{eqnarray}
By the Grothendieck duality theorem (\cite[3.4.4]{Serredual}), we have 
\begin{eqnarray*}
&&Rf_*\omega_Z\cong Rf_*\mathcal RHom_{\mathcal O_Z}(\mathcal O_Z,
f^!\omega_{\tilde Y})\cong R\mathcal Hom_{\mathcal O_{\tilde Y}}
(Rf_*\mathcal O_Z, \omega_{\tilde Y})\\
&\cong& R\mathcal Hom_{\mathcal O_{\tilde Y}}(\mathcal O_{\tilde Y}, \omega_{\tilde Y})
\cong \omega_{\tilde Y}.
\end{eqnarray*}
Here we have
$$f^!\omega_{\tilde Y}\cong \omega_Z$$
since both $Z$ and $\tilde Y$ are smooth. 
So we have 
$$R\sigma_*\omega_{\tilde Y}\cong R\sigma_*Rf_*\omega_Z\cong p_*R(gf)_*\omega_Z.$$
By \cite[6.2.8]{Frob-split}, we have 
$$R^i(gf)_*\omega_Z=0$$
for all $i\geq 1$. So $R^i\sigma_*\omega_{\tilde Y}=0$ for all $i\geq 1$. 
Moreover, by \cite[6.2.8]{Frob-split} we have $R(gf)_*\mathcal O_Z\cong 
\mathcal O_{\mathcal Y}$. Combined with (\ref{van}), we get
$$R\sigma_*\mathcal O_{\tilde Y}\cong p_*Rg_*\mathcal O_{\tilde Y}\cong p_*Rg_*Rf_*\mathcal O_Z\cong p_*\mathcal O_{\mathcal Y}.$$
Thus $$R\sigma_*\mathcal O_{\tilde Y}\cong \sigma_*\mathcal O_{\tilde Y}.$$
For any affine open subset $U$ of $Y$, let us prove $\sigma_{U*}\omega_{\sigma^{-1}(U)}$ is Cohen-Macauley. 
Choose a closed immersion $i: U\to S$ so that $S$ is smooth. By \cite[Proposition IV.11]{Serre2}, it suffices to show 
$i_*\sigma_{U*}\omega_{\sigma^{-1}(U)}$ is Cohen-Macauley. By the Grothendieck duality theorem, we have 
\begin{align*}
i_*\sigma_{U*}\mathcal O_{\sigma^{-1}(U)}&\cong i_*R\sigma_{U*}\mathcal O_{\sigma^{-1}(U)}\cong
R(i\sigma_U)_*R\mathcal Hom_{\mathcal O_{\sigma^{-1}(U)}}
(\omega_{\sigma^{-1}(U)}, \omega_{\sigma^{-1}(U)})\\
&\cong R(i\sigma_U)_*R\mathcal Hom_{\mathcal O_{\sigma^{-1}(U)}}
(\omega_{\sigma^{-1}(U)}, (i\sigma_U)^!\omega_S)
[\mathrm{dim}\,S-\mathrm{dim}\,\tilde Y]\\
&\cong R\mathcal Hom_{\mathcal O_S}(i_*\sigma_{U*}\omega_{\sigma^{-1}(U)}, \omega_S)[\mathrm{dim}\,S-\mathrm{dim}\,\tilde Y].
\end{align*}
It follows that $R^i\mathcal Hom_{\mathcal O_S}(i_*\sigma_{U*}\omega_{\sigma^{-1}(U)}, \omega_S)=0$ for 
$i\not =\mathrm{dim}\,S-\mathrm{dim}\,\tilde Y$. By \cite[Tag 0B5A]{stacksproject}, $i_*\sigma_{U*}\omega_{\sigma^{-1}(U)}$
is Cohen-Macaulay. 
\end{proof}

\begin{proposition}\label{prop:cohoverDedekind} Let $R$ be a Dedekind domain, let $G$ be a split reductive group $R$-scheme, and
let $H=G\times G$ act on $G$ via the left and the right multiplication of $G$. Suppose $\sigma: \tilde Y\to Y$ is an $R$-morphism 
satisfying the following conditions:
\begin{enumerate}[(1)]
\item Both $\tilde Y$ and $Y$ are $R$-schemes with $H$-action, and contain $G$ as an open subscheme.
\item $\sigma$ is equivariant proper dominant and induces identity on $G$.  
\item $\tilde Y\to\mathrm{Spec}\,R$ is smooth and has geometrically connected fibers. 
\end{enumerate}
Then 
$$R^i\sigma_* \omega_{\tilde Y}=0$$ for any $i\geq 1$, and 
$$(\sigma_*\omega_{\tilde Y})\otimes_R k(\mathfrak p)\cong \sigma_{k(\mathfrak p)*}\omega_{\tilde Y_{k(\mathfrak p)}}$$
for any prime ideal $\mathfrak p$ of $R$, where $(\sigma_*\omega_{\tilde Y})\otimes_R k(\mathfrak p)$, $\sigma_{k(\mathfrak p)}$
and $\tilde Y_{k(\mathfrak p)}$ are the base changes by $R\to k(\mathfrak p)$ of $\sigma_*\omega_{\tilde Y}$, $\sigma$ 
and $\tilde Y$, respectively.
\end{proposition}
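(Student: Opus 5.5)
The plan is to deduce the statement from Lemma \ref{lm:basechangesigma} applied fiberwise, together with cohomology and base change over the Dedekind base. Since the question is local on $Y$ and on $\mathrm{Spec}\,R$, I will assume $R$ is a discrete valuation ring (or a field) and $Y$ is affine. By (3), $\omega_{\tilde Y}=\omega_{\tilde Y/R}$ is a line bundle on $\tilde Y$, flat over $R$, and its formation commutes with base change:
$$\omega_{\tilde Y}\otimes_R k(\mathfrak p)\cong \omega_{\tilde Y_{k(\mathfrak p)}}.$$
Since $\sigma$ is proper, $R\sigma_*\omega_{\tilde Y}$ is a bounded complex with coherent cohomology on $Y$. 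Because $\omega_{\tilde Y}$ is $R$-flat, it is perfect relative to $R$, and the derived base change formula holds:
$$R\sigma_*\omega_{\tilde Y}\otimes^L_R k(\mathfrak p)\cong R\sigma_{k(\mathfrak p)*}\omega_{\tilde Y_{k(\mathfrak p)}}.$$
This follows from flat base change for the generic point and from the standard base change theorem for proper morphisms for a closed point.

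Next I check that each fiber $\sigma_{k(\mathfrak p)}:\tilde Y_{k(\mathfrak p)}\to Y_{k(\mathfrak p)}$ satisfies the hypotheses of Lemma \ref{lm:basechangesigma}. The fiber $\tilde Y_{k(\mathfrak p)}$ is smooth and geometrically connected, so it is irreducible. $G_{k(\mathfrak p)}$ is open in it, hence dense. Equivariance and properness are preserved by base change. The one point needing care is that $G_{k(\mathfrak p)}$ be dense in $Y_{k(\mathfrak p)}$. This holds because $\sigma$ is surjective on fibers: it is proper with dense image containing $G_{k(\mathfrak p)}$ in the image of an irreducible fiber, and Lemma \ref{lm:basechangesigma} only really uses $Y$ through $\sigma$, so one may replace $Y_{k(\mathfrak p)}$ by the closure of $G_{k(\mathfrak p)}$. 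The lemma then gives
$$R^i\sigma_{k(\mathfrak p)*}\omega_{\tilde Y_{k(\mathfrak p)}}=0\quad (i\ge1).$$

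Finally I run descending induction on cohomological degree. Let $C=R\sigma_*\omega_{\tilde Y}$ and let $i_0$ be the top degree with $\mathcal H^{i_0}(C)\neq0$. Then
$$\mathcal H^{i_0}(C\otimes^L k(\mathfrak p))=\mathcal H^{i_0}(C)\otimes_R k(\mathfrak p).$$
If $i_0\ge1$, this vanishes for every $\mathfrak p$, so by Nakayama (for the closed point) and coherence, $\mathcal H^{i_0}(C)=0$, a contradiction. Hence $C$ is concentrated in degrees $\le0$, and since $\sigma_*$ has no negative cohomology, $C=\sigma_*\omega_{\tilde Y}$. The module $\sigma_*\omega_{\tilde Y}$ is $R$-torsion free, being a subsheaf of a pushforward of an $R$-flat sheaf on an integral scheme, so it is flat over $R$. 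Therefore $C\otimes^L k(\mathfrak p)=(\sigma_*\omega_{\tilde Y})\otimes_R k(\mathfrak p)$, and the derived base change formula above yields the isomorphism with $\sigma_{k(\mathfrak p)*}\omega_{\tilde Y_{k(\mathfrak p)}}$.

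I expect the main obstacle to be verifying the hypotheses of Lemma \ref{lm:basechangesigma} on the fibers, namely the density of $G$ in $Y_{k(\mathfrak p)}$. If this causes trouble, I will apply the lemma to $\tilde Y_{k(\mathfrak p)}\to\overline{\sigma(\tilde Y_{k(\mathfrak p)})}$ instead, since the vanishing of $R^i\sigma_*$ is insensitive to enlarging the target by a closed immersion.
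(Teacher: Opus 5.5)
Your proof is correct, but it takes a different route from the paper. Both arguments localize to a DVR, apply Lemma \ref{lm:basechangesigma} on both fibers, and use flat base change at the generic point. They differ at the closed point.

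\textbf{The paper's route.} It works over the thickenings $R_n=R/\mathfrak m^{n+1}$. From $0\to\omega_{\tilde Y_0}\xrightarrow{\varpi^n}\omega_{\tilde Y_n}\to\omega_{\tilde Y_{n-1}}\to0$ it shows, by induction on $n$, that $R^i\sigma_{n*}\omega_{\tilde Y_n}=0$ for $i\ge1$ and that the $\sigma_{n*}\omega_{\tilde Y_n}$ reduce compatibly. It then invokes the theorem on formal functions (EGA III, 4.1.5) to get $(R^i\sigma_*\omega_{\tilde Y})^\wedge=0$ and $\sigma_*\omega_{\tilde Y}\otimes_Rk\cong\sigma_{0*}\omega_{\tilde Y_0}$.

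\textbf{Your route.} You use the single triangle $R\sigma_*\omega_{\tilde Y}\xrightarrow{\varpi}R\sigma_*\omega_{\tilde Y}\to R\sigma_{k*}\omega_{\tilde Y_k}$ and a top-degree Nakayama argument. This triangle is what your "derived base change" really is, and you should justify it that way. It comes from applying $R\sigma_*$ to $0\to\omega_{\tilde Y}\xrightarrow{\varpi}\omega_{\tilde Y}\to\omega_{\tilde Y_k}\to0$, which only needs $R$-flatness of $\omega_{\tilde Y}$. It is not a base change theorem along $Y_k\to Y$, which is not flat, and $Y$ need not be flat over $R$.

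\textbf{Comparison.} Your argument is the standard cohomology-and-base-change argument. It avoids completion and the induction over thickenings. The isomorphism $(\sigma_*\omega_{\tilde Y})\otimes_Rk\cong\sigma_{k*}\omega_{\tilde Y_k}$ falls out of the same triangle once $R^1\sigma_*\omega_{\tilde Y}=0$. The paper's route gives the vanishing over every $R_n$ along the way, at the cost of passing through formal functions.

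\textbf{The fiber hypotheses.} You raise the hypotheses of Lemma \ref{lm:basechangesigma} on fibers, which the paper passes over silently; no fallback is needed. Since $\sigma$ is proper and dominant, it is surjective, so every point of $Y_{k(\mathfrak p)}$ has a preimage in $\tilde Y_{k(\mathfrak p)}$. Hence $Y_{k(\mathfrak p)}=\sigma(\tilde Y_{k(\mathfrak p)})$. The fiber $\tilde Y_{k(\mathfrak p)}$ is irreducible with $G_{k(\mathfrak p)}$ dense, so by continuity this image lies in the closure of $G_{k(\mathfrak p)}$. Therefore $G_{k(\mathfrak p)}$ is dense in $Y_{k(\mathfrak p)}$, and the lemma applies directly to $\sigma_{k(\mathfrak p)}$.
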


\begin{proof} 
Localizing at each maximal ideal of $R$, we may assume $R$ has only one maximal ideal $\mathfrak m$.
For the prime ideal $\mathfrak p=0$, the assertion follows directly from Lemma \ref{lm:basechangesigma}. 
It remains to treat the case $\mathfrak p=\mathfrak m$. 
Let $R_n=R/\mathfrak m^{n+1}$ for each integer $n\geq 0$,  
and let $\sigma_n: \tilde Y_n\to Y_n$ be the base change by $R\to R_n$ of $\sigma:\tilde Y\to Y$. 
We first prove $R^i\sigma_{n*}\omega_{\tilde Y_n}=0$ for all $i\geq 1$ and 
$$\sigma_{n*}\omega_{\tilde Y_{n}}\otimes_{R_n} R_{n-1}\cong \sigma_{n-1,*}\omega_{\tilde Y_{n-1}}$$ by induction on $n$.
When $n=0$, this follows from Lemma \ref{lm:basechangesigma}. Assume $n\geq 1$. Choose a generator $\varpi$ for $\mathfrak m$.
We have a long exact sequence
\begin{align*}
0& \to R^0\sigma_{0*}\omega_{\tilde{Y}_{0}}\xrightarrow{\varpi^{n}} R^0\sigma_{n*}\omega_{\tilde{Y}_{n}}
\to R^0\sigma_{n-1,*}\omega_{\tilde{Y}_{n-1}}\to \cdots \\
&\to  R^i\sigma_{0*}\omega_{\tilde{Y}_{0}}\to R^i\sigma_{n*}\omega_{\tilde{Y}_{n}}\to R^i\sigma_{n-1,*}\omega_{\tilde{Y}_{n-1}}\to \cdots
\end{align*} 
By the induction hypothesis we have 
$$R^i\sigma_{0*}\omega_{\tilde{Y}_{0}}=R^i\sigma_{n-1,*}\omega_{\tilde{Y}_{n-1}}=0 \quad(i\geq 1).$$
So we have $R^i\sigma_{n*}\omega_{\tilde{Y}_{n}}=0$ for all $i\geq 1$ and we have a short exact sequence
$$0\to \sigma_{0*}\omega_{\tilde{Y}_{0}}\xrightarrow{\varpi^{n}} \sigma_{n*}\omega_{\tilde{Y}_{n}}\to 
\sigma_{n-1,*}\omega_{\tilde{Y}_{n-1}}\to  0.$$
Taking the composite of the epimorphisms $\sigma_{m*}\omega_{\tilde{Y}_{m}}\twoheadrightarrow 
\sigma_{n-1,*}\omega_{\tilde{Y}_{m-1}}$ $(m\leq n)$, we get an epimorphism 
$$\sigma_{n*}\omega_{\tilde{Y}_{n}}\twoheadrightarrow \sigma_{0*}\omega_{\tilde{Y}_0}.$$
So we may identify the image of $\sigma_{0*}\omega_{\tilde{Y}_{0}}\xrightarrow{\varpi^{n}} \sigma_{n,*}\omega_{\tilde{Y}_{n}}$ 
with the image of $\sigma_{n,*}\omega_{\tilde{Y}_{n}}\xrightarrow{\varpi^{n}} \sigma_{n,*}\omega_{\tilde{Y}_{n}}$.
Hence $\sigma_{n*}\omega_{\tilde{Y}_{n}}\otimes_{R_n} R_{n-1}\cong \sigma_{n-1,*}\omega_{\tilde{Y}_{n-1}}$.
By \cite[4.1.5]{EGAIII}, we have 
$$\varprojlim_n R^i\sigma_{n*}\omega_{\tilde{Y}_{n}} \cong (R^i\sigma_*\omega_{\tilde{Y}})^\wedge.$$ We thus have
$\sigma_*\omega_{\tilde{Y}}\otimes_R k\cong \sigma_{0*}\omega_{\tilde{Y}_{0}}$, and 
the restriction of $R^i\sigma_*\omega_{\tilde{Y}}$ to the special fiber of $Y\to \mathrm{Spec}\,R$ vanish for all $i\geq 1$. By 
Lemma \ref{lm:basechangesigma} and the flat base change theorem, the restriction of 
$R^i\sigma_*\omega_{\tilde{Y}}$ to the general fiber of $Y\to \mathrm{Spec}\,R$ also vanish for all $i\geq 1$. 
So $R^i\sigma_*\omega_{\tilde{Y}}=0$ for all $i\geq 1$. 
\end{proof}   

\begin{corollary}\label{cor:usedin2} Assume \ref{ass1} holds. Then 
$R^i\bar\iota_* \omega_{\tilde Y}=0$ for all $i\geq 1$. 
\end{corollary}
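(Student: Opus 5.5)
We factor $\bar\iota:\tilde Y\to\mathbb P$ as $\sigma:\tilde Y\to\overline Y$ followed by the finite morphism $\nu:\overline Y\to\overline X\to\mathbb P$. Since $\nu$ is finite, hence affine, $R\nu_*=\nu_*$ on quasi-coherent sheaves. The Leray spectral sequence therefore collapses to $R^i\bar\iota_*\omega_{\tilde Y}\cong\nu_*R^i\sigma_*\omega_{\tilde Y}$. It thus suffices to prove $R^i\sigma_*\omega_{\tilde Y}=0$ for $i\geq 1$.

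For this we apply Proposition \ref{prop:cohoverDedekind} to $\sigma:\tilde Y\to\overline Y$ over the discrete valuation ring $R$, which is a Dedekind domain, and check its hypotheses from Assumption \ref{ass1}.
\begin{enumerate}[(1)]
\item Both $\tilde Y$ and $\overline Y$ carry $H$-actions and contain $G$ as an open subscheme. For $\overline Y$, the integral closure of $\overline X$ in $G$ contains $G$ as an open subscheme, because $G\to\overline X$ is quasi-finite and $G$ is normal.
\item $\sigma$ is equivariant, proper and the identity on $G$. It is dominant because $G$ is dense in $\overline Y$.
\item $\tilde Y$ is smooth over $R$ with geometrically connected fibers, by Assumption \ref{ass1}(1).
\end{enumerate}
The proposition then gives $R^i\sigma_*\omega_{\tilde Y}=0$ for $i\ge1$, and the corollary follows.

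The only point needing care is hypothesis (1) for $\overline Y$, namely that $G$ is open in it. This follows from Zariski's main theorem applied to the quasi-finite map $G\to\overline X$. Alternatively, it is exactly the convention used when the paper says ``we regard $G$ as an open subscheme of both $\tilde Y$ and $\overline Y$'' in Assumption \ref{ass1}(2), so I would simply cite that.
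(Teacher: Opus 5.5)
Your proof is correct and is the same as the paper's: factor $\bar\iota$ as $\sigma:\tilde Y\to\overline Y$ followed by the finite morphism $\overline Y\to\mathbb P$, and apply Proposition~\ref{prop:cohoverDedekind} to $\sigma$. You also spell out two things the paper leaves implicit: the collapse of the Leray spectral sequence for the affine second factor, and the check of the proposition's hypotheses (including openness of $G$ in $\overline Y$ via Zariski's main theorem and dominance of $\sigma$).
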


\begin{proof} Note that $\bar\iota$ is a composite of $\sigma:\tilde Y\to\overline Y$ and 
a finite morphism $\overline Y\to \mathbb P$. We can apply Proposition \ref{prop:cohoverDedekind}
to $\sigma:\tilde Y\to\overline Y$. 
\end{proof}

\begin{corollary}\label{cor:usedin4} Assume \ref{ass2} holds. Denote the composite $Y'\to X'\to \mathbb V'$
by $f'$. Then $f'_{k*}\omega_{Y'_k}$ is Cohen-Macaulay, and $(f'\omega_{\tilde Y'})\otimes_R k\cong f'_{k*}\omega_{Y'_k}$.
\end{corollary}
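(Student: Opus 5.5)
The plan is to reduce Corollary \ref{cor:usedin4} to Lemma \ref{lm:basechangesigma} and Proposition \ref{prop:cohoverDedekind}, applied to the morphism $\sigma':\tilde Y'\to Y'$ of Assumption \ref{ass2}, with the group $\mathbb G_m\times G$ in place of $G$. The composite $Y'\to X'\to\mathbb V'$ is finite, so pushing forward along it is exact and preserves both the Cohen--Macaulay property and compatibility with base change to the special fiber. Here $\omega_{Y'}=\sigma'_*\omega_{\tilde Y'}$, and the second assertion is to be read as
\[
(f'_*\sigma'_*\omega_{\tilde Y'})\otimes_R k\cong f'_{k*}\sigma'_{k*}\omega_{\tilde Y'_k}.
\]

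\textbf{Hypotheses.} Lemma \ref{lm:basechangesigma} and Proposition \ref{prop:cohoverDedekind} are stated for $H=G\times G$ acting on a reductive $G$. Their proofs use only three things: the toroidal resolution results \cite[6.2.5, 6.2.8]{Frob-split} for equivariant embeddings, Grothendieck duality, and the Mittag-Leffler/induction argument over $R_n$. All of these apply verbatim to the reductive group $G'=\mathbb G_m\times G$ acting on itself by left and right multiplication. The action of $\mathbb G_m\times H$ on $\tilde Y'$ factors through $G'\times G'$, since the two $\mathbb G_m$ factors act through a single scalar. Assumption \ref{ass2} (1)--(2) then gives exactly the hypotheses: $\tilde Y'$ is smooth over $R$ with geometrically connected fibers, contains $G'$ as an open subscheme, and $\sigma'$ is proper, equivariant, dominant (it is the identity on $G'$, which is dense in $Y'$) and the identity on $G'$.

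\textbf{Base change.} Proposition \ref{prop:cohoverDedekind} yields $R^i\sigma'_*\omega_{\tilde Y'}=0$ for $i\ge 1$ and
\[
(\sigma'_*\omega_{\tilde Y'})\otimes_R k\cong \sigma'_{k*}\omega_{\tilde Y'_k}.
\]
Applying the finite (hence affine) pushforward $f'_*$, which commutes with the base change $\otimes_R k$ for quasi-coherent sheaves, gives the base change statement.

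\textbf{Cohen--Macaulay.} Lemma \ref{lm:basechangesigma} applies to the special fiber $\sigma'_k:\tilde Y'_k\to Y'_k$: here $\tilde Y'_k$ is smooth and geometrically connected, and $G'_k$ is dense in it. It gives that $\sigma'_{k*}\omega_{\tilde Y'_k}$ is Cohen--Macaulay on $Y'_k$. A finite pushforward of a Cohen--Macaulay coherent sheaf is Cohen--Macaulay as a module over the target local rings, since depth and dimension are preserved under finite maps \cite[Proposition IV.11]{Serre2}. Hence $f'_{k*}\omega_{Y'_k}$ is Cohen--Macaulay.

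\textbf{Main obstacle.} The delicate point is that $Y'_k$ need not be reduced or normal, and $Y'_k$ need not be the closure of $G'_k$ scheme-theoretically. Lemma \ref{lm:basechangesigma} was written to handle exactly this: it only requires $G$ to be dense in $Y$, and it passes through the normalization $\mathcal Y$ of $Y_{\mathrm{red}}$. So I would check that density of $G'_k$ in $Y'_k$ holds, which follows because $\sigma'_k$ is surjective from the irreducible $\tilde Y'_k$, in which $G'_k$ is dense. I would also check that the Frobenius-splitting inputs of \cite{Frob-split} are valid in positive characteristic for $G'_k$, which they are, since that reference works over any algebraically closed field.
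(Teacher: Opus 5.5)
Your proposal is correct and follows the same route as the paper. For the Cohen--Macaulay property you apply Lemma \ref{lm:basechangesigma} to $\sigma'_k$ and push forward along the finite morphism $f'_k$ using \cite[Proposition IV.11]{Serre2}; for the second assertion you apply Proposition \ref{prop:cohoverDedekind} to $\sigma'$ and then use base change for the affine morphism $f'$. Your checks that $\mathbb G_m\times G$ may play the role of $G$ and that $G'_k$ is dense in $Y'_k$ are details the paper leaves implicit. One small wording slip: it is the $G'\times G'$-action that factors through the surjection $G'\times G'\to\mathbb G_m\times H$, $((t_1,g_1),(t_2,g_2))\mapsto(t_1t_2^{-1},g_1,g_2)$, not the other way around.
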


\begin{proof} Note that $f'$ is a finite morphism. The Cohen-Macaulay property follows from Lemma \ref{lm:basechangesigma} 
and \cite[Proposition IV.11]{Serre2}. The second assertion follows from Proposition \ref{prop:cohoverDedekind} applied 
to $\sigma':\tilde Y'\to Y'$ and the base change theorem for an affine morphism. 
\end{proof}
   
\end{appendix}

\end{document}